\documentclass[12pt,final]{amsart}
\newif\ifslfour
\slfourtrue

\usepackage{amsmath,amsfonts,amssymb,amsthm,color,tikz,todonotes,xspace,euscript,nicefrac,mathrsfs,calligra,bbm,enumitem,fancyhdr}

\usepackage[russian,english]{babel}
\usepackage[margin=1in]{geometry}
\usepackage{tikz-cd}
\usepackage{floatpag}
\usepackage[all]{xy}
\usepackage[utf8]{inputenc}
\usepackage[normalem]{ulem}
\usepackage{bm}
\usepackage{mathtools}
\usepackage{tensor,todonotes}
\usepackage{latexsym,verbatim,hyperref}
\usepackage{subcaption}
\usepackage{afterpage}
\newcommand{\arxiv}[1]{\href{http://arxiv.org/abs/#1}{\tt
    arXiv:\nolinkurl{#1}}}
\newcommand{\Seq}{\operatorname{Seq}}

\pagestyle{fancy}
\chead[\theshorttitle]{Silverthorne \& Webster}

\newcommand{\nc}{\newcommand}

\nc{\Z}{\mathbb Z}
\nc{\R}{\mathbb R}
\newcommand{\gl}{\mathfrak{gl}}

\setlength{\fboxsep}{.2\fboxsep}
\newcommand{\Bv}{\mathbf{v}}

\newcommand{\Bb}{\mathbf{b}}

\newcommand{\mcU}{\mathcal{U}}
\newtheorem{itheorem}{Theorem}

\numberwithin{equation}{section}
\renewcommand{\theequation}{\arabic{section}.\arabic{equation}}

 \usetikzlibrary{decorations.pathmorphing,backgrounds,decorations.markings,babel,decorations.pathreplacing}

\tikzset{wei/.style={draw=red,double=red!40!white,double distance=1.5pt,thin}}
\tikzset{thickc/.style={draw=green, line width=3pt}}
\newcounter{subeqn}
\renewcommand{\thesubeqn}{\theequation\alph{subeqn}}
\newcommand{\subeqn}{\refstepcounter{subeqn}\tag{\thesubeqn}}
\makeatletter
\@addtoreset{subeqn}{equation}
\newcommand{\newseq}{\refstepcounter{equation}}

\numberwithin{equation}{section}
\theoremstyle{plain}
\newtheorem{theorem}{Theorem}[section]

\newtheorem{lemma}[theorem]{Lemma}
\newtheorem{proposition}[theorem]{Proposition}
\newtheorem{prop}[theorem]{Proposition}
\theoremstyle{definition}
\newtheorem{example}[theorem]{Example}

\newtheorem{definition}[theorem]{Definition}

\newtheorem{remark}[theorem]{Remark}
\newtheorem{corollary}[theorem]{Corollary}

\nc{\Bi}{\mathbf{i}}
\newcommand{\excise}[1]{}
\nc{\Bj}{\mathbf{j}}
\nc{\al}{\alpha}
\nc{\la}{\lambda}
\nc{\ev}{\operatorname{ev}}
\newcommand{\poly}{\mathcal{P}}
\nc{\Hom}{\operatorname{Hom}}

\nc{\MaxSpec}{\operatorname{MaxSpec}}
\nc{\bbC}{\mathbb{C}}

\newcommand{\GL}{\textrm{GL}}
\nc{\Cat}{\mathcal{C}}

\nc{\GTc}{\mbox{\rm\foreignlanguage{russian}{ГЦ}}}
\nc{\Wei}{\EuScript{Wei}}
\newcommand{\End}{\textrm{End}}

\nc{\wgmod}{\operatorname{-wgmod}}
\nc{\cO}{\mathcal{O}}
\nc{\Ba}{\mathbf{a}}
\nc{\tT}{\tilde{\mathbb{T}}}
\makeatletter
\renewenvironment{cases}[1][l]{\matrix@check\cases\env@cases{#1}}{\endarray\right.}
\def\env@cases#1{\let\@ifnextchar\new@ifnextchar
  \left\lbrace\def\arraystretch{1.2}\array{@{}#1@{\quad}l@{}}}
\makeatother

\newcommand{\theshorttitle}{Gelfand-Tsetlin modules: canonicity and calculations}
\newcommand{\thetitle}{Gelfand-Tsetlin modules:\\ canonicity and calculations}
\begin{document}\begin{center}
\noindent {\large  \bf \thetitle}
\bigskip

  \begin{tabular}{c@{\hspace{10mm}}c}
     {\sc\large Turner Silverthorne}&{\sc\large Ben Webster}\\
 \it Department of Mathematics,&   \it Department of Pure Mathematics,\\ 
   \it  University of Toronto&  \it 
   University of Waterloo \&\\
 \it Toronto, ON, Canada& \it Perimeter Institute for Mathematical Physics \\
 & \it Waterloo, ON, Canada\\
{\tt turner.silverthorne@utoronto.ca}&{\tt ben.webster@uwaterloo.ca}
 \end{tabular}
\end{center}
\bigskip
{\small
\begin{quote}
\noindent {\em Abstract.}
    In this paper, we give a more down-to-earth introduction to the connection between Gelfand-Tsetlin modules over $\mathfrak{gl}_n$ and diagrammatic KLRW algebras and develop some of its consequences.  In addition to a new proof of this description of the category Gelfand-Tsetlin modules appearing in earlier work, we show three new results of independent interest:  (1) we show that every simple Gelfand-Tsetlin module is a canonical module in the sense of Early, Mazorchuk and Vishnyakova, and characterize when two maximal ideals have isomorphic canonical modules, (2) we show that the dimensions of Gelfand-Tsetlin weight spaces in simple modules can be computed using an appropriate modification of Leclerc's algorithm for computing dual canonical bases, and (3) we construct a basis of the Verma modules of $\mathfrak{sl}_n$ which consists of generalized eigenvectors for the Gelfand-Tsetlin subalgebra.
    
    Furthermore, we present computations of multiplicities and Gelfand-Kirillov dimensions for all integral Gelfand-Tsetlin modules in ranks 3 and 4;  unfortunately, for ranks $>4$, our computers are not adequate to perform these computations.
\end{quote}}

\section{Introduction}

The finite-dimensional representation theory of $\mathfrak{gl}_n$ is one of the jewels of modern representation theory.  One well-known approach to understanding these representations is based on the formulas of Gelfand and Tsetlin \cite{gelfandFinitedimensionalRepresentations1950}; these can be found by considering the Gelfand-Tsetlin subalgebra of $U(\mathfrak{gl}_n)$ and simultaneously diagonalizing its action on any finite-dimensional representation.  

A natural generalization is to apply this approach to infinite-dimensional representations.  Of course, in many cases, such as the regular action on $U(\mathfrak{gl}_n)$, the elements of $\Gamma$ cannot be diagonalized in any meaningful sense. In order to do something even remotely similar to Gelfand and Tsetlin's approach, we need to only consider modules on which $\Gamma$ acts locally finitely.  Introduced in \cite{drozdGelFandZetlin1991}, these are called {\bf Gelfand-Tsetlin} modules and include all generalized weight modules with finite weight multiplicities with respect to the Cartan $\mathfrak{h}=\mathfrak{gl}_n\cap \Gamma$; in particular, they include all modules in category $\cO$ for a Borel containing $\mathfrak{h}$.  

The study of these modules has been quite active in recent years, with several interesting constructions allowing us to get a handle on this category, which had been rather mysterious quite recently; the most relevant recent papers for us will be those of Futorny, Grantcharov, Ramirez, and Zadunaisky \cite{FGRZVerma,futornyGelfandTsetlinModules2018}, Hartwig \cite{Hartwig} and Early, Mazorchuk, and Vishnyakova \cite{EMV}; all these authors have many other papers on the subject, so we will just note that we have not tried to give a comprehensive list here.  In this paper, we'll try to give a more down-to-earth and example-driven explanation of the approach of \cite{WebGT} which is somewhat orthogonal to the papers above; while there will be some overlap, and we will depend on \cite{WebGT} for a few results, this paper is meant to be read independently.  To give a crude summary: it is difficult to define Gelfand-Tsetlin modules because of issues that arise when the denominator and numerator of the Gelfand-Tsetlin formulas develop zeros.  One can organize the algebraic problem of defining how the singular portions of the GT formulas act in terms of defining a representation of a simpler (but still quite tricky) algebra, a {\bf KLRW algebra} $\tT$, which has appeared before in the study of categorification and quiver varieties \cite{Webmerged,Webqui}.  Versions of this theorem are proven in \cite{WebGT,KTWWYO,websterKoszulDuality2019}, but as special cases of more general results; here we give a purely algebraic proof that emphasizes the connection to the Gelfand-Tsetlin action formula.  

In addition to this new proof, we draw out some results about Gelfand-Tsetlin modules which are difficult to see from the perspective of $\mathfrak{gl}_n$.  The main new results of this paper are as follows:
\begin{itheorem}\hfill
\begin{enumerate}
    \item Every simple Gelfand-Tsetlin $\mathfrak{gl}_n$-module is the canonical module (in the sense of \cite{EMV}) of a maximal ideal in $\Gamma$, and we give an algorithm for deciding when two maximal ideals have isomorphic canonical modules.  
    \item The GT multiplicities of all simple GT modules can be computed using a modification of LeClerc's algorithm for the dual canonical basis.
    \item We give a basis of the Verma modules of $\mathfrak{gl}_n$, which consists of generalized eigenvectors of $\Gamma$, which is posed as a problem in \cite[Remark 6.8]{FGRZVerma}.
\end{enumerate}
\end{itheorem}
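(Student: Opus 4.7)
My overarching plan is to deduce all three assertions of Theorem~A from the equivalence between the category of Gelfand-Tsetlin $\mathfrak{gl}_n$-modules (with a fixed generalized central character) and the category of representations of the KLRW algebra $\tT$ that is recalled in the paper. Under this equivalence, maximal ideals $\mathbf{m} \in \MaxSpec(\Gamma)$ correspond to idempotents $e_{\mathbf{m}}$ in $\tT$ indexed by loadings, generalized $\Gamma$-weight spaces become $e_{\mathbf{m}}$-projections, and simple $\mathfrak{gl}_n$-modules correspond to simple $\tT$-modules.

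For part (1), I would first recall the Early-Mazorchuk-Vishnyakova definition: $C(\mathbf{m})$ is the simple quotient of a universal Gelfand-Tsetlin module whose $\mathbf{m}$-generalized weight component is one-dimensional. Under the equivalence this corresponds to the simple head of the projective $\tT e_{\mathbf{m}}$. Since every simple $\tT$-module has some nonzero $e_{\mathbf{m}}$-projection, picking any such $\mathbf{m}$ realizes a given simple as a canonical module, giving the first half of (1). The criterion for $C(\mathbf{m}) \cong C(\mathbf{m}')$ then amounts to the statement that $\tT e_{\mathbf{m}}$ and $\tT e_{\mathbf{m}'}$ have the same simple head, which is controlled by the combinatorics of loadings. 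The algorithm therefore reduces to checking whether the two loadings lie in the same block of $\tT$ and whether both have nonzero weight space in the corresponding simple---something one can carry out concretely using the diagrammatic description of $\tT$.

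For part (2), the KLRW algebra $\tT$ is known to categorify a tensor product of fundamental representations of $U_q(\mathfrak{sl}_n)$, with classes of simple modules giving the dual canonical basis and classes of idempotent-projectives giving the standard monomial basis. LeClerc's algorithm computes the expansion of the dual canonical basis in standard monomials. Transporting this to $\tT$, the coefficients output by the algorithm give exactly $\dim e_{\mathbf{a}} L$ for a simple $\tT$-module $L$; under the equivalence, this is the multiplicity of the generalized weight indexed by $\mathbf{a}$ in the corresponding simple Gelfand-Tsetlin module. A small modification of LeClerc's algorithm is required to accommodate the $\mathfrak{gl}_n$ (rather than $\mathfrak{sl}_n$) setting and the full class of loadings that arise.

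For part (3), I would identify the Verma module $M(\lambda)$ with an explicit standard module for $\tT$, cut out by Borel-like combinatorics. The standard diagrammatic basis of this $\tT$-module transports to a basis of $M(\lambda)$; because the idempotents $e_{\mathbf{a}}$ act through $\Gamma$ under the equivalence, each basis vector is automatically a generalized $\Gamma$-eigenvector. To verify that the transported basis really is a basis of $M(\lambda)$ (and not merely a spanning set), I would compare weight-space dimensions with those given by the Kostant partition function, i.e., with a PBW basis of $U(\mathfrak{n}^-)$. I expect the main obstacle to lie in part (1), where matching the EMV universal property for $C(\mathbf{m})$ with the head of $\tT e_{\mathbf{m}}$ requires careful transport of structure between the two categories; once this identification is in place, the applications in (2) and (3) follow relatively formally from standard facts about KLRW algebras.
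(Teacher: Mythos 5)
Your proposal correctly identifies the high-level strategy (transport everything across the equivalence with the KLRW algebra $\tT$), and parts~(2) and~(3) are roughly on track, but part~(1) contains a genuine gap that the paper's proof works hard to fill.

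\textbf{Part (1): the identification with canonical modules is incorrect.} You assert that under the equivalence the canonical module $C(\mathbf{m})$ corresponds to ``the simple head of the projective $\tT e_{\mathbf{m}}$.'' But $\tT e_{\mathbf{m}}$ typically has \emph{many} simple quotients---its simple quotients are exactly the simples $L$ with $e_{\mathbf{m}}L\neq 0$---so there is no well-defined simple head, and in particular it is false that ``picking any $\mathbf{m}$ with $e_{\mathbf{m}}L \neq 0$'' realizes $L$ as a canonical module. The actual definition of the canonical module used in the paper (and in Early--Mazorchuk--Vishnyakova) is the unique simple quotient of the \emph{specific} cyclic submodule of the (dual of the) polynomial representation generated by the evaluation functional $\ev_{\Bi}$. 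To show every simple is canonical, the paper must show $L(\Bi)\cong N(\Bi)$ when $\Bi$ is red-good, and this is done by a concrete computation in the polynomial representation (checking that $\ev_{\Bi}$ is annihilated by the dots and certain crossings). The algorithm for comparing $N(\Bi)$ and $N(\Bj)$ similarly comes from explicit braid-like moves on words, again verified in the polynomial representation---none of which follows from generalities about projectives.

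\textbf{Part (2)} is closer to the paper but understates the point. The modification of LeClerc's algorithm is not primarily about $\gl_n$ versus $\mathfrak{sl}_n$; the key input is a nontrivial positivity/mixedness property of $\tT$ (the graded composition factors of the kernel of $\bar\Delta(\Bi)\to L(\Bi)$ are shifted by strictly positive integers), which is necessary to make the bar-invariance step of the algorithm well-defined for this algebra with negative-degree elements.

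\textbf{Part (3)} agrees with the paper in identifying Vermas with standard modules $\bar\Delta(\Bi)$ and transporting their bases, but differs on the verification of injectivity. You propose to match weight multiplicities with the Kostant partition function; the paper instead argues via Gelfand--Kirillov dimension (or alternatively via deformed Verma modules). Your approach could be made to work, though one must be careful that $\Gamma$-weight spaces refine $\mathfrak{h}$-weight spaces, so the comparison has to be done after summing over all $\Gamma$-weights lying above a fixed $\mathfrak{h}$-weight.
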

All of these depend on properties of the algebra $\tT$: \begin{itemize}
    \item part (1) depends on the structure of the polynomial representations of $\tT$,
    \item part (2) depends on a rather deep and surprising property of $\tT$: while it contains elements of negative degree, it's Morita equivalent to a positively graded algebra
    \item part (3) describes the Verma modules in terms of standard modules for the algebra $\tT$.  
\end{itemize} 

Finally, we include explicit calculations of various properties of GT modules.  We've written a GAP program \cite{GTcode} which computes the multiplicities of all simple GT modules, and have included tables of these computations for $\mathfrak{gl}_3$ and $\mathfrak{gl}_4$. Using these data, we also describe the Gelfand-Kirillov dimensions and essential supports of these modules, as well as whether they possess an infinite-dimensional weight space over the Cartan subalgebra.   The complexity of these computations increases very quickly as rank increases, roughly in proportion with $n^2!$, and $\mathfrak{gl}_5$ is far beyond the abilities of our computer; one illustrative point is that the number of integral Gelfand-Tsetlin modules in a regular block grows very fast with $n$:\medskip

\setlength{\tabcolsep}{1em} 
\centerline{\begin{tabular}[]{|c|c|c|c|c|c|c|c|}\hline
              $n$ & 2& 3 & 4 & 5 & 6 & 7&8\\
\hline                    
\# of simples& 3& 20 & 259 & 6005 & 235,546&14,981,789&$1.494\times10^9$ \\ \hline
\end{tabular}}\medskip
\noindent showing the computational challenge of working in higher rank, even though our algorithm is valid there.\setlength{\tabcolsep}{.5em}  \section*{Acknowledgements}
Many thanks to Jon Brundan for generously sharing computer code that our program was based on, and to Pablo Zadunaisky, Elizaveta Vishnyakova, Walter Mazorchuk and Jonas Hartwig for helpful discussions about their work.  

\section{Background}
\label{sec:background}

\subsection{Gelfand-Tsetlin modules}
\label{sec:GT-modules}
Throughout, we let $Z_k=Z(\mcU(\gl_k))$ denote the center of $\mcU(\gl_k)$; we can think of $Z_k$ for $k=1,\dots, n-1$ as a commutative (but not central) subalgebra of $\mcU(\gl_n)$ via embedding a $k\times k$ matrix in the upper left corner of an $n\times n$-matrix. The Capelli determinant gives free generators for $Z(\mcU(\gl_n))$ in terms of the generators $e_{ij}$ of $\mcU(\gl_n)$.
    The {\bf Gelfand-Tsetlin subalgebra} $\Gamma$ is a maximal commutative subalgebra of $\mcU(\gl_n)$ and is given by $\Gamma = \langle Z_1, \ldots, Z_n\rangle$, 

\begin{definition}
  A {\bf Gelfand-Tsetlin module} over $\mcU(\gl_n)$ is a module $V$ where the action of $\Gamma$ is locally finite: for any $v\in V$, the subspace $\Gamma\cdot v$ is finite-dimensional.  

  More generally, a topological module over $\mcU$ (where $\mcU$ has the discrete topology) is called {\bf pro-Gelfand-Tsetlin } if the action is locally pro-finite: for any $v\in V$, the subspace $\Gamma\cdot v$ is pro-finite dimensional.  
\end{definition}
For readers who dislike pro-finite-dimensional vector spaces: the only example we will need is any finitely generated module over a power-series ring in finitely many variables.

Given $\gamma\in \MaxSpec{\Gamma}$, we can define the corresponding (generalized) weight space:
\[\Wei_{\gamma}(V)=\{v\in V|\mathsf{m}_\gamma^Nv=0\text{ for } N\gg 0\}.\] 
To avoid any confusion with weight spaces for the Cartan subalgebra $\mathfrak{h}$, we call the spaces $\Wei_{\gamma}(V)$ {\bf Gelfand-Tsetlin (GT) weight spaces} and those for the Cartan $\mathfrak{h}$-weight spaces.

The restriction of $\gamma$ to $Z_n$ gives a central character $\chi=\gamma|_{Z_n}$, and it will often be important to consider the subsets $\MaxSpec{\Gamma}_{\chi}$ and $\MaxSpec{\Gamma}_{\Z,\chi}$ of all/integral maximal ideals with this restriction fixed to be $\chi$.

\subsubsection{OGZ algebras} For purposes of computation, it will be essential for us to use the fact that $\mcU$ has a polynomial representation on $\Gamma$ itself. First, we embed $\Gamma$ into the polynomial ring $\Lambda$ with generators $\la_{ij}$ for $1\leq j \leq i\leq n$ using the shifted Harish-Chandra homomorphism. Let $K,L$ be the fraction fields of $\Gamma, \Lambda$, respectively.

That is, we send $\gamma\in Z_k$ to the polynomial $f_\gamma(\lambda_{k1},\dots,\lambda_{kk})$ given by the central character of the Verma module with highest weight $(\lambda_1 ,\lambda_2+1,\dots, \lambda_k+(k-1))$ for the standard upper-triangular Borel of $\mathfrak{gl}_k$.  
One can also interpret this isomorphism as formally adjoining the $\lambda_{k*}$ as the roots of the Capelli determinant; indeed, if we consider the column-determinant 
\[C_k(z)=\operatorname{cdet}(A)=\sum_{\sigma\in S_k}(-1)^{\sigma}a_{\sigma(1)1}\cdots a_{\sigma(k)k}\]\[ A=\begin{bmatrix}
e_{11}-z & e_{12} & \cdots & e_{1k}\\
e_{21} & e_{22} -1-z & \cdots &e_{2k}\\
\vdots & \vdots &\ddots & \vdots\\
e_{k1} & e_{k2} & \cdots & e_{kk}-k+1-z
\end{bmatrix}\]
then $C(z)$ is sent under this map to the polynomial $(\la_{k1}-z)\cdots (\la_{kk}-z)$ since all the terms corresponding to non-trivial permutations act trivially on the highest weight of a Verma module.
Thus, the image of $\Gamma$ is the invariant polynomials under the action of $W=S_1\times S_2\times \cdots \times S_{n}$ on $\Lambda$.

The maximal ideal $\mathsf{m}_{\gamma}$ lies under precisely the maximal ideals in a $W$-orbit in $\MaxSpec(\Lambda)$ of $W$.  It will often be useful to choose a point $\Ba=(a_{ij})\in \MaxSpec(\Lambda)\cong \bbC^1\times \bbC^2\times \cdots \bbC^n$ for $1\leq j\leq i\leq n$.  In particular, we will call $\gamma$ integral if $a_{ij}\in \Z$ for all $i,j$, and let $\MaxSpec{\Gamma}_{\Z}$ denote the set of integral maximal ideals.

By \cite[Thm. 1]{mazorchukOGZ}, we have:
\begin{prop}\label{prop:mazorchuk}
    Let $\delta_{ij}^\pm : L \to L$ be the map which sends $\lambda_{ij} \to \lambda_{ij} \mp 1$. Let $X_{i}^\pm : {L} \to {L}$ be the map
    \begin{align}\label{eq:Mazorchuk}
        X_{i}^\pm =\sum_{j=1}^{i}\mp \frac{\prod_{m}(\lambda_{i\pm 1 ,m} - \lambda_{ij} )}{\prod_{m \neq j}(\lambda_{im} - \lambda_{ij} ) } \delta_{ij}^\pm.
    \end{align} 
     The map sending 
     \[e_{i,i} \to \sum_{k=1}^i\lambda_{ik}-\sum_{\ell=1}^{i-1}\lambda_{i-1,\ell}+(i-1)\qquad  e_{i, i+1} \to X_{i}^+\qquad  e_{i+1, i} \to X_{i}^{-}\] 
     defines a faithful left $\mcU(\gl_n)$-module structure on $L$ which preserves the subrings $K$ and $\Gamma$. 
\end{prop}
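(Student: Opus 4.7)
The plan is to verify three properties in turn: (a) the given assignment extends to an algebra homomorphism $\varphi\colon \mcU(\gl_n)\to \End_{\bbC}(\Lambda)$; (b) the image of $\varphi$ preserves the subring $\Gamma\subset \Lambda$; and (c) $\varphi$ is injective. Since $\mcU(\gl_n)$ is presented by the Chevalley generators $e_{ii}$, $e_{i,i+1}$, $e_{i+1,i}$ modulo standard relations, step (a) is a finite check. The Cartan relations $[e_{ii},e_{jj}]=0$ are immediate since both act by multiplication. The commutators $[e_{ii}, X_j^{\pm}]$ reduce to measuring the change in the linear polynomial $\sum_k \lambda_{jk}-\sum_\ell \lambda_{j-1,\ell}+(j-1)$ under the single shift $\delta_{jk}^{\pm}$; the resulting scalars recover the root system of $\gl_n$. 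The non-trivial relation $[X_i^{+}, X_i^{-}]=\varphi(e_{ii}-e_{i+1,i+1})$ and the Serre relations between $X_i^{\pm}$ and $X_{i\pm 1}^{\pm}$ both reduce to rational-function identities on the hyperplane arrangement $\lambda_{ij}=\lambda_{im}$, which follow from Lagrange-type interpolation and residue cancellation.

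For (b), recall that the embedding identifies $\Gamma$ with the $W$-invariant subring $\Lambda^W$, where $W=S_1\times\cdots\times S_n$ acts by permuting the variables $\{\lambda_{ij}\}_j$ within each row $i$. The operators assigned to $e_{ii}$ are multiplication by $W$-invariant polynomials, so they preserve $\Lambda^W$ tautologically. For $X_i^{+}$ applied to an $S_i$-symmetric polynomial $p$, the apparent poles along the hyperplanes $\lambda_{ij}=\lambda_{im}$ cancel pairwise between the $j$-th and $m$-th summands (this is where the symmetry of $p$ is essential), so the output is again polynomial. The resulting polynomial is $S_i$-invariant because the transposition $(j\,j')$ simply permutes the $j$-th and $j'$-th summands of $X_i^{+}$, and it is automatically invariant under the other $S_k$-factors, which fix the variables $\{\lambda_{ij}\}_j$ pointwise. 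The same argument handles $X_i^{-}$.

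For (c), one approach is to specialize $\lambda_{ij}$ to the integer entries of a valid Gelfand-Tsetlin pattern for a dominant integral weight $\mu$; under this specialization the formulas reduce precisely to the classical Gelfand-Tsetlin action of \cite{GT50} on the finite-dimensional simple $L(\mu)$. Since $\bigoplus_\mu L(\mu)$ is a faithful $\mcU(\gl_n)$-module, the action on $\Lambda$ cannot have any nonzero kernel. The principal obstacle is the bookkeeping in (a): the commutators of the $X_i^{\pm}$ produce large sums of shift operators with rational coefficients, and collapsing them into the defining relations of $\mcU(\gl_n)$ requires careful use of Lagrange interpolation identities together with the observation that products of shift operators at distinct slots commute. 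Once those identities are in hand, the remaining arguments are structural and essentially automatic.
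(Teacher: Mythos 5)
The paper does not prove this statement; it quotes it from \cite[Thm.\ 1]{mazorchukOGZ} without further argument, so your proof is necessarily a new route. The issue is an internal contradiction between your steps (a) and (b). Step (a) asserts that the formulas define a homomorphism $\varphi\colon \mcU(\gl_n)\to\End_{\bbC}(\Lambda)$, which requires each $X_i^{\pm}$ to map $\Lambda$ into itself. But in step (b) you correctly observe that the poles along $\lambda_{ij}=\lambda_{im}$ cancel only for $S_i$-symmetric input; for a non-symmetric polynomial they do not. Concretely, from the explicit $n=3$ formulas given in the paper,
\[
e_{32}\cdot\lambda_{21}=\lambda_{21}+\frac{\lambda_{21}-\lambda_{11}}{\lambda_{21}-\lambda_{22}},
\]
which is not in $\Lambda$. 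So the target in (a) is wrong, and the relations you propose to verify there would be being checked for operators that do not even belong to $\End_{\bbC}(\Lambda)$.

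The repair is routine but must be made explicit: carry out (a) with target $\End_{\bbC}$ of the localization of $\Lambda$ at the discriminants $\prod_{j<k}(\lambda_{ij}-\lambda_{ik})$ (equivalently, its fraction field), where the $X_i^{\pm}$ are honest operators and the $[X_i^{+},X_i^{-}]$ and Serre identities can be checked by the interpolation arguments you describe. Only then does your pole-cancellation argument in (b) show that $\Gamma$ is a $\mcU(\gl_n)$-submodule, which is what the proposition actually needs. (The phrase ``module structure on $\Lambda$'' in the statement is loose shorthand for this, but a proof should resolve the imprecision rather than inherit it.) Your step (c) is a sound way to obtain faithfulness, but you should specialize only to \emph{regular} integral Gelfand--Tsetlin patterns, with distinct entries in each row, so that the discriminants do not vanish at the specialization; and the match with the classical action of \cite{GT50} holds only up to a diagonal rescaling of the basis (these formulas use the ``non-unitary'' normalization), which is of course sufficient for faithfulness.
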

When $n=3$ the formulas simplify to
\begin{align}\label{u_on_polys:start}
    e_{11} &= \lambda_{11}\\
    e_{12} &= (\lambda_{11} - \lambda_{21})(\lambda_{22}-\lambda_{11})\delta_{11}^+\\
    e_{21} &= \delta_{11}^-\\\label{u_on_polys:end}
    e_{23} &= \frac{ (\lambda_{22} - \lambda_{31})(\lambda_{22} - \lambda_{32})(\lambda_{22} - \lambda_{33})}{\lambda_{21}-\lambda_{22}}\delta_{22}^+ \nonumber \\
           &\quad - \frac{ (\lambda_{21} - \lambda_{31})(\lambda_{21} - \lambda_{32})(\lambda_{21} - \lambda_{33})}{\lambda_{21} - \lambda_{22}}\delta_{21}^+ \\
    e_{32} &= \frac{ (\lambda_{11}-\lambda_{22})}{\lambda_{21}-\lambda_{22}}\delta_{22}^- - \frac{ (\lambda_{11}-\lambda_{21})}{\lambda_{21}-\lambda_{22}}\delta_{21}^-.
\end{align}
This perspective on $\mcU(\gl_n)$ suggests a more general definition where we vary the number of variables in each alphabet in the formula \eqref{eq:Mazorchuk}.  That is, consider $\Bv=(v_1,\dots, v_n)\in \Z_{\geq 0}^n$, and let $\Omega=\{(i,j)\mid 1\leq i\leq n, 1\leq j\leq v_i\}$.  We can then let $\Lambda_{\Bv}$ be the polynomial ring with variables indexed by $\Omega$, and $\Gamma_{\Bv}$ the invariant subring in $\Lambda$ under the natural action of $S_{v_1}\times \cdots \times S_{v_n}$.  We can then consider analogues of the operators \eqref{eq:Mazorchuk} on this polynomial ring:
    \begin{align}\label{eq:Mazorchuk2}
        X_{i}^\pm =\sum_{j=1}^{v_i}\mp \frac{\prod_{m}(\lambda_{i\pm 1 ,m} - \lambda_{ij} )}{\prod_{m \neq j}(\lambda_{im} - \lambda_{ij} ) } \delta_{ij}^\pm.
    \end{align} 
\begin{definition}
  The orthogonal Gelfand-Zetlin\footnote{We know this is an inconsistent spelling of \foreignlanguage{russian}{Цетлин}, but we are keeping consistent with the spelling of \cite{mazorchukOGZ}, since ``OGZ'' is an established acronym.} (OGZ) algebra $\mcU_{\Bv}$ is the algebra of operators on $\Gamma_{\Bv}$ generated by $X_{i}^\pm$ as in \eqref{eq:Mazorchuk2} and multiplication by $\Gamma_{\Bv}$.  
\end{definition}

\subsubsection{Completion and Gelfand-Tsetlin modules}
In the work of Drozd, Futorny and Ovsienko \cite{FOD}, the authors consider a category $\Cat$ whose objects are maximal ideals in $\MaxSpec(\Gamma)$, and whose morphism spaces are defined by 
\[\Hom_{\Cat}(\gamma,\gamma')=\varprojlim \mcU /(\mathsf{m}_{\gamma'}^N\mcU+\mcU\mathsf{m}_{\gamma}^N).\]
These are the same as the space of natural transformations of the weight functors $\Wei_{\gamma}\to \Wei_{\gamma'}.$ Note that this is a category enriched in topological abelian groups.

Recall that a discrete representation of $\Cat$ is a functor to the category of vector spaces where the induced maps on morphisms are continuous, with all vector spaces given the discrete topology.   
By \cite[Thm. 17]{FOD}, there is an equivalence $\mathsf{DFO}$ of the category of Gelfand-Tsetlin modules over $\mcU$ to the discrete representations of $\Cat$, which sends the module $M$ to the representation $\gamma\mapsto \Wei_{\gamma}(M)$.

 Of course, $\Hom_{\Cat}(\gamma,\gamma')=0$ unless $\gamma|_{Z_n}=\gamma'|_{Z_n}$.  Let $\Cat_{\chi}$ and $\Cat_{\Z,\chi}$ be the full subcategories with object set $\MaxSpec(\Gamma)_{\chi}$ and $\MaxSpec(\Gamma)_{\Z,\chi}$ respectively.
 Let $\GTc_{\Z,\chi}$ be the category of Gelfand-Tsetlin modules where $\Wei_{\gamma}(V)\neq 0$ only for $\gamma\in \MaxSpec_{\Z,\chi}(\Gamma)$.  

One can easily extend \cite[Thm. 17]{FOD} to show that:
\begin{proposition}\label{prop:DFO}
  The category $\GTc_{\Z,\chi}$ is equivalent to the category of discrete $\Cat_{\Z,\chi}$-modules, via the functor $\mathsf{DFO}$.
\end{proposition}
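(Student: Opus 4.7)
The plan is to derive this from the full FOD equivalence \cite[Thm. 17]{FOD} by showing that $\Cat_{\Z,\chi}$ appears as a categorical direct summand of $\Cat$ (equivalently of $\Cat_\chi$). First I would verify the easy direction: if $M \in \GTc_{\Z,\chi}$, then by definition $\Wei_\gamma(M)=0$ for $\gamma \notin \MaxSpec(\Gamma)_{\Z,\chi}$, so $\mathsf{DFO}(M)$ automatically restricts to a discrete $\Cat_{\Z,\chi}$-module.

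For the inverse, given a discrete $\Cat_{\Z,\chi}$-module $F$, I would define a discrete $\Cat$-module $\tilde F$ by extension by zero on the complementary objects, and then apply the quasi-inverse of $\mathsf{DFO}$ from \cite[Thm. 17]{FOD}. The main obstacle is verifying that $\tilde F$ is functorial, which reduces to the key vanishing
\[\Hom_{\Cat}(\gamma,\gamma')=0 \quad \text{whenever exactly one of } \gamma,\gamma' \text{ lies in } \MaxSpec(\Gamma)_{\Z,\chi}.\]
To establish this I would invoke Proposition~\ref{prop:mazorchuk}: the generators $e_{ij}$ act on $\Lambda$ via rational functions of the $\la_{k\ell}$ composed with the integer shifts $\delta_{k\ell}^\pm$. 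Consequently, on any GT module $M$, the action of $u\in \mcU$ carries $\Wei_\gamma(M)$ into $\bigoplus_{\gamma'}\Wei_{\gamma'}(M)$ where the associated point $\Ba'$ differs from $\Ba$ by an integer vector in $\Z^{\Omega}$; in particular, $\Ba$ is integral if and only if $\Ba'$ is. Since $\Hom_\Cat(\gamma,\gamma')$ coincides with the space of natural transformations $\Wei_\gamma\to\Wei_{\gamma'}$ on GT modules, this integer-shift property forces the stated vanishing. Concretely, at the level of the defining inverse limit, any PBW monomial in $\mcU$ inducing a non-integer shift in the $W$-orbit coordinates already lies in $\mathsf{m}_{\gamma'}\mcU+\mcU\mathsf{m}_\gamma$, and inductively in $\mathsf{m}_{\gamma'}^N\mcU+\mcU\mathsf{m}_\gamma^N$ for all $N$.

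Granted this vanishing, $\tilde F$ is a well-defined discrete $\Cat$-module, and \cite[Thm. 17]{FOD} produces a GT module $M$ with $\Wei_\gamma(M)=\tilde F(\gamma)=0$ off $\MaxSpec(\Gamma)_{\Z,\chi}$, hence $M\in \GTc_{\Z,\chi}$. Checking that $F\mapsto M$ and $M\mapsto \mathsf{DFO}(M)|_{\Cat_{\Z,\chi}}$ are mutually quasi-inverse is then immediate from the FOD equivalence on the ambient category, completing the proof.
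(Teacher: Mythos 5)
Your proposal is correct and fills in the approach the paper intends when it says one can ``easily extend'' \cite[Thm.~17]{FOD}: identify $\Cat_{\Z,\chi}$ as a categorical direct summand of $\Cat$ and restrict the FOD equivalence to that summand. The key vanishing of cross-block morphisms is exactly right — the central-character part is already noted in the text, and the integrality part follows, as you argue, from the fact that $\mcU$ acts on the polynomial representation purely by integer shifts (Prop.~\ref{prop:mazorchuk}). One small slip in phrasing: there is no PBW monomial ``inducing a non-integer shift'' — every PBW monomial shifts the $\lambda$-coordinates by integer vectors; the point is rather that no integer shift can carry the (say integral) point $\Ba$ to the non-integral $\Ba'$, so for each PBW monomial $u$ with shift $z$ one can pick a coordinate $\lambda_{ij}$ with $a'_{ij} - a_{ij} - z_{ij} \ne 0$, use the commutation relation $(\lambda_{ij}-a'_{ij})u - u(\lambda_{ij}-a_{ij}) = -(a'_{ij}-a_{ij}-z_{ij})u$ to show $u \in \mathsf{m}_{\gamma'}\mcU + \mcU\mathsf{m}_\gamma$, and iterate (expanding the $N$th power via the binomial theorem and using that each term lies in $\mathsf{m}_{\gamma'}^j\mcU\mathsf{m}_\gamma^{N-j}\subseteq \mathsf{m}_{\gamma'}^{\lceil N/2\rceil}\mcU + \mcU\mathsf{m}_\gamma^{\lceil N/2\rceil}$) to get vanishing in the inverse limit. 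That is clearly what you meant, so the argument stands.
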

The category $\Cat$ does not act on $\Gamma$, but it has a representation $\mathscr{P}$ which sends $\gamma\mapsto \widehat{\Gamma}_{\gamma}$, the completion \[\widehat{\Gamma}_{\gamma}=\varprojlim {\Gamma}/\mathsf{m}_{\gamma}^N.\]
Note that unlike the representations considered earlier, this is not continuous in the discrete topology, but will be in the inverse limit topology on $\widehat{\Gamma}_{\gamma}$.  This is very closely related to the representation $\mathfrak{O}^G$ considered in \cite[\S 3.1]{MVHC}, which is the $\mcU$-module obtained by summing these completions (though they use germs of functions instead of completions; the difference is not relevant for us).

Let us try to carefully describe this action.  Let $\mathbf{a}\in \MaxSpec(\Lambda)$ be a maximal ideal lying over $\mathsf{m}_{\gamma}$ and $W_{\mathbf{a}}$ the stabilizer of $\mathbf{a}$ in $W$.  Since the map $\Gamma\to \Lambda$ induces a covering on spectra (i.e. an \'etale map of schemes), we have that $\widehat{\Gamma}_{\gamma}=\widehat{\Lambda}_{\mathbf{a}}^{W_{\mathbf{a}}}$.  Alternatively, if we sum over this orbit, we see that \begin{equation*}\label{eq:orbit-sum}
\widehat{\Gamma}_{\gamma}=\Big(\bigoplus_{\mathbf{a}'\in W\cdot \mathbf{a}}\widehat{\Lambda}_{\mathbf{a}'}\Big)^{W}.
\end{equation*}  We can think of $\widehat{\Lambda}_{\mathbf{a}}$ as the power series ring $\bbC[[\lambda_{ij}-a_{ij}]]$ for scalars $a_{ij}$, defined by Taylor expansion, so $\widehat{\Gamma}_{\gamma}$ is the set of invariant power series under $W_{\mathbf{a}}$.  Acting by $W$, we can also realize an element of $\widehat{\Gamma}_{\gamma}$ as a choice of power series based at each point in the orbit which is compatible with the action of $W$.

Thus, the operation $\delta_{ij}^\pm $ descends to an isomorphism $\widehat{\Lambda}_{\mathbf{a}}\to \widehat{\Lambda}_{\mathbf{a}\pm \mathbf{e}_{ij}}$ for $\mathbf{e}_{ij}$ the obvious unit vector.  We can interpret the element $X_{i}^\pm$ as a map \[X_{i}^\pm\colon \widehat{\Lambda}_{\mathbf{a}}\to \bigoplus_{j}\widehat{\Lambda}_{\mathbf{a}\pm  \mathbf{e}_{ij}}.\]
In order to induce the correct action on $\widehat{\Gamma}_{\gamma},$ we need to consider this map on orbit sums. Let $U_{\pm i,{\mathbf{a}}}=\cup_{j} W\cdot (\mathbf{a}\pm  \mathbf{e}_{ij})$.  From the formulas, it is clear that $X_{i}^\pm$ induces a map
\[X_{i}^\pm\colon \bigoplus_{\mathbf{a}'\in W\cdot \mathbf{a}}\widehat{\Lambda}_{\mathbf{a}'} \to \bigoplus_{\mathbf{a}''\in U_{\pm i,{\mathbf{a}}}}\widehat{\Lambda}_{\mathbf{a}''}.\]

The desired map on $\widehat{\Gamma}_{\gamma}$ is the restriction of this $W$-equivariant map to the invariant subspaces. The image $X_{i}^\pm(\gamma,\gamma')$ of $X_{i}^\pm$ in $\Hom_{\Cat}(\gamma,\gamma')$ is then the map just described, followed by projection to $\widehat{\Gamma}_{\gamma'}$.  In particular, note that this image is 0 unless $\gamma'$ lies under one of the elements of $U_{\pm i,{\mathbf{a}}}$.  

Since the action on $\Gamma$ is faithful and $\Gamma$ injects into any of its completions,   we can identify morphisms in $\Cat$ by their action on this module.

\subsection{KLRW algebras}
\label{sec:klrw-algebras}

We recall now the second author's construction of the \textbf{KLRW
  algebra} $\tT$.  We will follow the
conventions of \cite{websterThreePerspectives2020}: This algebra is defined by {\bf Stendhal diagrams} consisting of finitely many oriented curves that carry dots in $\R\times [0,1]$ whose projection to the second factor is a diffeomorphism.
  
  \begin{remark}
  To avoid any confusion, let us note that following \cite{websterThreePerspectives2020} means that unlike some earlier papers (for example, \cite{Webmerged}), we allow dots on red strands in addition to black strands.  These are needed to account for modules where the center $Z_n$ acts non-semi-simply.   
  \end{remark}

The diagrams are considered up to isotopy and must be locally of the form
\begin{equation*}
\begin{tikzpicture}
\draw[very thick] (0,0) +(-1,-1) -- +(1,1);
\draw[very thick](0,0) +(1,-1) -- +(-1,1);

  \draw[very thick](3,0) +(-1,-1) -- +(1,1);
\draw[wei, very thick](3,0) +(1,-1) -- +(-1,1);

  \draw[wei,very thick](6,0) +(-1,-1) -- +(1,1);
\draw [very thick](6,0) +(1,-1) -- +(-1,1);

 \draw[very thick](-5,0) +(0,-1) --  +(0,1);

  \draw[very thick](-4,0) +(0,-1) --  node
  [midway,circle,fill=black,inner sep=2pt]{}
  +(0,1);

   \draw[wei,very thick](-3,0) +(0,-1) --  +(0,1);
   
     \draw[wei, very thick](-2,0) +(0,-1) --  node
  [midway,circle,fill=red,inner sep=3pt]{}
  +(0,1);
\end{tikzpicture}
\end{equation*}
We'll adopt the convention (similar to that of \cite{websterThreePerspectives2020}) of labeling black strands with elements of the set $[1,n-1]$ and all red strands with $n$; this is not the most general algebra of this type, but it will suffice for us.  

We call the lines $y=0,1$ the {\bf  bottom} and {\bf top} of the
diagram.  Reading across the bottom and top from left to right, we
obtain a sequence $\Bi=(i_1, \dots, i_V)$ of elements of $[1,n]$
that label both red and black strands, where $V$ is the total number of
strands.

 Let $e(\Bi)$ be the unique
crossingless, dotless  diagram where the sequence at top and bottom are both $\Bi$.    

\begin{definition}  The {\bf degree} of a Stendhal diagram is the sum over
  crossings and dots in the diagram of 
  \begin{itemize}
  \item $-2$ for each crossing  where the strands both have labels $i$
  \item $1$ for each crossing where the strands both have labels $i$ and $j=i\pm 1$
  \item $0$ for each crossing where the strands both have labels $i$ and $j\notin \{i,i\pm 1\}$
  \item $2$ for each dot;
  \end{itemize}
The degree of diagrams is additive under composition.  Thus, the
algebra $\tT$ inherits a grading from this degree function.
\end{definition}

Consider the set $\Omega=\{(i,j) \mid i\in [1,n], j\in [1,v_i]\}$.  
Let $\prec$ be a total order on $\Omega$ such that 
\begin{equation}
    (i,1)\preceq \cdots \preceq (i,v_i). \label{eq:order-1}
\end{equation}  
This is equivalent to choosing a word $\Bi =(i_1,\dots, i_{N})$ where where $N=|\Omega|$ and $i_k=i$ for $v_i$ different indices $k$.  

We will want to weaken this definition a bit and allow $\preceq$ to be a total preorder (that is, a relation which is transitive and reflexive, but not necessarily anti-symmetric).  In this case, we have an induced equivalence relation $(i,k)\approx (j,\ell)$ if $(i,k)\preceq (j,\ell)$ and $(i,k)\succeq (j,\ell)$.  We assume that our preorder satisfies the condition that
\begin{equation}
    (i,k)\not\approx (j,\ell)\text{ whenever } i\neq j.\label{eq:order-2}
\end{equation}
We can still attach a word $\Bi$ to such a preorder; two equivalent elements give the same letter in the word $\Bi$, so it doesn't matter whether they have a chosen order.  We can thus think of a preorder as corresponding to a word in the generators with some subsets where the same letter appears multiple times together grouped together.  We can represent this within the word itself by replacing $(i,\dots, i)$ with $i^{(a)}$.  Thus, for our purposes, the sequences $(3,2,2,3,1,3)$ and $(3,2^{(2)},3,1,3)$ are different words with different associated preorders.  Every such word has a unique {\bf totalization} satisfying \eqref{eq:order-1}. 
We let $\operatorname{tSeq}(\Bv)$ be the resulting set of {\bf pre-sequences}.

\begin{definition}
  Let $\tT$ be the quotient of the formal span of Stendhal diagrams by the following local relations between Stendhal diagrams.  We draw these below as black, but the same relations apply to red strands (always taken with the label $n$):
\begin{equation*}\subeqn\label{first-QH}
    \begin{tikzpicture}[scale=.7,baseline]
      \draw[very thick](-4,0) +(-1,-1) -- +(1,1) node[below,at start]
      {$i$}; \draw[very thick](-4,0) +(1,-1) -- +(-1,1) node[below,at
      start] {$j$}; \fill (-4.5,.5) circle (4pt);
\node at (-2,0){=}; \draw[very thick](0,0) +(-1,-1) -- +(1,1)
      node[below,at start] {$i$}; \draw[very thick](0,0) +(1,-1) --
      +(-1,1) node[below,at start] {$j$}; \fill (.5,-.5) circle (4pt);
   \end{tikzpicture}
 \qquad 
    \begin{tikzpicture}[scale=.7,baseline]
      \draw[very thick](-4,0) +(-1,-1) -- +(1,1) node[below,at start]
      {$i$}; \draw[very thick](-4,0) +(1,-1) -- +(-1,1) node[below,at
      start] {$j$}; \fill (-3.5,.5) circle (4pt);
\node at (-2,0){=}; \draw[very thick](0,0) +(-1,-1) -- +(1,1)
      node[below,at start] {$i$}; \draw[very thick](0,0) +(1,-1) --
      +(-1,1) node[below,at start] {$j$}; \fill (-.5,-.5) circle (4pt);
      \node at (3.5,0){unless $i=j$};
    \end{tikzpicture}
  \end{equation*}
\begin{equation*}\subeqn\label{nilHecke-1}
    \begin{tikzpicture}[scale=.8,baseline]
      \draw[very thick](-4,0) +(-1,-1) -- +(1,1) node[below,at start]
      {$i$}; \draw[very thick](-4,0) +(1,-1) -- +(-1,1) node[below,at
      start] {$i$}; \fill (-4.5,.5) circle (4pt);
\node at (-2,0){$-$}; \draw[very thick](0,0) +(-1,-1) -- +(1,1)
      node[below,at start] {$i$}; \draw[very thick](0,0) +(1,-1) --
      +(-1,1) node[below,at start] {$i$}; \fill (.5,-.5) circle (4pt);
      \node at (1.8,0){$=$}; 
    \end{tikzpicture}\,\,
\begin{tikzpicture}[scale=.8,baseline]
      \draw[very thick](-4,0) +(-1,-1) -- +(1,1) node[below,at start]
      {$i$}; \draw[very thick](-4,0) +(1,-1) -- +(-1,1) node[below,at
      start] {$i$}; \fill (-4.5,-.5) circle (4pt);
\node at (-2,0){$-$}; \draw[very thick](0,0) +(-1,-1) -- +(1,1)
      node[below,at start] {$i$}; \draw[very thick](0,0) +(1,-1) --
      +(-1,1) node[below,at start] {$i$}; \fill (.5,.5) circle (4pt);
      \node at (2,0){$=$}; \draw[very thick](4,0) +(-1,-1) -- +(-1,1)
      node[below,at start] {$i$}; \draw[very thick](4,0) +(0,-1) --
      +(0,1) node[below,at start] {$i$};
    \end{tikzpicture}
  \end{equation*}
\begin{equation*}\subeqn\label{black-bigon}
    \begin{tikzpicture}[very thick,scale=.8,baseline]
      \draw (-2.8,0) +(0,-1) .. controls (-1.2,0) ..  +(0,1)
      node[below,at start]{$i$}; \draw (-1.2,0) +(0,-1) .. controls
      (-2.8,0) ..  +(0,1) node[below,at start]{$j$}; 
   \end{tikzpicture}=\quad
   \begin{cases}
0 & i=j\\
     \begin{tikzpicture}[very thick,yscale=.6,xscale=.8,baseline=-3pt]
       \draw (2,0) +(0,-1) -- +(0,1) node[below,at start]{$j$};
       \draw(1,0) +(0,-1) -- +(0,1) node[below,at start]{$i$};
     \end{tikzpicture} & i\neq j,j\pm 1\\
   \begin{tikzpicture}[very thick,yscale=.6,xscale=.8,baseline=-3pt]
       \draw(2,0) +(0,-1) -- +(0,1) node[below,at start]{$j$};
       \draw (1,0) +(0,-1) -- +(0,1) node[below,at start]{$i$};\fill (2,0) circle (4pt);
     \end{tikzpicture}-\begin{tikzpicture}[very thick,yscale=.6,xscale=.8,baseline=-3pt]
       \draw (2,0) +(0,-1) -- +(0,1) node[below,at start]{$j$};
       \draw (1,0) +(0,-1) -- +(0,1) node[below,at start]{$i$};\fill (1,0) circle (4pt);
     \end{tikzpicture}& i=j-1\\
  \begin{tikzpicture}[very thick,baseline=-3pt,yscale=.6,xscale=.8]
       \draw (2,0) +(0,-1) -- +(0,1) node[below,at start]{$j$};
       \draw(1,0) +(0,-1) -- +(0,1) node[below,at start]{$i$};\fill (1,0) circle (4pt);
     \end{tikzpicture}-\begin{tikzpicture}[very thick,yscale=.6,xscale=.8,baseline=-3pt]
       \draw (2,0) +(0,-1) -- +(0,1) node[below,at start]{$j$};
       \draw (1,0) +(0,-1) -- +(0,1) node[below,at start]{$i$};\fill (2,0) circle (4pt);
     \end{tikzpicture}& i=j+1
   \end{cases}
  \end{equation*}
 \begin{equation*}\subeqn\label{triple-dumb}
    \begin{tikzpicture}[very thick,scale=.8,baseline=-3pt]
      \draw (-2,0) +(1,-1) -- +(-1,1) node[below,at start]{$k$}; \draw
      (-2,0) +(-1,-1) -- +(1,1) node[below,at start]{$i$}; \draw
      (-2,0) +(0,-1) .. controls (-3,0) ..  +(0,1) node[below,at
      start]{$j$}; \node at (-.5,0) {$-$}; \draw (1,0) +(1,-1) -- +(-1,1)
      node[below,at start]{$k$}; \draw (1,0) +(-1,-1) -- +(1,1)
      node[below,at start]{$i$}; \draw (1,0) +(0,-1) .. controls
      (2,0) ..  +(0,1) node[below,at start]{$j$}; \end{tikzpicture}=\quad
      \begin{cases} 
    \begin{tikzpicture}[very thick,yscale=.6,xscale=.8,baseline=-3pt]
     \draw (6.2,0)
      +(1,-1) -- +(1,1) node[below,at start]{$k$}; \draw (6.2,0)
      +(-1,-1) -- +(-1,1) node[below,at start]{$i$}; \draw (6.2,0)
      +(0,-1) -- +(0,1) node[below,at
      start]{$j$};     \end{tikzpicture}& i=k=j+1\\
    -\begin{tikzpicture}[very thick,yscale=.6,xscale=.8,baseline]
     \draw (6.2,0)
      +(1,-1) -- +(1,1) node[below,at start]{$k$}; \draw (6.2,0)
      +(-1,-1) -- +(-1,1) node[below,at start]{$i$}; \draw (6.2,0)
      +(0,-1) -- +(0,1) node[below,at
      start]{$j$};     \end{tikzpicture}& i=k=j-1\\
     0& \text{otherwise}
      \end{cases}
  \end{equation*}
\end{definition}

Note that this algebra breaks up into a sum of subalgebras $\tT_{\Bv}$
where we
fix the number of strands with label $i$ to be $v_i$.  Following
Khovanov and Lauda \cite[\S 2.1]{KLI}, we let $\Seq(\Bv)$ be the set
of sequences in $[1,n]$ where $i$ appears $v_i$ times.

We'll also want to consider the completion of this algebra with respect to its grading.  By \cite[Lem. 2.3]{WebBKnote}, this is equivalent to simply considering the same diagrammatics, but allowing formal power series in the subring of dots.  One fact that will be useful for us: a topological module over $\widehat{\tT}$ which has discrete topology is thus one where dots of high degree act trivially. 

On the other hand, we can consider $\tT$-modules which are {\bf weakly graded}, that is, they possess a filtration such that the subquotients are gradeable. Let $\tT\wgmod$ be the category of finite-dimensional weakly graded $\tT$-modules 
\begin{lemma}\label{lem:wgmod}
  A $\tT$-module $M$ is finite dimensional and weakly graded if and only if it is the restriction to $\tT$ of a topological $\widehat{\tT}$-module which is finitely generated and discrete.
\end{lemma}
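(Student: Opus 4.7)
The plan is to handle each implication separately, leveraging the description in the preceding paragraph: $\widehat{\tT}$ is the completion of $\tT$ along the two-sided homogeneous ideal $J\subset \tT$ generated by dots (each of degree $+2$), and a topological $\widehat{\tT}$-module with the discrete topology is precisely one on which each element is annihilated by some power of $J$.

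For the $(\Leftarrow)$ direction, suppose $M$ is finite-dimensional and weakly graded, and fix a $\tT$-stable filtration $0=M_0\subset M_1\subset \cdots \subset M_k=M$ whose successive subquotients $M_i/M_{i-1}$ admit gradings. Being finite-dimensional and graded, each $M_i/M_{i-1}$ is supported in only finitely many degrees, so every dot acts nilpotently on it; iterating up the filtration produces a uniform $N$ with $J^N M = 0$. This nilpotency is exactly what is needed for an arbitrary formal power series in dots to act on $M$ via its truncation modulo $J^N$, so the $\tT$-action extends canonically to a $\widehat{\tT}$-action. The resulting topological module is finitely generated (since $M$ is finite-dimensional) and discrete (every element is killed by $J^N$).

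For the $(\Rightarrow)$ direction, suppose $M$ is finitely generated and discrete over $\widehat{\tT}$. Discreteness applied to a finite generating set produces a uniform $N$ with $J^N M = 0$, so $M$ is a quotient of $(\widehat{\tT}_{\Bv}/J^N)^{\oplus r}$ for $r$ the number of generators. The standard KLR-type basis theorem for $\widehat{\tT}$ writes $\widehat{\tT}_{\Bv}/J^N$ as the finite-dimensional space spanned by reduced Stendhal diagrams $\psi_w$ (for $w$ in the finite set of strand matchings compatible with $\Bv$) multiplied by dot monomials of total degree less than $N$, so $M$ is finite-dimensional.

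To exhibit a weak grading, I would use the $J$-adic filtration $M \supset JM \supset \cdots \supset J^N M = 0$. Each subquotient is annihilated by $J$, hence is a module over the finite-dimensional graded algebra $\tT/J$ (graded because $J$ is a homogeneous ideal, and finite-dimensional by the same basis argument with $N=1$). The main obstacle is the last point: establishing that every finite-dimensional $\tT/J$-module is gradeable as a $\tT/J$-module, for then each subquotient is gradeable as a $\tT$-module through the quotient map and the filtration witnesses $M$ as weakly graded. I would reduce this to the case of simples via a composition series, and then invoke the Morita equivalence of $\tT$ with a positively graded algebra noted in the introduction; since $J$ is homogeneous, this equivalence descends to a Morita equivalence between $\tT/J$ and a positively graded finite-dimensional quotient, whose simples are automatically concentrated in a single degree.
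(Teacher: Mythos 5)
Your first direction (finite dimensional + weakly graded $\Rightarrow$ finitely generated discrete over $\widehat{\tT}$) and the finite-dimensionality part of the converse agree with the paper's argument and are fine.

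The issue is in the last paragraph. You assert the target ``every finite-dimensional $\tT/J$-module is gradeable as a $\tT/J$-module,'' but that claim is false as stated (non-semisimple extensions of shifted copies of a simple need not be gradeable), and fortunately it is also more than you need. Since \emph{weakly} graded only asks for a filtration with gradeable subquotients, you should refine your $J$-adic filtration all the way to a composition series and only argue that each \emph{simple} composition factor, which is killed by $J$ and hence an $R := \tT/J$-module, is gradeable. That is exactly what the paper does.

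For that last point, you reach for the Morita equivalence between $\tT$ and a positively graded algebra. That input is both too heavy and logically awkward here: it is mentioned only in the introduction, it is not yet established at this point of the paper, and its proof depends on geometric machinery (mixed perverse sheaves on quiver moduli) far removed from this elementary lemma, plus you would have to verify carefully that the equivalence is a \emph{graded} Morita equivalence and that it sends $J$ to a homogeneous ideal so the quotients correspond. None of this is necessary: the positive grading is a red herring. Over \emph{any} finite-dimensional $\mathbb{Z}$-graded algebra $R$ over $\bbC$, every simple module is gradeable --- the Jacobson radical of $R$ is a graded ideal (it is the largest nil ideal, and the sum of graded nil ideals is graded nil), $R/\mathrm{rad}(R)$ is a graded semisimple algebra, and simples over graded semisimple algebras are graded. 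The paper cites exactly this fact (``it is a simple over the finite-dimensional graded ring $R$''). Swap in this elementary observation, drop the Morita detour, and your argument lines up with the paper's proof.
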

\begin{proof}
Since $M$ is finite dimensional and weakly graded, all dots act nilpotently on it, and thus, $M$ is killed by the $N$th power of the unique graded maximal ideal of the ring of dots.  The two-sided ideals generated by the $n$th powers of this ideal for $n\geq N$ form a basis of the neighborhoods of 0 in the grading topology on $\tT$, and so the fact that they act trivially shows that we obtain a continuous action of $\widehat{\tT}$ on $M$ equipped with the discrete topology, which is, of course, finitely generated by a basis of $M$.

If $M$ is a topological $\widehat{\tT}$-module which is finitely generated and discrete, then by discreteness, it must also be finitely generated as a $\tT$-module.  Furthermore, any vector must be killed by a power of the unique graded maximal ideal of the ring of dots, since these powers generate two-sided ideals, which are a basis of neighborhoods of 0, as discussed above.  Since the quotient $R$ of $\tT$ by this ideal is finite-dimensional, $M$ is finite-dimensional.  In particular, it contains a simple submodule, generated by a single vector; this submodule is gradable, since it is a simple over the finite-dimensional graded ring $R$.  Modding out by this simple submodule gives a module of smaller dimension, which is still discrete.  By induction on the dimension of $M$, this shows that $M$ has a composition series of gradeable simple modules.  
\end{proof}

\subsubsection{Polynomial representation} From its realization as a weighted KLR algebra, the algebra $\tT$ inherits a polynomial representation. 
\begin{definition}\label{def:poly-rep}
  The polynomial representation of $\tT$  is the vector space 
  \begin{equation*}
      \poly=\bigoplus_{\Bi}\bbC[Y_1, \dots, Y_{V}]e({\Bi}),
  \end{equation*}  with sum running over total orders on $\Omega$ satisfying \eqref{eq:order-1}.  The action is given by the rules:
\begin{itemize}
    \item $e({\Bi})$ acts by projection to the corresponding summand,
    \item a dot on the $k$th strand from the left acts by multiplication by $Y_k$,
    \item a crossing of the $k$th and $k+1$st strands with $\Bi$ at the bottom and $\Bi'$ at top acts by
    \begin{itemize}
        \item  If $i_k=i_{k+1}$, the divided difference operator \[fe_{\Bi}\mapsto \frac{f^{(k,k+1)}-f}{Y_{k+1}-Y_k}e_{\Bi'}.\]
        \item If $i_k+1=i_{k+1}$, the permutation $(k,k+1)$ followed by a multiplication \[fe_{\Bi}\mapsto (Y_{k}-Y_{k+1}) f^{(k,k+1)}e_{\Bi'}.\]    
        \item Otherwise, the permutation $(k,k+1)$  \[fe_{\Bi}\mapsto f^{(k,k+1)}e_{\Bi'}.\]
    \end{itemize}
\end{itemize}
\end{definition}
As noted below the proof of Theorem 2.8 in \cite{WebwKLR}, this representation is faithful.  
The same formulas define a representation $\widehat{\poly}$ of $\widehat{\tT}$ on the corresponding power series rings, which \cite[Lem. 2.3]{WebBKnote} shows is also faithful.  For later reference, we state this as a lemma:
\begin{proposition}\label{lem:faithful}
  The representations $\poly$ and $\widehat{\poly}$ are faithful over $\tT$ and $\widehat{\tT}$ respectively.   
\end{proposition}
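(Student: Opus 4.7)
The plan is to treat the two statements in sequence: faithfulness of $\poly$ over $\tT$ first, then bootstrap to the completed setting via Lemma~\ref{lem:wgmod} and the standard grading argument. Both halves are asserted in the excerpt to follow from \cite[Thm.~2.8]{WebwKLR} and \cite[Lem.~2.3]{WebBKnote}, but it is worth sketching how those results establish what is needed here.

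For the ungraded statement, the plan is the usual KLR-style spanning-set-versus-action argument. First, I would fix a pre-sequence $\Bi$ and produce a spanning set of $e(\Bi')\tT e(\Bi)$ indexed by triples $(w,\alpha,\Bi)$ with $w$ a minimal-length matching of the $\Bi$-strands to the $\Bi'$-strands and $\alpha$ a monomial exponent on the strands. This uses only the four families of relations (isotopy of dots past crossings \eqref{first-QH}, nilHecke \eqref{nilHecke-1}, black bigons \eqref{black-bigon}, and the triple-point relations \eqref{triple-dumb}) to push all dots to (say) the bottoms of strands and to rewrite any non-minimal crossing pattern as a minimal one plus lower-order corrections. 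Second, I would compute the action of such a basis diagram on the vector $Y_1^{\beta_1}\cdots Y_V^{\beta_V} e(\Bi)\in \poly$: the crossings act by the Demazure-style operators in Definition~\ref{def:poly-rep}, and the dots then multiply. A leading-term analysis (say with respect to the lexicographic order on the pair $(\ell(w),\beta)$) shows that distinct basis elements produce linearly independent elements of $\poly$, so any nonzero combination of them acts nontrivially. This yields faithfulness of $\poly$.

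For $\widehat{\poly}$ over $\widehat{\tT}$, I would use the fact, recorded above, that the completion is obtained by simply allowing formal power series in the subring generated by dots, so a general element of $\widehat{\tT}$ may be written as $\sum_{w,\alpha,\Bi}c_{w,\alpha,\Bi}\,\tau_w Y^{\alpha}e(\Bi)$ with $c_{w,\alpha,\Bi}\in\bbC$ and the sum convergent in the grading topology. If this element acts as zero on $\widehat{\poly}$, then for every degree $d$ its truncation modulo the $d$th power of the maximal graded ideal of the dot subring acts as zero on the finite dimensional graded pieces of $\widehat{\poly}$. But on each graded piece this truncation coincides with the action of a genuine element of $\tT$, which by the ungraded faithfulness must be zero. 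Letting $d\to\infty$ forces all coefficients $c_{w,\alpha,\Bi}$ to vanish.

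The main obstacle is the spanning-set step in the ungraded case: the relations \eqref{black-bigon} and \eqref{triple-dumb} produce correction terms (including sign-flipped dot moves for adjacent labels), so the reduction to the triples $(w,\alpha,\Bi)$ has to be organized with a carefully chosen partial order in which these corrections are strictly lower, exactly as in \cite[Thm.~2.8]{WebwKLR}. Once that ordering is in place the linear independence on $\poly$ follows from a triangular leading-term calculation, and the passage to the completion is then a formal grading argument that I would carry out by citing Lemma~\ref{lem:wgmod}.
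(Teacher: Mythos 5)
The paper's own proof of this proposition is purely a citation: faithfulness of $\poly$ over $\tT$ is deferred to the remark following \cite[Thm.~2.8]{WebwKLR}, and faithfulness of $\widehat\poly$ over $\widehat\tT$ to \cite[Lem.~2.3]{WebBKnote}. What you propose --- produce a spanning set of normal-form diagrams $\tau_w Y^\alpha e(\Bi)$ using the defining relations, show a nonzero combination acts nontrivially on $\poly$ by a leading-term/triangularity argument, then reduce the completed case to the uncompleted one by a degree argument --- is exactly the standard Khovanov--Lauda-style argument behind those references, so you are unpacking the citation rather than taking a different route, and the outline is sound.

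Two points merit correction. First, the appeal to Lemma~\ref{lem:wgmod} is out of place: that lemma compares finite-dimensional discrete $\widehat\tT$-modules with finite-dimensional weakly graded $\tT$-modules, whereas $\widehat\poly$ is neither finite dimensional nor discrete, so that lemma cannot be what mediates the passage to the completion. Second, your truncation argument for $\widehat\poly$ is not quite watertight as phrased: the lift $\tilde x_d$ of ``$x$ modulo $I^d$'' depends on $d$, and the residual $x-\tilde x_d$ only pushes output to high degree rather than to zero, so ``the truncation acts as zero on graded pieces'' does not literally follow and cannot be fed back into ungraded faithfulness without more bookkeeping. The cleaner way to package the same idea is to decompose $x\in e(\Bi')\widehat\tT e(\Bi)$ into KLR-homogeneous components $x=\sum_k z_k$: because the diagrams underlying the spanning set have at most $\binom{V}{2}$ crossings, only finitely many dot monomials contribute to any fixed KLR degree, so each $z_k$ is a genuine homogeneous element of $\tT$. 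For homogeneous $v\in\poly$ of degree $m$, the degree $m+k$ component of $x\cdot v$ is exactly $z_k\cdot v$; hence $x\cdot v=0$ for all $v$ forces $z_k\cdot v=0$ for all $k$ and $v$, and faithfulness of $\poly$ over $\tT$ gives $z_k=0$ for every $k$, i.e.\ $x=0$.
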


\subsubsection{Induction} Note that if $\Bv_1,\dots, \Bv_p$ are dimension vectors such that
$\Bv=\Bv_1+\cdots+ \Bv_p$, then we have an obvious map of horizontal
composition $\tT_{\Bv_1}\otimes \cdots \otimes \tT_{\Bv_p}\to
\tT_{\Bv}$.  Note that this is not a unital map, since not all
idempotents $e(\Bi)$ are in the image.
Following Khovanov and Lauda \cite{KLI}, we define the {\bf induction functor} for $L_k$ a $\tT_{\Bv_k}$-module for each $k$:
\begin{equation}
  \label{eq:induction}
  L_1\circ \cdots \circ L_p=\tT_{\Bv}\otimes_{\tT_{\Bv_1}\otimes
    \cdots \otimes \tT_{\Bv_p}} (L_1\boxtimes \cdots \boxtimes L_p).
  \end{equation}
  By an obvious extension of \cite[Lem. 2.16]{KLI} using the basis of \cite[Prop. 4.16]{Webmerged}, we have that:
  \begin{lemma}\label{lem:induction-basis}
    The module $L_1\circ \cdots \circ L_p$ has a basis given by the elements of the form $ae(\Bi)\ell$ for $\Bi$ ranging over concatenations of $\Bi_k\in \Seq(\Bv_k)$, the diagram $a$ ranging over longest right coset reps of $S_{|\Bv_1|}\times
    \cdots S_{|\Bv_p|}$ in $S_{|\Bv|}$ with $e(\Bi)$ at the bottom which don't cross any red strands, and $\ell$ ranging over a basis of $e(\Bi)(L_1\boxtimes \cdots \boxtimes L_p)$.  
  \end{lemma}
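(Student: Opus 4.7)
The plan is to deduce this as a formal consequence of the basis theorem for $\tT_{\Bv}$ as a right module over the horizontal composition subalgebra $A := \tT_{\Bv_1}\otimes\cdots\otimes\tT_{\Bv_p}$, combined with the tensor-product definition \eqref{eq:induction} of the induction functor.

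First I would invoke (the KLRW extension of) \cite[Prop. 4.16]{Webmerged} to obtain a free right $A$-module basis of $\tT_{\Bv}$. Concretely, one fixes for every element of $S_{|\Bv|}/(S_{|\Bv_1|}\times\cdots\times S_{|\Bv_p|})$ the longest right coset representative $w$, and draws the corresponding permutation diagram as a Stendhal diagram $a_w$ by choosing a reduced expression and, where there is freedom, pushing all crossings as far as possible past the vertical red strands; the basis theorem says that the elements $\{a_w e(\Bi)\}$, with $\Bi$ running over concatenations of $\Bi_k\in\Seq(\Bv_k)$ and $w$ running over the chosen coset representatives that can be realized without crossing any red strand, form a basis of $\tT_{\Bv}$ as a right $A$-module.

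Granted this, the lemma follows from the standard algebraic fact that if $B$ is free as a right $A$-module with basis $\{b_\alpha\}$ and $N$ is any left $A$-module with basis $\{n_\beta\}$, then $B\otimes_A N$ has basis $\{b_\alpha\otimes n_\beta\}$. Applying this with $B=\tT_{\Bv}$ and $N=L_1\boxtimes\cdots\boxtimes L_p$, and decomposing the basis of $N$ according to the primitive idempotents $e(\Bi)$ of $A$ so that each basis vector $\ell$ lies in some $e(\Bi)N$, we obtain the asserted basis $\{a_w e(\Bi)\ell\}$ of $L_1\circ\cdots\circ L_p$.

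The main obstacle is the first step: justifying that the longest coset representatives avoiding red strands really do give a free right $A$-basis of $\tT_{\Bv}$. For KLR algebras (no red strands) this is exactly \cite[Lem. 2.16]{KLI}, so the only genuinely new content is the interaction with the red strands. Since red strands in any Stendhal diagram sit at fixed horizontal positions and black strands may be isotoped past them subject to the relations, the reduced-expression basis of \cite[Prop. 4.16]{Webmerged} can be chosen so that each right coset $w(S_{|\Bv_1|}\times\cdots\times S_{|\Bv_p|})$ contributes exactly one diagram in which the black-strand crossings realizing $w$ have been pushed to avoid crossing red strands whenever possible; this is precisely the longest-coset-representative condition, after which everything else is a formal tensor-product argument as above.
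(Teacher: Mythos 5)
Your overall strategy --- deduce the lemma from freeness of $\tT_{\Bv}\cdot 1_A$ as a right module over $A=\tT_{\Bv_1}\otimes\cdots\otimes\tT_{\Bv_p}$ via the graded basis theorem, then apply the formal tensor-product fact --- is exactly what the paper has in mind: the paper's entire ``proof'' is the sentence ``By an obvious extension of [KLI Lem.\ 2.16] using the basis of [Webmerged Prop.\ 4.16].''

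However, your final paragraph contains a genuine error. You read ``which don't cross any red strands'' as forbidding \emph{every} crossing that involves a red strand, and then try to identify this with the longest-coset-representative choice (``pushed to avoid crossing red strands whenever possible; this is precisely the longest-coset-representative condition''). Neither of these is right. In the diagrammatics of $\tT$, a black strand may cross a red strand freely; what does not exist in the algebra is a crossing of \emph{two red strands} (there is no red-red crossing among the allowed local pictures in the definition). So the condition on the representative $a$ is only that it does not take one red strand past another --- equivalently, that $a\,e(\Bi)$ is actually a Stendhal diagram. The paper's own later uses of the lemma make this unambiguous: the basis diagrams drawn for the Verma module $\bar{\Delta}(\Bi)$ explicitly have black strands crossing red strands, and the non-vanishing criterion for a weight space is stated as ``$\Bi(\gamma')$ can be written as a shuffle of the words $b_i$ without switching the order of two $n$'s.'' Moreover, ``longest coset representative'' is just a normalization of which permutation to draw within each admissible coset and has nothing to do with which strands cross (the paper itself later writes ``shortest coset representatives that cross no red strands''). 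What would actually need to be checked is that each permutation $w$ occurring in the Webmerged basis $\psi_w y^\alpha e(\Bi)$ of $\tT_{\Bv}e(\Bi)$ preserves the order of the red strands, and factors uniquely as $w=w_0 u$ with $u\in S_{|\Bv_1|}\times\cdots\times S_{|\Bv_p|}$ and $\psi_w=\psi_{w_0}\psi_u+\text{lower}$, so that $\psi_u y^\alpha$ is absorbed into the right $A$-action. Your ``avoid red strands whenever possible / longest representative'' framing points in the wrong direction and would, if taken literally, produce too small a basis (e.g.\ it would incorrectly kill every coset representative that moves a black strand past a red one).
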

  
\subsubsection{Thick calculus}\label{sec:thick-calculus} Our work later will be considerably simplified by using ``thick calculus'' in the nilHecke algebra, following the approach of \cite{khovanovExtendedGraphical2012}.  This incorporates the ``full twist'' diagram $D_a$ on $a$ consecutive strands with the same label $i$.  We will use the reflection of the conventions from that paper, so we let $e_a=D_a\delta_a$ for $\delta_a$ a polynomial of degree $\frac{a(a+1)}{2}$ such that $D_a\delta_a D_a=D_a$; in \cite{khovanovExtendedGraphical2012}, they use $y_1^{a-1}y_2^{a-2}\cdots y_{a-1}$, but in the interest of symmetry, we prefer $\delta_a=\frac{1}{a!}\prod_{1\leq i<j\leq a} (y_i-y_j)$.  
A simple calculation shows that in either case, $e_a^2=e_a$, and $e_a$ is the projection whose image is the symmetric polynomials.  For our choice of $\delta_a$, we obtain the unique $S_a$-invariant projection.

Given a pre-sequence $\mathbf{I}\in \operatorname{tSeq}(\Bv)$, we can define an idempotent $e(\mathbf{I})$ by starting with the idempotent $e(\Bi)$ and multiplying by $e_a$ in a block of $a$ strands that are equivalent under $\approx$, that is, on the block of strands corresponding to $i^{(a)}$.  It will also be useful to consider the split and merge diagrams:
\[\tikz[baseline,yscale=1.5]{\draw[thickc] (-1,1) to[out=-90,in=180] node[midway, left]{$a$}node [above, at start]{$i$} (0,0); \draw[thickc] (1,1) to[out=-90,in=0] node[midway, right]{$b$} node [above, at start]{$i$}(0,0); \draw[thickc] (0,0) to[out=-90,in=90] node[midway, left]{$a+b$}node [below, at end]{$i$}(0,-1); }=\tikz[baseline,yscale=1.5]{\draw[very thick] (-1.6,1) to[out=-90,in=90] node[above, at start]{$i$} node[below, at end]{$i$}(-.9,-1); \draw[very thick] (-.9,1) to[out=-90,in=90] node[above, at start]{$i$} node[below, at end]{$i$} (-.2,-1);\draw[very thick] (1.6,1) to[out=-90,in=90] node[above, at start]{$i$} node[below, at end]{$i$}(.9,-1); \draw[very thick] (.9,1) to[out=-90,in=90] node[above, at start]{$i$} node[below, at end]{$i$}(.2,-1); \node at (1.25,.8) {$\cdots$};\node at (-1.2,.8) {$\cdots$};\node at (.55,-.8) {$\cdots$};\node at (-.55,-.8) {$\cdots$};\node[draw,very thick,fill=white,inner xsep=30pt] at (0,-.4){$e_{a+b}$}; \draw [decorate,decoration={brace,amplitude=5pt,mirror,raise=1.5ex}]
  (-1,-1.2) -- (1,-1.2) node[midway,yshift=-2em]{$a+b$ strands};\draw [decorate,decoration={brace,amplitude=5pt,mirror,raise=1.5ex}]
  (-.8,1.2) -- (-1.6,1.2) node[midway,yshift=2em]{$a$ strands};\draw [decorate,decoration={brace,amplitude=5pt,mirror,raise=1.5ex}]
  (1.6,1.2) -- (.8,1.2) node[midway,yshift=2em]{$b$ strands};}\qquad \qquad \tikz[baseline,yscale=1.5]{\draw[thickc] (-1,-1) to[out=90,in=180] node[midway, left]{$a$}node [below, at start]{$i$} (0,0); \draw[thickc] (1,-1) to[out=90,in=0] node[midway, right]{$b$}node [below, at start]{$i$}(0,0); \draw[thickc] (0,0) to[out=90,in=-90] node[midway, left]{$a+b$}node [above, at end]{$i$}(0,1); }=\tikz[baseline,yscale=1.5]{\draw[very thick] (.2,1) to[out=-90,in=90] node[above, at start]{$i$} node[below, at end]{$i$}(-1.5,-1); \draw[very thick] (.8,1) to[out=-90,in=90] node[above, at start]{$i$} node[below, at end]{$i$} (-.9,-1);\draw[very thick] (-.2,1) to[out=-90,in=90] node[above, at start]{$i$} node[below, at end]{$i$}(1.5,-1); \draw[very thick] (-.8,1) to[out=-90,in=90] node[above, at start]{$i$} node[below, at end]{$i$}(.9,-1); \node at (1.2,-.8) {$\cdots$};\node at (-1.15,-.8) {$\cdots$};\node at (.5,.8) {$\cdots$};\node at (-.45,.8) {$\cdots$}; \node[draw,very thick,fill=white,inner xsep=15pt] at (.9,-.4){$e_{b}$}; \node[draw,very thick,fill=white,inner xsep=15pt] at (-.9,-.4){$e_{a}$}; \draw [decorate,decoration={brace,amplitude=5pt,mirror,raise=1.5ex}]
  (-1.6,-1.2) -- (-.8,-1.2) node[midway,yshift=-2em]{$a$ strands};\draw [decorate,decoration={brace,amplitude=5pt,mirror,raise=1.5ex}]
  (.8,-1.2) -- (1.6,-1.2) node[midway,yshift=-2em]{$b$ strands};\draw [decorate,decoration={brace,amplitude=5pt,mirror,raise=1.5ex}]
  (1,1.2) -- (-1,1.2) node[midway,yshift=2em]{$a+b$ strands};}\]
Note that we only define these here for groups of strands with the same label; the extension of these to other type A cases is discussed in \cite{SWschur}, but that introduces a lot of complications which are not needed here.  We will need to use the notation for crossings of thick strands with different labels:
\[\tikz[baseline,yscale=1.5]{\draw[thickc] (-1,-1) to node[pos=.3, left]{$a$}node [below, at start]{$i$} node [above, at end]{$i$} (1,1); \draw[thickc] (1,-1) to node[pos=.3, right]{$b$}node [below, at start]{$j$} node [above, at end]{$j$} (-1,1);  }=\tikz[baseline,yscale=1.5]{\draw[very thick] (.8,1) to node[above, at start]{$i$} node[below, at end]{$i$}(-1.6,-1); \draw[very thick] (1.5,1) to node[above, at start]{$i$} node[below, at end]{$i$} (-.9,-1);\draw[very thick] (-.8,1) to node[above, at start]{$j$} node[below, at end]{$j$}(1.6,-1); \draw[very thick] (-1.5,1) to node[above, at start]{$j$} node[below, at end]{$j$}(.9,-1); \node at (1,-.8) {$\cdots$};\node at (-.95,-.8) {$\cdots$};\node at (.97,.8) {$\cdots$};\node at (-.92,.8) {$\cdots$}; \node[draw,very thick,fill=white,inner xsep=15pt] at (.8,-.5){$e_{b}$}; \node[draw,very thick,fill=white,inner xsep=15pt] at (-.8,-.5){$e_{a}$}; \draw [decorate,decoration={brace,amplitude=5pt,mirror,raise=1.5ex}]
  (-1.7,-1.2) -- (-.8,-1.2) node[midway,yshift=-2em]{$a$ strands};\draw [decorate,decoration={brace,amplitude=5pt,mirror,raise=1.5ex}]
  (.8,-1.2) -- (1.7,-1.2) node[midway,yshift=-2em]{$b$ strands};\draw [decorate,decoration={brace,amplitude=5pt,mirror,raise=1.5ex}]
  (1.6,1.2) -- (.7,1.2) node[midway,yshift=2em]{$a$ strands};\draw [decorate,decoration={brace,amplitude=5pt,mirror,raise=1.5ex}]
  (-.7,1.2) -- (-1.6,1.2) node[midway,yshift=2em]{$b$ strands};}\]

On the polynomial representation above, an idempotent with thick strands acts by projection to the appropriate subring of symmetric polynomials.  
\begin{lemma}[\mbox{\cite[Prop. 3.4]{SWschur}}]\label{lem:thick-action}
The split diagram above acts by the inclusion of $S_{a+b}$-invariant polynomials into $S_a\times S_b$-invariants.  The merge diagram above acts by the Demazure operator
\[ \sum_{\sigma\in S_{a+b}/(S_a\times S_b)}\sigma\cdot \frac{f}{\prod_{k=1}^a\prod_{\ell=a+1}^{a+b}(y_k-y_\ell)}\]
\end{lemma}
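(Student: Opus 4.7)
The plan is to interpret both diagrams as operators on the polynomial representation $\poly$ of $\tT$ from Definition \ref{def:poly-rep}, and then match them against the stated formulas using two standard identities for the nilHecke algebra. Throughout, write $\Delta_{a,b}=\prod_{k=1}^{a}\prod_{\ell=a+1}^{a+b}(y_k-y_\ell)$.

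For the split diagram, the right-hand picture has no crossings between distinct strands: each of the top $a+b$ strands (grouped as a left block of $a$ and a right block of $b$, carrying the implicit idempotents $e_a$ and $e_b$) runs straight down to the corresponding bottom position, and a single $e_{a+b}$ box sits on the bundled bottom strands.  By the definition $e_c=D_c\delta_c$ together with the standard fact that $\partial_{w_0(c)}(\delta_c\cdot g)=g$ for $g$ symmetric, each $e_c$ acts on $\poly$ as the projection onto the $S_c$-invariant subring. Since $S_{a+b}$-invariants are automatically $S_a\times S_b$-invariants, the composite $e_{a+b}$ followed by $e_a\otimes e_b$ is the tautological inclusion, proving the first claim.

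For the merge diagram, the picture shows a braid of $ab$ crossings that exchanges the left block of $a$ strands with the right block of $b$ strands, sandwiched between $e_a\otimes e_b$ at the bottom and an implicit $e_{a+b}$ at the top. This braid is a reduced expression for the minimal length coset representative $\pi$ of the longest element $w_0$ in $S_{a+b}/(S_a\times S_b)$, which has length precisely $\binom{a+b}{2}-\binom{a}{2}-\binom{b}{2}=ab$. Since all strands share the same label $i$, Definition \ref{def:poly-rep} says each crossing acts as a divided difference, so the braid as a whole acts as the Demazure operator $\partial_\pi$. Given an $S_a\times S_b$-invariant input $f$, the output $\partial_\pi(f)$ is automatically $S_{a+b}$-invariant, so the top $e_{a+b}$ acts trivially. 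It then remains to recognize the classical identity
\[
\partial_\pi(f)\;=\;\sum_{\sigma\in S_{a+b}/(S_a\times S_b)}\sigma\cdot\frac{f}{\Delta_{a,b}},
\]
which can be derived by combining the antisymmetrization formula $\partial_{w_0}(g)=\sum_{\tau\in S_{a+b}}(-1)^{\ell(\tau)}\tau(g)/\prod_{i<j}(y_i-y_j)$ with the factorization $\partial_{w_0}=\partial_\pi\,\partial_{w_0(a)}\,\partial_{w_0(b)}$ and the observation that $\partial_{w_0(a)}\partial_{w_0(b)}(\delta_a\delta_b\,f)=f$ for $S_a\times S_b$-invariant $f$; alternatively one inducts on $ab$ along a reduced expression for $\pi$.

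The step requiring the most care is matching the drawn braid with an actual reduced expression for $\pi$ (as opposed to a longer word that would simplify via \eqref{triple-dumb}) and checking that the same-label convention in Definition \ref{def:poly-rep} contributes no extra polynomial or sign factor beyond the bare divided differences. Once this bookkeeping is in place, the lemma reduces to the two nilHecke identities above, both of which appear in slightly different normalizations in \cite{KLMS}.
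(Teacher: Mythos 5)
Your argument is correct; the paper itself offers no proof of this lemma --- it is simply cited from \cite[Prop.~3.4]{SWschur} --- so your self-contained derivation is genuinely different in kind and a useful addition. Both halves check out: the split is the $e_{a+b}$ projection followed by a crossingless, dotless diagram, which is the identity in the polynomial representation, hence the inclusion. For the merge, identifying the $ab$ crossings with a reduced word for the minimal-length coset representative $\pi$ of $w_0(S_a\times S_b)$ is right; no pair of strands crosses twice, so the word is automatically reduced, and the length count $\binom{a+b}{2}-\binom{a}{2}-\binom{b}{2}=ab$ confirms it. The sign convention $\psi_k\mapsto (f^{(k,k+1)}-f)/(Y_{k+1}-Y_k)$ reproduces the stated formula with no stray signs: at $a=b=1$ both sides give $(f-f^{s_1})/(y_1-y_2)$, and the general case follows from your antisymmetrization argument together with $\prod_{i<j\le a+b}(y_i-y_j)=a!\,b!\,\delta_a\delta_b\,\Delta_{a,b}$. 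Two small points of bookkeeping. First, the observation that the drawn braid is reduced --- which you flag as the delicate step --- you never actually make; it is worth stating explicitly that no two strands cross twice in the picture. Second, the relation \eqref{triple-dumb} you cite as a possible source of simplification only applies when the outer labels differ, whereas here every strand carries the same label $i$; the relevant simplification for same-label strands is the $i=j$ case of the bigon relation \eqref{black-bigon}, which is vacuous here precisely because the braid is reduced.
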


\subsection{Standard modules}
\label{sec:standard-modules}

We will now discuss how to construct simple modules over $\tT$.  As
discussed in \cite[\S 5]{WebGT}, these are indexed by {\bf red-good}
words; we call a word $\Bi$ red-good if there is a simple module $L$ over
$\tT$ such that $\Bi$ is lex-minimal amongst words where $e(\Bi)L$.  By \cite[Th. 5.9]{WebGT}, we have a combinatorial characterization of
these words.  Let $\GL$ be the set of words of the form  $(k,k-1,\cdots, k-p)$ for
$k\leq n-1$, and $0\leq p <k$, and $\GL'$ be the set of words of the form  $(n,n-1,\cdots, n-p)$ for
$0\leq p<n $; as noted in \cite[\S 6.6]{Lecshuf}, these together form
the good Lyndon words of the $A_{n}$ root system in the obvious order
on nodes in the Dynkin diagram (which we identify with $[1,n]$).
\begin{theorem}[\mbox{\cite[Th. 5.9]{WebGT}}]
  A word $\Bi$ is red-good if it is the concatenation
  $\Bi=a_1\cdots a_p b_1\cdots b_{v_n}$ of words for $a_k\in\GL$, and
  $b_k\in \GL'$ satisfying $a_1\leq a_2\leq \cdots \leq a_{p}$ in
  lexicographic order.
\end{theorem}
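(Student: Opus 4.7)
The plan is to derive the characterization from Leclerc's classification of good words for the quantum group of type $A_n$ \cite{Lecshuf}, combined with the structural constraints imposed on words by red strands in $\tT$. The proof naturally splits into a forward direction (every red-good word has the stated form) and a converse (every word of the stated form is red-good), with the forward direction carrying essentially all the content.

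For the forward direction, I would first recall Leclerc's theorem: for the ordinary KLR algebra of type $A_n$, the lex-minimal words appearing in simple modules are precisely the non-decreasing concatenations of good Lyndon words, which by the discussion already included from \cite[\S 6.6]{Lecshuf} are exactly the intervals $(k,k-1,\ldots,k-p)$ for $k\in[1,n]$, i.e.\ the elements of $\GL\cup\GL'$. Next I would incorporate the red strands, each of which carries the label $n$ and, by Lemma \ref{lem:induction-basis}, sits in a fixed position since the induction basis uses coset representatives that do not cross red strands. Using this rigidity together with the crossing relations \eqref{first-QH}--\eqref{triple-dumb}, I would show that in the lex-minimal word $\Bi$ every occurrence of the letter $n$ must begin a block of consecutive labels $n,n-1,\ldots,n-p$ forming an element of $\GL'$: any other placement would allow one, via a local crossing move, to produce a lex-smaller word on which the same simple module is still supported, contradicting minimality. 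Once these red-strand blocks are peeled off as a suffix, I would apply Leclerc's theorem directly to the remaining pure-black sub-word to recover the prefix $a_1\cdots a_p$ as a non-decreasing concatenation of elements of $\GL$.

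For the converse direction, given any word $\Bi=a_1\cdots a_p b_1\cdots b_{v_n}$ of the stated form, I would exhibit a simple $\tT$-module $L$ with $\Bi$ lex-minimal by forming the induction product (via Lemma \ref{lem:induction-basis}) of the simple modules associated to the individual Lyndon factors and taking its head; faithfulness of the polynomial representation (Proposition \ref{lem:faithful}) together with the classical Leclerc construction ensures the lex-minimal word of this head is precisely $\Bi$.

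The main obstacle is the rigorous separation of red and black blocks: showing that in the lex-minimal representative all factors from $\GL'$ appear consecutively as a suffix and never interleave with the $\GL$ factors. Handling this cleanly requires careful bookkeeping with the relations governing crossings adjacent to red strands, since a letter $n$ trapped between two $\GL$ factors can in principle be moved leftward through several black labels before meeting the next red anchor; one must argue that every such configuration admits a lex-reduction, either through a direct application of \eqref{black-bigon} and \eqref{triple-dumb} or through a telescoping sequence of such moves. Once this separation is achieved, the non-decreasing order on $a_1\leq\cdots\leq a_p$ is immediate from the KLR case of Leclerc's theorem.
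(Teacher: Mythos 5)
This theorem is stated with a citation to \cite[Th. 5.9]{WebGT} and is not actually proved in the paper at the point of statement. The closest thing to a proof supplied here is the ``by hand'' argument embedded in the proof of Theorem~\ref{thm:all-simples}(2), which establishes the necessity direction --- that any simple $\tT$-module has a red-good lex-minimal word --- via a direct combinatorial lex-reduction: if $\Bj\in\mathcal{N}_L$ is not red-good, one uses the observations (a) and (b) (instances of relations \eqref{black-bigon} and \eqref{triple-dumb}) to exhibit a lex-lower word in $\mathcal{N}_L$, carefully tracking when two consecutive Lyndon-word factors $(\dots,r,\dots,s,r',\dots,s',\dots)$ with $r=r'$ fail to be ordered, and distinguishing the case $r=r'<n$ (a reduction is possible) from $r=r'=n$ (no constraint, because the red $n$'s block the relevant shuffle). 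Your sufficiency argument --- form $\bar\Delta(\Bi)$ as an induction product of Lyndon-factor modules and take its head --- matches the paper's route through Lemma~\ref{lem:delta-induction} and Theorem~\ref{thm:all-simples}(1), though the lex-minimality of $\Bi$ in $\mathcal{N}_{L(\Bi)}$ rests on the shuffle lemma \cite[Lem.~15]{Lecshuf}, not on faithfulness of the polynomial representation as you suggest.

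The genuine gap is in your necessity direction. You propose to invoke Leclerc's classification for the ordinary KLR algebra after ``peeling off'' the red-strand blocks as a suffix, applying Leclerc to the remaining pure-black prefix. This reduction is not justified: lex-minimality of $\Bi$ in $\mathcal{N}_L$ for a $\tT$-module is a condition against \emph{all} words in $\Seq(\Bv)$, including those that interleave the prefix letters with letters inside the $\GL'$ blocks, so there is no evident way to extract from $L$ a simple pure-KLR module on the prefix alone to which Leclerc's theorem would apply. Moreover, your plan never confronts the distinctive feature of the red-good classification: the $\GL'$ factors $b_1,\dots,b_{v_n}$ are allowed to appear in \emph{any} order, whereas ordinary good words require all Lyndon factors to be weakly increasing. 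The reason is precisely that the red $n$'s cannot be crossed (they are fixed in the induction basis of Lemma~\ref{lem:induction-basis}), so the shuffle that would lex-reduce a misordered pair of $\GL'$ factors simply does not exist in $\tT$ --- the case $r=r'=n$ in the paper's argument. You gesture at this ``rigidity'' but never use it to explain why the $b_k$'s escape the ordering constraint. Finally, your own ``main obstacle'' paragraph concedes that a telescoping sequence of moves via \eqref{black-bigon} and \eqref{triple-dumb} is needed anyway; once that work is done (which is exactly the paper's Theorem~\ref{thm:all-simples}(2) argument applied to the whole word), the detour through Leclerc buys nothing.
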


For the purposes of constructing the characters of simple modules, we
need a slightly different construction of these simples than that
given in \cite{WebGT}.  The approach we'll take is based on the theory of standard modules of Brundan, Kleshchev, and MacNamara \cite{brundanHomologicalProperties2014, mcnamaraFiniteDimensional2015}; we need to modify their approach to account for the presence of red strands.  

\begin{definition}
Consider a red-good word $\Bi$ and let $\Delta(\Bi)$ be the universal module generated by a vector $v$
  satisfying the equations 
  \begin{equation}\label{eq:Delta}
      e(\Bi)v=v\qquad \text{and} \qquad \psi_kv=0\text{ for any $k$ with
  $i_k-1=i_{k+1}$.}
  \end{equation}  
  Let $\bar{\Delta}(\Bi)$ be the quotient of $\Delta(\Bi)$ by the
  further relation that 
  \begin{equation}\label{eq:bar-Delta}
      y_kv=0 \text{ for all $k$.}  
  \end{equation}
By analogy with the modules defined in \cite[(3.5) \& (2.11)]{brundanHomologicalProperties2014}, we call these {\bf standard} and {\bf proper standard} modules.
\end{definition}
Note that despite the names, we do not claim that these algebras have a properly stratified structure or any analogous property; they are ``standard'' only in the sense of being quotients of indecomposable projectives by all maps from projectives lower in an order.

It's easy to check that:
\begin{lemma}\label{lem:delta-induction}
\begin{align*}
  \Delta(\Bi)&=\Delta(a_1)\circ \cdots \circ \Delta(a_p)\circ
  \Delta(b_1)\circ \cdots \circ \Delta(b_{v_n}) \\ \bar{\Delta}(\Bi)&=\bar{\Delta}(a_1)\circ \cdots \circ \bar{\Delta} (a_p)\circ
  \bar{\Delta} (b_1)\circ \cdots \circ \bar{\Delta} (b_{v_n}).
\end{align*}
\end{lemma}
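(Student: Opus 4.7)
The plan is to check that both sides of the equation satisfy the same universal property. For $\Delta(\Bi)$ this is encoded directly in the defining relations \eqref{eq:Delta}. On the induction side, let $v_{\mathrm{ind}}=v_{a_1}\otimes\cdots \otimes v_{a_p}\otimes v_{b_1}\otimes\cdots \otimes v_{b_{v_n}}$ denote the tensor of the canonical generators inside $\Delta(a_1)\circ\cdots\circ\Delta(b_{v_n})$. Since the concatenation of the sequences associated to the factors is exactly $\Bi$, the relation $e(\Bi)v_{\mathrm{ind}}=v_{\mathrm{ind}}$ is automatic, and any crossing $\psi_k$ that is internal to a single block $a_k$ or $b_k$ kills $v_{\mathrm{ind}}$ by the defining relations of that factor. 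What remains is to rule out the forbidden configuration $i_k-1=i_{k+1}$ at crossings straddling the boundary between two consecutive blocks.

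This is the key combinatorial step. Write $a_k=(\alpha_k,\alpha_k-1,\dots,\alpha_k-p_k)$ with $\alpha_k\leq n-1$ and $p_k\geq 0$, and $b_k=(n,n-1,\dots,n-q_k)$ with $q_k\geq 0$. The lexicographic order $a_1\leq\cdots\leq a_p$ forces $\alpha_k\leq \alpha_{k+1}$, so at the boundary $a_k\mid a_{k+1}$ the forbidden equation $\alpha_{k+1}=\alpha_k-p_k-1$ would give $\alpha_{k+1}<\alpha_k$, a contradiction. At $a_p\mid b_1$ the right factor starts with $n$ while the last letter of the left is $\alpha_p-p_p\leq n-1$, so the forbidden equation becomes $n=\alpha_p-p_p-1\leq n-2$; at $b_k\mid b_{k+1}$ we would need $n=n-q_k-1$, again absurd. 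Thus $v_{\mathrm{ind}}$ satisfies every relation in \eqref{eq:Delta}, and the universal property of $\Delta(\Bi)$ yields a map $\Delta(\Bi)\to \Delta(a_1)\circ\cdots\circ\Delta(b_{v_n})$ sending generator to generator.

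For the reverse direction I would invoke the tensor-hom adjunction built into the definition \eqref{eq:induction} of the induction product. Restricting $\Delta(\Bi)$ along $\tT_{\Bv_{a_1}}\otimes\cdots\otimes\tT_{\Bv_{b_{v_n}}}\hookrightarrow \tT_\Bv$, the generator carries the block-wise idempotent $e(a_1)\otimes\cdots\otimes e(b_{v_n})$ and is annihilated by every $\psi_k$ internal to a block, since all such crossings are among the relations imposed in \eqref{eq:Delta}. The factor-wise universal properties thus assemble into a map $\Delta(a_1)\boxtimes\cdots\boxtimes\Delta(b_{v_n})\to \Delta(\Bi)$ of modules over the outer tensor subalgebra, and by adjunction this yields a map $\Delta(a_1)\circ\cdots\circ\Delta(b_{v_n})\to \Delta(\Bi)$. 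Both maps send generator to generator, so by universality they are mutually inverse.

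The $\bar\Delta$ version follows from the same argument applied after imposing the dot-killing relation \eqref{eq:bar-Delta}, since the tensor generator of the induction is killed by every $y_k$ as soon as each individual $\bar\Delta(a_k)$ and $\bar\Delta(b_k)$ is. The main obstacle is purely bookkeeping --- translating global strand indices to block-internal indices and being careful with the boundary cases --- but the combinatorial observation above is the entire substantive content.
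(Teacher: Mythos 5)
Your argument is correct, and since the paper offers no proof beyond ``It's easy to check,'' the universal-property route you take is exactly what the authors are pointing at. The substantive content is indeed the combinatorial check you isolate: within each Lyndon block every consecutive pair drops by $1$, so every internal crossing is one of the defining $\psi_k$'s for that factor; and at every block boundary the ``drop by $1$'' configuration $i_k-1=i_{k+1}$ is impossible --- $\alpha_{k+1}\geq\alpha_k>\alpha_k-p_k-1$ at $a_k\mid a_{k+1}$ by the lex-ordering, $\alpha_p-p_p\leq n-1<n+1$ at $a_p\mid b_1$, and $q_k\geq 0$ at $b_k\mid b_{k+1}$ --- so the relation set for $\Delta(\Bi)$ consists precisely of the block-internal relations, matching the relation set governing maps out of the induced module via the (non-unital) Frobenius adjunction. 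The only stylistic point worth tightening is the closing ``by universality they are mutually inverse'': what you actually use is that both modules are cyclic on their designated generators, so endomorphisms fixing the generator are the identity. A marginally cleaner packaging of the same idea is to skip the explicit construction of the two maps altogether and directly identify the represented functors: by adjunction and the combinatorial observation, $\Hom_{\tT_\Bv}(\Delta(a_1)\circ\cdots\circ\Delta(b_{v_n}),M)$ and $\Hom_{\tT_\Bv}(\Delta(\Bi),M)$ both equal $\{m\in e(\Bi)M\mid \psi_k m=0 \text{ for internal } k\}$ naturally in $M$, whence the modules agree. The $\bar\Delta$ version goes through verbatim after adding the dot-killing relations on both sides, exactly as you say.
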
  

First, consider the case where $\Bi$ is a single
Lyndon word $a=(k,\dots,p)$ for $1\leq p\leq k\leq n$.   In this case, we have an endomorphism of $\Delta(\Bi)$, induced by sending $v\mapsto y_kv$.  Since in this case
$\psi_k^2=y_k-y_{k+1}$, we find that this is the same endomorphism for
all $k$.  We denote it by $\mathsf{y}$.

Now, consider the case where $\Bi$ is of the form $(a,a)$ for a single
Lyndon word, so $\Delta(\Bi)=\Delta(a)\circ \Delta(a)$.  In this case, $ \Delta(\Bi)$ has two dot
endomorphisms $\mathsf{y}_1, \mathsf{y}_2$ given by multiplication by
a dot on the first or second copy of $a$.  If $a\in \GL'$, then
clearly every element of $e(\Bi) \Delta(\Bi)$ is of the form
$\bbC[\mathsf{y}_1, \mathsf{y}_2]v$; on the other hand, if $a\in \GL$,
then we have a diagram which switches the strands attached to the two
different copies of $a$.  This is a vector $v'$ which satisfies the
same relations as $v$, and thus defines a map $\psi\colon
\Delta(\Bi)\to \Delta(\Bi)$.  We have
\[v'=\tikz[baseline,yscale=1.5]{\draw[very thick] (.5,1) to node[above, at start]{$k$} (-1.5,-1); \draw[very thick] (1.5,1) to node[above, at start]{$p$}  (-.5,-1);\draw[very thick] (-.5,1) to node[above, at start]{$p$} (1.5,-1); \draw[very thick] (-1.5,1) to node[above, at start]{$k$} (.5,-1); \node at (.8,.8) {$\cdots$};\node at (-.8,.8) {$\cdots$};\node at (.7,-.7) {$\cdots$};\node at (-.7,-.7) {$\cdots$}; \node[draw,very thick,fill=white,inner xsep=45pt] at (0,-1){$v$}; }\]
More generally, when $\Bi=a^m$, then we always have an action of
$\bbC[\mathsf{y}_1,\dots, \mathsf{y}_m]$ induced by the action on factors of the induction, and if $a\in \GL$, we also
have an endomorphisms $\psi_1,\dots, \psi_{m-1}$ defined by applying $\psi$ to an adjacent pair of factors.  These define an action of the nilHecke algebra constructed in \cite[Lemma 3.7]{brundanHomologicalProperties2014}.
\begin{proposition}
  If $\Bi=a^m$ for $a\in \GL'$, then
  $\End(\Delta(\Bi))=\bbC[\mathsf{y}_1,\dots, \mathsf{y}_m]$ and
  $\Delta(\Bi)$
  has a unique graded simple quotient.

  If $\Bi=a^m$ for $a\in \GL$, then $\End({\Delta}(\Bi))=NH_m$, and
  $\bar{\Delta}(\Bi)$ is the unique graded simple quotient of $\Delta(\Bi)$ up to
  isomorphism.  
\end{proposition}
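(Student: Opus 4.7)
The plan is to compute $e(\Bi)\Delta(\Bi)$ explicitly using the induction isomorphism $\Delta(\Bi)=\Delta(a)^{\circ m}$ of Lemma~\ref{lem:delta-induction} and the induction basis of Lemma~\ref{lem:induction-basis}, then identify $\End(\Delta(\Bi))$ with the subspace of $e(\Bi)\Delta(\Bi)$ consisting of vectors satisfying the defining relations of the cyclic generator $v$. As a warm-up, I would first verify that $e(a)\Delta(a)=\bbC[\mathsf{y}]v$: since the Lyndon word $a=(j,j-1,\dots,j-p)$ has distinct labels, any nontrivial crossing on the bottom of a diagram in $e(a)\Delta(a)$ either hits a pair with $i_k-1=i_{k+1}$ and kills $v$, or can be unwound through $v$ using relations \eqref{first-QH}--\eqref{black-bigon}; dots on distinct strands then get identified via $\psi_k^2=y_k-y_{k+1}$, collapsing everything to a single dot variable.

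Applying Lemma~\ref{lem:induction-basis} next expresses $e(\Bi)\Delta(\Bi)$ as a span indexed by longest right coset representatives of $S_{p+1}^m$ in $S_{m(p+1)}$ that neither cross red strands nor change the label sequence~$\Bi$, tensored with $\bbC[\mathsf{y}_1,\dots,\mathsf{y}_m]v$. In the $\GL'$ case each copy of $a$ contributes exactly one red strand (the initial $n$), chopping $\Bi$ into $m$ rigid blocks separated by immovable red lines; a label-preserving coset representative avoiding red crossings is forced to fix each block pointwise, hence is the identity. This yields $\End(\Delta(\Bi))=\bbC[\mathsf{y}_1,\dots,\mathsf{y}_m]$. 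In the $\GL$ case there are no red strands, and the label-preserving coset representatives are precisely the copy of $S_m$ permuting the $m$ blocks; combined with the dots, these generate the nilHecke action of \cite[Lemma 3.7]{BKM}, giving $\End(\Delta(\Bi))=NH_m$. In both cases I must check that the resulting operators genuinely descend to endomorphisms, i.e.\ respect the annihilation $\psi_kv=0$ for $i_k-1=i_{k+1}$; this follows because dots commute with such $\psi_k$ modulo relation~\eqref{first-QH}, and block-swap diagrams can be slid past the annihilating $\psi_k$ using the triple-point relation~\eqref{triple-dumb}.

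For the unique graded simple quotient, in the $\GL'$ case $\bbC[\mathsf{y}_1,\dots,\mathsf{y}_m]$ is non-negatively graded with unique graded-maximal ideal, so $\Delta(\Bi)$ has a unique graded simple quotient obtained by quotienting by the image of this ideal. In the $\GL$ case the nilHecke algebra $NH_m$ admits a unique simple module (up to grading shift), cut out by killing $\mathsf{y}_kv$ for all $k$, which is precisely the defining relation of $\bar{\Delta}(\Bi)$; that $\bar{\Delta}(\Bi)$ is nonzero and simple can be checked by tracing the image of $v$ through the faithful polynomial representation of Proposition~\ref{lem:faithful}. I expect the main obstacle to be the combinatorial step in paragraph two: confirming that the red-strand-avoiding, label-preserving coset representatives exhaust exactly the trivial group (for $\GL'$) or the full $S_m$ (for $\GL$), with no stray coset representatives contributing additional endomorphisms beyond the stated algebras.
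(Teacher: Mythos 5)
Your overall strategy is the same as the paper's: use the induction isomorphism from Lemma \ref{lem:delta-induction}, the basis from Lemma \ref{lem:induction-basis}, and the observation that evaluating endomorphisms at the generator $v$ embeds $\End(\Delta(\Bi))$ into $e(\Bi)\Delta(\Bi)$, then determine the latter by identifying the surviving coset representatives (identity only in the $\GL'$ case, one per element of $S_m$ in the $\GL$ case). Your verification that the dot and $\psi$ operators descend to honest endomorphisms is handled slightly differently — the paper invokes the universal property of $\Delta(\Bi)$ directly (the image $v'$ of $v$ under the block-swap satisfies the same defining relations, so the map $v\mapsto v'$ extends), while you propose verifying commutation via relations \eqref{first-QH} and \eqref{triple-dumb} — but this is a cosmetic difference; both routes work.

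The one genuine gap is your argument for simplicity of $\bar{\Delta}(\Bi)$ when $a\in\GL$. You propose to establish that $\bar{\Delta}(\Bi)$ is nonzero and simple ``by tracing the image of $v$ through the faithful polynomial representation,'' but this doesn't go through as stated: $\bar{\Delta}(\Bi)$ is a finite-dimensional quotient of $\Delta(\Bi)$, and faithfulness of $\poly$ is a statement about the algebra $\tT$, not a tool for probing quotient modules. Moreover, there is no natural nonzero map $\Delta(\Bi)\to\poly$ to trace $v$ through, since the obvious candidate vector $1\cdot e(\Bi)\in\poly$ fails the defining relation $\psi_k v=0$ (the crossing $\psi_k$ with $i_k-1=i_{k+1}$ acts on $\poly$ as a nonzero permutation-plus-multiplication, not as zero). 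The paper's argument is more concrete: using the coset-representative basis of $\bar{\Delta}(\Bi)$ from Lemma \ref{lem:induction-basis}, one checks that every basis diagram can be left-multiplied to land on a nonzero element of $e(\Bi)\bar{\Delta}(\Bi)\cong \overline{NH}_m$, which forces any nonzero submodule to be everything and hence shows $\bar{\Delta}(\Bi)$ has no proper nonzero submodules. You should replace the polynomial-representation step with this basis-tracking argument (or an explicit use of \cite[Lemma 5.9]{Webmerged}, as the paper also does, to reduce uniqueness of the simple quotient to the graded-local structure of $\End(\Delta(\Bi))$).
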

\begin{proof}
We prove this by induction on $m$.  If $m=1$, then obviously any diagram in $e(\Bi) \Delta(\Bi) $ is
either a
polynomial multiple of $v$ or it contains a bigon where two
strands cross.  In the latter case, this diagram is 0 in
$\Delta(\Bi)$.  Thus, multiplying by $v$ defines a surjection
$\bbC[\mathsf{y}]\to e(\Bi) \Delta(\Bi)$, and this surjection is split
by the map that maps $f(y_1,\dots, y_r)v\mapsto f(\mathsf{y})$, and
kills all other diagrams.  Since $ \bbC[\mathsf{y}]$ is indecomposable
as a module over itself, this surjection is an isomorphism.

In general, by Lemma \ref{lem:induction-basis}, we have that $e(\Bi)\Delta(\Bi)$ has a basis given by a basis of $\bbC[\mathsf{y}_1,\dots, \mathsf{y}_m]v$, which is isomorphic to $e(\Bi)(\Delta(a)\boxtimes\cdots \boxtimes\Delta(a))$, times the set of shortest coset representatives that cross no red strands.  If $a\in \GL'$, only the identity has this property, and $e(\Bi)\Delta(\Bi)\cong \bbC[\mathsf{y}_1,\dots, \mathsf{y}_m]$ via multiplication on $v$.

If $a\in \GL$, then there is a shortest coset rep for each permutation in $S_m$, given by the action of a composition of $\psi_k$'s acting on $v$.  Thus, in this case we have an isomorphism $e(\Bi)\Delta(\Bi)\cong NH_m$ defined by action on $v$.  

We can readily convert both of these to statements about endomorphisms:  every endomorphism of $\Delta(\Bi)$ sends $v$ to an element of $e(\Bi)\Delta(\Bi)$, and since $v$ is a generator, this is an injective map $\End(\Delta(\Bi))\to e(\Bi)\Delta(\Bi)$, which we have already checked is surjective, and thus an isomorphism.

Finally, note that for any simple quotient $L$ of $\Delta(\Bi)$, we must have a surjective, non-zero map $\End(\Delta(\Bi))\cong e(\Bi)\Delta(\Bi)\to e(\Bi)L$ which is a left module homomorphism.  In fact, $e(\Bi)L$ must be a simple quotient of $\End(\Delta(\Bi))$ as a left module over itself.  The uniqueness of this quotient up to isomorphism follows from the fact that $\End(\Delta(\Bi))$ is a matrix ring over a polynomial ring (which is graded local).  Note that $\bar{\Delta}(\Bi)$ corresponds to the quotient of $\End(\Delta(\Bi))$ by the left ideal generated by $\mathsf{y}_1,\dots, \mathsf{y}_m$; in either case, this gives an irrep over $\End(\Delta(\Bi))$. 
By the usual argument (see \cite[Lemma 5.9]{Webmerged}), the module  $\bar{\Delta}(\Bi)$ has a unique simple quotient.  
Finally note that if $a\in \GL$, then any diagram in the basis of $\bar{\Delta}(\Bi)$ corresponds to a right coset representative which can be multiplied by a diagram on the left to give a non-zero element of $NH_m$.  This shows that $\bar{\Delta}(\Bi)$ has no proper non-zero submodules in this case.
\end{proof}

\begin{theorem}\label{thm:all-simples}\hfill
  \begin{enumerate}
      \item For any red-good word $\Bi$, the standard module $\bar{\Delta}(\Bi)$ has a unique simple quotient $L(\Bi)$.
      \item As $\Bi$ ranges over red-good words, these quotients give a complete irredundant list of the simple graded $\tT$-modules.
      \item  The composition factors of the kernel of the map $\bar{\Delta}(\Bi)\to L(\Bi)$ are all of the form $L(\Bj)$ for $\Bj >\Bi$ in lexicographic order.
  \end{enumerate}
\end{theorem}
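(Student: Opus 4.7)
The plan is to run the standard Lyndon-word argument for KLR-type algebras, in the form used for example in Kleshchev--Ram and Brundan--Kleshchev--McNamara, adapted to the Stendhal setting by using the preceding proposition and Lemma~\ref{lem:induction-basis} in place of their nilHecke / cuspidal inputs.

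\emph{Stage 1 (decomposition and cuspidal heads).} Using Lemma~\ref{lem:delta-induction}, I would group maximal runs of equal Lyndon factors in $\Bi$ to write
\[\bar{\Delta}(\Bi)=\bar{\Delta}(c_1^{m_1})\circ\cdots\circ\bar{\Delta}(c_r^{m_r}),\]
where the $c_k$ are distinct Lyndon words with $c_1<c_2<\cdots<c_r$ in the lex order given by the red-good condition, each $c_k$ in $\GL$ or $\GL'$. The preceding proposition already provides, for each block, a unique graded simple quotient $L_k$ of $\bar{\Delta}(c_k^{m_k})$; so $\bar{\Delta}(\Bi)$ surjects onto the induction $L_1\circ\cdots\circ L_r$, and it suffices to show this induction has a unique simple quotient $L(\Bi)$.

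\emph{Stage 2 (lex-minimality and the head).} Here is the technical heart. Using the basis of Lemma~\ref{lem:induction-basis} applied to this induction, every word $\Bj$ with $e(\Bj)\bar{\Delta}(\Bi)\neq 0$ arises as a shuffle of the letters of the $c_k^{m_k}$. A direct combinatorial check, using that each $c_k$ is Lyndon and that the $c_k$ are listed in increasing lex order, shows that among these shuffles the plain concatenation $\Bi=c_1^{m_1}\cdots c_r^{m_r}$ is lex-minimal, and that the $\Bi$-weight space of $\bar{\Delta}(\Bi)$ is exactly $e(\Bi)(L_1\boxtimes\cdots\boxtimes L_r)$. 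Any proper submodule $N\subset\bar{\Delta}(\Bi)$ must satisfy $e(\Bi)N\subsetneq e(\Bi)\bar{\Delta}(\Bi)$ (otherwise $N$ contains the generator $v$ and equals the whole module, since $v$ generates by construction), and then the sum of all such $N$ is again proper. This produces the unique maximal submodule, hence the unique simple quotient $L(\Bi)$, proving (1).

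\emph{Stage 3 (classification and kernel).} For (2), let $L$ be any simple graded $\tT$-module and let $\Bi$ be the lex-minimum of its support; by the definition of red-good (and Theorem~5.9 of \cite{WebGT} cited above) this $\Bi$ is red-good. Pick a nonzero $w\in e(\Bi)L$ of minimal $y$-degree; then $\psi_k w=0$ whenever $i_k-1=i_{k+1}$ by lex-minimality applied to the target idempotent $e(s_k\Bi)$, and $y_k w$ lies below $w$ in the weight filtration so is forced to be $0$ by choice of $w$. Universality of $\bar{\Delta}(\Bi)$ then gives a surjection $\bar{\Delta}(\Bi)\twoheadrightarrow L$, so $L\cong L(\Bi)$. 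Non-isomorphism $L(\Bi)\not\cong L(\Bi')$ for distinct red-good $\Bi,\Bi'$ is immediate from lex-minimality. For (3), any composition factor $L(\Bj)$ of $K=\ker(\bar{\Delta}(\Bi)\to L(\Bi))$ satisfies $e(\Bj)K\neq 0$, hence $e(\Bj)\bar{\Delta}(\Bi)\neq 0$, forcing $\Bj\geq \Bi$ by Stage~2; and $\Bj=\Bi$ is ruled out because $e(\Bi)K$ sits inside the kernel of the projection from $e(\Bi)\bar{\Delta}(\Bi)\cong \End(\bar{\Delta}(\Bi))$ onto its unique simple quotient (a matrix ring over a graded local ring, by the previous proposition).

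\emph{Expected main obstacle.} The subtle step is Stage~2: proving that the identity shuffle of Lyndon blocks is lex-minimal, and that it spans the $\Bi$-weight space up to the endomorphism action. In the ordinary KLR setting this is a consequence of Leclerc's shuffle combinatorics for good Lyndon words; for $\tT$ one must verify that passing through red strands (where only ``longest right coset reps which don't cross any red strands'' are allowed in Lemma~\ref{lem:induction-basis}) does not introduce smaller shuffles. Once this combinatorial check is in place, parts (1)--(3) fall out by the formal arguments sketched above.
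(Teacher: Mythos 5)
Your outline follows the same overall strategy as the paper: decompose $\bar{\Delta}(\Bi)$ via Lemma~\ref{lem:delta-induction} into nilHecke blocks, use lex-minimality of the identity shuffle to identify $e(\Bi)\bar{\Delta}(\Bi)$ as a tensor product of $\overline{NH}_{m_r}$'s, deduce a unique head, then characterize all simples by pulling out a highest vector in the lex-minimal weight space. Two places deserve comment.

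First, in Stage 3 your argument for $y_k w = 0$ has the degree running the wrong way. You pick $w$ of \emph{minimal} $y$-degree and assert that $y_k w$ ``lies below $w$ in the weight filtration,'' but a dot has degree $+2$, so $y_k w$ sits \emph{above} $w$. The correct version of this elementary argument is to take $w$ in the top graded degree of the finite-dimensional space $e(\Bi)L$; then $y_k w$ would be in a strictly higher degree of the same weight space and must vanish. That fix is easy, but note the paper actually sidesteps this entirely by a structural argument: since $\End(\Delta(\Bi))\cong e(\Bi)\Delta(\Bi)$ is a matrix ring over a graded local ring (the preceding proposition), $\Delta(\Bi)$ itself has a unique graded simple quotient, which must coincide with the head of $\bar{\Delta}(\Bi)$; hence the surjection $\Delta(\Bi)\twoheadrightarrow L$ automatically factors through $\bar{\Delta}(\Bi)$ ``by simplicity.'' Either route is fine, but you should keep the degree bookkeeping straight.

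Second, and more significantly, you dispose of completeness in (2) with a one-line appeal to the combinatorial characterization of red-good words via \cite[Th.~5.9]{WebGT}. The paper explicitly notes that this citation would suffice, but then deliberately provides a self-contained ``by hand'' reproof: a detailed inductive argument, using the relations \eqref{black-bigon} and \eqref{triple-dumb} (your observations (a) and (b) are missing from your writeup), showing that whenever $\Bj\in\mathcal{N}_L$ fails to be a concatenation of weakly increasing Lyndon words, one can produce a lex-lower $\Bj'\in\mathcal{N}_L$. This manipulation — pushing letters left, applying the braid-type relation to resolve patterns like $(\dots,k,k-1,k,\dots)$ into $(\dots,k-1,k,k,\dots)$ or $(\dots,k,k,k-1,\dots)$, and then iterating — is the technical heart of the paper's proof of (2). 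You identify the shuffle-minimality in Stage 2 as the ``main obstacle,'' but the paper treats that step as routine (a citation to Leclerc). The genuinely nontrivial combinatorics lies in the completeness argument you've outsourced. So your proposal is correct as a citation-based proof, but it misses the piece the paper chose to make self-contained.
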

\begin{proof}\hfill
\begin{enumerate}[wide]
    \item We can write \[\bar{\Delta}(\Bi)=\bar{\Delta}(a_1^{m_1})\circ \cdots \circ \bar{\Delta} (a_p^{m_p})\circ
  \bar{\Delta} (b_1)\circ \cdots \circ \bar{\Delta} (b_{v_n})\] where we assume that $a_1<a_2<\cdots <a_p$ enumerate all elements of $\GL$, and $m_i\geq 0$.  It's an easy combinatorial result (see \cite[Lem. 15]{Lecshuf}) that every non-trivial shuffle of these words is above $\Bi$ in lex order, so $e(\Bi)\bar{\Delta}(\Bi)\cong \overline{NH}_{m_1}\otimes \cdots \otimes \overline{NH}_{m_p}$ as a module over the tensor product over nilHecke algebras.  By the simplicity of this module, we can apply \cite[Lemma 5.9]{Webmerged} again and show that $\bar{\Delta}(\Bi)$ has a unique simple quotient.
  
  \item Let $\mathcal{N}_{L}=\{\Bi\in \Seq(\Bv)\mid e(\Bi)L\neq 0\}$.  As observed above, $\Bi$ is lex-minimal amongst $\Bj$ so that $e(\Bj)\bar{\Delta}(\Bi)\neq 0$.  Thus, it is lex-minimal in $\mathcal{N}_{L(\Bi)}$.  This shows that we can recover $\Bi$ from $L(\Bi)$ and this list is irredundant.  It is complete by \cite[Thm. 5.9]{WebGT}.  Since that proof is a bit indirect, we will include a proof here ``by hand,'' which is not difficult in this special case.  We only need to show that any simple $L$ contains a vector that satisfies the relations \eqref{eq:Delta} and \eqref{eq:bar-Delta} for some word $\Bi$.  This is clear if $\Bi$ is red-good and lex-minimal in $\mathcal{N}_{L}$, since $\psi_kv$ is in the image of a lower lex idempotent if $i_k-1=i_{k+1}$; this defines a map $\Delta(\Bi)\to L$ which factors through $\bar{\Delta}(\Bi)$ with simplicity. Thus, we need only show that if $\Bj\in \mathcal{N}_{L}$ is a word which is not red-good, then there is a lex-lower word $\Bj'\in \mathcal{N}_{L}$.  This repeatedly uses the observations that 
  \begin{enumerate}
      \item if $s\neq r\pm 1$ and $e(\dots, r,s,\dots)L\neq 0$ then $e(\dots, s,r,\dots)L\neq 0$ as well.  This is immediate from the relation (\ref{black-bigon}).
      \item Similarly,  $e(\dots, r,r\pm 1,r,\dots)L\neq 0$ if and only if $e(\dots, r,r,r\pm 1\dots)L\neq 0$ or $e(\dots, r,r,r\pm 1\dots)L\neq 0$.  This follows from (\ref{triple-dumb}) using the fact that the LHS of the equation is non-zero if and only the RHS is.  
  \end{enumerate}  
First, let $k$ be the smallest index such that there are two consecutive 
appearances of $k$ in $\Bj$ with more than one copy of $k-1$ between then, or between the last copy of $k$ and the end of the word.  If there is no appearance of $k-2$ between the first two appearances of $k-1$, then we can use (a) to create a word $(\dots, k,k-1,k-1,\dots)\in \mathcal{N}_{L}$. By (b), we also have $e(\dots, k-1,k,k-1,\dots)\in \mathcal{N}_{L}$, which is lower in lexicographic order than $\Bj$.  
Similarly, if there is a copy of $k-2$ between them, we can obtain an idempotent of the form $(\dots, k,k-1,k-2,k-1,\dots)\in \mathcal{N}_{L}$; by (b), either $(\dots, k,k-1,k-1,k-2,\dots)\in \mathcal{N}_{L}$ or $(\dots, k,k-2,k-1,k-1,\dots)\in \mathcal{N}_{L}$.  In the first case, we have $(\dots, k-1,k,k-1,k-2,\dots)\in \mathcal{N}_{L}$  and in the second case,  we have $(\dots, k-2,k,k-1,k-1,\dots)\in \mathcal{N}_{L}$.  Both of these are lower than $\Bj$ in lex-order.
After finitely many of these moves, we will arrive at a word where there is at most one copy of $k-1$ between any two appearances of $k$, or between the last copy of $k$ and the end of the word.   If we switch any consecutive pair $(\dots, r,s,\dots)$ where $r>s+1$, then we will arrive at a word which is a concatenation of words from $\GL$ and $\GL'$.  

Consider two consecutive such words
$(\dots, r,\dots, s, r',\dots, s',\dots)$.  Note that if $r'<r$, then $r'+1$ and $r'$ violate the interlacing condition, so we must have $r'\geq r$.  If $r'>r$, or $s\leq s'$, then these are correctly lexicographically ordered.  Thus, we need only show that if $r=r'<n$ and $s>s'$, then this is not lex-minimal in $\mathcal{N}_{L}$.  We can apply (a) to move $r'$ to the left and obtain a pattern of the form $(\dots, r,r-1,r,\dots)$.  Since $r<n$, we can apply (b) to find that $(\dots, r-1,r,r,\dots)\in \mathcal{N}_{L}$ or $(\dots, r,r,r-1,\dots)\in \mathcal{N}_{L}$.  The former is lex-lower, so we can assume that $(\dots, r,r,r-1,\dots)\in \mathcal{N}_{L}$.  We can now move the $r-1$ left and apply the same argument again, to show that $(\dots,r,r,r-1,r-1,\dots, s,s,s-1,\dots, s',\dots)\in \mathcal{N}_{L}$.  Now, we apply (b) again to see that $(\dots,r,r,r-1,r-1,\dots, s,s-1, s-2,\dots, s',s,\dots)\in \mathcal{N}_{L}$.  Again, we can repeat this observation to see that $(\dots, r,\dots, s', r,\dots, s,\dots)\in \mathcal{N}_{L}$.  These are now in lexicographic order, and applying this inductively, we can arrive at a red-good word.  
\item The kernel $K$ is killed by $e(\Bj)$ for any word $\Bj\leq \Bi$, so $L(\Bj)$ for such a word cannot be a composition factor.\qedhere
\end{enumerate} 
\end{proof}

\subsection{Computation of characters of simple modules}

A well-known and beautiful property of KLR algebras is that they categorify the canonical basis of $U(\mathfrak{n})$ for the underlying diagram; this was shown by Varagnolo and Vasserot \cite{varagnoloCanonicalBases2011a} for an arbitrary symmetrizable Cartan matrix.  As discussed in \cite[\S 4.2]{brundanHomologicalProperties2014}, this means that the classes of simple modules can be identified with the dual canonical basis of this space and therefore can be computed using an algorithm of LeClerc \cite[\S 5.5]{Lecshuf}.  

Let us describe a generalization of this algorithm for simple modules over $\tT$.  We will avoid any discussion of canonical bases; readers wishing for that perspective can refer to \cite{WebCB}. As shown in \cite[Lemma 1.15]{WebCB}, this canonical basis property is a consequence of a positivity property called {\bf mixedness} (\cite[Def. 1.11]{WebCB}).  We will also not require the details of that definition (which is made to work in greater generality than we need), but we will discuss the components of it relevant here.

First, we give every simple module a preferred grading (in terms of the weight function in \cite[Def. 1.11]{WebCB}, this is the grading with weight 0).  Let us set up basic notation for graded vector spaces.  As usual, we let $V_n$ be the degree $n$ component of a graded vector space $V$, and let $\dim_qV=\sum_{i\in \Z}V_iq^i\in \Z[q,q^{-1}].$ The dual space $V^*=\operatorname{Hom}_{\bbC}(V,\bbC)$ has grading so that $V_n^*=(V^*)_{-n}$; in particular $\dim_qV^*=\overline{\dim_qV}$, where $\bar{q}=q^{-1}$.

The module $\bar{\Delta}(\Bi)$ comes with a natural grading where the degree of $v$ is 0, but this is not the one we will use.  
There is a duality functor $L\mapsto L^*$ on the category of left $\tT$-modules defined by considering duality as $\bbC$-vector spaces and twisting by the anti-automorphism given by reflection in the $x$-axis (denoted $\psi$ in \cite[\S 2.1]{KLI}).  
This has the property that $e(\Bi)L^*=(e(\Bi)L)^{*};$ in particular, each simple $L$ and its dual $L^*$ have the same red-good word and thus are isomorphic.  
However, this is not true as graded modules: $e(\Bi)L(\Bi)$ has a basis indexed by the elements of $S_{m_1}\times \cdots \times S_{m_p}$, with these elements having degree -2 times the length of the corresponding element.  Since the generating function for the elements with respect to length is $\prod_{s=1}^n \frac{1-q^{s}}{1-q}$ (\cite[Th. 2.3]{MR4596199}), we have:
\begin{equation}\label{eq:bad-gd}
\dim_qe(\Bi)L(\Bi)=\prod_{r=1}^\ell\prod_{s=1}^{m_r}\frac{1-q^{-2s}}{1-q^{-2}}.
\end{equation}
Thus, in the obvious grading $e(\Bi)L(\Bi)$ is not self-dual. 

However, we can correct for this.  Writing $L(\Bi)$ as a quotient of $\bar{\Delta}(\Bi)$, we let $v$ denote the (unique) vector in $e(\Bi)L$ satisfying the relations \eqref{eq:Delta} and \eqref{eq:bar-Delta}:
\begin{lemma}
In the grading where $\deg(v)=\sum m_r(m_r-1)/2$, the module $L(\Bi)$ is self-dual as a graded module. 
\end{lemma}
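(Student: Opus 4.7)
The plan is first to show that $L(\Bi)^*$ is a graded shift of $L(\Bi)$, and then to compute the shift by examining $e(\Bi)$ applied to both sides. Since the duality $(-)^*$ is an exact contravariant autoequivalence preserving simplicity, and since $e(\Bj)L^*\cong(e(\Bj)L)^*$ as graded vector spaces, the sets $\mathcal{N}_{L(\Bi)}$ and $\mathcal{N}_{L(\Bi)^*}$ coincide. In particular, $L(\Bi)^*$ has red-good word $\Bi$, so by Theorem \ref{thm:all-simples} there is a unique integer $d$ with $L(\Bi)^*\cong L(\Bi)\langle d\rangle$, and the task reduces to showing $d=0$ in the stated grading.

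To pin down $d$, apply $e(\Bi)$ and compare graded dimensions. On the one hand, $\dim_q e(\Bi)L(\Bi)^*=q^d\dim_q e(\Bi)L(\Bi)$; on the other hand, $\dim_q e(\Bi)L(\Bi)^*=\overline{\dim_q e(\Bi)L(\Bi)}$ directly from the definition of the graded dual. Hence $d$ is uniquely determined by the equation $\overline{\dim_q e(\Bi)L(\Bi)}=q^d\dim_q e(\Bi)L(\Bi)$.

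In the naive grading $\deg(v)=0$, the formula \eqref{eq:bad-gd} identifies $\dim_q e(\Bi)L(\Bi)$ with (a normalization of) the Poincar\'e series of $S_{m_1}\times\cdots\times S_{m_p}$ evaluated at $q^{-2}$. A direct check on each factor using $\overline{[m]_{q^{-2}}!}=q^{m(m-1)}[m]_{q^{-2}}!$ then yields $d=\sum_r m_r(m_r-1)$ in the $\deg(v)=0$ grading. Shifting the whole module uniformly so that $\deg(v)=\sum_r m_r(m_r-1)/2$ halves this shift to $d=0$, giving $L(\Bi)\cong L(\Bi)^*$ as graded modules.

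The main obstacle is really the bar-involution computation on the Poincar\'e series of a product of symmetric groups (which is elementary), together with a small bookkeeping step verifying that uniformly shifting $\deg(v)$ shifts the grading of the entire module by the same amount. This latter point holds because $v$ is a cyclic generator for $L(\Bi)$ and the $\tT$-action is homogeneous, so any reassignment of $\deg(v)$ propagates rigidly through the module.
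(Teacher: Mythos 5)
Your proof is correct and takes essentially the same route as the paper. Both arguments proceed by (i) observing that $L(\Bi)^*$ is a graded shift of $L(\Bi)$ (you invoke irredundancy of the classification by red-good words; the paper invokes the general uniqueness-up-to-shift of gradings on a graded simple — these are the same fact), and then (ii) pinning down the shift by comparing $\dim_q e(\Bi)L(\Bi)$ with its bar-conjugate, using the palindromic Poincar\'e series of $S_{m_1}\times\cdots\times S_{m_p}$. You phrase this as solving for $d$ in $\overline{\dim_q e(\Bi)L(\Bi)}=q^d\dim_q e(\Bi)L(\Bi)$ and then observing a uniform shift by $a$ turns $d$ into $d-2a$; the paper instead verifies directly that the corrected formula \eqref{eq:good-gd} is already bar-invariant. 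These are trivially equivalent reorganizations of the same computation, so there is no genuine difference in approach.
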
From now on, when we write $L(\Bi)$, we consider it as a graded module with this unique self-dual grading, and similarly, give ${\Delta}(\Bi),\bar{\Delta}(\Bi)$ the grading where $\deg(v)=\sum m_r(m_r-1)/2$. 
\begin{proof} Given this new grading, we correct \eqref{eq:bad-gd} to 
\begin{equation}\label{eq:good-gd}
    \dim_qe(\Bi)L(\Bi)=\prod_{r=1}^\ell\prod_{s=1}^{m_r}\frac{q^{s}-q^{-s}}{q-q^{-1}}.
\end{equation}
This is bar-invariant, so $e(\Bi)L(\Bi)$ and $e(\Bi)L(\Bi)^*$ are isomorphic as graded vector spaces.  Since a gradable simple over a graded algebra which is finite-dimensional in each degree has a unique grading up to shift, this completes the proof.  
We can also explicitly check this by showing that the unique line in $e(\Bi)L^*$ which is killed by $\bbC[\mathbf{y}]$ (i.e. the unique functional on $e(\Bi)L$ killing the image of $y_k$ for all $k$) is non-zero on the diagram giving the longest element of $e(\Bi)L(\Bi)$ and so has degree $-\deg(v)+\sum m_r(m_r-1)$.  This induces an isomorphism of graded modules $L(\Bi)\cong L^*(\Bi)$.
\end{proof}
 Following LeClerc \cite[(19)]{Lecshuf}, we let \begin{equation}\label{eq:kappa}
    \kappa_{\Bi}=\dim_qe(\Bi)L(\Bi)=\prod_{r=1}^p[m_r]!\qquad  [m]!=\prod_{s=1}^m\frac{q^{s+1}-q^{-s-1}}{q-q^{-1}}.
\end{equation}

We can refine Theorem \ref{thm:all-simples} by incorporating grading shifts; for a graded module $M$, let $M\{a\}$ be the same underlying module with all degrees shifted upward by $a$.
\begin{theorem}\label{th:positive-shift}
  The graded composition factors of the kernel of the map $\bar{\Delta}(\Bi)\to L(\Bi)$ are all of the form $L(\Bj)\{a\}$ for $a>0$ and $\Bj >\Bi$ in lexicographic order.
\end{theorem}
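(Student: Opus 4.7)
The ungraded part is Theorem \ref{thm:all-simples}(3); only the strict positivity $a>0$ of the grading shifts requires a new argument. Work in the graded Grothendieck group and decompose
\[
[\bar{\Delta}(\Bi)] = \sum_{\Bj,\,a\in\Z} c_{\Bj,a}\,[L(\Bj)\{a\}], \qquad c_{\Bj,a}\in\Z_{\geq 0},
\]
with the goal of showing $c_{\Bj,a}=0$ whenever $(\Bj,a)\neq (\Bi,0)$ and $a\leq 0$.

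First, handle the diagonal: since $e(\Bk)L(\Bj)=0$ for $\Bk<\Bj$ (as $\Bj$ is lex-minimal in $\mathcal{N}_{L(\Bj)}$), applying $e(\Bi)$ to the displayed equation gives $\dim_q e(\Bi)\bar{\Delta}(\Bi)= \bigl(\sum_a c_{\Bi,a}\,q^a\bigr) \cdot \dim_q e(\Bi)L(\Bi)$. By the proof of Theorem \ref{thm:all-simples}(1), $e(\Bi)\bar{\Delta}(\Bi)$ and $e(\Bi)L(\Bi)$ are both isomorphic to $\bigotimes_r\overline{NH}_{m_r}$ as graded modules, with the generator $v$ placed in degree $d_{\Bi}=\sum_r m_r(m_r-1)/2$; hence $c_{\Bi,0}=1$ and $c_{\Bi,a}=0$ for $a\neq 0$.

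For $\Bj>\Bi$, the plan is a bar-involution argument that exploits the self-duality of $L(\Bj)$ and the shifted grading built into $\bar{\Delta}(\Bi)$. Each $L(\Bj)$ has bar-invariant graded character, while $\bar{\Delta}(\Bi)$ does not; the asymmetry is measured precisely by the combinations $c_{\Bj,a}-c_{\Bj,-a}$. One computes the left-hand side explicitly via Lemma \ref{lem:induction-basis} (the character of $\bar{\Delta}(\Bi)$ is a quantum shuffle polynomial in the factorials of \eqref{eq:kappa}) and then inverts by reverse-lex induction on $\Bj$, in the spirit of LeClerc's algorithm for the dual canonical basis.

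The main obstacle is verifying that the inductive correction at each step carries only strictly positive powers of $q$. The cleanest route is to invoke the mixedness property of $\tT$ from \cite[Def.~1.11]{WebCB}, which by \cite[Lem.~1.15]{WebCB} upgrades the self-duality of each $L(\Bj)$ into the canonical-basis positivity that forces $c_{\Bj,a}\neq 0$ only for $a>0$. Combining this with $\Bj>\Bi$ from Theorem \ref{thm:all-simples}(3) completes the proof.
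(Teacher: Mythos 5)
Your proof is correct and ultimately rests on the same nontrivial input as the paper's --- the mixedness property of $\tT$ from \cite[Def.~1.11]{WebCB} --- but the route there is unnecessarily circuitous. The paper's argument is essentially one line: since $\bar{\Delta}(\Bi)$ has a unique simple quotient it is indecomposable, hence a quotient of the indecomposable projective cover of $L(\Bi)$, and the graded composition multiplicities of that projective are identified (via \cite[Cor.~4.12]{WebwKLR}) with Ext groups between perverse sheaves, which purity forces to vanish in degrees $\leq 0$ except for the head. Your bar-involution/LeClerc plan cannot by itself deliver the positivity: bar-invariance of each $\ch L(\Bj)$ only determines the differences $c_{\Bj,a}-c_{\Bj,-a}$ from the bar-asymmetry of $\ch\bar{\Delta}(\Bi)$, and knowing these together with $c_{\Bj,a}\geq 0$ does not force $c_{\Bj,a}=0$ for $a\leq 0$; indeed the well-posedness of the LeClerc-type recursion --- the unique splitting $\alpha_r=\rho_r+\kappa_{\Bi_r}\gamma_r$ with $\gamma_r\in q\Z[q]$ --- presupposes exactly the statement you are trying to prove. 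You recognize this and fall back on mixedness, which is the correct move, but you should then also make explicit the bridge to $\bar{\Delta}(\Bi)$: mixedness gives the positivity for the indecomposable projective, and you still need the (easy but necessary) observation that $\bar{\Delta}(\Bi)$ is a quotient of it --- precisely the step the paper leads with. Your diagonal computation showing $c_{\Bi,a}=\delta_{a,0}$ is a nice independent check, but it is subsumed once the projective-cover reduction is in place, and the whole Grothendieck-group and character machinery can be dropped.
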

This is the only consequence of the mixedness of $\tT$ that we will require.
\begin{proof}
Since $\bar{\Delta}(\Bi)$ has a unique simple quotient, it must be indecomposable, and thus a quotient of the indecomposable projective with head $L(\Bi)$.  All the composition factors of this module are shifted positively in grading by the mixed property of this category of modules (see \cite[Def. 1.11]{WebCB}), shown in the proof of \cite[Th. 8.8]{WebCB}.  This ultimately derives from the fact that the multiplicity of $L(\Bj)\{a\}$ in this projective cover is the dimension of $\operatorname{Ext}^{a}$ between two perverse sheaves on a moduli space of quiver representations (thanks to \cite[Cor. 4.12]{WebwKLR}). 
\end{proof}
 These will ultimately correspond to the dimensions of Gelfand-Tsetlin weight spaces in simple Gelfand-Tsetlin modules. As in \cite{Lecshuf}, we consider the free associative algebra on $\Z$ on symbols $w_1,\dots, w_n$, with $w[\Bi]=w_{i_1}\cdots w_{i_n}$.  
\begin{definition}
  Let $M$ be a finite-dimensional graded $\tT_{\Bv}$-module.  The {\bf character} $\ch(M)$ is the formal sum $\mathcal{F}$ of words $w[\Bi]$, weighted by the graded dimension of $e(\Bi)M$:
  \[\ch(M)=\sum_{\Bi\in \Seq(\Bv)}\dim_{q}(e(\Bi)M)\cdot w[\Bi].\]
\end{definition}
Using Theorems \ref{thm:all-simples} and \ref{th:positive-shift}, we can give an algorithm to calculate the characters of simple $\tT$-modules, which is only a minor modification of that given in \cite[\S 5.5]{Lecshuf}.
For a given $k$, assume that $\ch(L(\Bi_1)),\dots, \ch(L(\Bi_{k-1}))$ are already known.  
Let $\ch_{k}=\ch(\bar{\Delta}(\Bi_k))$, and define $\ch_r$ for $r\in [1,k-1]$ as follows: Let $\alpha_r$ be the coefficient of $w[\Bi_r]$ in $\ch_{r+1}$, and assume that there is a polynomial $\gamma_r\in q\Z[q]$ such that $\rho_r=\alpha_r -\kappa_{\Bi_r}\gamma_r$ is bar-invariant (such a polynomial is necessarily unique).  Then we let $\ch_r=\ch_{r+1}-\gamma_r\ch(L(\Bi_r))$.  
\begin{lemma}
    In this case $\ch(L(\Bi_k))=\ch_0$ and furthermore, $\gamma_r$ is the graded multiplicity of $L(\Bi_r)$ in $\bar{\Delta}(\Bi_k)$ and $\rho_r=\dim_q e(\Bi_r)L(\Bi_k)$.   
\end{lemma}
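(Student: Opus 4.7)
The plan is to reduce the algorithm to two ingredients: the composition-factor description supplied by Theorem~\ref{th:positive-shift}, and the bar-invariance that comes from self-duality of $L(\Bi_k)$. With the convention that the enumeration $\Bi_1,\Bi_2,\ldots$ is taken in descending lexicographic order (so that $\{L(\Bi_r):r<k\}$ is precisely the pool from which composition factors of the kernel of $\bar\Delta(\Bi_k)\twoheadrightarrow L(\Bi_k)$ may be drawn), Theorem~\ref{th:positive-shift} gives in the graded Grothendieck group
\[
[\bar\Delta(\Bi_k)] = [L(\Bi_k)] + \sum_{r=1}^{k-1}\tilde\gamma_r\,[L(\Bi_r)], \qquad \tilde\gamma_r \in q\Z_{\geq 0}[q],
\]
the positivity $\tilde\gamma_r\in q\Z[q]$ reflecting the strict shift $a>0$ in that theorem. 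Passing to characters reduces the lemma to the identifications $\tilde\gamma_r=\gamma_r$ and $\rho_r=\dim_q e(\Bi_r)L(\Bi_k)$.

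Next I would run downward induction on $r$ from $k-1$ to $1$ with hypothesis $\ch_{r+1}=\ch(L(\Bi_k))+\sum_{s=1}^{r}\tilde\gamma_s\ch(L(\Bi_s))$, which holds trivially when $r=k-1$. Extracting the coefficient of $w[\Bi_r]$ from $\ch_{r+1}$ produces
\[
\alpha_r = \dim_q e(\Bi_r)L(\Bi_k) + \sum_{s=1}^{r}\tilde\gamma_s\dim_q e(\Bi_r)L(\Bi_s).
\]
The crucial collapse comes from the defining lex-minimality of red-good words: for $s<r$, $\Bi_r<\Bi_s$ forces $e(\Bi_r)L(\Bi_s)=0$; for $s=r$, $\dim_q e(\Bi_r)L(\Bi_r)=\kappa_{\Bi_r}$ by \eqref{eq:kappa}. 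Thus $\alpha_r=\dim_q e(\Bi_r)L(\Bi_k)+\tilde\gamma_r\kappa_{\Bi_r}$, exactly the decomposition the algorithm requires, once we note that $\dim_q e(\Bi_r)L(\Bi_k)$ is bar-invariant by self-duality of $L(\Bi_k)$ in the preferred grading and that $\tilde\gamma_r\in q\Z[q]$.

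To close, I would verify uniqueness of this decomposition: if $\kappa_{\Bi_r}\gamma$ is bar-invariant for some $\gamma\in q\Z[q]$, then since $\kappa_{\Bi_r}$ is bar-invariant and nonzero we must have $\gamma=\bar\gamma$, forcing $\gamma\in q\Z[q]\cap q^{-1}\Z[q^{-1}]=\{0\}$. This pins down $\gamma_r=\tilde\gamma_r$ and $\rho_r=\dim_q e(\Bi_r)L(\Bi_k)$, advances the inductive hypothesis to $r-1$, and yields $\ch_0=\ch(L(\Bi_k))$ at the end.

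The main obstacle is not any single computation but the careful interlocking of three ingredients: the lex-minimality of red-good words (which isolates the $\tilde\gamma_r\kappa_{\Bi_r}$ summand from the sum), the self-duality grading of $L(\Bi_k)$ (which makes $\rho_r$ bar-invariant), and the strict positivity $a>0$ in Theorem~\ref{th:positive-shift} (which places $\tilde\gamma_r$ in $q\Z[q]$ rather than in a larger subspace of $\Z[q,q^{-1}]$ where the uniqueness step would collapse). Getting any one of these wrong breaks the decomposition step.
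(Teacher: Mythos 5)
Your proof is correct and follows essentially the same strategy as the paper's: downward induction on $r$, using the positive grading shift from Theorem~\ref{th:positive-shift} to place $\gamma_r$ in $q\Z[q]$, lex-minimality of red-good words to collapse the coefficient extraction to $\alpha_r = \rho_r + \kappa_{\Bi_r}\gamma_r$, and self-duality of $L(\Bi_k)$ to make $\rho_r$ bar-invariant so the decomposition is unique. The only cosmetic difference is that you spell out the uniqueness step (which the paper delegates to Leclerc) and phrase the inductive hypothesis as an explicit partial sum rather than a composition-factor sub-multiset; both are equivalent.
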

\begin{proof}
We prove by induction that $\chi_r$ is the sum of the characters of the composition factors of $\bar{\Delta}(\Bi_k)$ which are of the form $L(\Bi_k)$ or $L(\Bi_s)$ for $s<r$.  For $r=k$, this is clear, since all composition factors of $\bar{\Delta}(\Bi_k)$ are of the form $L(\Bi_s)$ for $s<r$.

Now assume that this is true for $r+1$.  Let $\gamma'_r$ be the graded multiplicity of $L(\Bi_r)$ in $\bar{\Delta}(\Bi_k)$, and $\rho_r'=\dim_q e(\Bi_r)L(\Bi_k)$. Note that by the self-duality of $L(\Bi_k)$, we have $\bar{\rho}_r'=\rho'_r$ and by Theorem \ref{th:positive-shift}, we have $\gamma'_r\in q\Z[q]$. We see that the coefficient $\alpha_r$ is the sum of the contributions of $L(\Bi_k)$ and $L(\Bi_r)$, since all the composition factors $L(\Bi_s)$ for $s<r$ are killed by $e(\Bi_r)$.  This gives the equation $\alpha_r=\rho_r'+\kappa_{\Bi_r}\gamma'_r$.  This shows that $\rho_r'$ and $\gamma_r'$ are uniquely characterized by this equation (as argued by Leclerc in \cite[5.5.3]{Lecshuf}), and are the $\rho_r,\gamma_r$ introduced before.  Thus, $\ch_r$ is indeed the sum of the characters of the composition factors of the form $L(\Bi_k)$ or $L(\Bi_s)$ for $s<r$. 
\end{proof}
Applying this lemma inductively gives us an algorithm for computing the character $\ch(L(\Bi))$ for all red-good words:
\begin{enumerate}
    \item Enumerate the red-good words $\Bi_1,\dots, \Bi_{\ell}$ in decreasing lexicographic order.
    \item Calculate the character $\ch(\bar{\Delta}(\Bi_k))$ for each $k$.  This is a straightforward computation using shuffles, though its combinatorial complexity grows quickly.
    \item For each $k=1,\dots, \ell$, we compute the sequence of characters \[\ch_k=\ch(\bar{\Delta}(\Bi_k)),\ch_{k-1},\dots, \ch_0=\ch(L(\Bi_k)).\]
\end{enumerate}
We have written a program in GAP (based on an earlier program written by Brundan) which performs this computation, though unfortunately, it is only practical to use for the number of strands $\leq 11$.  

\subsection{Canonical modules}
\label{sec:canonical-1}

In work of Early, Mazorchuk and Vishnyakova \cite{EMV}, they introduce certain special simple modules over orthogonal Gelfand-Zetlin algebras, which they call {\bf canonical modules}.  As discussed in \cite[\S 2.5]{WebGT}, these modules have counterparts over $\tT$: Consider the submodule $C'(\Bi)$ in the polynomial representation $\poly$ for each $\Bi\in \Seq(\Bv)$ generated by $e(\Bi)$.  By \cite[Lem. 2.24]{WebGT}, this module has a unique simple quotient $C(\Bi)$.   It will be more convenient for us to consider the dual construction working with the restricted dual of $\poly$, the span of the homogeneous elements in the full vector space dual $\poly^*$.  Note that this is naturally a right module, but we use the reflection anti-automorphism to make it a left module; we could instead construct a right polynomial representation and dualize that to obtain a left module, which will be different from $\poly^*$ and potentially result in a different notion of canonical module (this is one of many points where one needs to make choices in this definition, raising some questions about the name ``canonical'').  

For each $\Bi$, we have a unique element $\ev_{\Bi}\in \poly^* $ of degree 0 which sends $e(\Bj)\mapsto \delta_{\Bi,\Bj}$.  Let $N'(\Bi)$ be the submodule generated by $\ev_{\Bi}$; by an argument as in \cite[Prop. 3.6]{EMV}, this has a unique simple quotient $N(\Bi)$.  
\begin{lemma}
    The simple modules $C(\Bi)$ and $N(\Bi)$ are canonically dual, and thus (non-canonically) isomorphic.
\end{lemma}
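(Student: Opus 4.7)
The plan is to use the evaluation pairing $\langle \phi, v \rangle = \phi(v)$ between $\poly^*$ and $\poly$ to build a canonical nonzero $\tT$-module map $\gamma\colon N'(\Bi) \to C(\Bi)^\vee$, from which both the canonical duality and the (non-canonical) isomorphism will follow. By the definition $(x \cdot \phi)(v) = \phi(\psi(x) v)$ of the left action on $\poly^*$, where $\psi$ is the reflection anti-automorphism, this pairing is contragredient, so the rule $\phi \mapsto \phi|_{C'(\Bi)}$ defines a $\tT$-equivariant map $\gamma\colon N'(\Bi) \to C'(\Bi)^\vee$, with $\vee$ denoting the restricted graded dual twisted by $\psi$. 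This map is nonzero because $\gamma(\ev_{\Bi})(e(\Bi)) = 1$.

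The crucial step is to show that $\gamma$ actually lands in the simple submodule $C(\Bi)^\vee \subseteq C'(\Bi)^\vee$ of functionals annihilating the maximal submodule $K_C$. It suffices to verify that $\ev_{\Bi}$ itself vanishes on $K_C$, since then for any $x \in \tT$ and $k \in K_C$, $(x \ev_{\Bi})(k) = \ev_{\Bi}(\psi(x) k) = 0$ because $\psi(x) k \in K_C$. As $\ev_{\Bi}$ has degree zero, I only need to verify $\ev_{\Bi}|_{(K_C)_0} = 0$. Any $k \in (K_C)_0 \subseteq (\poly)_0 = \bigoplus_{\Bj} \bbC \cdot e(\Bj)$ can be written $k = \sum_\Bj c_\Bj e(\Bj)$, and its image vanishes in $C(\Bi) = C'(\Bi)/K_C$; the idempotent decomposition $C(\Bi) = \bigoplus_\Bj e(\Bj) C(\Bi)$ then forces each $c_\Bj \cdot [e(\Bj)] = 0$ separately. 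Since the image $[e(\Bi)]$ of the generator of $C'(\Bi)$ is nonzero in $C(\Bi)$, we conclude $c_\Bi = 0$, so $\ev_{\Bi}(k) = 0$.

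Finally, $C(\Bi)^\vee$ is simple (duality preserves simplicity for graded modules with finite-dimensional graded pieces), so the cyclic submodule $\tT \cdot \gamma(\ev_{\Bi}) \subseteq C(\Bi)^\vee$ equals all of $C(\Bi)^\vee$, making $\gamma$ a surjection $N'(\Bi) \twoheadrightarrow C(\Bi)^\vee$. Its kernel is a proper submodule of $N'(\Bi)$, hence contained in the unique maximal submodule $K_N$, and simplicity of the quotient forces this inclusion to be an equality. This yields the canonical isomorphism $N(\Bi) = N'(\Bi)/K_N \xrightarrow{\sim} C(\Bi)^\vee$ and hence a canonical non-degenerate pairing $N(\Bi) \otimes C(\Bi) \to \bbC$. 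The non-canonical isomorphism $N(\Bi) \cong C(\Bi)$ then follows because every graded simple $\tT$-module admits a self-dual grading, as observed earlier for the simples $L(\Bi)$. The main obstacle is spotting the vanishing $\ev_{\Bi}|_{K_C} = 0$; once one combines the degree-zero reduction with the idempotent decomposition of $C(\Bi)$, this becomes immediate.
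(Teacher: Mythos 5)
Your proof is correct and takes essentially the same approach as the paper: you both derive the canonical pairing from the observation that $\ev_{\Bi}$ kills $K_C = \ker(C'(\Bi)\to C(\Bi))$ by a homogeneity argument in degree zero, with the only difference being that you build the map $N'(\Bi)\twoheadrightarrow C(\Bi)^\vee$ and identify its kernel with the radical of $N'(\Bi)$, while the paper builds the dual map $C(\Bi)\hookrightarrow N'(\Bi)^*$ and identifies its image with the socle. Your spelled-out verification that $\ev_{\Bi}$ vanishes on $(K_C)_0$ via the idempotent decomposition of $C(\Bi)$ is a welcome elaboration of the paper's terser remark that $K_C\cap\bbC[\mathbf{Y}]$ lives in positive degree.
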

\begin{proof}
    Note that the kernel of the map $C'(\Bi)\to C(\Bi)$ is a homogeneous space, and thus its intersection with $\bbC[\mathbf{Y}]$ is spanned by positive degree elements.  That is, this kernel is killed by $\ev_{\Bi}$; since it is a submodule, it has trivial pairing with $N'(\Bi)$.  Thus, we have an induced map $C(\Bi)\to N'(\Bi)^*$, which is non-zero because $\ev_{\Bi}$ and $e(\Bi)$ have non-trivial pairing.  The image of this map is thus a simple submodule isomorphic to $C(\Bi)^*$.  On the other hand, since $N(\Bi)$ is the unique simple quotient of $N'(\Bi)$, its dual is the socle of $N'(\Bi)^*$, which must be the image of $C(\Bi)$.
\end{proof}

Now, assume that $\Bi$ is a red-good word.  
\begin{theorem}\label{lem:good-canonical}
  The element $\ev_{\Bi}$ in $N(\Bi)$ satisfies the relations \eqref{eq:Delta} and \eqref{eq:bar-Delta}, and thus defines an isomorphism $L(\Bi)\cong N(\Bi).$
\end{theorem}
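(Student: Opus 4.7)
My plan is to verify directly that $\ev_{\Bi}$, regarded as an element of $N(\Bi)$, satisfies the three defining relations \eqref{eq:Delta} and \eqref{eq:bar-Delta} that characterize the cyclic generator $v$ of $\bar{\Delta}(\Bi)$. Once these are in hand, universality produces a $\tT$-module map $\bar{\Delta}(\Bi) \to N(\Bi)$ sending $v \mapsto \ev_{\Bi}$; because $\ev_{\Bi}$ generates $N'(\Bi)$ it has nonzero image in the simple quotient $N(\Bi)$, so the map is surjective, and Theorem \ref{thm:all-simples} then identifies this simple quotient with $L(\Bi)$.

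The key simplification is that the reflection anti-automorphism used to make $\poly^*$ into a left $\tT$-module fixes each of the relevant generators $e(\Bi)$, $y_k$, and $\psi_k$ individually, since the corresponding diagrams are invariant under vertical flip. Hence the left action on the restricted dual takes the transparent form $(a \cdot f)(x) = f(a \cdot x)$, and every check reduces to a calculation on $\poly$. The first relation $e(\Bi) \cdot \ev_{\Bi} = \ev_{\Bi}$ is then immediate: $e(\Bi)$ acts on $\poly$ as projection to the summand $\bbC[Y_1,\dots,Y_V]e(\Bi)$, while $\ev_{\Bi}$ was defined to vanish off this summand, so both sides agree on every input.

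The remaining two relations follow from a single grading argument. The dot $y_k$ has degree $2$, and when $i_k = i_{k+1} + 1$ the crossing $\psi_k$ has degree $1$ (the case of labels on adjacent Dynkin nodes). Since $\ev_{\Bi}$ lies in degree $0$ of $\poly^*$ under the convention $V_n^* = (V^*)_{-n}$, the elements $y_k \cdot \ev_{\Bi}$ and $\psi_k \cdot \ev_{\Bi}$ are homogeneous of degrees $2$ and $1$ in the dual, and hence as linear functionals they are supported only on pieces of $\poly$ in degrees $-2$ and $-1$. Since $\poly = \bigoplus_{\Bi}\bbC[Y_1,\dots,Y_V]e(\Bi)$ is non-negatively graded, these pieces are zero, so both functionals vanish identically.

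There is no substantial obstacle here; the hardest part is really just ensuring that the anti-automorphism introduces no sign corrections, which is exactly the content of the first observation above. As a sanity check, the $\psi_k$ case also admits a hands-on verification: among summands $f(Y) e(\Bj)$ of an input, only $\Bj = s_k(\Bi)$ can contribute, and Definition \ref{def:poly-rep} then places us in its ``$i_k + 1 = i_{k+1}$'' subcase, producing an output with an explicit factor $(Y_k - Y_{k+1})$ that has positive polynomial degree and so pairs to zero with $\ev_{\Bi}$. Combined with universality and Theorem \ref{thm:all-simples}, this completes the proof of $L(\Bi) \cong N(\Bi)$.
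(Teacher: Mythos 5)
Your proof is correct and takes essentially the same approach as the paper's: both checks reduce to the observation that $\ev_{\Bi}$ annihilates positive-degree elements of $\poly$. The only small difference is that you dispatch the $\psi_k$-relation purely by the degree count ($\deg\psi_k = 1$ when $i_k - 1 = i_{k+1}$), whereas the paper writes out $\psi_k f(\mathbf{Y}) e(s_k\Bi) = (Y_{k+1}-Y_k)f^{s_k}e(\Bi)$ and notes the positive-degree factor; your closing ``sanity check'' is exactly that calculation.
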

\begin{proof}
    The relations \eqref{eq:bar-Delta} are obvious: any polynomial in the image of multiplication by $Y_i$ has trivial evaluation at the origin.  
    
    We wish to check \eqref{eq:bar-Delta}. We assume that $i_k-1=i_{k+1}$ and consider the evaluation of $\psi_k\ev_{\Bi}$ against $f(\mathbf{Y})\cdot e(s_k\Bi)$.  By definition, this is the same as the evaluation at 0 of \[\psi_kf(\mathbf{Y})\cdot e(s_k\Bi)=(Y_{k+1}-Y_k)f^{s_k}e(\Bi).\] Thus, we obtain 0, showing that $\ev_{\Bi}$ satisfies the desired relations.  This defines a map $\bar\Delta(\Bi)\to N(\Bi)$ which gives the desired isomorphism.  
\end{proof}
This shows that every simple $\tT$ module is a canonical module.
We can determine the canonical modules for all other cases by the following proposition:
\begin{lemma}\label{lem:canonical-moves}
If the words $\Bi$ and $\Bj$ differ by transformations of one of the forms
     \begin{equation}\label{eq:canonical-1}
         \Bi=(\dots, r,s,\dots) \qquad \Bj=(\dots,s,r,\dots)\quad \text{  with $r\neq s\pm 1$,}  
     \end{equation}
          \begin{equation}\label{eq:canonical-2}
     \Bi=(\dots, r,r-1,r \dots)\qquad \Bj=(\dots,r,r,r-1,\dots) \quad \text{ with }1\leq r<n     \end{equation}\begin{equation}\label{eq:canonical-3}
   \Bi=(\dots, r,r+1,r \dots)\qquad \Bj=(\dots,r+1,r,r,\dots)\quad \text{ with }1\leq r<n    
 \end{equation}then we have $N(\Bi)\cong N(\Bj)$.
\end{lemma}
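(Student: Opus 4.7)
The plan is to prove, in each case, that the cyclic submodules $N'(\Bi),N'(\Bj)\subset\poly^*$ generated by $\ev_\Bi$ and $\ev_\Bj$ actually coincide; since $N(\Bi),N(\Bj)$ are the unique simple quotients of these cyclic submodules, equality $N'(\Bi)=N'(\Bj)$ immediately forces $N(\Bi)\cong N(\Bj)$. I will exhibit in each case explicit elements of $\tT$ that act on $\ev_\Bi$ to produce $\ev_\Bj$ and vice versa, by direct dualisation of the action in Definition \ref{def:poly-rep} through the reflection anti-involution.

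For (\ref{eq:canonical-1}), since $r\neq s,s\pm1$, the ``otherwise'' case of the crossing rule applies in both directions and the dualisation is immediate: $\psi_k\cdot\ev_\Bi=\ev_\Bj$ and, by symmetry, $\psi_k\cdot\ev_\Bj=\ev_\Bi$. For (\ref{eq:canonical-2}), place the triple at positions $k,k+1,k+2$. Since the labels of $\Bi$ at $k+1,k+2$ are $r-1,r$, the ``adjacent $+1$'' rule applies to $\psi_{k+1}$ on $e(\Bi)$, and a direct computation on the pairing yields $\psi_{k+1}\cdot\ev_\Bi=\ev_\Bj$. For the reverse, the labels of $\Bj$ at $k,k+1$ are both $r$, so $\psi_k$ acts there by the divided difference $\partial_k$; consequently $\psi_k\cdot\ev_\Bj$ is the nonzero functional $g\,e(\Bj)\mapsto(\partial_k g)(0)$. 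Applying $\psi_{k+1}$ on top and pairing against $g\,e(\Bi)$ produces, via the ``adjacent $+1$'' rule, the quantity $\partial_k\bigl((Y_{k+1}-Y_{k+2})g^{(k+1,k+2)}\bigr)$ evaluated at the origin; the twisted Leibniz rule combined with $\partial_k(Y_{k+1}-Y_{k+2})=-1$ and the vanishing of $Y_k-Y_{k+2}$ at the origin collapses this to $-g(0)$, giving $\psi_{k+1}\psi_k\cdot\ev_\Bj=-\ev_\Bi$. Case (\ref{eq:canonical-3}) is the mirror calculation: $\psi_k\cdot\ev_\Bi=\ev_\Bj$ and $\psi_k\psi_{k+1}\cdot\ev_\Bj=-\ev_\Bi$.

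The first case is essentially formal. The main obstacle in (\ref{eq:canonical-2}) and (\ref{eq:canonical-3}) is that the naive attempt to invert $\psi_{k+1}\cdot\ev_\Bi=\ev_\Bj$ by reapplying $\psi_{k+1}$ to $\ev_\Bj$ returns zero, since every $Y_i$ annihilates $\ev_\Bj$; one is forced to pre-compose with a same-label crossing whose divided-difference action is exactly calibrated to reduce the $(Y_{k+1}-Y_{k+2})$ prefactor produced by the ``adjacent $+1$'' rule to a nonzero constant. One also has to check that no idempotent $e(\Bj')$ other than $e(\Bi)$ and $e(\Bj)$ contributes to the pairing, which is immediate since each crossing permutes only two adjacent positions, leaving only $\Bj'\in\{\Bi,\Bj\}$ as candidates whose image meets the relevant component.
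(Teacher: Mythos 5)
Your proposal is correct and follows essentially the same route as the paper's own proof: both reduce to showing $N'(\Bi)=N'(\Bj)$ and do so by dualizing explicit one- and two-crossing diagrams through the reflection anti-involution, with the same divided-difference cancellation trick in cases \eqref{eq:canonical-2}--\eqref{eq:canonical-3}. The sign $-1$ you obtain for $\psi_{k+1}\psi_k\cdot\ev_{\Bj}$ is actually the correct one given the conventions of Definition \ref{def:poly-rep} (the paper's displayed sign appears to have a harmless error), and this is of course immaterial since any nonzero multiple of $\ev_{\Bi}$ suffices.
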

\begin{proof}
    Of course, it's enough to show that $N'(\Bi)=N'(\Bj)$ in each case, or equivalently that $\ev_{\Bj}\in N'(\Bi)$ and $\ev_{\Bi}\in N'(\Bi)$.  In the case of \eqref{eq:canonical-1}, we just have that $\psi_k\ev_{\Bi}=\ev_{\Bj}$ where $k$ is the first index shown, so the result is clear.

    In the  case of \eqref{eq:canonical-2}, we note that $\psi_{k+1}(f\cdot e(\Bj))=f^{s_{k+1}}\cdot e(\Bi)$ and \[\psi_k\psi_{k+1}(f\cdot e(\Bi))=\psi_k(Y_{k+2}-Y_{k+1})f^{s_{k+1}}e(\Bj)=(Y_{k+2}-Y_k)\psi_k(fe(\Bj))+f^{s_{k+1}}e(\Bj).\]
    Dualizing, these calculations show that \[\psi_{k+1}\ev_{\Bi}=\ev_{\Bj} \qquad \psi_{k+1}\psi_{k}\ev_{\Bj}=\ev_{\Bi}.\]  
    A similar calculation for \eqref{eq:canonical-3} shows that in this case 
    \[\psi_{k}\ev_{\Bi}=\ev_{\Bj} \qquad \psi_{k}\psi_{k+1}\ev_{\Bj}=\ev_{\Bi}.\qedhere\]  
\end{proof}

\begin{remark}
As mentioned above, the asymmetry between $i+1$ and $i-1$ here is a reflection of our conventions.  In particular, the polynomial representation $\poly$ from Definiton \ref{def:poly-rep} breaks the symmetry between these, or put differently, depends on the orientation of the underlying Dynkin diagram.  We could swap the signs in equations \eqref{eq:canonical-2} and \eqref{eq:canonical-3} by choosing a different polynomial representation or various other adjustments of conventions.
\end{remark}

Let us describe how these results can be used to reduce the general case to the case of a red-good word.  
\begin{theorem}
    We have $N(\Bi)\cong N(\Bj)$ if and only if $\Bi$ and $\Bj$ differ by a finite chain of the moves in \eqref{eq:canonical-1}--\eqref{eq:canonical-3}.
\end{theorem}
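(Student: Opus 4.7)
The ``if'' direction is immediate from Lemma \ref{lem:canonical-moves}. For the ``only if'' direction, my plan is to show that every word $\Bi$ is move-equivalent to a unique red-good word $\mathbf{k}(\Bi)$, which must then be the red-good label of the simple module $N(\Bi)$. Granted this, $N(\Bi)\cong N(\Bj)$ forces $\mathbf{k}(\Bi)=\mathbf{k}(\Bj)$ by Theorem \ref{thm:all-simples}(2) combined with Lemma \ref{lem:good-canonical}, and concatenating $\Bi\sim\mathbf{k}(\Bi)=\mathbf{k}(\Bj)\sim\Bj$ produces the required chain of moves.

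The reduction of an arbitrary word to a red-good one would proceed by induction on lexicographic order: if $\Bi$ is red-good there is nothing to do, and otherwise I would produce some $\Bi'\sim\Bi$ with $\Bi'$ strictly lex-smaller, whence induction applies. The production of $\Bi'$ is essentially the reduction algorithm already embedded in the proof of Theorem \ref{thm:all-simples}(2): the operation there labeled (a) is precisely move \eqref{eq:canonical-1}, and the two instances of operation (b) correspond to \eqref{eq:canonical-2} and \eqref{eq:canonical-3}. For example, starting from a pattern $(\dots,k,k-1,k-2,k-1,\dots)$, applying \eqref{eq:canonical-2} to the triple $(k-1,k-2,k-1)$ yields $(\dots,k,k-1,k-1,k-2,\dots)$, after which \eqref{eq:canonical-3} applied to $(k,k-1,k-1)$ produces the strictly lex-smaller $(\dots,k-1,k,k-1,k-2,\dots)$; parallel combinations cover both the no-$(k-2)$ case and the final phase of reordering $\GL$-factors that are already locally decreasing.

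The main obstacle, and the only real subtlety, is reconciling the determinism of our moves with the non-deterministic ``or'' appearing in step (b) of the proof of Theorem \ref{thm:all-simples}(2): that step produces \emph{one} of two possible rearrangements of $(\dots,r,r\pm 1,r,\dots)$ inside $\mathcal{N}_L$, whereas \eqref{eq:canonical-2} and \eqref{eq:canonical-3} commit to one specific rearrangement each. Here the key is that the moves of Lemma \ref{lem:canonical-moves} are genuine equalities $N'(\Bi)=N'(\Bj)$ of submodules of $\poly^*$, so the rearrangement selected by one of our moves is automatically a legitimate element of $\mathcal{N}_{N(\Bi)}$; whichever branch of the ``or'' the earlier proof chose existentially, the branch we follow with our moves is a consistent alternative. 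Carrying out the resulting case analysis (routine but combinatorially heavy) shows that the lex-reduction terminates at a red-good word, and Theorem \ref{thm:all-simples}(2) guarantees this red-good word is unique in its equivalence class.
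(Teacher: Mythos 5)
Your proposal takes essentially the same route as the paper's proof: reduce an arbitrary word to a red-good one by induction on lexicographic order, using \eqref{eq:canonical-1}--\eqref{eq:canonical-3} in parallel with the reduction from the proof of Theorem \ref{thm:all-simples}(2), then conclude via Theorem \ref{thm:all-simples}(2) and Lemma \ref{lem:good-canonical}. The one place where you go beyond the paper is in flagging the mismatch between the deterministic canonical moves and the non-deterministic ``or'' of observation (b); the paper sidesteps this by simply writing down a specific chain of moves in each case, and your example $(\dots,k,k-1,k-2,k-1,\dots)\to(\dots,k-1,k,k-1,k-2,\dots)$ reproduces exactly that chain. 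The only substantive difference is one of completeness: you assert rather than carry out the second-stage case analysis (reordering the $\GL$/$\GL'$ factors once the word is a concatenation of such), which the paper does spell out with its explicit sequence of applications of \eqref{eq:canonical-2} and \eqref{eq:canonical-1}. So the logic is sound but the combinatorics need to be written out to constitute a complete proof.
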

\begin{proof}
Lemma \ref{lem:canonical-moves} covers the ``if'' direction.  
By Theorem \ref{thm:all-simples}(2) and Lemma \ref{lem:good-canonical}, if $\Bi$ is arbitrary, there is some red-good $\Bj$ so that $N(\Bi)\cong L(\Bj)\cong N(\Bj)$.  Thus, we need only prove that $\Bi$ can be transformed by a chain of these moves to be red-good.  We can prove this by induction on lexicographic order: we need only show that if $\Bi$ is not red-good, then it is equivalent by a chain of moves to a lex-lower word.  
  This follows the  pattern of the proof of Theorem \ref{thm:all-simples}(2). As before, choose $k$ minimal so that there is a pair of $k+1$'s (or the last $k+1$ and the end of the word) with more than one $k$ between them.  Applying \eqref{eq:canonical-1}, we can push the $k$'s to the left, so we have either $(\dots, k+1,k,k,\dots)$ or $(\dots, k+1,k,k-1,k,\dots)$.  We can apply \eqref{eq:canonical-3} in the first case or \eqref{eq:canonical-2} followed by \eqref{eq:canonical-3} in the second to transform these to $(\dots, k,k+1,k,\dots)$ or $(\dots, k,k+1,k,k-1,\dots)$, which are lex lower.
  
   This allows us, as in the proof of Theorem \ref{thm:all-simples}(2), to reduce to the case where we have a concatenation of words in $\GL$ and $\GL'$ with their first entries weakly increasing.  Thus, just as in that proof, we only need to deal with the case of two consecutive words $(\dots, r,r-1,\dots ,s,r,\dots s,s-1,\dots)$.  Moving the second appearance of $r$ right and applying \eqref{eq:canonical-2}, then moving the second instance of $r-1$, etc., we see that this has the same canonical simple as $(\dots, r,r,r-1,r-1,\dots, s,s,s-1,\dots )$.  Now, we start applying \eqref{eq:canonical-2} to the block $(s,s,s-1)$, and then to the block $(s+1,s+1,s)$, and so on up to $(r,r,r-1)$.  After several more applications of \eqref{eq:canonical-1}, we arrive at $(\dots, r,r-1,\dots ,s,s-1,r,\dots s,\dots)$, which is lower in lexicographic order.  This completes the proof.
\end{proof} 

\begin{example}
  Consider the word $(4,4,3,2,1,2,3,5,4,4,5,5,3,5,5)$.  Let us apply the arguments above in this case.  We will perform \eqref{eq:canonical-1} freely between lines, and underline the portion of the word where we are doing a move.
  \begin{align*}
      (4,4,\uline{3,2,1,2},3,5,4,4,5,5,3,5,5)&\to (4,4,\uline{2,3,2,1},3,5,4,4,5,5,3,5,5)\\
       (2,4,\uline{4,3,2,3},1,5,4,4,5,5,3,5,5)&\to (2,4,\uline{3,4,3,2},1,5,4,4,5,5,3,5,5)\\
       (2,4,3,4,3,2,1,\uline{5,4,4},3,5,5,5,5)&\to
       (2,4,3,4,3,2,1,\uline{4,5,4},3,5,5,5,5)
  \end{align*}
We have now finished the first stage of the proof, and indeed the resulting word is a concatenation of words in $\GL$ and $\GL'$.  However, they are not weakly increasing in lex-order.  So, we need to apply the transformations:
 \begin{align*}
       (2,\uline{4,3,4},3,2,1,4,5,4,3,5,5,5,5)&\to
       (2,\uline{4,4,3},3,2,1,4,5,4,3,5,5,5,5)\\
       (2,4,4,\uline{3,3,2},1,4,5,4,3,5,5,5,5)&\to
       (2,4,4,\uline{3,2,3},1,4,5,4,3,5,5,5,5)\\
       (2,\uline{4,4,3},2,1,3,4,5,4,3,5,5,5,5)&\to
       (2,\uline{4,3,4},2,1,3,4,5,4,3,5,5,5,5)
  \end{align*}
  After a final application of \eqref{eq:canonical-1}, we arrive at $(2,4,3,2,1,4,3,4,5,4,3,5,5,5,5)$, a red-good word obtained by concatenating the words $(2),(4,3,2,1),(4,3),(4)$ in $\GL$ and $(5,4,3),(5),(5),$ $(5),(5)$ in $\GL'$.
\end{example}

 \section{Gelfand-Tsetlin modules}

\subsection{Equivalence to KLR} Both \cite{KTWWYO} and \cite{WebGT} show an equivalence from a block of the category of Gelfand-Tsetlin modules to the KLRW algebra.  Here we will give a slightly different description of the equivalence.  The works cited above both had to use some complex auxiliary constructions to construct the equivalence. Thus, we wanted to give as ``down to earth'' a description of this equivalence as we could manage.  While we are primarily interested in the case of $\mcU(\gl_n)$, almost everything in the section below applies to general OGZ algebras $\mcU$, so we will work in this generality for some dimension vector $\Bv=(v_1,\dots, v_n)$, with $\Omega=\{(i,j) | 1\leq j\leq v_i\}$.   In particular, we let $Z_n$ denote the symmetric polynomials in the variables $\la_{n,1},\dots, \la_{n,v_n}$. 

Throughout this section, we fix $\chi$, an integral element of $\MaxSpec(Z_n)$.  Consider $\gamma,\gamma'\in \MaxSpec(\Gamma)_{\Z,\chi}$ with $\mathbf{a}$ and $\mathbf{a}'$  the dominant lifts of these weights, with \[a'_{\ell m}=a_{\ell m}\pm\delta_{\ell i}\delta_{mj} \] for some $(i,j)\in \Omega$.  By the assumption of dominance, $a_{i m}=a'_{im}\leq a_{i j}$ if $m<j$, and $a_{i m}=a'_{im}\geq  a'_{i j}$ if $m>j$.

Since $\gamma'$ only depends on the choice of $\gamma$ and $a_{ij}$, we can write $\gamma_a=\gamma'$ if $a_{ij}=a$, with this symbol undefined if $a$ doesn't have this form for $(i,j)\in \Omega$
We can break $X^{\pm}_i$ into a sum $\sum_{a\in \bbC} X^{\pm}_{i,a}$ where $X^{\pm}_{i,a}=\sum_{\gamma\in \MaxSpec(\Gamma)}X^{\pm}_i(\gamma,\gamma_a)$. 

The respective completions of $\Gamma$ at $\gamma$ and $\gamma'$ act as endomorphisms of the corresponding objects in $\Cat$.  
Consider the rational function \[\Phi_{i,j,\Ba}^{\pm}(\boldsymbol{\la})=\frac{\prod_{a_{i,m}\neq a_{i,j}}(\lambda_{im} - \lambda_{ij} ) }{\prod_{a_{i\pm 1,m}\neq a_{i,j}}(\lambda_{i\pm 1 ,m} - \lambda_{ij} )}\] as an element of the ring $\widehat{\Lambda}_{\Ba}$. Note that these are exactly the factors of the Gelfand-Tsetlin formulas that are invertible when specialized at $a_{**}$.  
It will be useful for us to identify the rings $\widehat{\Lambda}_{\Ba}$ and $\widehat{\Lambda}_{\Ba'}$ with each other via the shift map $\delta^{\pm}$, and use $\Phi_{i,j,\Ba}^{\pm}$ to denote the corresponding element of $\widehat{\Lambda}_{\Ba'}$ as well.

Recall the equality of power series:
\begin{multline}\label{eq:power-series}
1+\lambda_{ij}u+\lambda_{ij}^2u^2+\cdots=    \frac{1}{1-\lambda_{ij}u}
=\frac{\prod_{m< j}(1-\lambda_{ij}u)}{\prod_{m\leq j}(1-\lambda_{ij}u)}\\=1+\cdots + \Big(\sum_{k=0}^p (-1)^ke_k(\lambda_{i1},\dots \lambda_{i,m-1})h_{p-k}(\lambda_{i1},\dots \lambda_{i,m})\Big)u^p+\cdots
\end{multline}

This is a useful factorization since while $\Phi_{i,j,\Ba}^{\pm}(\boldsymbol{\la})$ is not invariant under $W_{\Ba}$ or $W_{\Ba'}$, this shows how to write it as an element of the subring generated by the invariants of both of these rings, since $e_k(\lambda_{i1},\dots \lambda_{i,m-1})$ lies in $\Lambda_{\Ba'}^{W_{\Ba'}}$, and $h_{p-k}(\lambda_{i1},\dots \lambda_{i,m})$ in $\Lambda_{\Ba}^{W_{\Ba}}$, and we can apply this formula to every appearance of $\lambda_{ij}-a_{ij}$ in the power series expansion of $\Phi_{i,j,\Ba}^{\pm}$.  

That is, this gives a formula 
\begin{equation}\label{eq:fg}
    \Phi_{i,j,\Ba'}^{\pm}(\boldsymbol{\la})=\sum_{r=1}^s f_r^{\pm}\delta^{\pm}_{ij}(g_r^{\pm}),
\end{equation} with $f_r\in \widehat{\Gamma}_{\gamma'}$ and $g_r\in \widehat{\Gamma}_{\gamma}$. Note that these are independent of the choice of $j$.

\begin{definition}\label{def:tildeX}
  Let \[\tilde{X}_{i}^{\pm}(\gamma,\gamma')=\sum_{r=1}^s f_r^{\pm}{X}_{i}^\pm(\gamma,\gamma')g_r ^{\pm}.\]
\end{definition}

This definition might require a bit of explanation.  This slightly strange formula looks much clearer in the completed polynomial representation of $\mcU$, where by \eqref{eq:Mazorchuk}, the morphisms $\mp\tilde{X}_{i}^{\pm}(\gamma,\gamma')$ act by the formulae
\begin{align}\notag
       \mp \tilde{X}_{i}^\pm &=\sum_{a'_{i,p}=a\pm 1}\sum_{r=1}^s\mp \frac{\prod_{m}(\lambda_{i\pm 1 ,m} - \lambda_{ip} )}{\prod_{m \neq p}(\lambda_{im} - \lambda_{ip} ) } f_r\delta_{ip}^\pm g_r\\ \label{eq:tildesum}
        &=\sum_{a'_{i,p}=a\pm 1} \frac{\displaystyle\prod_{a'_{i\pm 1,m}= a'_{i,p}}(\lambda_{i\pm 1 ,m} - \lambda_{ip} )}{\displaystyle\prod_{\substack{m\neq p\\a'_{i,m} = a'_{i,p}}}(\lambda_{im} - \lambda_{ip} ) } \delta_{ip}^\pm 
    \end{align} 
    In essence, $\tilde{X}_{i}^\pm $ is what we have left when we consider the formulae \eqref{eq:Mazorchuk} and remove as many polynomial factors as we can do safely with the left and right actions of $\Gamma$.  
    
    It is helpful to think about this in the case where $\gamma$ and $\gamma'$ correspond to Gelfand-Tsetlin patterns in the usual sense (i.e. they appear in the spectral decomposition of a finite-dimensional module).  In this case, these operators just act by shift in the completed polynomial representation, and thus, they give a canonical set of isomorphisms between all the GT weight spaces in a finite-dimensional representation.  This is usually represented by choosing a basis with one vector in each weight space, in which case $\tilde{X}_{i}^\pm $ just sends basis vectors to basis vectors. 
    
    Thus, one can translate the question of defining a Gelfand-Tsetlin module into one of defining how $\tilde{X}_{i}^\pm $ acts.  In cases where the values of $a_{ij}$ and $a_{k\ell}$ are far apart, we will see behavior like we discussed for finite-dimensional representations, and $\tilde{X}_{i}^\pm$ will be an isomorphism between GT weight spaces.  In other cases, it will not be, and one has to track the relations between these morphisms.  As we will see below, the algebra $\tT$ is an algebraic object which precisely does this. 

Let us make this connection more precise.  Given $\gamma\in \MaxSpec_{\Z,\chi}(\Gamma)$, there is a unique minimal partial preorder on the set $\Omega$ such that:
\begin{enumerate}
    \item If $a_{i,j}<a_{k,m}$, then $(i,j)\prec (k,m)$.
    \item If $a_{i,j}=a_{k,m}$ with $k<i$, then $(i,j)\prec (k,m)$.
\item If $a_{i,j}=a_{i,m}$, then $(i,j)\approx (i,m)$.
    \end{enumerate}
    The resulting preorder gives an element $\mathbf{I}_{\gamma}\in \operatorname{tSeq}(\Bv)$. This has a corresponding idempotent $e(\mathbf{I})\in \tT=\tT_{\Bv}$ defined in Section \ref{sec:thick-calculus}.  For each $a\in \Z$ and $i\in [1,n]$, we have a strand of thickness $t_{i,a}=\# \{j\in [1,v_i] | a_{i,j}=a\} $, and these strands are ordered first with $a$ weakly increasing left-to-right and $i$ decreasing within strands with a fixed $a$.  
    On the polynomial representation of $\tilde{\mathbb{T}}$, this idempotent acts by projection to the polynomials invariant under the permutations in $W_{\bf{a}}$, the stabilizer of $\Ba$ in $W$.
    The association of the value $a_{ij}$ to this strand is called a {\bf longitude} in \cite{KTWWYO}.

\begin{definition}
  We call $\gamma$ {\bf critical} if $a_{i,j}=a_{i,m}$ for some $j,m$, and {\bf non-critical} otherwise.  The preorder $\prec$ on $\Omega$ is an honest order if and only if $\gamma$ is non-critical.
\end{definition}

Let $e(\gamma)=e(\mathbf{I}_{\gamma})$; given any $\tT$-module $M$, we can thus consider the image $e(\gamma)M$.      Let $\wp\colon \Omega \to [1,N]$ be a bijection which matches the preorder $\prec$ with a refinement of the usual total order; this is not unique, since equivalent elements of $\Omega$ can be in any order, but this will not affect the constructions below.  Sometimes, it will be convenient for us to write $y_{ij}$ in place of $y_{\wp(i,j)}$.  

\begin{lemma}\label{lem:map-to-action}
For each weakly graded and finite dimensional $\tT$-module $M$, there is a representation of the category $\Cat_{\Z,\chi}$ which sends $\gamma\mapsto e(\gamma)M$,  sending a $W_{\bf{a}}$-invariant power series $f(\lambda_{ij}-a_{ij})$ to $f(y_{\wp(i,j)})$, the morphism
$(-1)^{t_{a+1,i}+1}\tilde{X}_{i}^{+}(\gamma,\gamma')$ to the diagram: 
\begin{equation}\label{eq:tXp}
\tikz[baseline,  yscale=2,xscale=3]{\draw[very thick] (-1.5,-.8) to[out=90,in=-90]  (1.5,.8) ; \draw[thickc]  (-2,-1)--(-2,1) node[below,at start]{$i+1$} node[pos=.7,left,scale=.8]{$t_{a,i+1}$};\draw[thickc] (-1.5,-1)--(-1.5,1) node[below,at start]{$i$}node[pos=.7,left,scale=.8]{$t_{a,i}-1$} node[pos=.1,left,scale=.8]{$t_{a,i}$};\draw[thickc] (-1,-1)--(-1,1) node[pos=.7,left,scale=.8]{$t_{a,i-1}$} node[below,at start]{$i-1$};
    \draw[thickc] (1,-1)--(1,1)node[below,at start]{$i+1$}node[pos=.3,left,scale=.8]{$t_{a+1,i+1}$}; \draw[thickc] (1.5,-1)--(1.5,1)node[below,at start]{$i$} node[pos=.9,left,scale=.8]{$t_{a+1,i}+1$} node[pos=.3,left,scale=.8]{$t_{a+1,i}$};\draw[thickc] (2,-1)--(2,1)node[below,at start]{$i-1$} node[pos=.3,left,scale=.8]{$t_{a+1,i-1}$}; \node[scale=1.5] at (0,.5){$\cdots$};\node[scale=1.5] at (0,-.5){$\cdots$}; }
\end{equation}
    where the strands drawn are the thick strands corresponding to $a=a_{i,j}$ and $a+1$ with $i,j$ as above, and 
    $(-1)^{t_{a-1,i-1}}\tilde{X}_{i}^{-}(\gamma,\gamma')$ to the diagram 
    \begin{equation}\label{eq:tXm}
\tikz[baseline, yscale=2,xscale=3]{\draw[very thick] (1.5,-.8) to[out=90,in=-90]  (-1.5,.8) ;\draw[thickc]  (-2,-1)--(-2,1) node[below,at start]{$i+1$} node[pos=.3,left,scale=.8]{$t_{a-1,i+1}$};\draw[thickc] (-1.5,-1)--(-1.5,1) node[pos=.3,left,scale=.8]{$t_{a-1,i}$} node[pos=.9,left,scale=.8]{$t_{a-1,i}+1$}node[below,at start]{$i$};\draw[thickc] (-1,-1)--(-1,1) node[pos=.3,left,scale=.8]{$t_{a-1,i-1}$} node[below,at start]{$i-1$};
    \draw[thickc] (1,-1)--(1,1)node[below,at start]{$i+1$}node[pos=.7,left,scale=.8]{$t_{a,i+1}$}; \draw[thickc] (1.5,-1)--(1.5,1)node[below,at start]{$i$} node[pos=.7,left,scale=.8]{$t_{a,i}-1$} node[pos=.1,left,scale=.8]{$t_{a,i}$};\draw[thickc] (2,-1)--(2,1)node[below,at start]{$i-1$} node[pos=.7,left,scale=.8]{$t_{a,i-1}$}; \node[scale=1.5] at (0,.5){$\cdots$};\node[scale=1.5] at (0,-.5){$\cdots$}; }
    \end{equation}
    By construction, the polynomial representation $\mathscr{P}$ of $\mathcal{C}$ is the pullback of the completed polynomial representation $\widehat{\poly}$ of $\tT$.
\end{lemma}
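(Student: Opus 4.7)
The plan is to reduce the functoriality check to a concrete computation in the polynomial representations and then invoke faithfulness on both sides. On the $\Cat$ side, the action on $\mathscr{P}$ is faithful by the discussion following Definition \ref{def:tildeX}, which ultimately rests on Proposition \ref{prop:mazorchuk}; on the $\widehat{\tT}$ side, $\widehat{\poly}$ is faithful by Proposition \ref{lem:faithful}. If one verifies that $\widehat{\poly}$, pulled back through the proposed assignment on generators, realizes $\mathscr{P}$, then the last sentence of the lemma is immediate, and functoriality on any finite-dimensional weakly graded $M$ follows by viewing $M$ as a discrete $\widehat{\tT}$-module via Lemma \ref{lem:wgmod}: any relation among morphisms of $\Cat_{\Z,\chi}$ holds in $\mathscr{P}$, lifts to the matching diagrammatic relation in $\widehat{\tT}$, and acts trivially on $M$. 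Hence the bulk of the work is a calculation of the generators on $e(\gamma)\widehat{\poly}$.

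First I would establish the identification $e(\gamma)\widehat{\poly}\cong \widehat{\Gamma}_{\gamma}=\widehat{\Lambda}_{\mathbf{a}}^{W_{\mathbf{a}}}$ via the substitution $y_{\wp(i,j)}\leftrightarrow \lambda_{ij}-a_{ij}$. By Lemma \ref{lem:thick-action}, the thick idempotent $e(\mathbf{I}_{\gamma})$ projects $\widehat{\poly}$ onto $W_{\mathbf{a}}$-invariant power series, which matches $\widehat{\Gamma}_{\gamma}$. Under this identification, multiplication by a $W_{\mathbf{a}}$-invariant power series $f(\lambda_{ij}-a_{ij})$ becomes multiplication by $f(y_{\wp(i,j)})$, verifying the $\widehat{\Gamma}$-part of the morphism assignment compatibly on the two sides.

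The heart of the argument is matching the thick diagrams \eqref{eq:tXp} and \eqref{eq:tXm} to the formula \eqref{eq:tildesum} for $\mp\tilde{X}_i^{\pm}(\gamma,\gamma')$. I would decompose \eqref{eq:tXp} as: split the thick $i$-strand of thickness $t_{a,i}$ at block $a$ into a $(t_{a,i}-1)$-strand plus a single black $i$-strand, slide that single strand across the intervening thick strands into its target position, and merge it with the $t_{a+1,i}$-thick $i$-strand of block $a+1$. By Lemma \ref{lem:thick-action}, the merge contributes a Demazure operator whose denominator produces $\prod_{m\neq p,\,a_{im}=a_{ip}}(\lambda_{im}-\lambda_{ip})$; the single strand's crossings with the thick $(i\pm 1)$-strands generate the numerator $\prod_m(\lambda_{i\pm 1,m}-\lambda_{ip})$ via the $i,i\pm 1$ crossing rule from Definition \ref{def:poly-rep}; and crossings with strands whose labels differ by more than one act as pure permutation and cancel with their inverses. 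Summing over the choice of $p$ with $a_{ip}=a$ reproduces exactly \eqref{eq:tildesum}, and the overall sign $(-1)^{t_{a+1,i}+1}$ absorbs the $\mp$ in \eqref{eq:Mazorchuk2} together with the signed contributions of each $i,i+1$-crossing. The minus case \eqref{eq:tXm} is entirely parallel with sign $(-1)^{t_{a-1,i-1}}$.

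The principal obstacle is the sign bookkeeping in this last step: the total sign depends on the precise count of $i,i\pm 1$-adjacent crossings encountered by the single moving strand as it traverses the blocks at longitudes $a$ and $a\pm 1$, and one must check that these signs telescope to match $t_{a+1,i}+1$ and $t_{a-1,i-1}$ respectively. Once these signs align, the computed action agrees with \eqref{eq:tildesum}, and since \eqref{eq:fg} exhibits $\tilde{X}_i^{\pm}(\gamma,\gamma')$ as a $\widehat{\Gamma}_{\gamma'}\text{-}\widehat{\Gamma}_{\gamma}$-bimodule sum built from $X_i^{\pm}(\gamma,\gamma')$, the identification extends to all morphisms in $\Cat_{\Z,\chi}$, completing the argument.
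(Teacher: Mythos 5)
Your proposal follows the same structure as the paper's proof: reduce to the faithful polynomial representations on both sides, decompose the diagrams \eqref{eq:tXp}, \eqref{eq:tXm} as a split followed by crossings followed by a merge, and match the result with the Gelfand--Tsetlin formula \eqref{eq:tildesum} via the Demazure operator of Lemma \ref{lem:thick-action}. The framing via Lemma \ref{lem:wgmod} to transport relations from $\widehat{\poly}$ to $M$ is also the same reduction the paper uses. However, two points in your sketch need correction or completion. First, the claim that crossings with strands of non-adjacent labels ``act as pure permutation and cancel with their inverses'' is wrong as stated: the moving strand crosses each vertical thick strand exactly once, so there are no inverse crossings to cancel against. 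These crossings are pure permutations of the dot variables, and one must genuinely track what permutation has been applied in order to match the terms of the Demazure sum \eqref{eq:Demazure} with the terms of \eqref{eq:tildesum}; the paper does this explicitly by identifying the cosets $S_{i,a+1}/\operatorname{St}(j)$ with the shift indices $p$ with $a'_{i,p}=a\pm 1$, and observing that each coset representative $\sigma$ accounts precisely for the different choice of lift $\Ba'$ (equivalently, the relabeling of which strand has been moved). Your argument omits this matching entirely. Second, you flag the sign bookkeeping as ``the principal obstacle'' but leave it unresolved; the paper addresses it concretely by observing that the denominator factors of the Demazure operator differ from those in \eqref{eq:tildesum} by $(-1)^{t_{a+1,i}}$, which combines with the $\mp$ sign from \eqref{eq:Mazorchuk2} to produce $(-1)^{t_{a+1,i}+1}$, and that the numerator factors from the $i,i+1$ crossing carry no sign mismatch (the reverse holds for $\tilde X_i^-$, giving $(-1)^{t_{a-1,i-1}}$). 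Also note that for $\tilde X_i^+$ only the crossing with the thick $i+1$-strand at longitude $a+1$ contributes to the numerator (the crossing with the $i-1$-strand at longitude $a$ is a pure permutation by the rule of Definition \ref{def:poly-rep}, since the moving strand is on the left at the bottom), and symmetrically for $\tilde X_i^-$ only the $i-1$-crossing at longitude $a-1$ contributes; your phrase ``crossings with the thick $(i\pm 1)$-strands'' should be read in this restricted sense.
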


It will be useful to rewrite the formulas for this functor in terms of the generators $X_i^\pm$.
In order to write this, it will be useful to have the power series \[\varphi^{\pm}_{ij,\Ba}(\mathbf{y})=\frac{\displaystyle \prod_{a_{i\pm 1,m}\neq a_{i,j}}(y_{i\pm 1,m}+a_{i\pm 1,m}-y_{ij}-a_{ij})}{\displaystyle \prod_{a_{i\pm 1,m}\neq a_{i,j}}(y_{im}+a_{im}-y_{ij}-a_{ij})} \in \bbC[[y_1,\dots, y_N]]\]

The functor of Lemma \ref{lem:map-to-action} sends $X_i^\pm(\gamma,\gamma')$ to the operators:
\begin{equation}\label{eq:tXp-KLR}
\tikz[baseline,  yscale=2,xscale=3]{\draw[very thick] (-1.5,-.95) to[out=90,in=-90]  (1.5,.95) ; \draw[thickc]  (-2,-1)--(-2,1) node[below,at start]{$i+1$} node[pos=.7,left,scale=.8]{$t_{a,i+1}$};\draw[thickc] (-1.5,-1)--(-1.5,1) node[below,at start]{$i$}node[pos=.7,left,scale=.8]{$t_{a,i}-1$} node[pos=.1,left,scale=.8]{$t_{a,i}$};\draw[thickc] (-1,-1)--(-1,1) node[pos=.7,left,scale=.8]{$t_{a,i-1}$} node[below,at start]{$i-1$};
    \draw[thickc] (1,-1)--(1,1)node[below,at start]{$i+1$}node[pos=.3,left,scale=.8]{$t_{a+1,i+1}$}; \draw[thickc] (1.5,-1)--(1.5,1)node[below,at start]{$i$} node[pos=.9,left,scale=.8]{$t_{a+1,i}+1$} node[pos=.3,left,scale=.8]{$t_{a+1,i}$};\draw[thickc] (2,-1)--(2,1)node[below,at start]{$i-1$} node[pos=.3,left,scale=.8]{$t_{a+1,i-1}$}; \node[scale=1.5] at (0,.5){$\cdots$};\node[scale=1.5] at (0,-.5){$\cdots$}; 
    \node[draw, very thick, fill=white, inner xsep=180pt] at (0,0){$(-1)^{t_{a+1,i}+1}\varphi^+_{ij,\Ba'}$};}
\end{equation}
\begin{equation}\label{eq:tXm-KLR}
\tikz[baseline, yscale=2,xscale=3]{\draw[very thick] (1.5,-.95) to[out=90,in=-90]  (-1.5,.95) ;\draw[thickc]  (-2,-1)--(-2,1) node[below,at start]{$i+1$} node[pos=.3,left,scale=.8]{$t_{a-1,i+1}$};\draw[thickc] (-1.5,-1)--(-1.5,1) node[pos=.3,left,scale=.8]{$t_{a-1,i}$} node[pos=.9,left,scale=.8]{$t_{a-1,i}+1$}node[below,at start]{$i$};\draw[thickc] (-1,-1)--(-1,1) node[pos=.3,left,scale=.8]{$t_{a-1,i-1}$} node[below,at start]{$i-1$};
    \draw[thickc] (1,-1)--(1,1)node[below,at start]{$i+1$}node[pos=.7,left,scale=.8]{$t_{a,i+1}$}; \draw[thickc] (1.5,-1)--(1.5,1)node[below,at start]{$i$} node[pos=.7,left,scale=.8]{$t_{a,i}-1$} node[pos=.1,left,scale=.8]{$t_{a,i}$};\draw[thickc] (2,-1)--(2,1)node[below,at start]{$i-1$} node[pos=.7,left,scale=.8]{$t_{a,i-1}$}; \node[scale=1.5] at (0,.5){$\cdots$};\node[scale=1.5] at (0,-.5){$\cdots$}; 
    \node[draw, very thick, fill=white, inner xsep=180pt] at (0,0){$(-1)^{t_{a-1,i-1}}\varphi^-_{ij,\Ba'}$};
    }
    \end{equation}
\begin{proof}
First, note that since $M$ is finite dimensional and weakly graded, the dots $y_k$ all act nilpotently.  This shows that the claimed action of the polynomial rings $\widehat{\Gamma}_\gamma$ is well defined.  

Now, we need to check that if a relation holds between $\tilde{X}_{i}^{\pm}(\gamma,\gamma')$ and the polynomials $f(\lambda_{ij}-a_{ij})$ in $\Cat_{\Z,\chi}$, then it holds in our claimed representation.  Since the polynomial representation $\poly$ is faithful by Lemma \ref{lem:faithful}, it is enough to verify this for the induced action of $\tilde{X}_{i}^{\pm}(\gamma,\gamma')$ and $f(\lambda_{ij}-a_{ij})$ on $\poly$.  

The diagram \eqref{eq:tXp} acts in the polynomial representation by 
\begin{enumerate}
    \item[(A)] first a split, which gives the natural inclusion of symmetric polynomials by Lemma \ref{lem:thick-action}, 
    \item[(B)] a crossing of strands of different labels; the only crossing which contributes is the highest one shown in the diagram, between strands with labels $i$ and $i+1$.  This gives multiplication by the product \[\prod_{a'_{i+ 1,m}= a'_{i,j}}(y_{\wp(i+ 1,m)}-y_{\wp(i,j)}).\]
    \item[(C)] finally a merge, which by Lemma \ref{lem:thick-action} corresponds to the application of the Demazure operator  \begin{equation}
        \label{eq:Demazure}\sum _{\sigma \in S_{i,a+1}/\operatorname{St}(j)}\sigma\cdot  \frac{f}{ \displaystyle\prod_{\substack{m\neq j\\a'_{i,m} = a+1}} y_{\wp(i,j)} -y_{\wp(i,m)}}
    \end{equation} where $S_{i,a+1}$ is the group of permutations of $(i,m)\in\Omega$ with $a'_{i,m}=a+1$, and $\operatorname{St}(j)$ is the stabilizer of $j$ in that group.  
\end{enumerate}

Under the map on polynomials $\lambda_{i\pm 1 ,m} - \lambda_{ij}\mapsto y_{\wp(i\pm 1,m)}-y_{\wp(i,j)}$ and $\lambda_{i ,m} - \lambda_{ij}\mapsto y_{\wp(i,m)}-y_{\wp(i,j)}$.  We can use this to match this formula with the composition of (A), (B), and (C) above.   The cosets of $S_{i,a+1}/\operatorname{St}(j)$ are identified with the set $\{p\mid a'_{i,p}=a\pm 1\}$; this allows us to match the terms in the sums \eqref{eq:Demazure} and \eqref{eq:tildesum} applied to $(-1)^{t_{a+1,i}+1}\tilde{X}_{i}^\pm $.  To see that the individual terms are the same, note that each of the shifts $\delta^+_{ip}$ corresponds to a different choice of lift $\Ba'$ of $\gamma'$ and thus embedding of $\widehat{\Gamma}_{\gamma}$ and $\widehat{\Gamma}_{\gamma'}$ in a common polynomial ring.  The permutations $\sigma$ account for this difference, since $\delta^+_{ip}$ changes the value of $a_{ip}$ from $a$ to $a+1$, and thus moves $(i,p)$ to be larger than all other $(i,m)$ with $a_{im}=a$; these give a set of coset representatives.  Thus, for $\sigma$ the unique coset that switches $j$ to $p$, we match the terms 
\begin{equation}\label{eq:term-match}
    (-1)^{t_{a+1,i}}\frac{\displaystyle\prod_{a'_{i\pm 1,m}= a'_{i,p}}(\lambda_{i\pm 1 ,m} - \lambda_{ip} )}{\displaystyle\prod_{\substack{m\neq p\\a'_{i,m} = a'_{i,p}}}(\lambda_{im} - \lambda_{ip} ) } \delta_{ip}^\pm \qquad \text{and}\qquad \sigma\cdot  \frac{\displaystyle f\prod_{a'_{i+ 1,m}= a'_{i,j}}(y_{\wp(i+ 1,m)}-y_{\wp(i,j)})}{ \displaystyle\prod_{\substack{m\neq j\\a'_{i,m} = a+1}} y_{\wp(i,j)}-y_{\wp(i,m)} }
\end{equation}  
Here, we incorporate the factor from (B), which matches the numerator of \eqref{eq:tildesum}; note that the power of $(-1)^{t_{a+1,i}}$ is incorporated into the denominator on the RHS.

The argument for $(-1)^{t_{a-1,i-1}}\tilde{X}_{i}^{-}(\gamma,\gamma')$ is sufficiently similar that we will not write it out here.  We just note that in the analogue of \eqref{eq:term-match}, the factors in the denominators will not have the same sign mismatch, but the terms in the numerator will,  necessitating the factor $(-1)^{t_{a+1,i}}$.

This calculation shows that the result holds for the representation $\poly$, and the result is the same as the polynomial representation of $\Cat_{\Z,\chi}$.  Thus, for any relation in $\Cat_{\Z,\chi}$ involving only $\tilde{X}_{i}^{\pm}(\gamma,\gamma')$ and the polynomials $f(\lambda_{ij}-a_{ij})$ holds in the case of $\poly$, and as argued at the start of the proof, the faithfulness of $\poly$ shows that the same holds for any $\tT$-module.  
\end{proof}

It immediately follows that we have a $\mcU$-module structure on the direct sum $\mathbb{GT}(M)\cong \oplus_{\gamma}e(\gamma)M$ which is Gelfand-Tsetlin by continuity in the discrete topology.   If we consider a representation of $\mathcal{C}$ which is complete in a not necessarily discrete topology, we can still define $\mathbb{GT}(M)$, which will be a pro-Gelfand-Tsetlin module (an inverse limit of Gelfand-Tsetlin modules).  By construction, we have $\mathscr{P}=\mathbb{GT}(\widehat{\poly}).$

In fact, we can state this result more categorically.  Let $\mathcal{T}_{\Z,\chi}$ be the category whose objects are elements of $\MaxSpec_{\Z,\chi}(\Gamma)$, with morphisms $\Hom_{\mathcal{T}_{\Z,\chi}}(\gamma,\gamma')=e(\mathbf{I}_{\gamma'})\widehat{\tT}e(\mathbf{I}_{\gamma})$.  We can think of Lemma \ref{lem:map-to-action} as defining a functor $\Theta\colon \Cat_{\Z,\chi}\to \mathcal{T}_{\Z,\chi}$.
\begin{proposition}\label{prop:equivalence}
The functor $\Theta$ defines an equivalence of categories $\Cat_{\Z,\chi}\cong \mathcal{T}_{\Z,\chi}$.
\end{proposition}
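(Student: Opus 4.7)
The plan is to establish the equivalence by checking that $\Theta$ is essentially surjective, faithful, and full. Essential surjectivity is immediate, since both categories have object set $\MaxSpec_{\Z,\chi}(\Gamma)$ and $\Theta$ is the identity on objects. Faithfulness should follow easily from a common polynomial representation; fullness will be the main obstacle.

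For faithfulness, I would appeal to Lemma \ref{lem:map-to-action}, which identifies the polynomial representation $\mathscr{P}$ of $\Cat_{\Z,\chi}$ with $\mathbb{GT}(\widehat{\poly})$, i.e.\ the pullback along $\Theta$ of the polynomial representation of $\widehat{\tT}$. Since $\widehat{\poly}$ is faithful as a $\widehat{\tT}$-module by Proposition \ref{lem:faithful}, any morphism $f\in\Hom_{\Cat}(\gamma,\gamma')$ with $\Theta(f)=0$ must act trivially on $e(\mathbf{I}_\gamma)\widehat{\poly}\cong\widehat{\Gamma}_\gamma$, and hence on the $\gamma$-summand of $\mathscr{P}$ (which is the only summand on which $f$ is potentially nonzero). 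Since $\Cat$ acts faithfully on $\mathscr{P}$ as noted at the end of Section \ref{sec:GT-modules}, this forces $f=0$.

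For fullness, the strategy is to exhibit a topological generating set for $e(\mathbf{I}_{\gamma'})\widehat{\tT}e(\mathbf{I}_\gamma)$ lying in the image of $\Theta$. The natural candidates are the thick crossings \eqref{eq:tXp} and \eqref{eq:tXm}, which by Lemma \ref{lem:map-to-action} are exactly the images of the sign-twisted operators $\tilde{X}_i^\pm(\gamma,\gamma')$, together with multiplications by elements of $\widehat{\Gamma}_\gamma$ and $\widehat{\Gamma}_{\gamma'}$ coming from the $\Gamma$-action in $\mcU$. The core combinatorial step is to show that any morphism in $e(\mathbf{I}_{\gamma'})\widehat{\tT}e(\mathbf{I}_\gamma)$ can, modulo the KLRW relations, be reduced to a polynomial-linear combination of compositions of such adjacent-weight thick crossings---a thick-idempotent analogue of Lemma \ref{lem:induction-basis} using the thick calculus of Section \ref{sec:thick-calculus}.

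The main obstacle is twofold. Combinatorially, the reduction to adjacent moves must carefully track the thick projectors (especially in the critical case where several strands with the same label are equivalent under $\approx$), and one must verify that the relations already checked in Lemma \ref{lem:map-to-action} suffice to perform the reduction. Topologically, $\widehat{\tT}$ is completed with respect to its grading while $\Hom_{\Cat}(\gamma,\gamma')$ is defined as an inverse limit over powers of maximal ideals; one must check these topologies match under $\Theta$, so that infinite sums of diagrams on the KLRW side correspond to honest elements of $\Hom_{\Cat}$. Once these two subtleties are resolved, density of the image of $\Theta$ together with continuity and completeness of the target will yield fullness, finishing the proof.
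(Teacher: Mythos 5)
Your scaffolding matches the paper's exactly: essential surjectivity for free, faithfulness via the common polynomial representation (Lemma~\ref{lem:map-to-action} identifying $\mathscr{P}$ with the pullback of $\widehat{\poly}$, plus Proposition~\ref{lem:faithful}), and fullness via a generating-set argument. The faithfulness argument is correct and essentially verbatim what the paper does. The gap is in fullness, which is the real content of the proposition.

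You correctly identify the candidate generators --- the thick crossings \eqref{eq:tXp}, \eqref{eq:tXm} and multiplication by $\widehat{\Gamma}_\gamma$ --- and correctly diagnose that what is needed is a reduction of an arbitrary element of $e(\mathbf{I}_{\gamma'})\widehat{\tT}\,e(\mathbf{I}_\gamma)$ to compositions of these. But you stop there, listing the reduction as an unresolved obstacle rather than carrying it out, and this reduction is where all the work lies. The paper's proof proceeds by an explicit diagrammatic induction: positioning the terminal for $(i,j)$ at $x$-value $a_{ij}+\epsilon$, flipping vertically so the rightmost top terminal lies right of (or at) the rightmost bottom one, closing up any splits in strands connecting equal-position terminals via \cite[Prop.\ 2.9]{KLMS} to pull off straight decorated strands, and inducting on the total thickness of the straight strands already isolated at the far right. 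The key manipulation is rewriting a thick crossing with splits and merges as a ``ladder'' of thin crossings decorated by dots using \cite[(2.70)]{KLMS} and \cite[Prop.\ 2.4]{KLMS}, which exhibits each step as a generator \eqref{eq:tXp-two} times terms already in the image of $\widehat{\Gamma}$ (via the \eqref{eq:power-series} trick for absorbing dots into symmetric polynomials on the top and bottom). You do not verify that the relations suffice, nor do you produce the induction; the argument that makes the reduction actually terminate is absent.

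On your topological worry: this is legitimate to flag, but in the paper it is resolved implicitly by the definitions rather than as a separate step. The target category $\mathcal{T}_{\Z,\chi}$ is built with Hom spaces $e(\mathbf{I}_{\gamma'})\widehat{\tT}\,e(\mathbf{I}_\gamma)$, i.e.\ the grading completion, and Lemma~\ref{lem:map-to-action} constructs $\Theta$ so that the $\mathfrak{m}_\gamma$-adic filtration on $\widehat{\Gamma}_\gamma$ maps to the grading filtration on the subring of dots (via $\lambda_{ij}-a_{ij}\mapsto y_{\wp(i,j)}$). Once a single diagram is shown to lie in the image, the full completed Hom space is obtained by multiplying by elements of $\widehat{\Gamma}_\gamma$, and no further compatibility check is needed. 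So this is a concern you can discharge at the level of the definitions; the genuine gap remains the unexecuted diagrammatic reduction.
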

\begin{proof}
We already know that this functor is essentially surjective, since the object sets are the same.  We also know that it is faithful, since as argued in the proof of Lemma \ref{lem:map-to-action}, the pullback of the module $\widehat{\poly}$ along this functor is faithful.  We need only to show that it is full.

Consider any diagram in $e(\mathbf{I}_{\gamma'})\widehat{\tT}e(\mathbf{I}_{\gamma})$.  For simplicity, we draw these pictures so that the terminal corresponding to $(i,j)$ is at the $x$-value $a_{ij}+\epsilon$, for choices of $\epsilon$ that keep the strands in the correct order. Flipping vertically if necessary, we can assume that the rightmost terminal of the top is to the right of the rightmost for the bottom or at the same $x$-value.

If the $x$-values are the same, then we make sure that the thicker terminal at that position is at the top.  If the thicknesses are the same as well, then it might be that our diagram connects these two terminals without interacting with any other strands.  In this case, we can use \cite[Prop. 2.9]{khovanovExtendedGraphical2012} to close up any splits and reduce to the case of a single thick strand decorated with a symmetric polynomial.  Now ignore this strand and consider the next pair of terminals and flip if necessary to make sure that the further right one is at the top and the thicker one if the $x$-values are the same.  We iterate this process until we obtain a pair of terminals which aren't connected by a strand with no splits or merges.  We will show surjectivity by induction on the sum of the thickenesses of these strands at the far right.  Of course, if this sum is $N$, then our diagram is just the identity on an object times an element of $\widehat{\Gamma}_\gamma$ and thus obviously in the image.

We now apply \cite[Prop. 4.5]{SWschur} to the portion of the diagram left of the straight strands we've already found. These diagrams correspond to double cosets in the symmetric group. We can think about this most canonically by thinking about the set $\mathsf{B}$ of pairs of terminals at the bottom of the diagram and integers in $\{1,\dots, t\}$ where $t$ is the thickness of the terminal, and similarly the set $\mathsf{T}$ for the top.  Diagrams thus correspond to orbits in the set of bijections $\mathsf{B}\to \mathsf{T}$ modulo pre- and post-composition with permutations preserving terminals.  We construct the corresponding diagram by having the thickness of the strand joining terminals be the number of elements sent from one terminal to the other by the bijection.  

In particular, one way of choosing our diagram is to first grab all the strands that correspond to elements mapping to the terminal at $x=a$ at the top, and join those to the strands that start at $x=a$ at the bottom, and then completing the rest of the diagram.  Essentially, whenever we have a crossing of strands that meet the terminal at $x=a$ at the top or bottom, we push the crossing right until it has been squashed into the straight line joining these terminals.  The result will look like:
\begin{equation}
    \tikz[yscale=2,xscale=3,baseline]{\draw[thickc] (1.5,1) -- (1.5,-1);\draw[thickc] (1.3,1) -- (1.3,-1); \node at (1,0){$\cdots$}; \draw[thickc]  (.7,1) to node[above, at start]{$x=a$} (.7,-.2) to (.7,-1); \draw[thickc] (.7,-.2) to[out=-90,in=70] node[below, at end]{$x=a'$} (.2,-1); \draw[thickc] (.7,-.2) to[out=-90,in=50] (-1,-1);
    \draw[thickc] (0,1) to (0,.2) to[out=-90,in=100] (.5,-1); 
    \draw[thickc] (-.3,1) to (-.3,.2) to[out=-90,in=100] (.2,-1); 
    \draw[thickc] (-.6,1) to (-.6,.2) to[out=-90,in=80] (-1,-1);
    \draw[thickc] (.7,-.2) to[out=90,in=-50] (.1,.3);
    \node[draw, very thick, fill=white, inner xsep=40pt,inner ysep=15pt] at (-.3,.6) {D}; \node at (-.25,-.85) {$\cdots$};\node at (-.5,-.5) {$\cdots$};
    }    
\end{equation}
for some diagram $D$, with all dots pushed to the top of the diagram.

We let $a$ be the longitude value on the top of the rightmost terminal we consider (as shown in the picture above).  We let $a'$ be the value on the rightmost terminal at the bottom with $x$-value $<a$ which connects to the terminal at $x=a$.  Such a terminal exists by assumption.  Note that the strand starting there must cross all strands with longitude between $a'$ and $a$, including those with longitude $a'$ and label $i'<i$ as well as those with longitude $a$ and label $i'>i$.  Thus, at the bottom of the diagram, there is a portion that looks like:
\begin{equation}\label{eq:tXp-two}
\tikz[baseline,  yscale=2,xscale=3]{\draw[thickc] (-1.5,-.8) to[out=90,in=-90] node[pos=.35, above]{$t$}
node[pos=.65,fill=white, draw,very thick,inner sep=4pt]{$p$}(1.5,.8) ; \draw[thickc]  (-2,-1)--(-2,1) node[below,at start]{$i+1$};\draw[thickc] (-1.5,-1)--(-1.5,1) node[below,at start]{$i$} ;\draw[thickc] (-1,-1)--(-1,1) node[below,at start]{$i-1$};
    \draw[thickc] (1,-1)--(1,1)node[below,at start]{$i+1$}; \draw[thickc] (1.5,-1)--(1.5,1)node[below,at start]{$i$};\draw[thickc] (2,-1)--(2,1)node[below,at start]{$i-1$}; \node[scale=1.5] at (0,.5){$\cdots$};\node[scale=1.5] at (0,-.5){$\cdots$}; }
\end{equation}
where the middle strand has thickness $t$. The strands at the far right have longitude $a'+1$; these might all have thickness 0, in which case we just have a split and no merge.  Similarly, we might have thickness 0 at longitude $a'$ at the top of the diagram, in which case we don't have a split, but just a merge.  In many cases, we will just have a strand moving one unit to the right and no other crossings.  

We now wish to write the diagram \eqref{eq:tXp-two} in terms of \eqref{eq:tXp}.  Note that using the method of \eqref{eq:power-series}, we can write the diagram \eqref{eq:tXp} with any number of dots on the middle strand by multiplying at the top and bottom by elements of $\Gamma$.  

Now, we can rewrite \eqref{eq:tXp-two} using \cite[(2.70)]{khovanovExtendedGraphical2012} by adding a split and merge in the middle down to $t$ strands of weight 1, placing $t-1$ dots on 1 strand, $t-2$ on another, etc.  Using \cite[Prop. 2.4]{khovanovExtendedGraphical2012}, we can push this split and merge to the vertical strands and rewrite this as a ``ladder''
\begin{equation}\label{eq:tXp-three}
\tikz[baseline,  yscale=5,xscale=3]{\draw[very thick] (-1.5,.1) to[out=60,in=180] (0,.5) to[out=0,in=-120] node[at start, circle,draw,fill=black,inner sep=2pt,label=above:{$t-1$}]{} (1.5,.9) ; 
\draw[very thick] (-1.5,-.2) to[out=60,in=180] (0,.2) to[out=0,in=-120] node[at start, circle,draw,fill=black,inner sep=2pt,label=above:{$t-2$}]{} (1.5,.6) ;\draw[very thick] (-1.5,-.6) to[out=60,in=180] (0,-.2) to[out=0,in=-120] node[at start, circle,draw,fill=black,inner sep=2pt,label=above:{$1$}]{} (1.5,.2) ;\draw[very thick] (-1.5,-.9) to[out=60,in=180] (0,-.5) to[out=0,in=-120]  (1.5,-.1) ;\draw[thickc]  (-2,-1)--(-2,1) node[below,at start]{$i+1$};\draw[thickc] (-1.5,-1)--(-1.5,1) node[below,at start]{$i$} ;\draw[thickc] (-1,-1)--(-1,1) node[below,at start]{$i-1$};
    \draw[thickc] (1,-1)--(1,1)node[below,at start]{$i+1$}; \draw[thickc] (1.5,-1)--(1.5,1)node[below,at start]{$i$};\draw[thickc] (2,-1)--(2,1)node[below,at start]{$i-1$}; \node[scale=1.5] at (0,0){$\cdots$}; \node[inner ysep=80pt,fill=white,draw,very thick] at (.4,0){$p$};}
\end{equation}
As we've already observed, this is in the image of the functor.  Thus, we can reduce to showing that the portion of the diagram above is in the image.  If $a'<a-1$, then the result is of the same form but has increased $a'$.  After a finite number of applications, we will have $a'=a-1$, and the next application will increase the thickness of the terminal with label $i$ and longitude $a$ at the bottom, and reduce the number of times this strand splits.  We can continue this process until we have removed all splits in this strand.
There are now two possibilities for the remainder: 
\begin{enumerate}
    \item The terminal with label $i$ and longitude $a$ is thicker at the bottom than at the top.  So our diagram now has the form.
\begin{equation}
    \tikz[yscale=2,xscale=3]{\draw[thickc] (1.5,1) -- (1.5,-1);\draw[thickc] (1.3,1) -- (1.3,-1); \node at (1,0){$\cdots$}; \draw[thickc]  (.7,1) to node[above, at start]{$x=a$} (.7,-.2) to (.7,-1); \draw[thickc] (0,1) to (0,.2) to[out=-90,in=100] (.5,-1); 
    \draw[thickc] (-.3,1) to (-.3,.2) to[out=-90,in=100] (.2,-1); 
    \draw[thickc] (-.6,1) to (-.6,.2) to[out=-90,in=80] (-1,-1);
    \draw[thickc] (.7,-.2) to[out=90,in=-50] (.1,.3);
    \node[draw, very thick, fill=white, inner xsep=40pt,inner ysep=15pt] at (-.3,.6) {D}; \node at (-.4,-.85) {$\cdots$};
    }    
\end{equation}In this case, we can flip the diagram, and apply this argument again.
    \item There is a strand with no splits or merges connecting these terminals, so the diagram has the form:
    \begin{equation}
    \tikz[yscale=2,xscale=3,baseline]{\draw[thickc] (1.5,1) -- (1.5,-1);\draw[thickc] (1.3,1) -- (1.3,-1); \node at (1,0){$\cdots$}; \draw[thickc]  (.7,1) to node[above, at start]{$x=a$} (.7,-.2) to (.7,-1); \draw[thickc] (0,1) to (0,.2) to[out=-90,in=100] (.5,-1); 
    \draw[thickc] (-.3,1) to (-.3,.2) to[out=-90,in=100] (.2,-1); 
    \draw[thickc] (-.6,1) to (-.6,.2) to[out=-90,in=80] (-1,-1);
    \node[draw, very thick, fill=white, inner xsep=40pt,inner ysep=15pt] at (-.3,.6) {D}; \node at (-.4,-.85) {$\cdots$};
    }    
\end{equation}In this case, we exclude this strand, and continue the argument, having increased the total thickness of the straight strands at the right.
\end{enumerate}
This completes the proof since after a finite number of applications of this argument, we'll arrive at a diagram of all straight strands, which we've already covered.  
\end{proof}
The category $\mathcal{T}_{\Z,\chi}$ is equivalent to a standard construction: let $A$ be an algebra and $e_1,\dots, e_m$ a list of idempotents.  The category $\mathcal{A}$ has the object set $\{e_1,\dots, e_m\}$, morphism spaces $\Hom_{\mathcal{A}}(e_i,e_j)=e_jA e_i$, and the composition given by multiplication.  This is equivalent to the subcategory of projective right $A$-modules with objects $Ae_i$ for $i=1,\dots, m$.  

Of course, for any left $A$-module $M$, there is a representation of the category $\mathcal{A}$ sending $e_m\mapsto e_mM$; this functor has a left adjoint $A\otimes_{\mathcal{A}}-$.  
A basic exercise in ring theory shows that: \begin{lemma}
These functors make the category of representations of $\mathcal{A}$ the quotient category of left $A$-modules by the Serre subcategory of modules $M$ such that $e_iM=0$ for all $M$, with the categories being equivalent if $e_1,\dots, e_m$ generate the ring $A$ as a 2-sided ideal. 
\end{lemma}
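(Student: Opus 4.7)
The plan is to verify that the pair $(F, G)$ with $F = A \otimes_{\mathcal{A}} -$ and $G(M) = (e_i M)_i$ is an adjunction in which $F$ is always fully faithful (giving the quotient statement), and then that under the generation hypothesis the counit is also an isomorphism (promoting this to an equivalence).

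First, I would unpack $FN$ as the quotient of $\bigoplus_i A e_i \otimes_{\bbC} N(e_i)$ by the relations $a\alpha \otimes n \sim a \otimes \alpha n$ for $\alpha \in e_j A e_i$ and $n \in N(e_i)$, with $A$ acting by left multiplication on the first factor. The adjunction $\Hom_A(FN, M) \cong \Hom_{\mathcal{A}}(N, GM)$ is then formal: an $A$-linear map out of $FN$ is determined by the images of the generators $e_i \otimes n$, these images automatically land in $e_i M$, and the tensor relations translate precisely into compatibility with the $\mathcal{A}$-action. Next I would show that the unit $\eta_N \colon N \to GFN$ is an isomorphism by constructing its inverse explicitly: at $e_j$ the unit sends $n \mapsto e_j \otimes n$, while the inverse sends a class $\alpha \otimes n$ with $\alpha \in e_j A e_i$ to $\alpha \cdot n \in N(e_j)$. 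This is well-defined on the tensor product because both $a\alpha \otimes n$ and $a \otimes \alpha n$ evaluate to $(a\alpha) \cdot n$ under the $\mathcal{A}$-action on $N$, and both compositions are patently the identity. Thus $F$ is fully faithful. Combined with the exactness of $G$ (restriction by an idempotent is exact on either side), this identifies representations of $\mathcal{A}$ with the Serre quotient of $A$-mod by $\ker G$, which is the claimed quotient-category structure.

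Now suppose $\sum_i A e_i A = A$ and write $1 = \sum_\ell a_\ell e_{i_\ell} b_\ell$. For any $A$-module $M$ and any $m \in M$, the identity $m = \sum_\ell a_\ell e_{i_\ell}(b_\ell m)$ exhibits $m$ as an element of $\sum_i A e_i M$, which is exactly the image of the counit $\epsilon_M \colon FGM \to M$; so $\epsilon_M$ is surjective. For injectivity, the triangle identity $G(\epsilon) \circ \eta G = \mathrm{id}_G$ together with the unit being iso forces $G(\epsilon_M)$ to be iso, so the kernel $K$ of $\epsilon_M$ satisfies $GK = 0$; but the same formula applied to any $m \in K$ shows $m \in \sum_i A e_i K = 0$, so $K = 0$. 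Hence $F$ and $G$ are inverse equivalences. The only real content in the whole argument is the construction of the inverse to the unit; everything else is formal manipulation, and the hard part — if any — is simply bookkeeping the idempotents correctly through the tensor relations.
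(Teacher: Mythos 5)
The paper gives no proof of this lemma, labelling it ``a basic exercise in ring theory,'' so there is no paper argument to compare against; your proof is correct and fills in that exercise cleanly. The one step you invoke without spelling it out is the passage from ``exact $G$ with fully faithful left adjoint $F$'' to ``Serre quotient by $\ker G$,'' which is the colocalizing (dual) form of Gabriel's recognition criterion --- standard, and you appropriately flag it as the formal part. The explicit inverse to the unit (well-definedness on the tensor relations via associativity of the $\mathcal{A}$-action) and the two-pronged counit argument under $\sum_i A e_i A = A$ are both right.
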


The category $\mathcal{T}_{\Z,\chi}$ is equivalent to this category for $A=\widehat{\tT}$ and the set of idempotents being the (finite) set of idempotents of the form $e(\Bi_{\gamma})$.  

Note that this set depends on the choice of $\chi$.  If $a_{n,1}\ll \cdots \ll a_{n,n}$, then every idempotent $e(\Bi)$ appears, but this is not the case for $\chi$ where $a_{n,i}-a_{n,i-1}<n$.
In either case, we can define a functor $\mathbb{KLR}\colon \GTc_{\Z,\chi}\to \tT\wgmod$ by: \begin{enumerate}
    \item using the equivalences of Propositions \ref{prop:DFO} and  \ref{prop:equivalence} to construct the corresponding discrete $\Cat_{\Z,\chi}$-module,
    \item transport structure to a $\mathcal{T}_{\Z,\chi}$-module,
    \item apply the functor $\widehat{\tT}\otimes_{\mathcal{T}_{\Z,\chi}}-$.
\end{enumerate}  In the case where $a_{n,1}\ll \cdots \ll a_{n,n}$, we can give a slightly different definition of this functor: for each $\Bi$, choose a corresponding $\gamma(\Bi)$ and we find that
\[\mathbb{KLR}(V)=\bigoplus_{\Bi}\Wei_{\gamma(\Bi)}(V)\] with $\tT$ acting on the right-hand side by the equivalence of Proposition \ref{prop:equivalence}.  Note that we could alternatively apply \cite[Th. 4.4]{WebGT} to prove the existence of this action.
\begin{theorem}[\mbox{\cite[Th. 5.6]{WebGT}}]\label{thm:GT-KLR}
If $\chi$ is a regular integral character (i.e. $a_{n,1}< \cdots < a_{n,n}$), the functors $\mathbb{KLR}$ and $\mathbb{GT}$ define an equivalence of categories  $\widehat{\tT}\wgmod\cong \GTc_{\Z,\chi}$.
\end{theorem}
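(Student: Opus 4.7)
The plan is to assemble the equivalence from pieces already developed in the paper and then verify the one input that specifically needs regularity of $\chi$. Concretely, I would combine Proposition~\ref{prop:DFO}, which identifies $\GTc_{\Z,\chi}$ with discrete representations of $\Cat_{\Z,\chi}$, with Proposition~\ref{prop:equivalence}, which identifies $\Cat_{\Z,\chi}$ with $\mathcal{T}_{\Z,\chi}$. This immediately gives an equivalence of $\GTc_{\Z,\chi}$ with discrete $\mathcal{T}_{\Z,\chi}$-representations, and under these identifications the composite functor sends $V \mapsto \bigoplus_{\gamma} \Wei_{\gamma}(V)$, which is exactly $\mathbb{KLR}$; the inverse sends a $\mathcal{T}_{\Z,\chi}$-representation to $\mathbb{GT}(M)=\bigoplus_{\gamma} e(\mathbf{I}_\gamma) M$, with its $\mcU$-action read off from Lemma~\ref{lem:map-to-action}.

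Now $\mathcal{T}_{\Z,\chi}$ is precisely the Hom-category determined by the set of idempotents $E=\{e(\mathbf{I}_\gamma)\mid \gamma \in \MaxSpec_{\Z,\chi}(\Gamma)\}$ inside $A=\widehat{\tT}$, so by the ring-theoretic lemma quoted just after Proposition~\ref{prop:equivalence}, the category of discrete $\mathcal{T}_{\Z,\chi}$-representations is a quotient of the category of $\widehat{\tT}$-modules, and is equivalent to it as soon as $E$ generates $\widehat{\tT}$ as a two-sided ideal. The topological/finiteness matching between ``discrete $\widehat{\tT}$-module'' and ``finite dimensional weakly graded $\tT$-module'' is then precisely the content of Lemma~\ref{lem:wgmod}, so, once ideal-generation is in hand, the two functors $\mathbb{GT}$ and $\mathbb{KLR}$ become quasi-inverse as claimed.

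The work, therefore, is to show that regularity $a_{n,1}<\cdots<a_{n,n}$ forces $\sum_{\gamma\in \MaxSpec_{\Z,\chi}(\Gamma)} e(\mathbf{I}_\gamma)$ to be a full idempotent of $\widehat{\tT}$. Equivalently, I would show that every idempotent $e(\Bi)$ for $\Bi\in \Seq(\Bv)$ factors, via a composition of splits, merges, and horizontal compositions in the thick calculus of Section~\ref{sec:thick-calculus}, through some $e(\mathbf{I}_\gamma)$ with $\gamma\in \MaxSpec_{\Z,\chi}(\Gamma)$. The strategy is constructive: given any $\Bi$, decompose it by the positions of its red $n$-strands into blocks, and for each block choose integral black longitudes $a_{i,j}$ with $i<n$ (recall these are unconstrained by $\chi$, which only fixes the $a_{n,\ell}$) that realise the prescribed interlacing within that block. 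Regularity $a_{n,\ell}<a_{n,\ell+1}$ guarantees that there is an integer strictly between any two consecutive red longitudes, so the black strands can always be placed so that the resulting word $\mathbf{I}_\gamma$ interleaves the reds as required, possibly after replacing several black strands with a single thick strand. The corresponding $e(\Bi)$ is then obtained from $e(\mathbf{I}_\gamma)$ by applying the split maps of Section~\ref{sec:thick-calculus}, which shows that $e(\Bi)\in \widehat{\tT}\cdot e(\mathbf{I}_\gamma)\cdot \widehat{\tT}$.

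The main obstacle I anticipate is exactly this step: under the weak regularity $a_{n,\ell}<a_{n,\ell+1}$ (as opposed to $a_{n,\ell}\ll a_{n,\ell+1}$) some words $\Bi$ cannot be realised on the nose as $\mathbf{I}_\gamma$, and one must instead realise them only after splitting a thick strand. Some bookkeeping is required to verify that nevertheless every $e(\Bi)$ lies in the two-sided ideal generated by $E$; the argument should parallel the surjectivity step inside the proof of Proposition~\ref{prop:equivalence}, which already shows how to reduce an arbitrary Stendhal diagram to ones built from splits, merges, polynomial factors, and the operators $\tilde X_i^\pm$ between the thickened idempotents $e(\mathbf{I}_\gamma)$. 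Once this generation statement is verified, the assembly described above yields the equivalence $\widehat{\tT}\wgmod\cong \GTc_{\Z,\chi}$ and identifies the two named functors.
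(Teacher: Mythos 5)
Your structural reduction is exactly right and matches the paper: combining Propositions~\ref{prop:DFO} and~\ref{prop:equivalence} with Lemma~\ref{lem:wgmod} leaves only the question of whether the idempotents $E=\{e(\mathbf{I}_\gamma)\mid\gamma\in\MaxSpec_{\Z,\chi}(\Gamma)\}$ generate $\widehat{\tT}$ as a two-sided ideal. But your proposed method for that last step diverges from the paper, and it has a real gap.

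First, the sentence ``Regularity $a_{n,\ell}<a_{n,\ell+1}$ guarantees that there is an integer strictly between any two consecutive red longitudes'' is simply false. If $a_{n,\ell}$ and $a_{n,\ell+1}$ are consecutive integers (as happens in the principal block), there is no integer strictly between them, and the recipe ``choose black longitudes realising the prescribed interlacing'' breaks down. You do flag the resulting problem at the end, but the difficulty is worse than ``replace black strands by a thick strand and then split.'' Consider $n=3$ with $a_{3,\ell}=0$, $a_{3,\ell+1}=1$ and the fragment $(\ldots,3,1,2,3,\ldots)$. The tie-break rule forces higher label to come first at equal longitude, so the only realizable fragments with the $1$ and $2$ wedged between these two reds are $(3,2,1,3)$, $(3,1,3,2)$, $(3,2,3,1)$ — never $(3,1,2,3)$. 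For nonadjacent labels the corresponding crossings are invertible and this is harmless (the paper's tables exploit exactly this when $31$ or $42$ appear), but for adjacent labels like $1,2$ the crossing has degree $1$ and the opposite crossing contributes $y-y'$; neither is a unit, so showing $e(\ldots,3,1,2,3,\ldots)$ lies in the two-sided ideal generated by $e(\ldots,3,2,1,3,\ldots)$ is genuinely the whole representation-theoretic content, not mere ``bookkeeping.'' The nilHecke Morita argument you invoke handles splits and merges of same-label blocks, but it does not address the adjacent-label interleaving obstruction.

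The paper itself does not attempt this direct verification. It makes precisely the observation you start from — in the strongly regular case $a_{n,1}\ll\cdots\ll a_{n,n}$ every $e(\Bi)$ appears in $E$, so generation is trivial — and then gets the general regular case by a global argument, not a local one: translation equivalences show that every regular integral block has the same number of simple Gelfand--Tsetlin modules, so Morita equivalence for one regular block forces it for all of them. (Alternatively, \cite[Thm~5.9]{WebGT} proves generation by classifying all graded simples of $\widehat{\tT}$ and showing none is killed by all of $E$; this too is a global argument.) If you want to salvage your local approach, you would need something like the counting or translation input the paper uses, or a direct proof that every graded simple $\widehat{\tT}$-module has a nonzero image under some $e(\mathbf{I}_\gamma)$; the split/merge calculus alone will not close the adjacent-label gap.
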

In the singular case, a version of this theorem holds, but the algebra $\tT$ needs to be replaced with another KLR-type algebra which doesn't precisely fit the definition we gave: one needs to incorporate thick red strands; this is covered in detail in \cite[\S 4.1]{websterThreePerspectives2020}. 

Note that we have come most of the way to giving a new proof of this theorem---since Propositions  \ref{prop:DFO} and \ref{prop:equivalence} give equivalences, the only question is whether the functor $\widehat{\tT}\otimes_{\mathcal{T}_{\Z,\chi}}-$ is an equivalence, that is, if the idempotents $e(\gamma)$ for $\gamma\in \MaxSpec_{\Z,\chi}(\Gamma)$ generate $\widehat{\tT}$.

This fact is shown based on the theory of KLR algebras in \cite[Thm 5.9]{WebGT}: we know a complete classification of graded simple $\widehat{\tT}$-modules and can show that none are killed by all idempotents of the form $e(\gamma)$ for $\gamma\in \MaxSpec(\Gamma)_{\Z,\chi}$.  However, as noted in the proof of \cite[Cor. 4.10]{websterThreePerspectives2020}, a little work on the side of representation theory allows us to avoid using this fact for $\mcU=\mcU(\gl_n)$---if $a_{n,1}\ll \cdots \ll a_{n,n}$, then every idempotent $e(\Bi)$ appears, and so $\widehat{\tT}$ and $\mathcal{T}_{\Z,\chi}$ are Morita equivalent.  However, translation equivalences show that every regular integral block has the same number of simple Gelfand-Tsetlin modules, so it suffices to show this equivalence for one such block, and it must hold for all of them.

\begin{remark}
Those who have tried to work in a ``hands-on'' way with Gelfand-Tsetlin modules in higher rank know that the key problem with writing explicit bases and structure coefficients for these modules is the difficulties around points where many $a_{i,*}$ coincide.  Theorem \ref{thm:GT-KLR} shows that we can compress all of this difficulty into the Morita equivalence between $\widehat{\tT}$ and $\mathcal{T}_{\Z,\chi}.$  This is certainly not trivial to write out explicitly, but does reduce this question to an established combinatorial commutative algebra of symmetric polynomials; this connection was already noted in \cite{FGRZGalois}.

This approach has the notable effect that one can on some level ignore critical weights (those where $a_{i,j}=a_{i,k}$ for some $i$); their structure is entirely captured by the action of the KLR algebra on the weight spaces for non-critical weights. This is quite difficult to see in the principal block, which is usually the regular integral block that representation theorists prefer to work with, since in this case, many idempotents cannot be written as $e(\gamma)$.  
\end{remark}

Let us note how certain properties of Gelfand-Tsetlin modules transfer to modules over the KLR modules.  For a word $\Bi$, we let the {\bf Gelfand-Kirillov dimension} of $\Bi$ be the number of entries in the word that are not between a pair of $n$'s (i.e., either to the left or to the right of all $n$'s).  This is justified by the fact that if we look at the number of maximal ideals of distance $<\delta$ from any fixed point (identifying maximal ideals with points in $\bbC^1\times \cdots\times \bbC^n$) grows approximately as a polynomial of this degree;  one can see this from the fact that the coordinates corresponding to $n$'s are fixed by $\chi$, but otherwise, the coordinates can grow arbitrarily within a particular cone.  In terms of affine geometry, we can think of this as slicing a cone with an affine space.
\begin{theorem}\label{thm:GK}
  The Gelfand-Kirillov dimension of $\mathbb{GT}(M)$ is the maximum of the GK dimensions of the words $\Bi$ such that $e(\Bi)M\neq 0$.
\end{theorem}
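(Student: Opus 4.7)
The plan is to combine the equivalence $\mathbb{GT}$ of Theorem \ref{thm:GT-KLR} with a direct count of integral weights in an expanding ball, stratifying $\MaxSpec(\Gamma)_{\Z,\chi}$ by the word-type $\Bi_\gamma$ of $\mathbf{I}_\gamma$.

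First, I would invoke Lemma \ref{lem:map-to-action} and Theorem \ref{thm:GT-KLR} to identify, for each non-critical $\gamma$, the weight space $\Wei_\gamma(\mathbb{GT}(M))$ with $e(\Bi_\gamma)M$, where $\Bi_\gamma\in\Seq(\Bv)$ is the word determined by the strict ordering of the longitudes of $\gamma$. In this case $\mathbf{I}_\gamma$ has no nontrivial groupings, so $e(\mathbf{I}_\gamma)=e(\Bi_\gamma)$; in particular $\dim\Wei_\gamma(\mathbb{GT}(M))$ depends only on the word $\Bi_\gamma$, and is uniformly bounded by $\dim M$.

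Second, for each fixed word $\Bi$ in the support of $M$, I would count the non-critical integral weights $\gamma\in B_\delta\cap\MaxSpec(\Gamma)_{\Z,\chi}$ with $\Bi_\gamma=\Bi$, with $B_\delta$ a ball of radius $\delta$ in the natural coordinates on $\MaxSpec(\Lambda)$. The coordinates $a_{n,\ast}$ are pinned by $\chi$, while the remaining coordinates $a_{i,j}$ for $i<n$ must satisfy the strict inequalities encoded by $\Bi$. A coordinate forced to lie strictly between two successive values $a_{n,k}$ and $a_{n,k+1}$ takes only finitely many integer values, while a coordinate positioned to the left of every $n$-strand or to the right of every $n$-strand ranges freely in a half-line, contributing one dimension each. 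By the definition of $d(\Bi)$, the number of such free coordinates is exactly $d(\Bi)$, and a standard integer-point count then gives
\[
\#\{\gamma\text{ non-critical}: \Bi_\gamma=\Bi,\ \gamma\in B_\delta\}=\Theta(\delta^{d(\Bi)}).
\]

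Third, I would verify that critical weights do not perturb the leading asymptotic: each equality $a_{i,j}=a_{i,m}$ cuts a real dimension from the free cone, so the count of critical $\gamma$'s in $B_\delta$ is $O(\delta^{d(\Bi)-1})$; moreover any pre-sequence $\mathbf{I}$ with $e(\mathbf{I})M\neq 0$ has totalization $\Bi$ with $e(\Bi)M\neq 0$ (since $e(\mathbf{I})$ is a subidempotent of $e(\Bi)$) and $d(\mathbf{I})=d(\Bi)$, so critical strata are subsumed in the maximum. Summing over the finitely many words in the support of $M$ then yields
\[
\dim\bigoplus_{\gamma\in B_\delta}\Wei_\gamma(\mathbb{GT}(M))=\Theta\Bigl(\max_{\Bi\,:\,e(\Bi)M\neq 0}\delta^{d(\Bi)}\Bigr),
\]
giving $\mathrm{GKdim}\,\mathbb{GT}(M)=\max_{\Bi\,:\,e(\Bi)M\neq 0}d(\Bi)$, as claimed. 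The main technical obstacle is the careful accounting of the critical stratum and of pre-sequences with nontrivial groupings; the essential observation that carries this through is that each coincidence of longitudes cuts the relevant chamber in dimension by at least one, while $\dim M<\infty$ bounds the per-point contribution uniformly, so only the "generic" non-critical weights determine the leading polynomial.
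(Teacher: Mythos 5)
Your proposal is internally consistent in its geometric stratification — the count of lattice points with a fixed word-type $\Bi$ inside a ball of radius $\delta$ is indeed $\Theta(\delta^{d(\Bi)})$ (this is essentially the paragraph before the theorem statement in the paper, which defines the GK dimension of a word exactly by this count), and your handling of critical weights and pre-sequences via the subidempotent observation $e(\mathbf{I}) = e_a e(\Bi)$ is a clean way to show the critical strata are lower-dimensional and dominated.

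However, there is a genuine gap, and it is in fact the heart of the paper's proof. You compute the growth of $\sum_{\gamma \in B_\delta} \dim \Wei_\gamma(\mathbb{GT}(M))$, but the Gelfand-Kirillov dimension of a $\mcU$-module is defined via the filtration by monomial length in a fixed set of generators of $\mcU(\gl_n)$: one chooses a finite-dimensional generating subspace $M' \subset \mathbb{GT}(M)$ and a generating subspace $V \subset \mcU$, and asks for the growth of $\dim(V^\ell M')$. Nothing you wrote identifies this filtration with the ball filtration; you simply declare at the end that your lattice count ``gives $\mathrm{GKdim}$.'' The direction ``$V^\ell M'$ is contained in the span of weight spaces at distance $O(\ell)$'' is easy — each application of $X_i^\pm$ moves a single coordinate $a_{ij}$ by $\pm 1$. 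But the converse — that every weight space within distance $c\ell$ of the support of $M'$ is actually reached by monomials of length at most $\ell$ — is nontrivial. It requires knowing that the normalized operators $\tilde{X}_i^\pm(\gamma,\gamma')$ (and the $\Gamma$-projection operators that isolate weight spaces) can each be expressed as words of \emph{uniformly bounded} length in the generators of $\mcU$, independent of $\gamma,\gamma'$, so that a chain of $d$ unit moves costs only $O(d)$ generators. The paper spends the first paragraph of its proof establishing precisely these two constants $c$ and $C$, and without that bridge your geometric count computes a quantity that has not yet been shown to equal the GK dimension. You should insert this comparison of filtrations before invoking the lattice count.
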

\begin{proof}
 We consider $\mcU$ to be generated by $X_i^{\pm}$ and by some generating set of $\Gamma$.
We claim that given a $\Gamma$-invariant generating subspace $M'$ of the module $\mathbb{GT}(M)$, there are constants $c$ and $C$, such that
\begin{enumerate}
\item 
  the span of all length $\ell$ monomials in generators on $\mcU$ contains the GT weight spaces in a ball of radius $c\ell$ around any fixed point $x\in \bbC^1\times \cdots\times  \bbC^n$, and
 \item it is contained in the sum of those contained in a ball of radius $C\ell $. 
 \end{enumerate}  We can see the bound (2) by the fact that $X_i^\pm$ can only change one of the coordinates by $\pm 1$, and $\Gamma$ cannot change it at all.  To see the bound (1), we need to note that all the projections to weight spaces can be written as a polynomial of degree less than some fixed constant independent of which weight space is under discussion, so the morphism $\tilde{X}_i^\pm(\gamma,\gamma')$ can be written using a number of generators independent of $\gamma,\gamma'$.  Since all morphisms $\Hom_{\mathcal{C}}(\gamma,\gamma')$ are spanned as a left $\Gamma$-module by finitely many monomials in the $\tilde{X}_i^\pm(\gamma,\gamma')$, whose length is bounded above by a multiple of the distance between $\gamma$ and $\gamma'$, a constant with the desired properties $c$ exists.

 Thus, we can determine the Gelfand-Kirillov dimension by considering the growth of the number of lattice points in the support of $\mathbb{GT}(M)$, weighted by their dimensions; however, since only finitely many different $e(\gamma)$, these dimensions are uniformly bounded above, so we can count lattice points in the support, ignoring multiplicity.  The sum of finitely many positive functions has growth rate given by the maximum of the growth rates that appear, and these are given by the GK dimensions of the different idempotents $e(\Bi)$ such that $e(\Bi)M\neq 0$.  This completes the proof.
\end{proof}

\subsection{The non-integral case}
Much recent work on Gelfand-Tsetlin modules has mainly focused on the cases which are relatively far from being integral, with only a small number of the differences $a_{ij}-a_{ik}$ being integers.  Thus, it is reasonable to ask what we will find in the non-integral case.

{For each maximal ideal $\gamma\in \MaxSpec_{\chi}(\Gamma)$ and each $\bar{a}\in \bbC/\Z$, define a vector $\Bv^{(\bar{a})}$ by $v_i^{(\bar{a})}=\#\{ j |a_{ij}\equiv \bar{a}\mod \Z\}$ for $\bar{a}\in \bbC/\Z$.  The ideal $\gamma$ is integral if and only if $\Bv^{(\bar{0})}=\Bv$.}

\begin{lemma}
For any indecomposable Gelfand-Tsetlin module $M$ over $\mcU_{\Bv}$, the vector $\Bv^{(\bar{a})}$ will be the same for every maximal ideal in the support of $M$ for each $\bar{a}$.
\end{lemma}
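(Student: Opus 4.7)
The plan is to show that the category $\Cat_\chi$ itself decomposes as a disjoint union of full subcategories indexed by the tuple $(\Bv^{(\bar a)})_{\bar a \in \bbC/\Z}$, and then invoke the equivalence with Gelfand--Tsetlin modules to force any indecomposable module to be supported on a single such piece.

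First I would examine the Mazorchuk action formula \eqref{eq:Mazorchuk2}. The operators $X_i^{\pm}$ act on $\widehat{\Lambda}_{\mathbf{a}}$ by a sum of shift operators $\delta_{ij}^{\pm}$, each of which sends $\mathbf{a}$ to $\mathbf{a} \pm \mathbf{e}_{ij}$; in particular every such shift preserves the multiset $\{a_{i,1}\bmod \Z,\dots, a_{i,v_i}\bmod \Z\}$ of residues in row $i$, and does not touch the residues in any other row. Consequently $X_i^{\pm}(\gamma,\gamma')=0$ unless $\Bv^{(\bar a)}(\gamma)=\Bv^{(\bar a)}(\gamma')$ for every $\bar a\in\bbC/\Z$. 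Since the category $\Cat$ is generated (after completion) by the operators $X_i^{\pm}$ together with the action of $\Gamma$, and the latter acts through endomorphisms of each object, it follows that $\Hom_{\Cat_\chi}(\gamma,\gamma')=0$ whenever $(\Bv^{(\bar a)}(\gamma))_{\bar a}\neq (\Bv^{(\bar a)}(\gamma'))_{\bar a}$. A small point to verify carefully is that this vanishing survives the inverse limit in the definition $\Hom_{\Cat}(\gamma,\gamma')=\varprojlim \mcU/(\mathsf{m}_{\gamma'}^N\mcU+\mcU\mathsf{m}_\gamma^N)$; this is automatic because each quotient already vanishes, as one checks on the faithful polynomial representation using the shift structure of \eqref{eq:Mazorchuk2}.

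Having established the Hom-vanishing, I would declare the equivalence relation on $\MaxSpec_\chi(\Gamma)$ defined by $\gamma\sim\gamma'\iff \Bv^{(\bar a)}(\gamma)=\Bv^{(\bar a)}(\gamma')$ for all $\bar a$. The conclusion of the previous paragraph exactly says that $\Cat_\chi$ is the disjoint union $\bigsqcup_{[\gamma]}\Cat_{[\gamma]}$ of its full subcategories on the equivalence classes, with no morphisms between different classes. Discrete representations of such a disjoint union decompose canonically as direct sums of representations of the individual pieces.

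Finally, applying the straightforward extension of Proposition~\ref{prop:DFO} to a general central character (not just integral ones), the functor $\mathsf{DFO}$ identifies GT modules over $\mcU_{\Bv}$ whose support lies in $\MaxSpec_\chi(\Gamma)$ with discrete $\Cat_\chi$-modules, and this equivalence intertwines the support of a module with the set of objects on which the corresponding representation is non-zero. Hence every GT module decomposes as a direct sum indexed by equivalence classes $[\gamma]$, and an indecomposable module is therefore supported in a single class, i.e.\ $\Bv^{(\bar a)}$ is constant on its support. The main technical obstacle, and the only thing demanding care, is the passage from the generator-level vanishing of $X_i^{\pm}$ to the vanishing in the completed Hom spaces; everything else is formal once that is in hand.
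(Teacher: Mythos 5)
Your proof is correct and follows essentially the same route as the paper: observe that $\Hom_{\Cat}(\gamma,\gamma')=0$ unless the residue-profile vectors $\Bv^{(\bar a)}$ agree, so $\Cat_\chi$ splits as a disjoint union of full subcategories with no cross-morphisms, and an indecomposable module must live on a single piece. The paper simply asserts the Hom-vanishing; you additionally justify it via the shift structure of the Mazorchuk formulas and flag the inverse-limit subtlety, which is a reasonable expansion of the same idea rather than a different argument.
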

\begin{proof}
The morphism space between maximal ideals in $\mathcal{C}$ will only be non-zero if the vectors $\Bv^{(\bar{a})}$ are equal.  This writes $\mathcal{C}$ as a direct sum of subcategories where we fix $\Bv^{(\bar{a})}$, and any indecomposable module kills all but one of these subcategories.  
\end{proof}
Let $\mathscr{S}$ be the set of maximal ideals corresponding to some fixed choice of $\Bv^{(\bar{a})}$  for each $\bar{a}\in \bbC/\Z$; by shift by any element of the corresponding coset, we can identify this with the integral maximal ideals for $\otimes \Gamma_{\bar{a}}$.  
By \cite[Cor. 4.7]{WebGT}, we have that:
\begin{proposition}\label{prop:integrality}
The block of Gelfand-Tsetlin modules supported in $\mathscr{S}$ over the OGZ algebra $\mcU_{\Bv}$ is equivalent to the integral modules over the tensor product $\bigotimes_{\bar{a}\in \bbC/\Z} \mcU_{\Bv^{(\bar{a})}}$, which we can describe using Theorem \ref{thm:GT-KLR} as the module category $\bigotimes_{\bar{a}\in \bbC/\Z} \tT_{\Bv^{(\bar a)}}\wgmod$.    
\end{proposition}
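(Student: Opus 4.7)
The strategy is to combine the DFO equivalence of Proposition~\ref{prop:DFO} (applied to the non-integral analogue of $\GTc$) with the cited factorization \cite[Cor.~4.7]{WebGT}, and then invoke Theorem~\ref{thm:GT-KLR} on each tensor factor separately. The first observation to record is that $\mathscr{S}$ is closed under all morphisms in $\Cat_{\chi}$: the generators $X_i^\pm$ shift a single coordinate $\lambda_{ij}$ by $\pm 1$, so they preserve the residue class of every coordinate modulo $\Z$. Thus $\Cat_{\chi}$ decomposes as a direct sum of subcategories indexed by the tuples $(\Bv^{(\bar a)})_{\bar a\in \bbC/\Z}$, and $\mathscr{S}$ is precisely the object set of one such summand $\Cat_{\mathscr{S}}$.

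Next, I would identify each $\gamma\in\mathscr{S}$ with a tuple $(\gamma_{\bar a})_{\bar a}$, where $\gamma_{\bar a}$ is an integral maximal ideal of $\Gamma_{\Bv^{(\bar a)}}$ obtained by translating the $\bar a$-entries of a lift $\Ba$ by a fixed coset representative. The heart of the argument is then to check that morphism spaces factor as tensor products:
\[
\Hom_{\Cat_{\mathscr{S}}}(\gamma,\gamma') \;=\; \bigotimes_{\bar a\in \bbC/\Z} \Hom_{\Cat_{\Z,\chi_{\bar a}}}(\gamma_{\bar a},\gamma'_{\bar a}).
\]
From the formulas \eqref{eq:tildesum} this factorization is transparent in principle: any factor $\lambda_{i\pm 1,m}-\lambda_{i,j}$ or $\lambda_{i,m}-\lambda_{i,j}$ in which the two coordinates come from different residue classes specializes to a nonzero complex number, hence yields an invertible element of $\widehat{\Gamma}_{\gamma}$. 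Only differences within a single residue class can vanish and contribute nontrivial KLR-type behaviour. Composing morphisms preserves this decomposition, so one obtains a functor from the tensor product of the integral subcategories $\Cat_{\Z,\chi_{\bar a}}$ to $\Cat_{\mathscr{S}}$; this is precisely \cite[Cor.~4.7]{WebGT}.

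Finally, Theorem~\ref{thm:GT-KLR} applied to each integral block $\Cat_{\Z,\chi_{\bar a}}$ identifies it with $\tT_{\Bv^{(\bar a)}}\wgmod$, and tensoring over $\bar a$ (of which only finitely many contribute nontrivially) produces the stated equivalence
\[
\GTc_{\mathscr{S}} \;\simeq\; \bigotimes_{\bar a\in \bbC/\Z} \tT_{\Bv^{(\bar a)}}\wgmod.
\]
The main obstacle is the middle step: although the invertibility of cross-class factors makes the tensor decomposition intuitively clear, verifying it rigorously requires checking compatibility of the factorization with composition of morphisms and with the DFO equivalence. Rather than redo that bookkeeping, I would simply cite \cite[Cor.~4.7]{WebGT}, where it is carried out in the requisite generality.
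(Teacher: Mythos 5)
Your proposal is correct and takes essentially the same route as the paper: the paper proves Proposition \ref{prop:integrality} simply by citing \cite[Cor.~4.7]{WebGT} and then applying Theorem \ref{thm:GT-KLR} to each integral tensor factor, exactly as you do. Your intermediate remark that cross-residue-class factors of the Gelfand-Tsetlin formulas are invertible is precisely the mechanism the paper unpacks afterwards via the operators $\bar{X}^{\pm}_{i,\bar a}$ and Lemma \ref{lem:polys}, so your sketch of what lies inside the cited result matches the authors' own explanation.
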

In the interest of writing this out explicitly, consider the polynomial  \[\phi_{i,j}^{\pm}(\boldsymbol{\la})=\frac{\prod_{\bar{a}_{i,m}\neq \bar{a}_{i,j}}(\lambda_{im} - \lambda_{ij} ) }{\prod_{\bar{a}_{i\pm 1,m}\neq \bar{a}_{i,j}}(\lambda_{i\pm 1 ,m} - \lambda_{ij} )}\]
This lies in $\widehat{\Lambda}_{\Ba}$; as with $\Phi_{i,j,\Ba}^{\pm}$, we can write it as \begin{math}
    \phi_{i,j}^{\pm}=\sum_{r=1}^p s_r^{\pm}\delta^{\pm}_{ij}(t_r^{\pm})
\end{math} for $s_r$ and $t_r$ which are well defined in all completions of $\Gamma$ at points in $\mathscr{S}$.  
We can break $X^{\pm}_i$ as a morphism in $\mathcal{C}$ into a sum $X^{\pm}_i=\sum_{\bar{a}\in \bbC/\Z}X^{\pm}_{i,\bar{a}}$ where $X^{\pm}_{i,\bar{a}}=\sum_{a\in  \bar{a}+\Z} X^{\pm}_{i,a}$; informally, this is the sum of the components of $X_i^+$ that change an $a_{ij}\in \bar{a}+\Z$ into a $a_{ij}\pm 1$.   

Analogously to Definition \ref{def:tildeX}, we can define
\[\bar{X}^{\pm}_{i,\bar{a}}=\sum_{r=1}^ps_r^{\pm}X^{\pm}_{i,\bar{a}}t_r^{\pm}.\] We leave to the reader confirming that: 
\begin{lemma}\label{lem:polys}
Identifying the completed polynomial representation of $\mcU_{\Bv}$ at points in $\mathscr{S}$ with the completion of the polynomial representation of the tensor product $\bigotimes_{\bar{a}\in \bbC/\Z} \mcU_{\Bv^{(\bar{a})}}$ at integral points,
the operators $\bar{X}^{\pm}_{i,\bar{a}}$ act by the formulas \eqref{eq:Mazorchuk2} for the alphabets corresponding to $\Bv^{(a)}$.
\end{lemma}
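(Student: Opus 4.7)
The plan is to prove Lemma \ref{lem:polys} by direct algebraic manipulation, essentially just unwinding the definitions and observing that the factor $\phi_{ij}^{\pm}$ is engineered to cancel exactly those factors of the Gelfand-Tsetlin formula that involve variables in a different coset mod $\Z$ than $a_{ij}$. The hard work was already done in setting up the right definitions; what remains is to check that the resulting formula agrees with the OGZ formula \eqref{eq:Mazorchuk2} on the sub-alphabet $\Bv^{(\bar a)}$.

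First I would expand $\bar X^{\pm}_{i,\bar a}=\sum_r s_r^{\pm}X_{i,\bar a}^{\pm}t_r^{\pm}$ using the definition of $X_{i,\bar a}^{\pm}$ as the subsum of \eqref{eq:Mazorchuk2} over indices $j$ with $a_{ij}\in \bar a+\Z$. Using the commutation $\delta_{ij}^{\pm}\!\cdot\! t=\delta_{ij}^{\pm}(t)\!\cdot\!\delta_{ij}^{\pm}$ as operators on $\Lambda$, and the identity $\phi_{ij}^{\pm}=\sum_r s_r^{\pm}\delta_{ij}^{\pm}(t_r^{\pm})$ (which is the analogue of \eqref{eq:fg} here, and whose terms $s_r,t_r$ are independent of $j$), one rewrites
\[
\bar X^{\pm}_{i,\bar a}\;=\;\sum_{j:\, a_{ij}\in\bar a+\Z}\mp\;\phi_{ij}^{\pm}(\boldsymbol{\la})\cdot \frac{\prod_m(\lambda_{i\pm1,m}-\lambda_{ij})}{\prod_{m\neq j}(\lambda_{im}-\lambda_{ij})}\;\delta_{ij}^{\pm}.
\]

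Next I would simplify the rational coefficient of $\delta_{ij}^{\pm}$. Substituting the formula for $\phi_{ij}^{\pm}$ and splitting each product $\prod_m$ and $\prod_{m\neq j}$ into the parts with $\bar a_{\ell,m}=\bar a_{ij}$ and $\bar a_{\ell,m}\neq \bar a_{ij}$ (for $\ell=i\pm1$ and $\ell=i$ respectively), all factors with $\bar a_{\ell,m}\neq \bar a_{ij}$ cancel between numerator and denominator. What survives is precisely
\[
\mp\,\frac{\prod_{\bar a_{i\pm1,m}=\bar a_{ij}}(\lambda_{i\pm1,m}-\lambda_{ij})}{\prod_{m\neq j,\,\bar a_{im}=\bar a_{ij}}(\lambda_{im}-\lambda_{ij})}\;\delta_{ij}^{\pm},
\]
which is exactly the $j$-th term of the Mazorchuk operator \eqref{eq:Mazorchuk2} for the sub-alphabet $\{\lambda_{\ell,m}:\bar a_{\ell,m}=\bar a\}$ of dimension vector $\Bv^{(\bar a)}$.

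Finally I would confirm the identification with the tensor product structure. Under the decomposition $\Ba\leftrightarrow (\Ba^{(\bar a)})_{\bar a\in \bbC/\Z}$ (where $\Ba^{(\bar a)}$ is the tuple of $a_{ij}$ lying in $\bar a+\Z$), the completed polynomial ring $\widehat\Lambda_{\Ba}$ factors as the completed tensor product $\widehat\bigotimes_{\bar a}\widehat\Lambda_{\Ba^{(\bar a)}}$, since no $a_{ij}$ lying in different cosets can collide. The stabilizer $W_{\Ba}$ factors as $\prod_{\bar a}W_{\Ba^{(\bar a)}}$, and the computation above shows $\bar X^{\pm}_{i,\bar a}$ acts only on the factor indexed by $\bar a$ and by the formula \eqref{eq:Mazorchuk2} for $\Bv^{(\bar a)}$. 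The only mild subtlety — which is the closest thing to an obstacle — is checking that the manipulation above is legitimate in $\widehat\Lambda_{\Ba}$: the rational factors being cancelled are invertible there (their specialization at $\Ba$ is nonzero precisely because $\bar a_{i,m}\neq \bar a_{i,j}$ forces $a_{i,m}\neq a_{i,j}$), so everything occurs inside the completion $\widehat{\Gamma}_{\gamma}$ without the appearance of denominators that vanish at $\Ba$.
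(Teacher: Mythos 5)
Your proof is correct, and since the paper leaves this lemma to the reader with no written argument, the natural approach is exactly the direct computation you perform: expand $\bar X^{\pm}_{i,\bar a}=\sum_r s_r^{\pm}X^{\pm}_{i,\bar a}t_r^{\pm}$, commute $t_r^{\pm}$ past $\delta_{ij}^{\pm}$, recognize $\phi_{ij}^{\pm}=\sum_r s_r^{\pm}\delta_{ij}^{\pm}(t_r^{\pm})$, and observe that multiplying the Gelfand-Tsetlin coefficient by $\phi_{ij}^{\pm}$ cancels exactly the factors $\lambda_{\ell m}-\lambda_{ij}$ with $\bar a_{\ell m}\neq\bar a_{ij}$, leaving the Mazorchuk formula for the sub-alphabet of $\Bv^{(\bar a)}$. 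Your closing remark about legitimacy is the key sanity check and is handled appropriately; you only spell out that the $i$-level factors $\lambda_{im}-\lambda_{ij}$ don't vanish at $\Ba$, but of course the same argument (the values lie in distinct $\Z$-cosets) applies to the $i\pm1$-level factors $\lambda_{i\pm1,m}-\lambda_{ij}$ being cancelled from the GT numerator, which is worth stating explicitly.
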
   Thus, for any Gelfand-Tsetlin module whose support lies in $\mathscr{S}$, the operators $\bar{X}^{\pm}_{i,\bar{a}}$ define an action of  $\bigotimes_{\bar{a}\in \bbC/\Z} \mcU_{\Bv^{(\bar{a})}}$ on the same underlying vector space; this defines the equivalence of \cite[Cor. 4.7]{WebGT}.

\subsection{Canonical modules}

Consider $\gamma\in \MaxSpec(\Gamma)_{\chi}$ with $\Bi=\Bi(\gamma)$ the corresponding idempotent in $\tT$; if $\gamma$ is non-integral, we have a sequence of words $\Bi^{(\bar{a})}$, one for each $\bar{a}\in \bbC/\Z$.  Early, Mazorchuk and Vishnyakova define a {\bf canonical module} $N(\gamma)$ as follows: Let $N'(\gamma)$ be the submodule generated $\mathrm{ev}_\gamma$ as an element of the (full) dual space $\Gamma^*$, and $N(\gamma)$ its unique simple quotient.  As defined in \cite{EMV}, this module is a right module, so we will find it more convenient to work with its restricted dual $C(\gamma)$, which is a left module.  We can realize $C(\gamma)$ by considering the representation of the category $\mathcal{C}$ on the completed polynomial rings $\widehat{\Gamma}_{\gamma'}$, letting $C'(\gamma)$ be the submodule generated by $\widehat{\Gamma}_{\gamma}$ and $C(\gamma)$ its unique simple quotient as a topological module over $\mathcal{C}$.  

\begin{remark}
The reader may recall that for $\tT$, we defined a {\it left} module $N(\Bi)$ which is dual to $C(\Bi)$; here, we used an anti-automorphism of $\tT$ to switch between right and left modules.  An obvious question is whether there is a compatible duality on the category of Gelfand-Tsetlin modules. The usual duality on Lie algebra modules induced by the anti-automorphism $X\mapsto -X$ is obviously unsuitable since it switches highest and lowest weight modules.  The anti-automorphism $X\mapsto X^T$ is compatible with the inclusion $\mathfrak{gl}_k\hookrightarrow \mathfrak{gl}_n$ and acts by the identity on $\Gamma$.  Thus, if we define the contragredient $M^*$ using this anti-automorphism, it will have the property that $\Wei_{\gamma}(M)^*=\Wei_{\gamma}(M^*)$ for all modules $M$. This allows us to define the left module version of $N(\gamma)$, which is non-canonically isomorphic to $C(\gamma)$.

Note that while we have not carefully verified that this contragredient matches the one we defined for $\tT$-modules, it seems virtually certain that this is indeed the case.
\end{remark}

By Lemma \ref{lem:polys}, the equivalence of Proposition \ref{prop:integrality} sends canonical modules to canonical modules, and we can determine all canonical modules by understanding those in the integral case.

We will do this by comparison with the $\tT$-module $C(\Bi)$ defined in Section \ref{sec:canonical-1}; in the non-integral case, we use this to mean the tensor product of the corresponding canonical modules over $\tT_{\Bv^{(\bar{a})}}$.
\begin{proposition}
We have an isomorphism $C(\gamma)\cong \mathbb{GT}(C(\Bi))$.
\end{proposition}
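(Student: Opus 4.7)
The plan is to transport the defining presentation of $C(\gamma)$ across the equivalence $\Theta\colon\Cat_{\Z,\chi}\xrightarrow{\sim}\mathcal{T}_{\Z,\chi}$ of Proposition~\ref{prop:equivalence} and match the result with $C(\Bi)$.  A preliminary reduction using Proposition~\ref{prop:integrality} and Lemma~\ref{lem:polys} writes both $C(\gamma)$ and $C(\Bi)$ as tensor products over residue classes $\bar a\in\bbC/\Z$, so it suffices to treat the integral case; assume $\gamma$ is integral throughout.

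The essential input is that, under $\Theta$, the representation $\mathscr{P}$ is the pullback of the completed polynomial representation $\widehat{\poly}$ (the final sentence of Lemma~\ref{lem:map-to-action}).  The resulting isomorphism $\mathscr{P}(\gamma')=\widehat{\Gamma}_{\gamma'}\cong e(\mathbf{I}_{\gamma'})\widehat{\poly}$ sends the constant $1\in\widehat{\Gamma}_\gamma$ to the idempotent $e(\Bi)=e(\mathbf{I}_\gamma)$.  Hence the $\Cat_{\Z,\chi}$-submodule $C'(\gamma)\subset\mathscr{P}$ generated by $1\in\widehat{\Gamma}_\gamma$ corresponds to the $\widehat{\tT}$-submodule $\widehat{C}'(\Bi)\subset\widehat{\poly}$ generated by $e(\Bi)$.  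The same simple-quotient argument as in \cite[Lem.~2.20]{WebGT} shows that $\widehat{C}'(\Bi)$ has a unique simple quotient $\widehat{C}(\Bi)$, and the exactness of $\mathbb{GT}$ (from Propositions~\ref{prop:DFO} and~\ref{prop:equivalence}) yields $\mathbb{GT}(\widehat{C}(\Bi))\cong C(\gamma)$.

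To conclude, I need $\widehat{C}(\Bi)\cong C(\Bi)$ as $\tT$-modules.  The inclusion $\poly\hookrightarrow\widehat{\poly}$ carries $C'(\Bi)$ into $\widehat{C}'(\Bi)$, and the composition $C'(\Bi)\to\widehat{C}'(\Bi)\twoheadrightarrow\widehat{C}(\Bi)$ is non-zero (the image of $e(\Bi)$ generates the target), so it surjects onto the unique simple quotient $C(\Bi)$ of $C'(\Bi)$.  Conversely, any $\tT$-simple subquotient of $\widehat{C}(\Bi)$ must have red-good word equal to the lex-minimum of $\{\Bj:e(\Bj)\widehat{C}(\Bi)\neq 0\}\subseteq\{\Bj:e(\Bj)\widehat{C}'(\Bi)\neq 0\}$, which is $\Bi$; by Theorem~\ref{thm:all-simples} this simple is $C(\Bi)$, so the surjection must be an isomorphism.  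Combining these steps gives $\mathbb{GT}(C(\Bi))\cong C(\gamma)$.

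The main obstacle is this last identification of the simple quotients taken before and after completion: in principle, completing $\poly$ to $\widehat{\poly}$ might enlarge the maximal submodule and thus shrink the head.  The weight-space decomposition localizes the question, and since dots act locally nilpotently on each weight space of any simple $\tT$-module relevant here (a truncated version of Lemma~\ref{lem:wgmod}), each $e(\Bj)$-weight space of the head is unaffected by completion; this is what forces $\widehat{C}(\Bi)\cong C(\Bi)$.
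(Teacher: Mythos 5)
Your overall route is the same as the paper's: reduce to the integral case, use the last sentence of Lemma~\ref{lem:map-to-action} to identify $\mathscr{P}$ with $\mathbb{GT}(\widehat{\poly})$, observe $1\in\widehat{\Gamma}_\gamma\leftrightarrow e(\Bi)$ to get $C'(\gamma)\cong\mathbb{GT}(\widehat{C'(\Bi)})$, and pass to unique simple quotients.  The paper's proof stops right there; what you add is an attempt to justify that taking the simple quotient commutes with completion, a step the paper silently takes for granted.

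That added paragraph has a genuine misstep, however.  You assert that any $\tT$-simple subquotient of $\widehat{C}(\Bi)$ has red-good word equal to the lex-minimum of $\{\Bj : e(\Bj)\widehat{C}(\Bi)\neq 0\}$, and that this lex-minimum is $\Bi$.  Neither claim holds in general: for a simple subquotient $L$ of a module $M$ one only has $\{\Bj:e(\Bj)L\neq 0\}\subseteq\{\Bj:e(\Bj)M\neq 0\}$, which bounds the red-good word of $L$ from \emph{below} by the lex-minimum of the larger set rather than pinning it down; and $\Bi=\Bi(\gamma)$ is an arbitrary word, not necessarily red-good, so it need not be that lex-minimum anyway (indeed by Theorem~\ref{lem:good-canonical} and the discussion around it, $C(\Bi)\cong L(\Bj)$ for some potentially different red-good $\Bj$).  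The citation of Theorem~\ref{thm:all-simples} therefore does not land.  The clean way to close the gap is what your last paragraph gestures at but does not quite execute: $\widehat{C}(\Bi)$ is a simple topological $\widehat{\tT}$-module, and once one knows it is finite-dimensional and discrete, Lemma~\ref{lem:wgmod} says its $\tT$-module structure is weakly graded, so simplicity over $\widehat{\tT}$ and over $\tT$ coincide; your nonzero map $C'(\Bi)\to\widehat{C}(\Bi)$ is then automatically surjective, hence factors through the unique simple quotient $C(\Bi)$, giving $\widehat{C}(\Bi)\cong C(\Bi)$ without any appeal to lex-minimality.
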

\begin{proof}
As discussed above, we need only consider the case where $\gamma$ is integral.

First, by Lemma \ref{lem:map-to-action} (and the discussion below), we have an isomorphism between the representations $\gamma\mapsto \widehat{\Gamma}_{\gamma}$ and $\mathbb{GT}(\widehat{\poly})$.   Since this sends $1\in \widehat{\Gamma}_{\gamma}$  to $e(\Bi)$, we also have an isomorphism $C'(\gamma)\cong \mathbb{GT}(\widehat{C'(\Bi)})$, where we complete $C'(\Bi)$ with respect to its grading.  Taking the unique simple quotient of both sides shows that $C(\gamma)\cong \mathbb{GT}(C(\Bi))$.
\end{proof}
While we have only stated this result for integral $\gamma\in \MaxSpec(\Gamma)_{\Z,\chi}$, we can also apply the result to arbitrary $\gamma $ using Lemma \ref{lem:polys}.
Thus, the results of Section \ref{sec:canonical-1} apply immediately to Early, Mazorchuk, and Vishnyakova's canonical modules:
\begin{corollary}
Every simple Gelfand-Tsetlin module over the OGZ algebra $\mcU$ is isomorphic to a canonical module for some maximal ideal $\gamma$ and two integral maximal ideals $\gamma,\gamma'$  have isomorphic canonical modules if and only if their corresponding words $\Bi^{\bar{a}}(\gamma),\Bi^{\bar{a}}(\gamma')$ differ by a finite chain of the moves in \eqref{eq:canonical-1}--\eqref{eq:canonical-3} for each ${\bar{a}}\in \bbC/\Z$.
\end{corollary}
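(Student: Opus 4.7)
The plan is to assemble the corollary from three ingredients already in place. The proposition just above identifies the EMV canonical module $C(\gamma)$ with $\mathbb{GT}(C(\Bi))$ for $\Bi = \Bi(\gamma)$; this transports every statement about canonical modules over $\mcU$ into the analogous statement about the $\tT$-modules $C(\Bi)\cong N(\Bi)$. Theorem \ref{thm:all-simples}(2) says that every simple graded $\tT$-module has the form $L(\Bj)$ for a red-good word $\Bj$, and Theorem \ref{lem:good-canonical} identifies $L(\Bj)$ with $N(\Bj)$ for such $\Bj$. Finally, the last theorem of Section \ref{sec:canonical-1} characterizes when two words give isomorphic canonical $\tT$-modules in terms of the moves \eqref{eq:canonical-1}--\eqref{eq:canonical-3}.

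For the first assertion, I would take a simple Gelfand-Tsetlin module $V\in \GTc_{\Z,\chi}$ and use Theorem \ref{thm:GT-KLR} to write $V\cong \mathbb{GT}(L)$ for some simple weakly graded $\widehat{\tT}$-module $L$. Then $L\cong L(\Bj)\cong N(\Bj)$ for a unique red-good $\Bj$. Any integral $\gamma$ with $\Bi(\gamma)=\Bj$ then satisfies $\mathbb{GT}(N(\Bj))\cong C(\gamma)$ by the preceding proposition, so $V\cong C(\gamma)$ is a canonical module. Such a $\gamma$ exists because the sequence of letters in $\Bj$ records precisely the multiset $\Bv$.

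For the second assertion, the same proposition gives $C(\gamma)\cong C(\gamma')$ if and only if $N(\Bi(\gamma))\cong N(\Bi(\gamma'))$ as $\tT$-modules, and the last theorem of Section \ref{sec:canonical-1} converts this into the combinatorial condition on words. To handle non-integral $\gamma$, I would invoke Proposition \ref{prop:integrality} to factor the block along the cosets $\bar{a}\in \bbC/\Z$; by Lemma \ref{lem:polys} this equivalence respects the defining construction of canonical modules (both are generated by the appropriate evaluation functional from the polynomial representation), so canonical modules go to external tensor products of canonical modules over the factors $\tT_{\Bv^{(\bar{a})}}$. Since tensor products of simples over such a product algebra are themselves simple and isomorphic if and only if each tensor factor is, one can apply the integral result one residue class at a time. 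I do not anticipate a serious obstacle: the only bookkeeping item is verifying that under all equivalences in play, the distinguished generator $\mathrm{ev}_\gamma$ and the idempotent $e(\Bi(\gamma))$ correspond to each other, which is built into the definition of $\mathbb{GT}$ via Lemma \ref{lem:map-to-action}.
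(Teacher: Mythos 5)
Your proposal matches the paper's approach exactly: the paper compresses the entire argument into a single sentence (``Thus, the results of Section~\ref{sec:canonical-1} apply immediately to EMV's canonical modules''), and your chain of citations --- Theorem~\ref{thm:GT-KLR} to write $V\cong\mathbb{GT}(L)$, Theorem~\ref{thm:all-simples}(2) and Theorem~\ref{lem:good-canonical} to identify $L\cong L(\Bj)\cong N(\Bj)$, the move-characterization theorem of Section~\ref{sec:canonical-1} for the ``if and only if,'' and Proposition~\ref{prop:integrality} with Lemma~\ref{lem:polys} to reduce the non-integral case to tensor factors --- is precisely what that sentence is summarizing. The transport step via the preceding proposition $C(\gamma)\cong\mathbb{GT}(C(\Bi))$ is also exactly the paper's mechanism, so this is the intended argument, just spelled out.
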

In the integral case, we only have a single pair of words $\Bi(\gamma)=\Bi^{\bar{0}}(\gamma),\Bi(\gamma')=\Bi^{\bar{0}}(\gamma')$ we need to compare.

\subsection{Verma modules}  

One major focus of the paper \cite{FGRZVerma} is to understand Verma modules in a way compatible with the Gelfand-Tsetlin subalgebra.  Here, we present one solution to this problem.  Recall that for each red-good word $\Bi$, we have introduced a standard module $\Delta(\Bi)$ as an induction from the words in its red-good factorization.  

Throughout this section, we'll assume that $\Bi=b_1\cdots b_n$ for $b_k=(n,n-1,\dots,r_k)\in GL'$.  Note that in order to obtain the dimension vector $(1,\dots, n)$, the map $\sigma(k)= r_k$ must be a permutation $\sigma\in S_n$.  Now choose a central character $\chi$, and let $\mu$ be the unique weight such that $\mu-\rho$ is anti-dominant and the Verma module $M(\mu)$ with lowest weight $\mu$ for the negative Borel has central character $\chi$.  As usual, we let $e_{i,i}m=\mu_im$ for $m$ the lowest weight vector; anti-dominance after adding $\rho$ means that \[\mu_1\leq\mu_2+1\leq \cdots \leq \mu_{n}+(n-1).\]  Using the Harish-Chandra homomorphism, we see that a symmetric polynomial $f(\la_{n1},\dots, \la_{nn})$ acts by the scalar $f(\chi_1,\chi_2,\dots, \chi_n)$ where $\chi_i=\mu_i+(i-1)$.  Of course, the other Verma modules with the same central character are of the form $M(\sigma)=M(\sigma(\mu-\rho)+\rho)$ for $\sigma \in S_n$ and $\rho=(\frac{n-1}{2},\frac{n-3}{2},\dots, \frac{1-n}{2})$ is half the sum of the positive roots (for the positive Borel).  If $\mu'=\sigma(\mu-\rho)+\rho$,  then we have \begin{equation}\label{eq:muprime}
    \mu'_i=\chi_{\sigma^{-1}(i)}-(i-1)
\end{equation}
For simplicity, we will only prove this result for $\chi\in \MaxSpec_{\Z}(Z_n)$, in which case $M(\sigma)$ in an integral Gelfand-Tsetlin module for all $\sigma$, but with appropriate changes, the same results hold for any Verma module.

\begin{theorem}
We have an isomorphism $\mathbb{GT}(\bar{\Delta}(\Bi))\cong M(\sigma)$, where $\sigma(k)=r_k$ as defined above.  
\end{theorem}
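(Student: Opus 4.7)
The plan is to use the functorial setup of Proposition~\ref{prop:equivalence}: I will produce a natural surjection from $\bar\Delta(\Bi)$ onto $\mathbb{KLR}(M(\sigma))$ (where $\mathbb{KLR}$ is available even beyond the regular case as the composition with $\widehat{\tT}\otimes_{\mathcal{T}_{\Z,\chi}}-$), and then upgrade it to an isomorphism by matching graded characters. Applying $\mathbb{GT}$ to the resulting isomorphism gives the theorem, since $\mathbb{GT}\circ\mathbb{KLR}$ returns $M(\sigma)$ up to canonical isomorphism on its Gelfand--Tsetlin decomposition.

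First, I would pin down the dominant GT pattern $\Ba$ of the lowest-weight vector $m\in M(\sigma)$. The $\mathfrak{h}$-weight condition $e_{ii}m=\mu'_im=(\chi_{\sigma^{-1}(i)}-(i-1))m$ combined with the formula for $e_{ii}$ in Proposition~\ref{prop:mazorchuk} fixes the row-sums $\sum_{k\leq i}a_{ik}$. The lowest-weight condition $e_{i+1,i}m=X_i^-m=0$, read through~\eqref{eq:Mazorchuk}, forces enough equalities of the form $a_{i-1,k}=a_{ij}$ (making the numerator of each $\delta^-_{ij}$ summand vanish) to pin the pattern down uniquely. Tracking which entries in successive rows are forced to coincide shows that the preorder on $\Omega$ attached to $\gamma$ is exactly the one encoded by $\Bi=b_1\cdots b_n$ with $b_k=(n,n-1,\dots,\sigma(k))$: the top-row value $a_{n,\sigma(k)}=\chi_k-(k-1)$ propagates down through coincident entries $a_{n-1,?},\dots,a_{\sigma(k),?}$, and this descending chain is precisely the letter-block $b_k$.

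Next I would transport $m$ through $\mathbb{KLR}$ to obtain $\tilde m\in e(\Bi)\mathbb{KLR}(M(\sigma))$ and check the relations~\eqref{eq:Delta} and~\eqref{eq:bar-Delta}. The relation $y_k\tilde m=0$ holds because $m$ is a genuine (not merely generalized) eigenvector for $\Gamma$ at $\gamma$, so all dots act by zero. For $\psi_k\tilde m=0$ when $i_k-1=i_{k+1}$, I would apply Lemma~\ref{lem:map-to-action} (and the explicit formula~\eqref{eq:tXm-KLR}): such a crossing identifies, up to an invertible scalar power series in the $y$'s, with a component of $\tilde X_i^-(\gamma,\gamma')$ acting on $m$, which is zero since $X_i^-m=0$. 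This produces a $\widehat{\tT}$-linear map $\bar\Delta(\Bi)\to\mathbb{KLR}(M(\sigma))$, and it is surjective because $m$ cyclically generates $M(\sigma)$ as a $\mcU$-module, hence $\mathbb{KLR}(M(\sigma))$ as a $\widehat{\tT}$-module via the essential-surjectivity part of Proposition~\ref{prop:equivalence}.

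To close the argument I would compare graded characters. By Lemma~\ref{lem:delta-induction} we have $\bar\Delta(\Bi)=\bar\Delta(b_1)\circ\cdots\circ\bar\Delta(b_n)$, and since each $b_k\in\GL'$ is a single Lyndon word the factor $\bar\Delta(b_k)$ is one-dimensional; Lemma~\ref{lem:induction-basis} then gives a basis indexed by shortest right coset representatives of $S_{|b_1|}\times\cdots\times S_{|b_n|}$ in $S_N$ that cross no red strands. On the Verma side, a PBW basis of $U(\mathfrak{n}^+)m$ by convex-ordered monomials in positive root vectors $e_{ji}$ ($j>i$) distributes across GT weight spaces in a way that, translated through Proposition~\ref{prop:equivalence} and the thick-calculus dictionary of Section~\ref{sec:thick-calculus}, matches the shuffle-coset basis of $\bar\Delta(\Bi)$ term-by-term. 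The main obstacle is precisely this character match: the GT weight multiplicities of a Verma module are combinatorially delicate, and reconciling them with the Lyndon-shuffle counts requires a careful bijection between convex PBW-monomials and red-avoiding coset representatives --- I expect this to reduce, by an argument in the style of \cite[\S5]{Lecshuf}, to a standard bijection between good words in the $A_{n-1}$ root system and a Kostant-partition-like index set, but verifying the bijection cleanly is where the real work lies.
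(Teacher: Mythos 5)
Your construction in the second paragraph is essentially the first half of the paper's argument but run in the opposite direction: the paper starts from the generator $v\in\bar\Delta(\Bi)$, observes that $X_k^-v'=\sum d\psi_kv=0$ is forced by the defining relation $\psi_kv=0$, and concludes $v'$ is a lowest-weight vector, yielding a surjection $M(\sigma)\twoheadrightarrow\mathbb{GT}(\bar\Delta(\Bi))$. You instead start from $m\in M(\sigma)$ and try to deduce the finer relations $\psi_k\tilde m=0$ from the coarser $X_i^-m=0$. This is not automatic: when the letters at positions $k,k+1$ lie at the \emph{same} longitude $a$, the word $s_k\Bi$ is not of the form $\mathbf{I}_{\gamma'}$, so $\psi_k$ does not correspond to a component of $\tilde X_i^-(\gamma,\gamma')$ and cannot be read off directly from $X_i^-m=0$; you would need a separate argument for those crossings. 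The paper's direction avoids this case analysis entirely.

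The genuine gap is the injectivity step. You propose to match the graded character of $\bar\Delta(\Bi)$ (via the shuffle/coset basis from Lemma~\ref{lem:induction-basis}) against that of $M(\sigma)$ (via a convex-ordered PBW basis). But PBW monomials in the root vectors $e_{ji}$ applied to $m$ are \emph{not} homogeneous for the Gelfand--Tsetlin action --- the $e_{ji}$ for $j-i\geq 2$ mix GT weight spaces, so a PBW basis does not ``distribute'' across GT weight spaces in any term-by-term way. Computing the GT character of the Verma module is essentially the content of the theorem being proved (it is what produces the semi-pattern basis), so the character comparison you sketch is circular. The paper instead closes the argument with either a Gelfand--Kirillov dimension count --- $\mathbb{GT}(\bar\Delta(\Bi))$ has full GK dimension $n(n+1)/2$ because the shuffle basis hits every dominant GT pattern, while any proper quotient of a Verma has strictly smaller GK dimension --- or, alternatively, a deformed-Verma/flatness argument over the completed local ring of $U(\mathfrak h)$. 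Both are much softer than a bijective character match. Finally, you do not address the singular blocks; the paper reduces them to the regular case using translation functors and the compatibility $\mathbb{GT}\cong T\circ\mathbb{GT}'$ from \cite[Cor.~4.14]{Webthree}.
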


\begin{proof}
For simplicity, we first consider the case where $\chi$ is regular;  we will deal with the singular case at the end of the proof.

Let $\gamma\in \MaxSpec(\Gamma)$ be the maximal ideal that kills the unique lowest weight vector of $M(\sigma)$.  Since this vector is lowest weight for $\gl_k$ for all $k=1,\dots, n$, this maximal ideal is determined by the Harish-Chandra homomorphism---the action is induced by sending $\la_{ij}$ to the scalar $a_{ij}=\chi_{\sigma^{-1}(j)}$ by \eqref{eq:muprime}.  This maximal ideal $\gamma$ has associated word $\Bi(\gamma)$ given by the original word $\Bi$.  Note that this is exactly the maximal ideal $\la^{(\Bi)}$ introduced in \cite[\S 5.3]{WebGT}.

Thus, we have that $\Wei_{\gamma}(\mathbb{GT}(\bar{\Delta}(\Bi)))\cong e(\Bi)\bar{\Delta}(\Bi)$.  As shown in the proof of Theorem \ref{thm:all-simples}(1), this is a 1-dimensional vector, spanned by the generating vector $v$ for $\bar{\Delta}(\Bi)$; let $v'$  denote the corresponding vector in $\mathbb{GT}(\bar{\Delta}(\Bi))$.  Note that the action of any $X^-_k$ on $v'$ is a sum over diagrams that move a black strand to the left and is a sum of terms $d\psi_kv$, where $\psi_kv=0$ by the definition \ref{eq:Delta}.  Note that this argument does not apply to $X^+_k$, since diagrams where a black strand moves northeast across a red strand are not set to 0. Thus, $v'$ is a lowest weight vector.  

By the fullness of the functor $\Theta$, we find that $v'$ generates $\mathbb{GT}(\bar{\Delta}(\Bi))$ since $v$ generates $\bar{\Delta}(\Bi)$.  This shows that we have a surjective homomorphism $M(\sigma)\to \mathbb{GT}(\bar{\Delta}(\Bi))$.  We need only show that this is injective.  Since it is not zero, it must be injective if the Verma module is simple, so if $\sigma=w_0$ is the longest permutation.

One way to see that it is injective is to compute the Gelfand-Kirillov dimension of $\mathbb{GT}(\bar{\Delta}(\Bi))$.  By Lemma \ref{lem:induction-basis}, the space $\Wei_{\gamma'}(\mathbb{GT}(\bar{\Delta}(\Bi)))=e(\gamma')\bar{\Delta}(\Bi))$ is non-zero whenever $\Bi(\gamma')$ can be written as a shuffle of the words $b_i$ without switching the order of two $n$'s.  In particular, this is possible if $\gamma'$ is in the support of $M(w_0)$, and any Gelfand-Tsetlin module with all these spaces non-zero has Gelfand-Kirillov dimension $(n+1)n/2$. Any proper quotient of a Verma module has strictly lower Gelfand-Kirillov dimension, so the map to $\mathbb{GT}(\bar{\Delta}(\Bi))$ must be injective.

Another approach is to note that this construction is well-defined not just for $\bbC$-valued weights, but also for weights in a complete local ring over $\bbC$ (or any other field).  
In this context, we can consider the deformed Verma module $\tilde{M}(\sigma)$ as in \cite[\S 3.1]{soergelCombinatoricsHarishChandra1992} with $T'$ the completed local ring of $U(\mathfrak{h})$ at $\sigma(\mu-\rho)+\rho$.   This has a natural map to $\mathbb{GT}(\hat{\Delta}(\Bi))$ where $\hat{\Delta}(\Bi)$ is the completion of $\Delta(\Bi)$ with respect to its grading.  After passing to the fraction field of $T'$, the module $\tilde{M}(\sigma)$ becomes simple, and so the induced map is an isomorphism.   By the flatness of the deformed Verma, our original map must be injective.  

This covers the case of a regular block. For a singular integral block, we have the usual translation functor $T$ from a regular block.  The functor $T$ sends the Verma module $M(\sigma(\mu'-\rho)+\rho)$ to the Verma module $M(\sigma(\mu-\rho)+\rho)$; note that the singularity of the block means that we will sometimes have $M(\sigma)\cong M(\sigma')$ for two different permutations.  On the other hand, by \cite[Cor. 4.14]{websterThreePerspectives2020}, we find that the functor $\mathbb{GT}$ (which depends on a choice of central character) commutes with translation onto a wall.  That is, if we use $\mathbb{GT}'$ to denote this functor for $\chi'$, then $\mathbb{GT}\cong T\circ \mathbb{GT}'$.  Thus
\[\mathbb{GT}(\Delta(\Bi))\cong T\circ \mathbb{GT}'(\Delta(\Bi))\cong T(M(\sigma(\mu'-\rho)+\rho))\cong M(\sigma).\qedhere\]
\end{proof}

Lemma \ref{lem:induction-basis} furthermore shows us how to construct a basis of the Verma module $M(\sigma)$.  Combining this with the construction of the functor $\mathbb{GT}$, we see that we have a basis indexed by {\bf semi-patterns}, choices of $a_{ij}$ for $(i,j)\in \Omega$ that satisfy $a_{ij}\geq a_{i+1,j}$, with $a_{nj}=\chi_{\sigma^{-1}(j)}$; these satisfy the NW-SE inequalities of a Gelfand-Tsetlin pattern, but not the NE-SW inequalities; any such semi-pattern has a corresponding maximal ideal $\gamma$ in $\Gamma$. Given such a semi-pattern, can construct the basis vector in $\Wei_\gamma(M(\sigma))$ as the image of a vector in $\bar{\Delta}(\Bi)$: we connect to $v$ a diagram that connects the strand labeled $i$ in the block corresponding to the word $(n,\dots, j)$ at the bottom to the longitude $a_{ij}$.  If $a_{ij}=a_{ik}$ for $j\neq k$, then these join to form a thick strand.  Note that this diagram is not unique, though there are various systematic ways of choosing a preferred one (this essentially requires choosing a preferred reduced expression for each permutation).  The semi-pattern inequalities guarantee that strands coming from the same Lyndon word don't cross.  

For example, if $n=3$, and $\chi=(0,1,2)$, then the Verma with lowest weight $\mu=0$ gives half-patterns of the form 
\[\tikz{\matrix[row sep=0mm,column sep=0mm,ampersand replacement=\&]{
\node {$0$}; \& \& \node {$1$}; \& \& \node {$2$};\\
\& \node {$a$};  \& \& \node {$b$}; \&\\
\& \&\node {$c$}; \& \&\\
};}\]
with $0\leq a\leq c$ and $1\leq b$.
Some examples of diagrams are below:
\[\tikz[very thick,baseline=15pt,yscale=1,xscale=1.2]{
\draw(1.6,0) -- node[above, at end]{$1$} (1.6,1) ; 
\draw(2.3,0) -- node[above, at end]{$2$} (2.3,1); 
\draw(1.3,0) -- node[above, at end]{$2$}(1.3,1);
\draw[red](1,0) -- node[above, at end]{$3$} (1,1) ; 
\draw[red](2,0) -- node[above, at end]{$3$} (2,1); 
\draw[red](3,0) -- node[above, at end]{$3$}(3,1);
\node at (2,-.5) {$a=0,b=1, c=0$};
}\qquad \qquad \tikz[very thick,baseline=15pt,yscale=1,xscale=1.2]{
\draw(1.6,0) to[out=45,in=-135] node[above, at end]{$1$} (3.6,1) ; 
\draw(2.3,0) -- node[above, at end]{$2$} (2.3,1); 
\draw(1.3,0) -- node[above, at end]{$2$}(1.3,1);
\draw[red](1,0) -- node[above, at end]{$3$} (1,1) ; 
\draw[red](2,0) -- node[above, at end]{$3$} (2,1); 
\draw[red](3,0) -- node[above, at end]{$3$}(3,1);
\node at (2,-.5) {$a=0,b=1, c=2$};
}\]\[\tikz[very thick,baseline=15pt,yscale=1,xscale=1.2]{
\draw(1.6,0) to[out=90,in=-90] (2.6,.8) -- node[above, at end]{$1$} (2.6,1) ; 
\draw[thickc](2.3,.8) -- node[above, at end]{$2$} (2.3,1); 
\draw(2.3,0) --  (2.3,.8); 
\draw(1.3,0) to[out=90,in=-90] (2.3,.8);
\draw[red](1,0) -- node[above, at end]{$3$} (1,1) ; 
\draw[red](2,0) -- node[above, at end]{$3$} (2,1); 
\draw[red](3,0) -- node[above, at end]{$3$}(3,1);
\node at (2,-.5) {$a=1,b=1, c=1$};
}\qquad \qquad \tikz[very thick,baseline=15pt,yscale=1,xscale=1.2]{
\draw(1.6,0) to[out=45,in=-135] node[above, at end]{$1$} (3.6,1) ; 
\draw(2.3,0) -- node[above, at end]{$2$} (2.3,1); 
\draw(1.3,0) to[out=45,in=-135] node[above, at end]{$2$}(3.3,1);
\draw[red](1,0) -- node[above, at end]{$3$} (1,1) ; 
\draw[red](2,0) -- node[above, at end]{$3$} (2,1); 
\draw[red](3,0) -- node[above, at end]{$3$}(3,1);
\node at (2,-.5) {$a=2,b=1, c=2$};
}
\]
The action on the formal span of these diagrams is given by applying the functor $\Theta$ and then acting by the resulting morphism in $\mathcal{T}_{\Z,\chi}$. In the case of $\gl_3$, this is written out in the next section; in higher rank than this, accounting for all special cases becomes overwhelming.

\begin{remark}
While Lemma \ref{lem:induction-basis} gives a basis of the module $\bar{\Delta}(\Bi)$, it is still quite challenging to calculate the structure coefficients of the action of $\tT$, and thus of the $\gl_n$-action on this space.  In our opinion, this just shows that giving hands on formulas for the action on a basis compatible with $\Gamma$ is unavoidably challenging, and makes precise the combinatorial challenge of doing so on a level close to the complexity of the Gelfand-Tsetlin formulas.
 \end{remark}

\subsection{Essential support and nilHecke algebras}

One topic which received a great deal of focus in \cite{FGRZVerma} is the {\bf essential support} of a module (\cite[Def. 5.1]{FGRZVerma}), the set of maximal ideals $\gamma$ where $\dim \Wei_{\gamma}(M)$ achieves the Futorny-Ovsienko upper bound.  For a general OGZ algebra, this bound is $\dim \Wei_{\gamma}(M)\leq \prod_{i=1}^{n-1}v_i!$ for any module $M$ cyclically generated by a single vector killed by a maximal ideal in $\Gamma$; of course, this includes all simple modules.  In the case of $\gl_n$, this bound is $(n-1)!\cdots 2!\cdot 1!$.  

By \cite[Cor. 2.19]{WebGT}, the sum of the dimensions $\Wei_{\gamma}(M)$ where $M$ ranges over all simple integral GT modules is bounded above by $\prod_{i=1}^{n-1}v_i!$, so if this bound is achieved by a single module $M$, then $M$ is the only simple GT module where this weight appears.  In fact, we can distinguish the modules where this occurs.  We call a word $\Bi$ {\bf essential} if all appearances of $i+1$ come either before or after all appearances of $i$.  For example, $333221$ is essential, whereas $332321$ or $333212$ are not.  In terms of the associated order on $\Omega$, this is the property that there is no triple $\{(i,k),(i\pm 1,m),(i,\ell)\}\subset \Omega$ with $(i,k)\prec (i\pm 1,m) \prec (i,\ell)$

\begin{theorem}\label{thm:essential}The following are equivalent for a simple $M$ GT module and $\gamma\in \MaxSpec_{\Z}(\Gamma)$ in its support:
\begin{itemize}
    \item   The weight $\gamma$ lies in the essential support of the simple module $M$.
   \item The maximal ideal $\gamma$ is non-critical, and the word $\Bi(\gamma)$ is essential.
  \item The algebra $e(\gamma)\tT e(\gamma)$ is isomorphic to the tensor product $NH_{v_1}\otimes NH_{v_2}\otimes NH_{v_{n-1}}\otimes \bbC[y_{n,1},\dots, y_{n,v_n}]$.
\end{itemize}
\end{theorem}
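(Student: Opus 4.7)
The plan is to establish the three-way equivalence by the chain $(2)\Rightarrow(3)\Rightarrow(1)\Rightarrow(2)$, translating between Gelfand-Tsetlin weight multiplicities and the algebra $e(\gamma)\tT e(\gamma)$ via the equivalence $\mathbb{GT}$ of Theorem~\ref{thm:GT-KLR}. The basic numerical fact we want to control is $\dim\Wei_{\gamma}(M)=\dim e(\gamma)\mathbb{KLR}(M)$, and the latter is a module over $e(\gamma)\tT e(\gamma)$; the Futorny-Ovsienko bound is therefore governed by the maximal-dimension simple of this endomorphism algebra.

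For $(2)\Rightarrow(3)$, I would assume $\gamma$ non-critical so that $e(\gamma)=e(\Bi)$ for $\Bi=\Bi(\gamma)$, and work with the standard basis of $e(\Bi)\tT e(\Bi)$ indexed by label-preserving permutations decorated by dot monomials. Essentiality of $\Bi$ says there is no interlaced triple $(i,k)\prec(i\pm1,m)\prec(i,\ell)$. The point is that under this hypothesis, every crossing of strands with distinct labels $i\ne j$ in a diagram can be eliminated by a local move: for $j\ne i\pm1$ the bigon relation (\ref{black-bigon}) straightens double crossings to the identity, and for $j=i\pm 1$ the triple relation (\ref{triple-dumb}) can produce an extra term only in the forbidden interlaced pattern. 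Iterating this straightening, any diagram reduces to a product of (i) dots and (ii) crossings of equally-labeled strands, with no residual interaction between the clusters for different labels. The resulting factor for each $i<n$ is precisely $NH_{v_i}$, while the red cluster at label $n$ contributes only $\bbC[y_{n,1},\ldots,y_{n,v_n}]$ since red strands may not cross.

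For $(3)\Rightarrow(1)$, once the algebra has the tensor-product form, its finite-dimensional simple modules (in the weakly graded sense of Lemma~\ref{lem:wgmod}) are exactly tensor products of the unique $v_i!$-dimensional simple of $NH_{v_i}$ with a $1$-dimensional character of the polynomial part, so the maximal simple has dimension $\prod_{i=1}^{n-1} v_i!$. A standard-module argument shows this maximum is actually attained: take a red-good $\Bi'$ obtained from $\Bi$ by the moves of Lemma~\ref{lem:canonical-moves}, and observe that $e(\Bi)\bar\Delta(\Bi')$ factors through the top simple of the tensor product. Combined with the inequality $\sum_{[L]}\dim e(\gamma)L\le\prod_{i=1}^{n-1}v_i!$ from \cite[Cor.~3.4]{WebGT}, this forces the $L$ achieving the maximum to be unique, placing $\gamma$ in its essential support.

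For $(1)\Rightarrow(2)$, I would argue contrapositively. If $\gamma$ is critical, some longitude block is a thick strand $e_{t}$ with $t>1$, which projects onto symmetric polynomials; a Hilbert-series comparison shows that no simple over $e(\gamma)\tT e(\gamma)$ can reach dimension $\prod v_i!$, since each symmetrization strictly shrinks the available space (thick-strand factors yield Schur-like quotients of the nilHecke simple). If $\Bi$ is non-essential, pick an interlaced triple: the triple relation (\ref{triple-dumb}) then genuinely entangles two of the would-be $NH_{v_i}$ factors, and a direct count again shows the maximal simple is strictly smaller. Either obstruction prevents the bound from being saturated, so $\gamma$ is not in the essential support.

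The principal obstacle is the straightening argument in $(2)\Rightarrow(3)$: one must verify that, under essentiality, the inductive elimination of cross-label crossings terminates in a canonical form and produces no relations across different label clusters beyond those already inside each $NH_{v_i}$. The combinatorics is controlled by the absence of interlaced triples, but writing out the straightening order, and checking it is compatible with the basis of Lemma~\ref{lem:induction-basis} (so that one genuinely obtains an algebra isomorphism and not merely a surjection), is the technical heart of the argument.
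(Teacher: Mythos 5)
Your cycle of implications $(2)\Rightarrow(3)\Rightarrow(1)\Rightarrow(2)$ has the same shape as the paper's $(1)\Rightarrow(2)\Rightarrow(3)\Rightarrow(1)$, and the underlying dictionary (translate $\dim\Wei_\gamma$ via $\mathbb{GT}$ into module theory over $e(\gamma)\tT e(\gamma)$) matches. But each arrow of your chain is incomplete in ways that the paper's argument resolves with specific tools you don't invoke, so the proposal as written has genuine gaps.

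For $(2)\Rightarrow(3)$, you correctly identify the well-definedness of the superposition map as ``the technical heart,'' but your diagrammatic straightening is not a proof. The bigon relation does \emph{not} ``straighten double crossings to the identity'' when labels differ by one (it introduces dots), and the triple relation does not eliminate crossings but trades them for other configurations, possibly with error terms. The paper sidesteps this entirely: the superposition map $NH_{v_1}\otimes\cdots\otimes NH_{v_{n-1}}\otimes\bbC[y_{n,*}]\to e(\gamma)\tT e(\gamma)$ is an isomorphism by the basis theorem \cite[Prop.~4.16]{Webmerged}, which already gives a basis of $e(\gamma)\tT e(\gamma)$ indexed by label-preserving permutations decorated by dot monomials; essentiality simply says no reduced-word diagram for such a permutation involves a degree-one crossing, so the degrees on both sides match term by term. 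Reproving the basis theorem by an inductive straightening would be much harder than the rest of the theorem, and your sketch doesn't specify a normal form or a well-ordering under which it terminates.

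For $(3)\Rightarrow(1)$, you overcomplicate. The crucial input you omit is that for a simple GT module $M$, the weight space $\Wei_\gamma(M)$ is automatically a \emph{simple} module over $e(\gamma)\widehat{\tT}e(\gamma)$ (by the DFO framework, \cite[Thm.~18]{FOD}, transported via $\Theta$). Once $e(\gamma)\tT e(\gamma)$ is identified as a matrix algebra of rank $\prod_{i<n}v_i!$ over its center, every nonzero discrete simple module has exactly that dimension; since $\gamma$ is assumed in the support of $M$, the bound is saturated immediately, with no standard-module construction and no appeal to the ``uniqueness'' mechanism of \cite[Cor.~3.4]{WebGT} (which the paper uses only in the surrounding discussion, not in this implication).

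For $(1)\Rightarrow(2)$, your contrapositive sketch is the weakest part. ``A Hilbert-series comparison shows that no simple can reach $\prod v_i!$'' for critical $\gamma$ is not substantiated; the paper cites \cite[Cor.~3.7]{WebGT} for this. For the non-essential case, ``the triple relation genuinely entangles two factors and a direct count shows the maximal simple is strictly smaller'' has no content as stated. The paper's argument is quite different and concrete: if $\gamma$ is in essential support, then $e(\gamma)\mathbb{KLR}(M)$ is isomorphic to the dot-coinvariant quotient $U=e(\gamma)\tT e(\gamma)/\sum_i e(\gamma)\tT e(\gamma)y_i$, which is also the coinvariant algebra of $e(\gamma)\poly$. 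A graded isomorphism must send $[1]\in U$ (degree $0$) to the fundamental class (degree $\sum v_i(v_i-1)$), hence there must exist an element of $e(\gamma)\tT e(\gamma)$ of degree $-\sum v_i(v_i-1)$, which forces the half-twist diagram to have no degree-one crossings, i.e.\ the word to be essential. You would need to supply this or a substitute for your direction to go through.

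In short: right skeleton, but the load-bearing facts---the KLR basis theorem, the simplicity of weight spaces of simple GT modules as modules over the local endomorphism algebra, and the degree/fundamental-class argument---are exactly the pieces missing from your proposal.
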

\begin{proof}
  $(1)\Rightarrow (2)$: If $\gamma$ is critical, then the bound cannot be achieved by \cite[Cor. 2.19]{WebGT}.  If $\gamma$ is in the essential support, then we have that $e(\gamma)\mathbb{KLR}M$ is isomorphic to $U=e(\gamma)\tT e(\gamma)/\sum_ie(\gamma)\tT e(\gamma)y_i$, the quotient  by the left ideal generated by all dots.  On the other hand, $e(\gamma)\tT e(\gamma)$ also acts on the coinvariant quotient of the polynomial ring $e(\gamma)\poly$, which must also be simple for dimension reasons.  Furthermore, this isomorphism must send $[1]\in U$ to the unique element of the coinvariant quotient killed by the dots, which has degree $\sum_{i=1}^{n-1} v_i(v_i-1)$.  If we think of this coinvariant quotient of as the cohomology of the product of flag varieties on $\bbC^{v_i}$, this is the fundamental class.
  
  Thus, for this isomorphism to hold, there must be an element of degree $-\sum_{i=1}^{n-1} v_i(v_i-1)$ which maps to the unit of the coinvariant quotient.  This must be given by a diagram in which there are no dots and all pairs of strands with the same label cross (i.e. we do the half-twist on the strands with each fixed label).  This will have the correct degree if and only if every pair of strands with different labels crossing has degree 0, i.e. if their labels are not consecutive.  This will hold if and only if the word is essential.  
  
  $(2)\Rightarrow (3)$: If the word $\Bi$ is essential, then we have a map of algebras $NH_{v_1}\otimes NH_{v_2}\otimes NH_{v_{n-1}}\otimes \bbC[y_{n,1},\dots, y_{n,v_n}]$ to $e(\gamma)\tT e(\gamma)$ by simply superimposing diagrams.  This map is an isomorphism by \cite[Prop. 4.16]{Webmerged}.  
  
  $(3)\Rightarrow (1)$: The weight space $\Wei_{\gamma}(M)$ is a simple $e(\gamma)\tT e(\gamma)$-module by applying the equivalence $\Theta$ and \cite[Thm. 18]{FOD}.  Since $NH_{v_1}\otimes NH_{v_2}\otimes NH_{v_{n-1}}\otimes \bbC[y_{n,1},\dots, y_{n,v_n}]$ is a matrix algebra of rank $\prod_{i=1}^{n-1}v_i!$ over its center by \cite[Prop. 3.5]{laudaCategorificationQuantum2010}, this simple module has dimension $\prod_{i=1}^{n-1}v_i!$, so $\gamma$ is in the essential support.
\end{proof}
Since each essential word appears in the support of a unique simple, it is natural to ask when two essential words are in the support of a single simple.  We call two essential words {\bf essentially the same} if the unique diagram joining the corresponding idempotents with no crossings of strands of the same label has degree 0 (it crosses no strands with consecutive labels), and {\bf essentially different} otherwise.  You can check this means that for each $i$, both words have all appearances of $i+1$ before all appearances of $i$, or both have all $i+1$'s after all $i$'s.  For example, the words $333122$, $313322$, and $133322$ are all essential and all essentially the same, whereas $333221$ is essentially different from these words.  You can easily check that essential sameness is an equivalence relation.
\begin{theorem}\label{thm:essentially-same}
  Non-critical $\gamma$ and $\gamma'$ will both appear in the essential support of a simple module $M$ if and only if the words $\Bi(\gamma),\Bi(\gamma')$ are both essential and essentially the same.
\end{theorem}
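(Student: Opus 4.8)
The plan is to isolate the analytic content into three facts already available and then reduce to pure combinatorics. First, Theorem~\ref{thm:essential} identifies the essential support of a simple GT module as the set of non-critical $\gamma$ with $\Bi(\gamma)$ essential. Second, for such a $\gamma$ the Futorny--Ovsienko bound $\prod_i v_i!$ is attained, so by \cite[Cor. 3.4]{WebGT} the maximal ideal $\gamma$ lies in the support of a \emph{unique} simple GT module; since the canonical module $C(\gamma)$ (generated by the image of $1\in\widehat{\Gamma}_{\gamma}$) has $\gamma$ in its support, that unique simple is $C(\gamma)$. Third, $C(\gamma)\cong C(\gamma')$ iff $\Bi(\gamma)$ and $\Bi(\gamma')$ are joined by a finite chain of the moves \eqref{eq:canonical-1}--\eqref{eq:canonical-3} (Lemma~\ref{lem:canonical-moves} together with the classification of canonical modules proved above). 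Granting these, if non-critical $\gamma,\gamma'$ both appear in the essential support of some simple $M$, then $\Bi:=\Bi(\gamma)$ and $\Bj:=\Bi(\gamma')$ are essential and $M\cong C(\gamma)\cong C(\gamma')$; so the whole theorem reduces to the combinatorial assertion that \emph{two essential words $\Bi,\Bj\in\Seq(\Bv)$ are joined by a chain of moves \eqref{eq:canonical-1}--\eqref{eq:canonical-3} if and only if they are essentially the same.}

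For a word $\Bw$ and $i\in[1,n-1]$, let $O_i(\Bw)$ be the subword of $\Bw$ obtained by deleting every letter other than $i$ and $i+1$. When $\Bw$ is essential, $O_i(\Bw)$ is one of the two ``pure'' words ($v_i$ copies of $i$ followed by $v_{i+1}$ copies of $i+1$, or the reverse), and which one records exactly the orientation $\epsilon_i(\Bw)$; by the description of ``essentially the same'' recalled just before the statement, $\Bi$ and $\Bj$ are essentially the same iff $O_i(\Bi)=O_i(\Bj)$ for all $i$. For the ``if'' direction I would prove the stronger statement that essential words with $O_i(\Bi)=O_i(\Bj)$ for all $i$ are joined using only moves of type \eqref{eq:canonical-1}, by bubbling the leading letter. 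If $x$ is the first letter of $\Bi$ and $p$ is the position of the first $x$ in $\Bj$, then every letter $y$ of $\Bj$ strictly before position $p$ has $y\neq x$ and $|y-x|\geq 2$: otherwise $y\in\{x-1,x+1\}$ would occur before any $x$ in $O_{\min(x,y)}(\Bj)$, while $O_{\min(x,y)}(\Bi)$ begins with $x$, contradicting their equality. Hence one slides $x$ to the front of $\Bj$ via \eqref{eq:canonical-1}, deletes the now common leading $x$ (which leaves all $O_j$ of the remainders equal), and inducts on length. This gives $C(\Bi)\cong C(\Bj)$ by Lemma~\ref{lem:canonical-moves}, and then $\gamma,\gamma'$ both lie in the essential support of $C(\gamma)\cong C(\gamma')$ by Theorem~\ref{thm:essential}.

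For the ``only if'' direction, fix a chain $\Bi=\Bw_0,\dots,\Bw_m=\Bj$ and track $(O_1(\Bw_t),\dots,O_{n-1}(\Bw_t))$. The book-keeping is: a move \eqref{eq:canonical-1} changes no $O_j$; a move \eqref{eq:canonical-2} with parameter $r$ changes only $O_{r-1}$, and \eqref{eq:canonical-3} with parameter $r$ changes only $O_r$, in each case by transposing two letters of a $\{j,j+1\}$-pair occupying consecutive positions of the word, hence of $O_j$ --- the point being that the third strand in \eqref{eq:canonical-2}, \eqref{eq:canonical-3} only ever slides past a strand whose label differs from it by exactly one, so it disturbs no other $O_j$. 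Now the pure word ``all $i$'s then all $i+1$'s'' contains none of the length-three consecutive patterns ($i{+}1,i,i{+}1$; $i{+}1,i{+}1,i$; $i,i{+}1,i$; $i{+}1,i,i$) required to apply \eqref{eq:canonical-2} with $r=i+1$ or \eqref{eq:canonical-3} with $r=i$, since each such pattern has an $i$ immediately after an $i+1$; so that word is \emph{frozen}, no move can alter $O_i$ once it equals it. Running this from whichever endpoint has $\epsilon_i=-$, it follows that if one of $\Bi,\Bj$ has $\epsilon_i=-$ then so does the other, and by symmetry $\epsilon_i(\Bi)=\epsilon_i(\Bj)$ for every $i$; thus $\Bi$ and $\Bj$ are essentially the same.

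I expect the main obstacle to be the book-keeping lemma of the last paragraph: showing rigorously that \eqref{eq:canonical-2} and \eqref{eq:canonical-3} act on $(O_1,\dots,O_{n-1})$ exactly as claimed, and the ``frozen'' property of the pure words. These are elementary case analyses of which strand slides past which, but must be carried out carefully since the entire ``only if'' direction rests on them; the remaining ingredients --- the bubbling argument, the reduction to the combinatorial statement, and the identification of the common simple with $C(\gamma)$ --- are routine given the results already in place.
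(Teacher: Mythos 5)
Your argument is correct, but it takes a genuinely different route from the paper's. The paper works directly in the KLRW algebra: for the ``if'' direction it observes that when the words are essential and essentially the same, the straight-line (degree~$0$) diagram is an isomorphism between $e(\gamma)$ and $e(\gamma')$ in the category, so the two weight functors agree on \emph{every} GT module; for the ``only if'' direction it runs a grading argument on $N=\mathbb{KLR}(M)$, normalizing the top degree of one weight space to $0$ and using that a generator must be carried to the bottom of the other weight space by a diagram of degree $\leq -\sum v_i(v_i-1)$, which forces that diagram to have all same-label crossings and no consecutive-label crossings, i.e.\ the words are essentially the same. You instead combine Theorem~\ref{thm:essential} with the uniqueness of the simple at a weight achieving the Futorny--Ovsienko bound and the identification of that simple with the canonical module, and then lean on the already-proved classification of canonical modules to reduce everything to a combinatorial statement about the moves \eqref{eq:canonical-1}--\eqref{eq:canonical-3}. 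Your combinatorial lemma is sound: the projections $O_i$ are unchanged by \eqref{eq:canonical-1}, are changed only by \eqref{eq:canonical-2} with $r=i+1$ or \eqref{eq:canonical-3} with $r=i$, and all four local windows needed to apply those moves contain an $i$ immediately following an $i+1$, so the orientation ``all $i$'s before all $i+1$'s'' is frozen along any chain; the converse bubbling argument is the standard projection-lemma proof that words with equal $O_i$ lie in one commutation class. What each approach buys: the paper's proof is self-contained in the diagrammatic grading formalism and actually yields the stronger fact that essentially-the-same weights have isomorphic weight functors (equal multiplicities in \emph{all} GT modules), whereas your ``if'' direction only exhibits one simple containing both weights in its essential support (which is all the statement requires); in exchange, you avoid the degree bookkeeping entirely, and your frozen-orientation lemma is an independently interesting refinement of the canonical-move calculus, showing in particular that essentially-the-same essential words are already linked by moves of type \eqref{eq:canonical-1} alone. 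There is no circularity, since the canonical-module classification you invoke was established before Theorem~\ref{thm:essentially-same}.
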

\begin{proof}
  If $\Bi(\gamma),\Bi(\gamma')$ are both essential and essentially the same, then the straight line diagrams provide isomorphisms between $\gamma$ and $\gamma'$ in $\mathcal{C}$ (using the equivalence $\Theta$), so the corresponding GT weight spaces have the same dimension in all GT modules.  This shows that $\gamma$ is in the essential support of $M$ if and only if $\gamma'$ is as well.
  
  Conversely, if $\gamma$ and $\gamma'$ both appear in the essential support of a simple module $M$, then we can consider the module $N=\mathbb{KLR}(M)$, and grade it so that the highest degree element $m$ appearing in $e(\gamma)N$ or $e(\gamma')N$ has degree 0.  By symmetry, we may assume that this is in $e(\gamma)N$; thus, the lowest degree element of $e(\gamma')N$ has degree $\leq -\sum_{i=1}^{n-1} v_i(v_i-1)$.  Since $m$ generates $N$, there must be a diagram in $\tT$ sending $m$ to a non-zero element of $e(\gamma')N$ which has degree $\leq -\sum_{i=1}^{n-1} v_i(v_i-1)$.  This must be the diagram with no dots that connects these idempotents crossing all strands with the same label. This is a lowest degree diagram connecting these idempotents, which has degree $ -\sum_{i=1}^{n-1} v_i(v_i-1)$.  Thus, if we remove the crossings of strands with the same label, the diagram has degree $0$, and so $\gamma$ and $\gamma'$ are essentially the same.  
\end{proof}

If a word is not essential, then we can still use the appearance of nilHecke algebras in its endomorphism algebra to constrain the possible multiplicities of GT weight spaces.
\begin{proposition}\label{prop:smaller-nilHecke}
Assume that $\gamma\in \MaxSpec_{\Z}(\Gamma)$ is non-critical.  If the word $\Bi$ has a group of $n_1$ consecutive appearances of $i_1$, $n_2$ consecutive appearances of $i_2$, etc. up to $n_p$ consecutive appearances of $i_p$ then the multiplicity $\dim\Wei_{\gamma}(M)$ is divisible by $n_1!\cdots n_p!$ for any GT module $M$.  
\end{proposition}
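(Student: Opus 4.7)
The plan is to mimic the final implication of the proof of Theorem \ref{thm:essential}: I will embed a tensor product of nilHecke algebras into $e(\gamma)\widehat{\tT}e(\gamma)$ and then invoke the fact that any finite-dimensional module over such a product has dimension divisible by $\prod_k n_k!$. Since $\gamma$ is non-critical, the preorder $\mathbf{I}_\gamma$ is a total order, so $e(\gamma)=e(\Bi)$ with $\Bi=\Bi(\gamma)$; by Propositions \ref{prop:DFO} and \ref{prop:equivalence}, $\Wei_\gamma(M)$ naturally carries an action of $e(\Bi)\widehat{\tT}e(\Bi)$.

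First I would construct an algebra homomorphism
\[\Phi: NH_{n_1}\otimes\cdots\otimes NH_{n_p}\longrightarrow e(\Bi)\tT e(\Bi)\]
by placing a nilHecke diagram on the $k$th prescribed block of $n_k$ consecutive same-label strands and identity strands everywhere else. Well-definedness follows immediately from the local relations \eqref{first-QH} and \eqref{nilHecke-1}, which already encode the defining relations of $NH_m$ on a run of equal-label strands. Because the blocks occupy disjoint sets of strands, the images of the different tensor factors commute in $\tT$, so the map is genuinely defined on the tensor product.

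Next I would verify injectivity of $\Phi$ using the faithful polynomial representation $\poly$ of Proposition \ref{lem:faithful}. Under this representation, the image of $\Phi$ acts by the tensor product of the standard polynomial representations of the individual $NH_{n_k}$ on the disjoint sets of dot-variables associated to each block, and by the polynomial subring on the remaining strands; faithfulness of each standard nilHecke representation (and hence of their tensor product) forces $\Phi$ to be injective.

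Finally, as recorded in the proof of Theorem \ref{thm:essential}, $NH_m$ is a matrix algebra of rank $m!$ over its center, so $NH_{n_1}\otimes\cdots\otimes NH_{n_p}$ is a matrix algebra of rank $\prod_k n_k!$ over its center. Every finite-dimensional module over such a ring has dimension divisible by $\prod_k n_k!$, and pulling back $\Wei_\gamma(M)$ along $\Phi$ then yields the claimed divisibility (with the infinite-dimensional case being vacuous). The step I expect to be the main obstacle is confirming that $\Phi$ is truly an embedding and not merely a surjection onto some quotient; equivalently, that no auxiliary relation in $\tT$, and in particular no hidden instance of the triple relation \eqref{triple-dumb}, collapses diagrams supported on a single block. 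This is a local check: within a single run of equal-label strands, every adjacent pair has the same label, so the conditions $i=k=j\pm 1$ triggering \eqref{triple-dumb} never arise, and strands outside the blocks are never crossed by diagrams in the image of $\Phi$.
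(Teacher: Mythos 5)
Your proposal is correct and follows essentially the same route as the paper: build the block-wise map $\Phi\colon NH_{n_1}\otimes\cdots\otimes NH_{n_p}\to e(\Bi)\tT e(\Bi)$ by superimposing nilHecke diagrams on the prescribed runs of equal-label strands, pull back $\Wei_\gamma(M)$, and use the structure of the nilHecke tensor product to force the divisibility. One small difference: the paper concludes by noting that the tensor product has a unique graded simple of dimension $\prod_k n_k!$ and that $\Wei_\gamma(M)$ is weakly graded, whereas you go through Morita theory for $\operatorname{Mat}_{\prod n_k!}(R)$ over a commutative ring, which gives the divisibility for arbitrary finite-dimensional modules without invoking the weak grading; both are fine, and yours is marginally more self-contained. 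The one thing worth flagging is that the injectivity of $\Phi$, which you identify as the ``main obstacle'' and spend the most effort on, is not actually needed: the divisibility statement only requires that $\Wei_\gamma(M)$ becomes a finite-dimensional module over $NH_{n_1}\otimes\cdots\otimes NH_{n_p}$ via $\Phi$, which is automatic once $\Phi$ is a (possibly non-injective) algebra homomorphism landing in $e(\Bi)\tT e(\Bi)$. The paper accordingly only asserts that the map is ``non-trivial.'' Your injectivity check via the faithful polynomial representation is correct, but it is an unnecessary detour.
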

\begin{proof}
  Diagrams which only cross strands in the groups discussed define a non-trivial map $NH_{n_1}\otimes \cdots \otimes NH_{n_p}\to e(\gamma)\tT e(\gamma)$, which then induces a module structure on $\Wei_{\gamma}(M)$.  Since $NH_{n_1}\otimes \cdots \otimes NH_{n_p}$ has a unique graded simple module with dimension $n_1!\cdots n_p!$ so the dimension of any weakly graded module is divisible by this number.
\end{proof}

\section{Low rank computations}
\subsection{The case of $\mathfrak{gl}_2$ modules}

While Gelfand-Tsetlin modules for $\mathfrak{gl}_2$ are well-understood, we think it will be instructive to illustrate our approach in this case.  

In the $\gl_2$ case we have $Z(\mcU(\gl_1)) = e_{11}$ since it is a one-dimensional algebra and $Z(\mcU(\gl_2))$ can be calculated from the Capelli determinant as 
\begin{align}
    C(t)&=   \left| \begin{matrix}e_{11} +1 -t& e_{12}\\e_{21} & e_{22} -t \end{matrix} \right| = (e_{11}+1-t)(e_{22} -t) - e_{21}e_{12} \\
    &= t(t-1) -t(e_{11} + e_{22} ) + e_{11} e_{22} - e_{21}e_{12} + e_{22} .
\end{align} 
So we see $\Gamma = \langle e_{11}, e_{11} + e_{22},e_{11} e_{22} - e_{21}e_{12} + e_{22} \rangle$, and these elements generate $\Gamma$ as a polynomial ring. 

Consider a homomorphism $\mu\colon \Gamma\to \bbC$.  Let $a_{11}=\mu(e_{11})$; the image $\mu(C(t))$ is a quadratic polynomial and we let $a_{21},a_{22}$ be its roots.  If $a_{21}\neq a_{22}$, the completion $\widehat{\Gamma}_{\mu}$ of $\Gamma$ with respect to this maximal ideal is a polynomial ring is a power series ring $\bbC[[\la_{11}-a_{11},\la_{21}-a_{21},\la_{22}-a_{22}]]$ with the isomorphism defined by $\la_{11}\mapsto e_{11}$ and $C(t)\mapsto (t-\lambda_{21})(t-\lambda_{22})$; the inverse to this map can be constructed using Hensel's lemma to lift the factorization $\mu(C(t))=(t-a_{21})(t-a_{22})$ to the completion. If $a_{21}=a_{22}$, then the same map is injective with image given by the power series symmetric under switching $\lambda_{21}$ and $\lambda_{22}$.  

We let $\Ba=(a_{11},\{a_{21},a_{22}\})$ and $\Bb=(b_{11},\{b_{21},b_{22}\})$ index two such maximal ideals; we can let these represent the corresponding objects in the category $\mathcal{C}$.  

By definition, the morphism space in $\mathcal{C}$ between these is, by definition: \[\Hom_{\mathcal{C}}(\mathbf{a},\mathbf{b})=\mcU/(\mcU\mathfrak{m}_{\mathbf{a}}^N+\mathfrak{m}_{\mathbf{b}}^N\mcU)\]
One can easily check using the PBW theorem that $U(\mathfrak{gl}_2)$ is a free left $\Gamma$-module with basis $\{\dots, e_{12}^2,e_{12},1,e_{21},e_{21}^2,\dots\}$, and Hom space $\Hom_{\mathcal{C}}(\mathbf{a},\mathbf{b})$ is 0 unless $\{a_{21},a_{22}\}=\{b_{21},b_{22}\}$ and $n=a_{11}-b_{11}\in \Z$.  If $n>0$, then $e_{21}^n$ generates $\Hom_{\mathcal{C}}(\mathbf{a},\mathbf{b})$ as a free left module over the completion $\widehat{\Gamma}_{\mathbf{b}}$; if $n<0$, then $e_{12}^{-n}$ plays the same role, and if $n=0$, then the Hom space is simply $\widehat{\Gamma}_{\mathbf{b}}$.  

In the notation introduced earlier when $b_{11}=a_{11}-1$ and $\{a_{21},a_{22}\}=\{b_{21},b_{22}\}$, the image of $e_{21}$ is the morphism $X_i^-(\Ba,\Bb)$, and the image of $e_{12}$ is the morphism $X_i^+(\Bb,\Ba)$.  From the formulas (\ref{u_on_polys:start}--\ref{u_on_polys:end}), we have that \[e_{12}e_{21}=-\lambda_{11}^2+(\la_{21}+\la_{22})\lambda_{11}-\la_{21}\la_{22}.\]
This is invertible if $a_{11}\notin \{a_{21},a_{22}\}$: in the case, we have 
\[\tilde{X}_i^+(\Bb,\Ba)=\frac{1}{-\lambda_{11}^2+(\la_{21}+\la_{22})\lambda_{11}-\la_{21}\la_{22}}X_i^+(\Bb,\Ba)\qquad \tilde{X}_i^-(\Ba,\Bb)=X_i^-(\Ba,\Bb).\]

If $a_{11}=a_{21}\neq a_{22}$, then 
\[\tilde{X}_i^+(\Bb,\Ba)=\frac{1}{\lambda_{11}-\la_{22}}X_i^+(\Bb,\Ba)\qquad \tilde{X}_i^-(\Ba,\Bb)=X_i^-(\Ba,\Bb).\] Note that the case of $a_{11}=a_{22}\neq a_{21}$ is equivalent since this is just the relabeling of the roots of $\mu(C(t))$.  
If $a_{11}=a_{21}=a_{22}$, then 
\[\tilde{X}_i^+(\Bb,\Ba)=X_i^+(\Bb,\Ba)\qquad \tilde{X}_i^-(\Ba,\Bb)=X_i^-(\Ba,\Bb).\]
In each of these, we divide by all invertible factors of $\mu(C(t))$.

These correspond to diagrams with two red strands and one black strand; the $x$-values $\{a_{21},a_{22}\}$ with $a_{21}\leq a_{22}$ specify the position of the red strands in the idempotent $e(\Ba)$ and $a_{11}$ specifies the position of the black strand, with the tie break rule that the black strand is right of the red strand if these $x$-values agree. Thus, we have that:
\begin{equation}\label{eq:gl2-idempotents}
e(\Ba)=\begin{cases}e(1,2,2) & a_{11}<a_{21}\leq a_{22}\\
e(2,1,2) & a_{21}\leq a_{11}<a_{22}\\
e(2,2,1) & a_{21}\leq a_{22}\leq a_{11}
\end{cases}
\end{equation}
More explicitly, fix a $\tT$-module $M$. Keeping  $\{a_{21},a_{22}\}\subset \Z$ (that is, the maximal ideal of $Z_2$) fixed, we can define a Gelfand-Tsetlin module $\mathbb{GT}(M)=\bigoplus_{a_{11}\in\Z}e(\Ba)M$.  In order to describe the action, it will be useful to use the notation $y_{ij}=y_{\wp(i,j)}$ for the dot on the $j$th strand with label $i$ read from right to left.  Using the formulas for $\tilde{X}_1^{\pm}$ above, we see that the $\mathfrak{gl}_2$ action on this sum is given by 
\newseq
\begin{align*}
    e_{11}m &= (a_{11}+y_{11})m\\
    (e_{11}+e_{22})m &= (a_{21}+a_{22}+y_{21}+y_{22})m\\
\end{align*}
Now, we give the formulas for the action of $X_1^+$.  When $a_{11}\notin \{a_{21}-1,a_{22}-1\}$:
\begin{align*}\subeqn\label{eq:X1p-action}
    X^+_1m&= -{\prod_{i=1}^2(y_{11}-y_{2i}+a_{11}-1-a_{2i})}\cdot m \\
\shortintertext{When $a_{11}=a_{21}-1\neq a_{22}-1$:} X^+_1m&= (y_{22}-y_{11}+a_{22}+1-a_{11})
    \tikz[baseline,scale=.4,very thick]{\draw (1,1) -- node[below, at end]{$1$}(-1,-1);\draw[red] (-1,1) -- node[below, at end]{$2$}(1,-1);  \draw[red] (3,1) -- node[below, at end]{$2$}(3,-1); }\cdot m  \subeqn\\
    \shortintertext{When $a_{11}=a_{22}-1\neq a_{21}-1$:} 
    X^+_1m&= 
    (y_{21}-y_{11}+a_{21}+1-a_{11})
        \tikz[baseline,scale=.4,very thick]{\draw (3,1) -- node[below, at end]{$1$}(1,-1);\draw[red] (-1,1) -- node[below, at end]{$2$}(-1,-1);  \draw[red] (1,1) -- node[below, at end]{$2$}(3,-1); }\cdot m \\
        \shortintertext{When $a_{11}=a_{22}-1=a_{21}-1$:} 
    X^+_1m&= -
        \tikz[baseline,scale=.4,very thick]{\draw (3,1) -- node[below, at end]{$1$}(-1,-1);\draw[red] (-1,1) -- node[below, at end]{$2$}(1,-1);  \draw[red] (1,1) -- node[below, at end]{$2$}(3,-1); }\cdot m \subeqn
\end{align*}
For the formulas for $X_1^-$ are similar, but simpler since we have no terms in the numerator.  When $a_{11}\notin \{a_{21},a_{22}\}$:\newseq
\begin{align*}\subeqn\label{eq:X1m-action}
    X^-_1m&=   m \\ 
    \shortintertext{When $a_{11}=a_{21}\neq a_{22}$:} X^-_1m&=  
    \tikz[baseline,scale=.4,very thick]{\draw[red] (1,1) -- node[below, at end]{$2$}(-1,-1);\draw (-1,1) -- node[below, at end]{$1$}(1,-1);  \draw[red] (3,1) -- node[below, at end]{$2$}(3,-1); }\cdot m  \subeqn\\    \shortintertext{When $a_{11}=a_{22}\neq a_{21}$:} X^-_1m&=  
        \tikz[baseline,scale=.4,very thick]{\draw[red] (3,1) -- node[below, at end]{$2$}(1,-1);\draw[red] (-1,1) -- node[below, at end]{$2$}(-1,-1);  \draw (1,1) -- node[below, at end]{$1$}(3,-1); }\cdot m  \subeqn\\
    \shortintertext{When $a_{11}=a_{22}=a_{21}$:} X^-_1m&=         \tikz[baseline,scale=.4,very thick]{\draw (3,1) -- node[below, at end]{$1$}(-1,-1);\draw[red] (-1,1) -- node[below, at end]{$2$}(1,-1);  \draw[red] (1,1) -- node[below, at end]{$2$}(3,-1); }\cdot m \subeqn
\end{align*}
Note that the corresponding category of GT modules has 3 simple modules, corresponding to the 3 simple 1-d $\mathbb{\tilde{T}}$-modules.  In each of these simples, all positive degree elements act by 0 and they are distinguished by whether $e(1,2,2)$, $e(2,1,2)$ and $e(2,2,1)$ act by the identity. Since in the case, GT weight spaces are the same as those of the Cartan, the modules $\mathbb{GT}(M)$ have 1-dimensional weight spaces, which are non-zero when $a_{11}$ satisfies the equality in one of the lines of \eqref{eq:gl2-idempotents}.  

Thus, if we consider the subcategory $\mathcal{C}_{\Z,\chi}$, this means that we fix $\{a_{21},a_{22}\}\subset  \Z$ with $a_{21}\leq a_{22}$ and consider all possible $a_{11}\in \Z$.  In general, for a $\mathbb{\tilde{T}}$-module $M$, the image $e(1,2,2)M$ is isomorphic to the weight space of $\mathbb{GT}(M)$ for $\Ba$ with $a_{11}<a_{21}$, analogously with $e(2,1,2)M$ when $a_{21}\leq a_{11}<a_{22}$ and $e(2,2,1)M$ when $a_{22}\leq a_{11}$; thus, the three simples are a Verma module for the positive Borel, the unique finite-dimensional module, and a Verma module for the negative Borel, as we expect.  When $a_{21}=a_{22}$, the idempotent $e(2,1,2)$ doesn't have a corresponding weight space, so we lose the corresponding finite-dimensional simple; again, this is as expected for a singular block of $\mcU(\mathfrak{gl}_2)$-modules.

\subsection{The case of $\mathfrak{gl}_3$ modules}

Let us now discuss how these results apply to $\mathfrak{gl}_3.$

\subsubsection{Integral GT modules} First, let us expound a bit on the meaning of Theorem \ref{thm:GT-KLR} in this case.  Fix a regular integral maximal ideal $\chi\in \MaxSpec_{\Z}(Z_3)$, that is, fix integers $a_{31}<a_{32}<a_{33}$.  Choosing $\gamma$ lying over this in $\mathcal{S}=\MaxSpec_{\Z,\chi}(\Gamma)$ entails choosing $a_{11},a_{21},a_{22}\in \Z$.  Each such $\gamma$ gives an idempotent $e(\mathbf{I})\in \tT$; note that this idempotent will only include a thick strand if $a_{21}=a_{22}$. 

Thus, given a weakly graded finite-dimensional module $M$ over $\tT$, we can define the corresponding $\mcU(\mathfrak{gl}_3)$-module by considering the sum
\[\mathbb{GT}(M)=\sum_{\Ba\in \mathcal{S}}e(\Ba)M.\]
As before,  we use the notation $y_{ij}=y_{\wp(i,j)}$ for the dot on the $j$th strand with label $i$ read from right to left.
Now, let us describe the action of the Chevalley generators for $m\in e(\Ba)M$. 

The operators $X^{\pm}_1$ send $m$ to the weight space for $e(\Ba')M$; note that typically, this is the same idempotent, unless the $a_{*,*}$ line up so that two strands cross; this happens when $a_{2*}-a_{11}$ switches between 1 and 0.  These are given by the same formulas as (\ref{eq:X1p-action}--\ref{eq:X1m-action}), ignoring the position of the strands with the label $3$.

This was just a warm-up: Now we consider the operators $X_2^\pm$.  In the interest of space, let \[p_j=\prod_{a_{3i}\neq a_{2j}}(y_{2j}-y_{3i}+a_{2j}+1 - a_{3i} ).\]
First, let us consider the case where $a_{21}\neq a_{22}$.  In this case $X_2^\pm$ is a sum of elements in two weight spaces: \[\Ba'=(a_{11},a_{21}\pm 1,a_{22})\qquad \Ba''=(a_{11},a_{21},a_{22}\pm 1).\] I'll write $m_1,m_2$ for the instance of $m$ associated with these weight spaces.  If $a_{2i}+1\notin \{a_{31},a_{32},a_{33},a_{22}\}$, then $e(\Ba)=e(\Ba')=e(\Ba'')$, so $m_1$ and $m_2$ are already in the correct idempotent images (though note that these are still different elements of $\mathbb{GT}(M)$).  
In other cases, we will need to apply a diagram to them.  Similarly, if $a_{21}=a_{22}$, then only $\Ba''$ is dominant (and $\Ba'$ is a permutation of it), so we simply write $m$ for the single instance we need in this case.
To write the formula for $X_2^+$, we consider the diagrams.
\begin{equation*}
    d_i^+=\begin{dcases}
    1 &\text{ when } a_{2i}+1\notin \{a_{31},a_{32},a_{33},a_{22}\}\\
    \tikz[baseline,scale=.4,very thick]{\draw[red] (3,1) -- node[below, at end]{$3$}(3,-1);\draw[red] (1,1) -- node[below, at end]{$3$}(1,-1);\draw (-1,1) -- node[below, at end]{$2$}(-3,-1);\draw[red] (-3,1) -- node[below, at end]{$3$}(-1,-1);   } &\text{ when } a_{2i}+1=a_{31}\neq a_{22}\\
    \tikz[baseline,scale=.4,very thick]{\draw[red] (3,1) -- node[below, at end]{$3$}(3,-1);\draw[red] (-1,1) -- node[below, at end]{$3$}(1,-1);\draw[red] (-3,1) -- node[below, at end]{$3$}(-3,-1);\draw (1,1) -- node[below, at end]{$2$}(-1,-1);   } &\text{ when } a_{2i}+1=a_{32}\neq a_{22}\\
    \tikz[baseline,scale=.4,very thick]{\draw[red] (1,1) -- node[below, at end]{$3$}(3,-1);\draw[red] (-1,1) -- node[below, at end]{$3$}(-1,-1);\draw[red] (-3,1) -- node[below, at end]{$3$}(-3,-1);\draw (3,1) -- node[below, at end]{$2$}(1,-1);   } &\text{ when } a_{2i}+1=a_{33}\neq a_{22}\\
    \end{dcases} 
    \end{equation*}
    The cases $a_{21}+1=a_{22}$ and  $a_{21}=a_{22}$ are special.  In the former case, we have a special formula for $d_1^+$, and in the latter, we only need a single diagram $d^+$
    \begin{equation*}d_1^+=
    \begin{dcases}
    \tikz[baseline,scale=.4,very thick]{\draw (1,0) -- node[below, at end]{$2$}(-1,-1); \draw (1,0) -- node[below, at end]{$2$}(1,-1); \draw[thickc] (1,0) -- (1,1);  }&\text{ when } a_{21}+1=a_{22}\neq a_{3j}\\
    \tikz[baseline,scale=.4,very thick]{\draw[red] (3,1) -- node[below, at end]{$3$}(3,-1);\draw[red] (1,1) -- node[below, at end]{$3$}(1,-1);\draw (-1,0) -- node[below, at end]{$2$}(-3,-1);\draw[red] (-2,1) -- node[below, at end]{$3$}(-2,-1);  \draw (-1,0) -- node[below, at end]{$2$}(-1,-1);  \draw[thickc] (-1,0) -- (-1,1); }&\text{ when } a_{21}+1=a_{22}=a_{31}\\
\tikz[baseline,scale=.4,very thick]{\draw[red] (-5,1) -- node[below, at end]{$3$}(-5,-1);\draw[red] (1,1) -- node[below, at end]{$3$}(1,-1);\draw (-1,0) -- node[below, at end]{$2$}(-3,-1);\draw[red] (-2,1) -- node[below, at end]{$3$}(-2,-1);  \draw (-1,0) -- node[below, at end]{$2$}(-1,-1);  \draw[thickc] (-1,0) -- (-1,1); } &\text{ when } a_{21}+1=a_{22}=a_{32}\\
    \tikz[baseline,scale=.4,very thick]{\draw[red] (-5,1) -- node[below, at end]{$3$}(-5,-1);\draw[red] (-7,1) -- node[below, at end]{$3$}(-7,-1);\draw (-1,0) -- node[below, at end]{$2$}(-3,-1);\draw[red] (-2,1) -- node[below, at end]{$3$}(-2,-1);  \draw (-1,0) -- node[below, at end]{$2$}(-1,-1);  \draw[thickc] (-1,0) -- (-1,1); } &\text{ when } a_{21}+1=a_{22}=a_{33}\\
            \end{dcases}
            \end{equation*}
            \begin{equation*}            d^+=
            \begin{dcases}
        \tikz[baseline,scale=.4,very thick]{\draw (1,0) -- (3,1); \draw (1,0) -- (1,1); \draw[thickc] (1,0) -- node[below, at end]{$2$}(1,-1);  }&\text{ when } a_{21}=a_{22}\neq a_{3j}-1\\
    \tikz[baseline,scale=.4,very thick]{\draw[red] (3,1) -- node[below, at end]{$3$}(3,-1);\draw[red] (1,1) -- node[below, at end]{$3$}(1,-1);\draw[red] (-2,1) -- node[below, at end]{$3$}(-2,-1);  \draw (-3,0) -- (-1,1); \draw (-3,0) -- (-3,1); \draw[thickc] (-3,0) -- node[below, at end]{$2$}(-3,-1); }&\text{ when } a_{21}=a_{22}=a_{31}-1\\
\tikz[baseline,scale=.4,very thick]{\draw[red] (-5,1) -- node[below, at end]{$3$}(-5,-1);\draw [red](1,1) -- node[below, at end]{$3$}(1,-1);\draw[red] (-2,1) -- node[below, at end]{$3$}(-2,-1);\draw (-3,0) -- (-1,1); \draw (-3,0) -- (-3,1); \draw[thickc] (-3,0) -- node[below, at end]{$2$}(-3,-1); } &\text{ when } a_{21}=a_{22}=a_{32}-1\\
    \tikz[baseline,scale=.4,very thick]{\draw[red] (-5,1) -- node[below, at end]{$3$}(-5,-1);\draw[red] (-7,1) -- node[below, at end]{$3$}(-7,-1);\draw[red] (-2,1) -- node[below, at end]{$3$}(-2,-1);  \draw (-3,0) -- (-1,1); \draw (-3,0) -- (-3,1); \draw[thickc] (-3,0) -- node[below, at end]{$2$}(-3,-1);} &\text{ when } a_{21}=a_{22}=a_{33}-1\\
    \end{dcases}
\end{equation*}
In each of the pictures above, we have left out ``irrelevant'' strands whose position is not fixed by our conditions: the other strand with the label $2$ in the cases of $a_{21}+1\neq a_{22} $, and all strands with label $3$ if $a_{2i}+1\notin \{a_{31},a_{32},a_{33}\}$.
Since we focus on the regular case, we have left out cases where the $a_{3*}$'s are not distinct, though the reader should be able to work out the pattern.
Having made all these definitions, applying the formula \eqref{eq:tXp-KLR} shows that:
\begin{lemma}
\begin{equation*}
    X^+_2m=\begin{cases}[r]\displaystyle
        \displaystyle\frac{d_1^+p_1m_1}{y_{22}-y_{21}+a_{22} - a_{21}-1} -\frac{d_2^+p_2m_2}{y_{22}-y_{21}+a_{22} - a_{21}+1}
             &\text{ when } a_{22}-a_{21}\neq 0,1\\
        \displaystyle
        -d_1^+p_1m_1-\frac{d_2^+p_2m_2}{y_{22}-y_{21}+a_{22} - a_{21}+1} &\text{ when } a_{22}-a_{21}=1\\
        p_2d^+m &\text{ when } a_{22}-a_{21}=0
    \end{cases}
\end{equation*}
\end{lemma}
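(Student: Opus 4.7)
The plan is to apply formula \eqref{eq:tXp-KLR} from Lemma \ref{lem:map-to-action} directly, case-by-case, and then simplify the resulting diagrams using elementary $\tT$ relations. First I would decompose $X_2^+ = \sum_a X^+_{2,a}$ and observe that when acting on $m\in e(\Ba)M$ only $a\in\{a_{21}+1,a_{22}+1\}$ contribute, giving the two summands $m_1$ and $m_2$ appearing in the statement. When $a_{21}=a_{22}$ the two target ideals are $W$-equivalent, so the pair collapses into a single contribution $m$ associated to the thick row-2 strand of longitude $a_{21}$, which is why the third case has only one term.

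For each fixed $j\in\{1,2\}$ in the non-critical case, I would read off the RHS of \eqref{eq:tXp-KLR} with $i=2$ and extract the polynomial coefficient $(-1)^{t_{a+1,2}+1}\varphi^+_{2j,\Ba'}$. The numerator of $\varphi^+_{2j,\Ba'}$ is a product over row-3 longitudes distinct from $a_{2j}+1$; reversing signs in each factor converts it to the $p_j$ defined in the lemma, producing a uniform sign $(-1)^{\#\{m : a_{3m}\neq a_{2j}+1\}}$. The denominator of $\varphi^+_{2j,\Ba'}$ contributes a single factor involving the other row-2 variable, which after shifting becomes exactly $y_{22}-y_{21}+a_{22}-a_{21}\mp 1$ as in the statement. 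These signs should combine with $(-1)^{t_{a+1,2}+1}$ to produce the overall relative minus between the two terms, as well as the absolute signs built into the individual diagrams $d_j^+$. The remaining diagrammatic piece is a single strand of label 2 moving one step to the right past the thin vertical strands at longitudes $a_{2j}$ and $a_{2j}+1$; vertical strands with non-consecutive labels pass straight through by \eqref{black-bigon}, leaving at most one genuine crossing with a row-3 strand and matching the definition of $d_j^+$ in each of the four positional subcases.

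The critical case $a_{21}=a_{22}$ requires separate attention: here the row-2 strand at longitude $a_{21}$ carries thickness $2$, and applying Lemma \ref{lem:thick-action} to the split/merge pair inherent in \eqref{eq:tXp-KLR} produces the merge-plus-crossing structure visible in $d^+$, with polynomial coefficient $p_2$ (and no rational denominator, since there is no other row-2 longitude to cancel against). The sub-case $a_{21}+1=a_{22}$ is intermediate: the denominator for $j=1$ has a pole at this specialization, which is exactly compensated by a factor appearing once $d_1^+$ is rewritten using the merge of two thin strands into a thickness-2 strand, explaining the absence of the $y_{22}-y_{21}+a_{22}-a_{21}-1$ denominator in that line. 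The main obstacle is essentially bookkeeping: enumerating the four positions of $a_{2j}+1$ relative to $\{a_{31},a_{32},a_{33}\}$ together with the sub-case $a_{21}+1=a_{22}$, and verifying that the sign conventions of $p_j$, the Demazure formula of Lemma \ref{lem:thick-action}, and the exponent $(-1)^{t_{a+1,2}+1}$ align to produce exactly the signs as written; no substantive new $\tT$-manipulation beyond these straightening and thick-calculus moves is needed.
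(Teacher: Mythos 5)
Your proposal follows the same route as the paper, which gives no argument beyond the one line ``applying the formula \eqref{eq:tXp-KLR} shows that''; the decomposition of $X_2^+$ by the longitude being raised, the extraction of the polynomial $p_j$, the single-strand denominator, and the use of thick calculus in the degenerate cases are all the right ingredients. Where the proposal falls short is in the places it waves its hands at the signs and at the $a_{21}+1=a_{22}$ case. On the signs: for $j=2$ the surviving row-$2$ denominator factor of $\varphi^+_{2,2,\mathbf{a}'}$ is $y_{21}+a'_{21}-y_{22}-a'_{22}=-(y_{22}-y_{21}+a_{22}-a_{21}+1)$, contributing an extra $(-1)$ relative to $j=1$; the parity of the number of row-$3$ factors left in the numerator changes according to whether $a_{2j}+1$ hits some $a_{3*}$ (in which case a crossing in $d_j^+$ supplies the missing $(Y_k-Y_{k+1})$ via Definition \ref{def:poly-rep}); and in the intermediate case the prefactor $(-1)^{t_{a+1,2}+1}$ from \eqref{eq:tXp-KLR} itself flips from $-1$ to $+1$ because $t_{a_{22},2}$ jumps from $0$ to $1$. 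These three sources of sign all have to be reconciled to produce the plus/minus pattern as written, including the extra minus on $d_1^+p_1m_1$ in the second line, and the proposal only asserts that they ``should combine.'' On the intermediate case: the derivation is not a cancellation of a pole in the generic line. Rather, formula \eqref{eq:tXp-KLR} is applied with the shifted target $\mathbf{a}'$, for which $a'_{21}=a'_{22}$; the row-$2$ denominator factor of $\varphi^+_{2,1,\mathbf{a}'}$ is therefore simply absent (the index condition $a'_{2m}\neq a'_{2j}$ excludes $m=2$), and the Mazorchuk denominator $\lambda_{22}-\lambda_{21}$ is instead absorbed by the Demazure operator of the merge inside $d_1^+$ via Lemma \ref{lem:thick-action}. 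The continuity picture you describe is a reasonable sanity check, but it is not how the formula arrives, and it will not by itself produce the sign change on the $j=1$ term.
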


Similarly, for $X_2^-$, we define diagrams $d_i^-$ when $a_{21}\neq a_{22}$ and $d^-$ when $a_{21}=a_{22}$; these are just the reflections in a vertical line of $d^+_i$.  We let 
\[q_i=\begin{cases}1 &\text{ when } a_{2i}=a_{11}+1\\
y_{2i}-y_{11}+a_{2i}-a_{11}-1&\text{ when } a_{2i}\neq a_{11}+1\end{cases}\]
We then applying \eqref{eq:tXm-KLR} shows:
\begin{lemma}
\begin{equation*}
    X^+_2m=\begin{cases}[r]
        \displaystyle\frac{d_1^-q_1m_1}{y_{22}-y_{21}+a_{22} - a_{21}+1} -\frac{d_2^+q_2m_2}{y_{22}-y_{21}+a_{22} - a_{21}-1}
             &\text{ when } a_{22}-a_{21}\neq -1,0\\
        \displaystyle
            \displaystyle\frac{d_1^-q_1m_1}{y_{22}-y_{21}+a_{22} - a_{21}+1} +d_2^+q_2m_2
         &\text{ when } a_{22}-a_{21}=-1\\
        q_1d^-m &\text{ when } a_{22}-a_{21}=0
    \end{cases}
\end{equation*}
\end{lemma}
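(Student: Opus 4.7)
The plan is to apply Lemma~\ref{lem:map-to-action} specialized to $n=3$, $i=2$: the equivalence $\Theta$ of Proposition~\ref{prop:equivalence} translates the action of the relevant $\mcU(\gl_3)$-generator on $e(\Ba)M\subseteq\mathbb{GT}(M)$ into the decorated ladder diagram from~\eqref{eq:tXp-KLR}--\eqref{eq:tXm-KLR}, with scalar factor $\pm(-1)^{t}\varphi^{\pm}_{2j,\Ba}$ attached. The lemma is then the explicit evaluation of this decorated diagram, broken up according to how the idempotent $e(\Ba)$ is shaped by coincidences among $a_{11},a_{21},a_{22}$ and the fixed $a_{3k}$. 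The diagrammatic building blocks $d_i^{\pm}$ (and $d^{\pm}$) record the crossings of the moving label-$2$ strand with whichever red strand sits on top of $a_{2i}\pm 1$, while the polynomial $q_i$ collects the factors of $\varphi^{\pm}_{2j,\Ba}$ that remain as a scalar multiplier after those crossings have been extracted.

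For the generic range $a_{22}-a_{21}\notin\{-1,0,1\}$ I would argue as follows. The idempotent $e(\Ba)$ has two distinct thin strands of label~$2$, so the action splits as a sum over the two dominant targets $\Ba'$ and $\Ba''$, indexed by which of the two $2$-strands moves. Applying Lemma~\ref{lem:thick-action} to the merge at label~$2$ in~\eqref{eq:tXp-KLR}--\eqref{eq:tXm-KLR} produces a Demazure operator on the pair $(y_{21},y_{22})$; since only one of the two strands has actually moved, the Demazure sum reduces to a single surviving summand, contributing the denominator $y_{22}-y_{21}+a_{22}-a_{21}\pm 1$ with the sign determined by which strand was moved. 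The residual rational factor $\varphi^{\pm}_{2i,\Ba}$ then decomposes via~\eqref{eq:power-series}: factors in which $a_{2i}\pm 1$ coincides with some $a_{3k}$ (or with $a_{22}$) are absorbed into the diagrammatic crossings that assemble into the $d_i^{\pm}$ listed in the case analysis preceding the lemma, while the remaining non-colliding factors multiply out to the polynomial $q_i$.

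For the degenerate ranges $a_{22}-a_{21}\in\{-1,0\}$, one of the two candidate targets is either non-dominant or coincides with the other. When $|a_{22}-a_{21}|=1$, the would-be $W$-conjugate target drops out of the sum and the associated Demazure denominator trivializes; the resulting surviving shape of the formula can be cross-checked against the generic case by clearing the denominator that vanishes. When $a_{21}=a_{22}$, the idempotent $e(\Ba)$ carries a genuine thick $2$-strand of thickness two and there is only a single dominant target; the ladder diagram now contains a single thick split in place of two separate moves, and Lemma~\ref{lem:thick-action} collapses the answer to the single term $q_1 d^{\pm}m$. The main obstacle is sign bookkeeping: the prefactors $(-1)^{t_{a\pm 1,i\pm 1}}$ in~\eqref{eq:tXp-KLR}--\eqref{eq:tXm-KLR}, the signs baked into Definition~\ref{def:tildeX}, and the signs picked up when polynomial coefficients commute past $\delta^{\pm}_{ij}$ in the decomposition~\eqref{eq:fg} all have to line up with the signs printed in the statement. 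I would pin this down by verifying one fully generic subcase against the expanded formula~\eqref{eq:tildesum} and then observing that every other enumerated shape of $d_i^{\pm}$ or $d^{\pm}$ differs from it only by diagrammatic moves with manifest sign~$+1$.
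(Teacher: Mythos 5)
Your top-level plan---specialize Lemma \ref{lem:map-to-action}, i.e.\ the decorated diagrams \eqref{eq:tXp-KLR}--\eqref{eq:tXm-KLR}, to $n=3$, $i=2$ and evaluate the decoration case by case---is exactly the paper's route (its entire proof is the phrase ``applying \eqref{eq:tXm-KLR} shows''), but the mechanism you describe for producing the denominators is wrong, and those denominators are the heart of the formula. In the generic case $a_{21}\neq a_{22}$ the two label-$2$ strands sit at distinct longitudes, so in both the source and the target idempotent they are separate thin strands: there is no merge at label $2$, hence no Demazure operator from Lemma \ref{lem:thick-action} anywhere in the computation. Moreover a Demazure denominator is a difference of dots at a single longitude, of the form $y_{2j}-y_{2m}$ with zero constant term; it can never produce $y_{22}-y_{21}+a_{22}-a_{21}\pm 1$. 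Those denominators are the invertible, different-longitude factors of the Gelfand--Tsetlin denominator $\lambda_{21}-\lambda_{22}$ from \eqref{eq:Mazorchuk}, carried by the power-series coefficient $\varphi^{-}_{2i,\Ba'}$ decorating the diagram in \eqref{eq:tXm-KLR} (equivalently by the $f_r,g_r$ of \eqref{eq:fg}); the shift $a_{2i}\mapsto a_{2i}-1$ produces the constants $\pm 1$, and reordering the factor for the second summand produces the minus sign. Relatedly, your claim that $\varphi$ ``decomposes'' into colliding factors absorbed by crossings plus a residue $q_i$ is backwards: by construction $\varphi^{\pm}_{ij,\Ba}$ contains only the non-colliding factors, so (up to sign) $q_i$ is the whole numerator of $\varphi^{-}_{2i,\Ba'}$ and the stated denominators are its denominator, while the colliding factors were never in $\varphi$ at all---they are created by the $i/(i\pm1)$ crossings in the polynomial representation, which is precisely why the diagrams $d_i^{\pm}$ enter.

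The boundary cases are misdescribed in the same way. When $a_{22}-a_{21}=\mp 1$ nothing ``drops out of the sum'': both targets persist and both terms appear; what changes is that the moved strand lands at an occupied longitude, so the corresponding factor is no longer invertible, is excluded from $\varphi$, and is realized instead by the merge into a thick strand inside the special diagram $d_1^{\pm}$. That is the only place Lemma \ref{lem:thick-action} and a Demazure operator appear, and they are packaged inside $d_1^{\pm}$ (and inside $d^{\pm}$ when $a_{21}=a_{22}$) rather than contributing a visible denominator. Finally, your proposed sanity check against \eqref{eq:tildesum} would not catch the error: \eqref{eq:tildesum} computes $\tilde{X}_{2}^{\pm}$, whose denominators are exactly the same-longitude ones, whereas the lemma concerns $X_2^{-}$ itself, so the verification must be made against \eqref{eq:Mazorchuk} (or against \eqref{eq:tXm-KLR} with its $\varphi$-decoration), reinstating the invertible factors that Definition \ref{def:tildeX} strips off. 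In short, the skeleton of your argument matches the paper, but executing the evaluation as you describe it would not reproduce the stated rational functions.
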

\subsubsection{The classification of simple modules}
As discussed in \cite{futornyGelfandTsetlinModules2018,WebGT}, in the principal block of $\mathfrak{sl}_3$ (or more generally, any regular integral block), there are 20 simple modules up to isomorphism.  These include:
\begin{enumerate}
\item one finite dimensional module
\item 6 irreducible anti-dominant Verma modules for the 6 different Borels containing the standard torus
\item 12 other simples in category $\mathcal{O}$ for some Borel; there are 4 simples in category $\mathcal{O}$ for a fixed Borel which are neither finite dimensional nor a Verma module, and each of these lies in category $\cO$ for exactly 2 of the 6 Borels.
\item one simple which does not lie in category $\cO$ for any Borel (in fact, it has infinite weight multiplicities).
\end{enumerate}
As an illustration of our approach to Gelfand-Tsetlin modules, let us explain how to arrive at this list using KLR algebras.  By Theorem \ref{thm:all-simples}, the simple $\tT$-modules for $\Bv=(1,2,3)$ are classified by red-good words.    This realizes each simple module as a quotient of a standard module.  In full generality, finding this quotient can be quite complicated, but in the case of $\mathfrak{sl}_3$, things are relatively simple.

In particular, since the group $W$ in this case is just $S_2$, all integral weights are at worst 1-singular, and we can utilize \cite[Cor. 3.6]{WebGT}; thus, every weight either appears in a unique GT-module or in two of them.  In KLR terms, we can separate this possibility by looking for the unique diagram with no dots, a single crossing of black strands with label 2, and $e(\gamma)$ at both top and bottom.  We can separate the 3 cases of \cite[Cor. 3.6]{WebGT} by the degree of this element:
\begin{enumerate}
    \item If the degree is -2, then $e(\gamma)\tT e(\gamma)$ is a 2$\times$2 matrix algebra, and so there is a unique simple GT module where $\Wei_\gamma(S)$ is 2-dimensional (and it is 0-dimensional in all other simples).  
    \item If the degree is 0, then $e(\gamma)\tT e(\gamma)$ has two distinct 1-dimensional simples, so there are two simple GT modules where this weight space is 1-dimensional (and it is 0-dimensional in all other simples).
     \item If the degree is $> 0$, then $e(\gamma)\tT e(\gamma)$ is strictly positively graded and has a unique simple module which is 1-dimensional, so there is a unique simple GT module where this weight space is 1-dimensional (and it is 0-dimensional in all other simples).
\end{enumerate}

In Figures \ref{fig:list1} and \ref{fig:list2} below, we show a basis of each module; for each of these, we write the corresponding good word as well as the indexing of the corresponding $\mcU$-module in the notation of \cite{futornyGelfandTsetlinModules2018}.  In each case, we have a box which represents a generating vector; the idempotent of which this vector is in the image can be read off from the first diagram.  Note that these are not necessarily good words, but are of the form (1) or (3) in the classification above, and thus only appear in a single simple module; the structure coefficients for diagrams acting on our basis vectors are implicit here, since they are simply all diagrams in $\tT e(\gamma)$ which generate a proper submodule of this projective left module.  We leave to the reader to verify that these modules are simple; this might sound daunting, but one can easily reduce to showing that the basis vectors shown each generate the module, using the application of idempotents to a general linear combination and the observation that each weight space is either 1-dimensional or a simple 2-dimensional module over $e(\gamma')\tT e(\gamma')$, when $\gamma'$ is of type (1).  

We note that this description also makes it simple to match with the classification of \cite{futornyGelfandTsetlinModules2018}, since one need only read off the good word from their description of the weight spaces.  Since they use the principal block, some of the idempotents in the list above might not be realized as $e(\gamma)$, but those corresponding to good words will be.

\begin{figure}\thisfloatpagestyle{empty} \centering

    \caption{Basis vectors for simple $\tT$-modules, part II}
    \label{fig:list2}
\end{figure}

We can also organize this information a bit differently, as a table of the weight multiplicities for different simples.   This table, and those later in this section, are all based on GAP4 and Python code available online \cite{GTcode}.
 To save space, we
only indicate the word for each weight space.  So, in place of
$a_{3,1}<a_{3,2}<a_{3,3}\leq a_{2,1}<a_{2,2}\leq a_{1,1}$, we write
$333221$, in place of $    a_{3,1}<a_{3,2}<a_{3,3}\leq a_{2,1}\leq
a_{1,1}<a_{2,2}$, we write $333212$, etc.  We omit entries which are 0
for clarity of reading.
We denote simples in this chart by their symbol from \cite{futornyGelfandTsetlinModules2018}. You can easily read off the good word by looking at the last non-zero entry in each column; since words are listed in decreasing lexicographic order, this will give the desired good word.

For each row, there will be a unique column that corresponds to the canonical module for this weight space.  This, of course, will have non-zero multiplicity.  In this case, we have also included a box around the multiplicity of that weight space.  
\begin{figure}
  \thisfloatpagestyle{empty}
 \centerline{ \scalebox{.75}{   \begin{tabular}{|c|c|c|c|c|c|c|c|c|c|c|c|c|c|c|c|c|c|c|c|c|}\hline
FGR& $L_{32}$&$L_{21}$&$L_{24}$&$L_{26}$&
$L_{16}$&$L_8$&$L_{19}$&$L_{10}$&$L_{17}$&$L_{13}$&$L_{6}$&$L_{3}$&$L_{4}$&
$L_{28}$&$L_{11}$&$L_{14}$&$L_{5}$&$L_{2}$&$L_{7}$&$L_{1}$\\\hline
333221&\boxed{2}&&&&&&&&&&&&&&&&&&&\\\hline
333212&\boxed{1}&&&&&&&&&&&&&1&&&&&&\\\hline
333122&&&&&&&&&&&&&&\boxed{2}&&&&&&\\\hline
332321&\boxed{1}&&&1&&&&&&&&&&&&&&&&\\\hline
332312&&\boxed{1}&&&&&&&&&&&&&&&&&&\\\hline
332231&&&&\boxed{2}&&&&&&&&&&&&&&&&\\\hline
332213&&&&\boxed{2}&&&&&&&&&&&&&&&&\\\hline
332132&&\boxed{1}&&&&&&&&&&&&&&&&&&\\\hline
332123&&&&\boxed{1}&&&&&&&&&&&&1&&&&\\\hline
331322&&&&&&&&&&&&&&\boxed{2}&&&&&&\\\hline
331232&&&&&&&&&&&&&&\boxed{1}&&1&&&&\\\hline
331223&&&&&&&&&&&&&&&&\boxed{2}&&&&\\\hline
323321&&&\boxed{1}&&&&&&&&&&&&&&&&&\\\hline
323312&&&&&\boxed{1}&&&&&&&&&&&&&&&\\\hline
323231&&&&\boxed{1}&&&&&1&&&&&&&&&&&\\\hline
323213&&&&\boxed{1}&&&&&1&&&&&&&&&&&\\\hline
323132&&&&&\boxed{1}&&&&&&&&&&&&&&&\\\hline
323123&&&&&&\boxed{1}&&&&&&&&&&&&&&\\\hline
322331&&&&&&&&&\boxed{2}&&&&&&&&&&&\\\hline
322313&&&&&&&&&\boxed{2}&&&&&&&&&&&\\\hline
322133&&&&&&&&&\boxed{2}&&&&&&&&&&&\\\hline
321332&&&&&\boxed{1}&&&&&&&&&&&&&&&\\\hline
321323&&&&&&\boxed{1}&&&&&&&&&&&&&&\\\hline
321233&&&&&&&&&\boxed{1}&&&&&&&&&&1&\\\hline
313322&&&&&&&&&&&&&&\boxed{2}&&&&&&\\\hline
313232&&&&&&&&&&&&&&\boxed{1}&&1&&&&\\\hline
313223&&&&&&&&&&&&&&&&\boxed{2}&&&&\\\hline
312332&&&&&&&&&&&&&&&\boxed{1}&&&&&\\\hline
312323&&&&&&&&&&&&&&&&\boxed{1}&&&1&\\\hline
312233&&&&&&&&&&&&&&&&&&&\boxed{2}&\\\hline
233321&&&&&&&\boxed{1}&&&&&&&&&&&&&\\\hline
233312&&&&&&&&&&\boxed{1}&&&&&&&&&&\\\hline
233231&&&&&&&&\boxed{1}&&&&&&&&&&&&\\\hline
233213&&&&&&&&\boxed{1}&&&&&&&&&&&&\\\hline
233132&&&&&&&&&&\boxed{1}&&&&&&&&&&\\\hline
233123&&&&&&&&&&&\boxed{1}&&&&&&&&&\\\hline
232331&&&&&&&&&\boxed{1}&&&&1&&&&&&&\\\hline
232313&&&&&&&&&\boxed{1}&&&&1&&&&&&&\\\hline
232133&&&&&&&&&\boxed{1}&&&&1&&&&&&&\\\hline
231332&&&&&&&&&&\boxed{1}&&&&&&&&&&\\\hline
231323&&&&&&&&&&&\boxed{1}&&&&&&&&&\\\hline
231233&&&&&&&&&&&&\boxed{1}&&&&&&&&\\\hline
223331&&&&&&&&&&&&&\boxed{2}&&&&&&&\\\hline
223313&&&&&&&&&&&&&\boxed{2}&&&&&&&\\\hline
223133&&&&&&&&&&&&&\boxed{2}&&&&&&&\\\hline
221333&&&&&&&&&&&&&\boxed{2}&&&&&&&\\\hline
213332&&&&&&&&&&\boxed{1}&&&&&&&&&&\\\hline
213323&&&&&&&&&&&\boxed{1}&&&&&&&&&\\\hline
213233&&&&&&&&&&&&\boxed{1}&&&&&&&&\\\hline
212333&&&&&&&&&&&&&\boxed{1}&&&&&&&1\\\hline
133322&&&&&&&&&&&&&&\boxed{2}&&&&&&\\\hline
133232&&&&&&&&&&&&&&\boxed{1}&&1&&&&\\\hline
133223&&&&&&&&&&&&&&&&\boxed{2}&&&&\\\hline
132332&&&&&&&&&&&&&&&\boxed{1}&&&&&\\\hline
132323&&&&&&&&&&&&&&&&\boxed{1}&&&1&\\\hline
132233&&&&&&&&&&&&&&&&&&&\boxed{2}&\\\hline
123332&&&&&&&&&&&&&&&&&\boxed{1}&&&\\\hline
123323&&&&&&&&&&&&&&&&&&\boxed{1}&&\\\hline
123233&&&&&&&&&&&&&&&&&&&\boxed{1}&1\\\hline
122333&&&&&&&&&&&&&&&&&&&&\boxed{2}\\\hline
    \end{tabular}
}}
\caption{The table of dimensions of GT weight spaces in the integral regular case for $\mathfrak{gl}_3$}
\label{fig:sl3wt}
\end{figure}

\begin{figure}\thisfloatpagestyle{empty} 
  \centering
  \begin{tabular}{|c|c|c|c|c|c|c|c|c|c|c|c|}
  \hline FGR&$L_{16}$ &$L_7$&$L_9$&$L_{13}$&$L_5$&$L_2$&$L_6$&
$L_4$&$L_3$&$L_4$&$L_1$\\\hline
33221&\boxed{2}&&&&&&&&&&\\\hline
33212&\boxed{1}&&&&&&&1&&&\\\hline
33122&&&&&&&&\boxed{2}&&&\\\hline
32321&\boxed{1}&&&1&&&&&&&\\\hline
32312&&\boxed{1}&&&&&&&&&\\\hline
32231&&&&\boxed{2}&&&&&&&\\\hline
32213&&&&\boxed{2}&&&&&&&\\\hline
32132&&\boxed{1}&&&&&&&&&\\\hline
32123&&&&\boxed{1}&&&&&&1&\\\hline
31322&&&&&&&&\boxed{2}&&&\\\hline
31232&&&&&&&&\boxed{1}&&1&\\\hline
31223&&&&&&&&&&\boxed{2}&\\\hline
23321&&&\boxed{1}&&&&&&&&\\\hline
23312&&&&&\boxed{1}&&&&&&\\\hline
23231&&&&\boxed{1}&&&1&&&&\\\hline
23213&&&&\boxed{1}&&&1&&&&\\\hline
23132&&&&&\boxed{1}&&&&&&\\\hline
23123&&&&&&\boxed{1}&&&&&\\\hline
22331&&&&&&&\boxed{2}&&&&\\\hline
22313&&&&&&&\boxed{2}&&&&\\\hline
22133&&&&&&&\boxed{2}&&&&\\\hline
21332&&&&&\boxed{1}&&&&&&\\\hline
21323&&&&&&\boxed{1}&&&&&\\\hline
21233&&&&&&&\boxed{1}&&&&1\\\hline
13322&&&&&&&&\boxed{2}&&&\\\hline
13232&&&&&&&&\boxed{1}&&1&\\\hline
13223&&&&&&&&&&\boxed{2}&\\\hline
12332&&&&&&&&&\boxed{1}&&\\\hline
12323&&&&&&&&&&\boxed{1}&1\\\hline
12233&&&&&&&&&&&\boxed{2}\\\hline
  \end{tabular}
                                  
    \caption{The characters of GT
      modules when $\iota_1=1,\iota_3=2$ (case (C5) in the notation of \cite{futornyGelfandTsetlinModules2018})}
    \label{fig:33221}
  \end{figure}

   \begin{figure}\thisfloatpagestyle{empty} 
        \centering (1)
        \begin{tabular}{|c|c|c|c|c|c|}
\hline
FGR&$L_6$&$L_2$&$L_3$&$L_4$&$L_1$\\\hline
3221&\boxed{2}&&&&\\\hline
3212&\boxed{1}&&&1&\\\hline
3122&&&&\boxed{2}&\\\hline
2321&\boxed{1}&&1&&\\\hline
2312&&\boxed{1}&&&\\\hline
2231&&&\boxed{2}&&\\\hline
2213&&&\boxed{2}&&\\\hline
2132&&\boxed{1}&&&\\\hline
2123&&&\boxed{1}&&1\\\hline
1322&&&&\boxed{2}&\\\hline
1232&&&&\boxed{1}&1\\\hline
1223&&&&&\boxed{2}\\\hline
        \end{tabular}     
        \hspace{1cm}(2)
        \begin{tabular}{|c|c|c|c|c|}\hline
FGR&$L_5$&$L_2$&$L_3$&$L_1$\\\hline
3322&\boxed{2}&&&\\\hline
3232&\boxed{1}&&1&\\\hline
3223&&&\boxed{2}&\\\hline
2332&&\boxed{1}&&\\\hline
2323&&&\boxed{1}&1\\\hline
2233&&&&\boxed{2}\\\hline
        \end{tabular}
        \vspace{1cm}

      (3)  \begin{tabular}{|c|c|c|c|c|c|c|c|}
          \hline
FGR&$L_{10}$&$L_5$&$L_8$&$L_4$&$L_2$&$L_3$&$L_1$\\\hline
33322&\boxed{2}&&&&&&\\\hline
33232&\boxed{1}&&1&&&&\\\hline
33223&&&\boxed{2}&&&&\\\hline
32332&&\boxed{1}&&&&&\\\hline
32323&&&\boxed{1}&&&1&\\\hline
32233&&&&&&\boxed{2}&\\\hline
23332&&&&\boxed{1}&&&\\\hline
23323&&&&&\boxed{1}&&\\\hline
23233&&&&&&\boxed{1}&1\\\hline
22333&&&&&&&\boxed{2}\\\hline
        \end{tabular}
         \hspace{1cm}
         \begin{minipage}{1.5in}\centering
         (4)
                \begin{tabular}{|c|c|c|c|}\hline
FGR&$L_2$&$L_1$\\\hline
221&\boxed{2}&\\\hline
212&\boxed{1}&1\\\hline
122&&\boxed{2}\\\hline
        \end{tabular}
        \vspace{5mm}
        
        (5)               
        \begin{tabular}{|c|c|c|c|}\hline
FGR&$L_1$\\\hline
22&\boxed{2}\\\hline
        \end{tabular}
         \end{minipage}
        \caption{Similarly, here are the tables for (1) $\iota_1=\iota_3=1$ (C4)
          (2)   $\iota_1=0,\iota_3=2 $ (C6), and (3)
          $\iota_1=0, \iota_3=3$ (C9), (4)  
          $\iota_1=1,\iota_3=0$ (C2), (5) $\iota_1=\iota_3=0$ (C1); the case $\iota_1=0,\iota_3=1$ (C3) is identical to (4).}
        \label{fig:smaller-sl3}
      \end{figure}
Note some interesting observations about this table:
\begin{itemize}
\item As mentioned above, the
  last non-zero entry in a column is in the row corresponding to its
  good word.
\item Also, as mentioned above, every row contains either 1 once, 1 twice, or 2 once.  This corresponds to the trichotomy for 1-singular weights given in \cite[Cor. 3.6]{WebGT}, depending on whether the corresponding map is small, strictly semi-small, or a $\mathbb{P}^1$-bundle.
\item We have a 2 in a row if and only if the word has two consecutive 2's.  This is because this is the only situation where the corresponding simple in the KLR algebra has a negative degree element.
\item The Gelfand-Kirillov dimension of each irrep can be read from these diagrams: by Theorem \ref{thm:GK}, for each irrep, it is the maximal number of 2's and 1's not between any pair of 3's.  Thus, it is 0 for $L_8$, the finite-dimensional module, 3 for $L_1$, $L_4$, $L_5$, $L_{19}$, $L_{28}$, $L_{32}$, and $L_{13}$, and 2 for all other modules.  
\item In each of the rows of this table, the boxed entry is the furthest left.  That is, the canonical module for the weight is the simple module with the greatest good word in lex order amongst those with non-zero multiplicity.  We have yet to observe a counter-example to this pattern, but the cases we have done thus far provide scant evidence.  In many cases, patterns like this break down in higher ranks. 
\end{itemize}
A simple Gelfand-Tsetlin module is always a $\mathfrak{h}$-weight module, that is, the Cartan $\mathfrak{h}$ acts locally finitely and semi-simply.  However, it might have finite or infinite weight multiplicities.  For $\gl_3$, there is only one integral GT module in each block with an infinite $\mathfrak{h}$-weight multiplicity as shown in \cite{futornyGelfandTsetlinModules2018}.  One can see this using the approach of comparison with the KLRW algebra as follows---if we fix the $\mathfrak{h}$-weight and the central character, we have fixed the longitude of all strands with label $1$ or $3$.  Thus, we can only have an infinite-dimensional $\mathfrak{h}$-weight multiplicity if we have infinitely many GT weight spaces with $a_{12}+a_{22}$ fixed, so $a_{12}\ll \cdots \ll a_{22}$ with all other $a_{**}$'s in between, that is, the corresponding idempotent begins and ends with $2$.  We can thus read off from Figure \ref{fig:sl3wt} that $L_{13}$ in the notation of \cite{futornyGelfandTsetlinModules2018} is the only such simple in the regular block. Of course, one can see from the non-integral case below that there are also many non-integral examples.
      
\subsubsection{Non-integral modules} We also include some similar calculations for non-integral
Gelfand-Tsetlin modules.  We fix a central character $\chi$, which may or may not be integral.  
Of course, we can break the category of Gelfand-Tsetlin modules further into blocks where we fix the coset of $a_{1,1},a_{2,1},a_{2,2}$ modulo $\Z$. For a fixed choice of these cosets, let $\mathscr{S}$ be the set of corresponding GT weights.

In this case, the partial order $\prec$ on
the set $\Omega$ defined before is not a total order, and it is useful
to recall that:
\begin{proposition}\label{prop:comparison}
  If $\gamma$ and $\gamma'$ are related by a permutation $\sigma
  \colon \Omega\to \Omega$  such that:
  \begin{enumerate}
  \item For all $(i,j)\in \Omega$, we have $\sigma(i,j)=(i,m)$ for some $m$ and $\sigma(n,k)=(n,k)$.
  \item For all $i,j$, we have $a_{i,j}-a'_{\sigma(i,j)}\in \Z$.  
  \item We have $(i,j)\prec (k,\ell)$ if and only if $\sigma(i,j)\prec'(k,\ell)$.  
  \end{enumerate}
  then the functors $\Wei_{\gamma}$ and $\Wei_{\gamma'}$ are isomorphic.
\end{proposition}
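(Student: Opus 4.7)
My plan is to show that both $\gamma$ and $\gamma'$ are represented by the same idempotent in $\widehat{\tT}$ (or the appropriate tensor product in the non-integral case), after which the isomorphism of weight functors will be automatic from the equivalences already established in the paper.

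The first step is to dispose of non-integrality. Condition~(1) preserves row indices, and condition~(2) forces $a_{ij}$ and $a'_{\sigma(i,j)}$ to lie in the same coset of $\bbC/\Z$; together these imply that the dimension vectors $\Bv^{(\bar a)}(\gamma)$ and $\Bv^{(\bar a)}(\gamma')$ agree for every $\bar a\in \bbC/\Z$. By Proposition~\ref{prop:integrality}, both weights therefore lie in the same block of $\mathrm{GT}_\chi$, which is equivalent to the integral block of $\bigotimes_{\bar a}\mcU_{\Bv^{(\bar a)}}$. Since $\Wei_\gamma$ and $\Wei_{\gamma'}$ restrict to weight functors on this block, it suffices to compare them after passing through this equivalence and applying Theorem~\ref{thm:GT-KLR} factorwise. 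Under this composition, $\Wei_\gamma(V)$ is identified with $e(\mathbf{I}_\gamma)\,\mathbb{KLR}(V)$, where $\mathbf{I}_\gamma\in \operatorname{tSeq}(\Bv^{(\bar a)})$ is the pre-sequence determined by the preorder $\prec$ and the row labels.

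The combinatorial core is then the claim that $e(\mathbf{I}_\gamma)=e(\mathbf{I}_{\gamma'})$ as thick idempotents in $\tT$. A pre-sequence is determined by two pieces of data: the preorder on $\Omega$, and the row-label $i$ attached to each $(i,j)\in \Omega$. Condition~(1) says $\sigma$ preserves the row-label and in particular induces a bijection on each row, while condition~(3), combined with the fact that $\sigma$ is a permutation, says that $\sigma$ intertwines $\prec$ and $\prec'$. Hence the preorder classes and their labels match, and the associated words and thick idempotents agree on the nose. With this identification, the objects $\gamma$ and $\gamma'$ of $\mathcal{T}_{\Z,\chi}$ are represented by the same idempotent of $\widehat{\tT}$, hence are canonically isomorphic via the identity diagram; pulling back along Propositions~\ref{prop:equivalence} and~\ref{prop:DFO} produces an isomorphism in $\Cat_\chi$ whose evaluation on any $V$ is the required natural isomorphism $\Wei_\gamma(V)\cong\Wei_{\gamma'}(V)$. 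The only real obstacle I foresee is bookkeeping: one must verify that the identifications $\widehat{\Lambda}_{\Ba}\cong \widehat{\Lambda}_{\Ba'}$ induced by $\sigma$ agree with those implicit in the chain of equivalences, so that the identity diagram really does represent a morphism between the two intended objects in $\mathcal{T}_{\Z,\chi}$, rather than between slightly different realizations of them.
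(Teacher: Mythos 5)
The paper states Proposition~\ref{prop:comparison} without giving any proof (it is introduced with ``it is useful to recall that'' and followed only by a two-sentence gloss on what the conditions mean in the integral and non-integral cases), so there is no argument in the paper to compare against.  Your argument is correct and is exactly the proof that the paper's machinery was designed to make automatic: condition (2) matches the vectors $\Bv^{(\bar a)}$ so that Proposition~\ref{prop:integrality} applies factorwise, and conditions (1) and (3) imply that the labelled preorders on $\Omega$ are isomorphic via $\sigma$, whence $\mathbf{I}_\gamma=\mathbf{I}_{\gamma'}$ as pre-sequences and $e(\mathbf{I}_\gamma)=e(\mathbf{I}_{\gamma'})$ in $\widehat{\tT}$; the identity idempotent is then an isomorphism $\gamma\to\gamma'$ in $\mathcal{T}_{\Z,\chi}$, which pulls back through Propositions~\ref{prop:equivalence} and~\ref{prop:DFO} to an isomorphism of the weight functors.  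Two small points worth making explicit: first, conditions (1)--(2) as literally written only force $a_{n,k}-a'_{n,k}\in\Z$, and since $\Hom_{\Cat}(\gamma,\gamma')=0$ unless $\gamma|_{Z_n}=\gamma'|_{Z_n}$, the proposition tacitly requires $a_{n,k}=a'_{n,k}$ (i.e.\ $\chi=\chi'$), which you silently assume when you put both weights in the same block; this is consistent with every use the paper makes of the result, but it should be flagged.  Second, the ``bookkeeping'' worry you raise at the end is harmless: the object $\gamma$ of $\mathcal{T}_{\Z,\chi}$ is by definition carried by the idempotent $e(\mathbf{I}_\gamma)$, so once the idempotents coincide the identity is a genuine isomorphism between the two objects, independently of the auxiliary choices $\wp$ used to write out the polynomial realizations.
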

If $\gamma$ is integral, then $\gamma'$ must be as well, and this
result simply requires that the orders $\prec$ and $\prec'$ coincide.  
On the other hand, if $\gamma$ is not integral, then we only need to pay
attention to the order between elements that are in the same coset of $\Z$.

If 
$a_{2,1}-a_{2,2}\notin \Z$ then the corresponding block is non-singular in a non-singular block, and  their weight multiplicities
described in \cite[Cor. 33]{FOD}.  
In particular, all GT multiplicities are 1 or 0.  Thus, we can reduce to the case where $a_{2,1}-a_{2,2}\in \Z$.  Tensoring with a complex
power of the trace representation uniformly shifts $a_{*,*}$, so we can assume that
$a_{2,1},a_{2,2}\in \Z$.

Now, fix $\iota_1\in \{0,1\}$ and $\iota_3\in \{0,1,2,3\}$.  We'll consider a block where:
\begin{itemize}
\item [(*)] For all weights in $\mathscr{S}$, $a_{2,1},a_{2,2}\in \Z$ and $\iota_i=\#\{j\mid a_{i,j}\in \Z\}$.  
\end{itemize}
By Proposition \ref{prop:comparison}, the isomorphism type of the
corresponding weight functor only depends on the order of elements of
$\Omega$ such that $a_{i,j}\in \Z$.

By Proposition \ref{prop:integrality}, we have:
\begin{proposition}
The category of GT modules with
support in $\mathscr{S}$ is equivalent to the category of modules over the OGZ
algebra for the dimension vector $(\iota_1,\iota_2,\iota_3)$, tensored
with polynomials in $4-\iota_1-\iota_3$ variables, matching GT multiplicities.
\end{proposition}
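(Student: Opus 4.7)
The plan is to apply Proposition~\ref{prop:integrality} to decompose $\GTc_{\mathscr{S}}$ as a tensor product indexed by $\bbC/\Z$-cosets, and then identify each auxiliary factor with polynomial modules.

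Having fixed $\chi$ and the coset $\alpha\in\bbC/\Z$ of $a_{1,1}$, the $\bbC/\Z$-cosets appearing in $\mathscr{S}$ are as follows: the integer coset $\bar 0$, with dimension vector $\Bv^{(\bar 0)}=(\iota_1,2,\iota_3)$ (since $a_{2,1},a_{2,2}\in\Z$ by assumption); and auxiliary cosets, each with $v_2^{(\bar a)}=0$, of which at most one has $v_1^{(\bar a)}=1$, namely the coset of $\alpha$ itself when $\iota_1=0$. The total size of these auxiliary cosets is $6-(\iota_1+2+\iota_3)=4-\iota_1-\iota_3$, matching the polynomial variable count in the statement. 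Proposition~\ref{prop:integrality} then identifies $\GTc_{\mathscr S}$ with the tensor product of integral modules over $\mcU_{\Bv^{(\bar a)}}$, or equivalently (by Theorem~\ref{thm:GT-KLR}) with $\bigotimes_{\bar a}\tT_{\Bv^{(\bar a)}}\wgmod$.

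Next, I identify each auxiliary factor with polynomial modules. For a coset of shape $(0,0,k)$, the KLRW algebra $\tT_{(0,0,k)}$ consists of $k$ red strands all labelled $n=3$ with no black strands; reading off its action on the polynomial representation shows that this is precisely the nilHecke algebra $NH_k$, which is Morita equivalent to $\bbC[y_1,\dots,y_k]^{S_k}$, so its category of weakly graded finite-dimensional modules is equivalent to that of finite-dimensional $\bbC[y_1,\dots,y_k]$-modules with nilpotent action. For the possibly occurring coset of shape $(1,0,k_\alpha)$, the lone black strand of label $1$ is non-consecutive to the red label $n=3$, so relation~\eqref{black-bigon} forces every bigon of the black strand with a red strand to be straight, and no triple-crossing relations from \eqref{triple-dumb} apply. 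Hence $\tT_{(1,0,k_\alpha)}$ decouples, up to Morita equivalence, as $NH_{k_\alpha}\otimes\bbC[y_{1,1}]$; after the nilHecke Morita reduction this becomes a polynomial ring in $1+k_\alpha$ variables.

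Substituting these Morita equivalences into the tensor decomposition produces the stated equivalence with modules over $\mcU_{(\iota_1,2,\iota_3)}$ tensored with polynomial modules in $4-\iota_1-\iota_3$ variables. The matching of GT multiplicities follows because each step---the equivalence $\mathbb{GT}$, the KLRW equivalence of Theorem~\ref{thm:GT-KLR}, and the nilHecke Morita equivalences---preserves the $\Gamma$-weight decomposition on each tensor factor, so that the dimension of a weight space of the original module is the product of the corresponding dimensions on each factor. The main obstacle is the Morita reduction for $\tT_{(1,0,k_\alpha)}$: while the decoupling of the label-$1$ strand from the red strands is immediate from the bigon and triple-crossing relations, one must verify that the resulting tensor factorization is compatible with the restriction to weakly graded finite-dimensional modules and with the fixed character on the symmetric polynomials in the $\la_{3,*}$'s, so that the ``polynomial'' factor is literally a polynomial module in the claimed number of variables.
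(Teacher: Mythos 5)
Your overall plan (apply Proposition~\ref{prop:integrality} to factor over $\bbC/\Z$-cosets, recognize the integral-coset factor as $\mcU_{(\iota_1,2,\iota_3)}$, and identify the auxiliary factors with polynomial modules) is correct and is the same route the paper has in mind. However, there is a genuine error in your identification of the auxiliary KLRW algebras.

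You claim $\tT_{(0,0,k)}$ is the nilHecke algebra $NH_k$, with the $k$ red strands (label $n=3$) crossing via divided differences. But in the algebra $\tT$ used here, \emph{red strands cannot cross each other}. Look at the list of allowed local forms of a Stendhal diagram in Section~\ref{sec:klrw-algebras}: the permitted crossings are black--black, black--red, and red--black; there is no red--red crossing. (The remark about ``the same relations apply to red strands'' refers to dots and to crossings involving a red strand, not to a nonexistent red--red crossing.) Consequently, for the all-red dimension vector $(0,0,k)$ there is only one idempotent and no crossings, so $\tT_{(0,0,k)}=\bbC[y_1,\dots,y_k]$, a polynomial ring — not $NH_k$.

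This matters precisely for the clause ``matching GT multiplicities.'' Were $\tT_{(0,0,k)}\cong NH_k$, its unique simple would have dimension $k!$, and your assertion that the nilHecke Morita equivalence ``preserves the $\Gamma$-weight decomposition'' would be false — the equivalence scales dimensions by $k!$. In fact one can see directly that $\tT_{(0,0,k)}\neq NH_k$: the unique simple GT module over $\mcU_{(0,0,k)}=\Gamma_{(0,0,k)}$ is one-dimensional, and under $\mathbb{KLR}$ it must become a $\tT_{(0,0,k)}$-module of the same dimension (there is a single idempotent), but $NH_k$ has \emph{no} one-dimensional modules for $k\geq 2$ (the relation $y_2\psi-\psi y_1=1$ together with $\psi^2=0$ rules them out). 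So the nilHecke identification is inconsistent with Theorem~\ref{thm:GT-KLR}. The same issue affects your treatment of $\tT_{(1,0,k_\alpha)}$: the reduction there uses only the degree-zero bigon relation on the lone label-$1$ strand to identify all $k_\alpha+1$ idempotents, producing $\bbC[y_0,y_1,\dots,y_{k_\alpha}]$ directly, with no nilHecke factor and no Morita correction to the multiplicity count. Once $\tT_{(0,0,k)}$ and $\tT_{(1,0,k_\alpha)}$ are replaced by these polynomial rings, the rest of your argument is fine and gives the stated equivalence with weight-space dimensions literally preserved.
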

Thus, we can compute the weight multiplicities of simple modules with
support in $\mathscr{S}$ by computing these for integral GT modules
over the corresponding OGZ algebra.  This is presented in all the
relevant cases for $\mathfrak{sl}_3$ in Figures \ref{fig:33221} and
\ref{fig:smaller-sl3}.  In each case, we write the word that encodes  the induced order on the elements of $\Omega$ such that $a_{ij}\in \Z$; note that by assumption, $2$ will appear twice,  $1$ will appear $\iota_1$ times, and similarly $3$ appears $\iota_3$ times.   As mentioned
in the caption, the case $\iota_1=0, \iota_3=1$ has the same
multiplicities as $\iota_1=1,\iota_3=0$, so we omit it.  

All of these cases have been worked out in \cite[\S 7.5]{futornyGelfandTsetlinModules2018}.  We will include their labeling of different cases; for example, for them, the integral regular case is (C10).  We have only included the cases that have regular central character.  Their cases (C7), (C8), (C11), (C12), (C13), and (C14)  are translations to singular blocks of (C5), (C6), (C10), (C9), (C10), and (C19).  

The character tables for these can be obtained by just deleting from the appropriate table all the rows corresponding to words that cannot be realized with this singular character of $Z_3$.  That is to say, all the words that have a 1 or 2 between two 3's that have the same longitude under our choice of $\chi$.  Some of the columns of the table will now be empty; these correspond to the simple modules which are killed by translation to the wall, so we remove the empty columns.  

\begin{figure}
 \centerline{ \scalebox{.85}{    \begin{tabular}{|c|c|c|c|c|c|c|c|c|c|c|c|c|c|c|c|c|c|c|c|c|}\hline
FGR& $L_{20}$&&$L_{14}$&& $L_{10}$&&$L_{11}$&&$L_{8}$&$L_{6}$&&$L_{2}$&$L_{5}$&$L_{16}$&$L_{9}$&&$L_{3}$&&$L_{4}$&$L_{1}$\\\hline
333221&\boxed{2}&&&&&&&&&&&&&&&&&&&\\\hline
333212&\boxed{1}&&&&&&&&&&&&&1&&&&&&\\\hline
333122&&&&&&&&&&&&&&\boxed{2}&&&&&&\\\hline
323321&&&\boxed{1}&&&&&&&&&&&&&&&&&\\\hline
323312&&&&&\boxed{1}&&&&&&&&&&&&&&&\\\hline
322331&&&&&&&&&\boxed{2}&&&&&&&&&&&\\\hline
322133&&&&&&&&&\boxed{2}&&&&&&&&&&&\\\hline
321332&&&&&\boxed{1}&&&&&&&&&&&&&&&\\\hline
321233&&&&&&&&&\boxed{1}&&&&&&&&&&1&\\\hline
313322&&&&&&&&&&&&&&\boxed{2}&&&&&&\\\hline
312332&&&&&&&&&&&&&&&\boxed{1}&&&&&\\\hline
312233&&&&&&&&&&&&&&&&&&&\boxed{2}&\\\hline
233321&&&&&&&\boxed{1}&&&&&&&&&&&&&\\\hline
233312&&&&&&&&&&\boxed{1}&&&&&&&&&&\\\hline
232331&&&&&&&&&\boxed{1}&&&&1&&&&&&&\\\hline
232133&&&&&&&&&\boxed{1}&&&&1&&&&&&&\\\hline
231332&&&&&&&&&&\boxed{1}&&&&&&&&&&\\\hline
231233&&&&&&&&&&&&\boxed{1}&&&&&&&&\\\hline
223331&&&&&&&&&&&&&\boxed{2}&&&&&&&\\\hline
223133&&&&&&&&&&&&&\boxed{2}&&&&&&&\\\hline
221333&&&&&&&&&&&&&\boxed{2}&&&&&&&\\\hline
213332&&&&&&&&&&\boxed{1}&&&&&&&&&&\\\hline
213233&&&&&&&&&&&&\boxed{1}&&&&&&&&\\\hline
212333&&&&&&&&&&&&&\boxed{1}&&&&&&&1\\\hline
133322&&&&&&&&&&&&&&\boxed{2}&&&&&&\\\hline
132332&&&&&&&&&&&&&&&\boxed{1}&&&&&\\\hline
132233&&&&&&&&&&&&&&&&&&&\boxed{2}&\\\hline
123332&&&&&&&&&&&&&&&&&\boxed{1}&&&\\\hline
123233&&&&&&&&&&&&&&&&&&&\boxed{1}&1\\\hline
122333&&&&&&&&&&&&&&&&&&&&\boxed{2}\\\hline
    \end{tabular}
}}
\caption{The table of dimensions of GT weight spaces in an integral singular block where $a_{31}<a_{32}=a_{33}$ for $\mathfrak{gl}_3$}
\label{fig:sl3wt-sing}
\end{figure}
As an illustration, we include the case of (C11) where $a_{31}<a_{32}=a_{33}$.  We have deleted the words incompatible with this order and have left blank columns corresponding to the simples that disappear in the process.  Unfortunately, the conventions chosen in \cite{futornyGelfandTsetlinModules2018} are not especially compatible with translation to the wall, but in our tables, the $k$th column from the left in Figure \ref{fig:sl3wt-sing} is the translation to the wall of the $k$th column in Figure \ref{fig:sl3wt}.  We leave the other singular cases to the reader.

\subsection{The case of $\gl_4$-modules}

\ifslfour
Finally, we include the full list of multiplicities for the integral case
of $\mathfrak{gl}_4$ at the end of the paper after the bibliography.  In the interest of saving space, we don't show every single word: if a 31 or 42 appears in a word, the word where we switch these numbers has the same multiplicities in every GT module and is lower in lex-order; thus we only display words that have no such pairs.  For example, we leave out the words 4443423231, 4443423213, and 4443243231 since 4443243213 has the same multiplicities.  There are 259 simples in this case, so we break
these into groups of 30; as the reader can see, these are on
impractically large pages.  If the reader would prefer to not have
these tables in the file, download the source from the {\tt arXiv} and
comment out the line {\tt \textbackslash slfourtrue}.
\else
For space reasons, we have omitted the full list of multiplicities for the integral case
of $\mathfrak{gl}_4$ from this version of the paper.  They are available in \href{https://arxiv.org/abs/2011.06029}{the arXiv version.}
\fi
\begin{figure}[tb]
  \centering
  \includegraphics[width=6in]{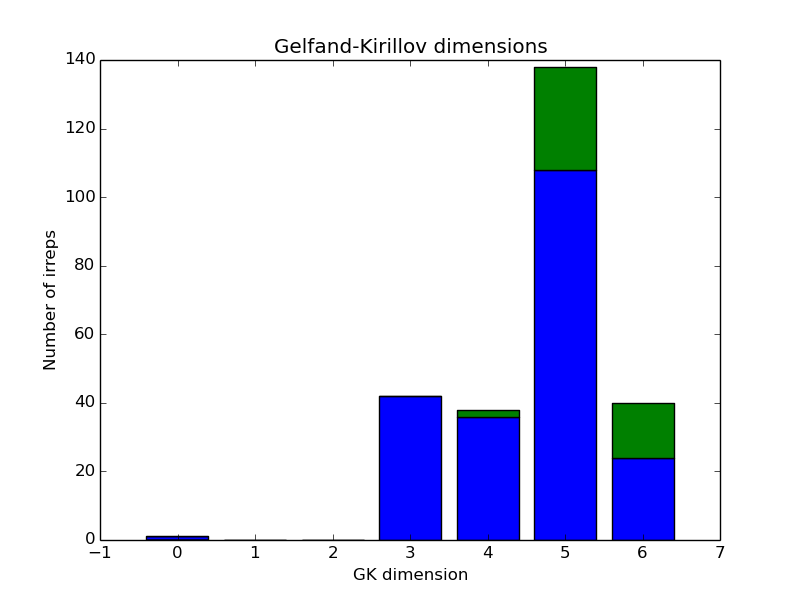}
  \caption{The number of integral $\gl_4$ GT modules of a given Gelfand-Kirillov dimension in a regular block.  Blue represents simples without an infinite-dimensional $\mathfrak{h}$-weight space, and green those which do. }\label{fig:hist}
\end{figure}
Many of the features we pointed out about the $\gl_3$ table hold here with suitable modifications:
\begin{itemize}
\item As before, the
  last non-zero entry in a column is in the row corresponding to its
  good word.
\item There are now many different possibilities for the entries in a row; we also have to account for the fact that a given word appears on several pages, so we have to consider all those rows together.  By \cite[Cor. 2.19]{WebGT}, these must have sum which is no more than $12=3!\cdot 2!$.  The different possibilities of the multiplicities with which a given maximal ideal can appear in the different integral simple Gelfand-Tsetlin modules, with an example of a word realizing them, are:\smallskip

\centerline{\begin{tabular}{ll|ll|ll}
 $ \{1\}$&$ 4432143243$&$\{4, 8\}$&$ 4443343221$&$ \{2, 2, 2, 4\}$&$ 4434343221$\\
 $ \{2\}$&$ 4432443321$&$\{6, 6\}$&$ 4444333212$&$ \{2, 2, 4, 4\}$&$ 4443343212$\\
 $\{4\}$&$ 4443322143$ &$ \{1, 1, 1\}$&$ 4432143432$&$ \{1, 1, 1, 1, 1\}$&$ 4434432132$\\
 $\{6\}$&$ 2444433321$ &$ \{1, 2, 2\}$&$ 4434432321$&$ \{1, 1, 1, 1, 2\}$&$ 4443213432$\\
 $ \{12\}$&$ 4444333221$&$\{2, 2, 2\}$&$ 4443243321$&$ \{1, 1, 1, 2, 2\}$&$ 4443234321$\\
 $\{1, 1\}$&$ 4434432123$&$ \{2, 2, 4\}$&$ 4443324321$&$ \{1, 1, 1, 1, 1, 1\}$&$ 4443432123$\\ 
 $ \{1, 2\}$&$ 4432443213$&$ \{2, 4, 4\}$&$ 4444332132$&$\{1, 1, 1, 1, 1, 2\}$&$ 4434324321$\\
 $ \{2, 2\}$&$ 4443214332$ &$\{1, 1, 1, 1\}$&$ 4434321432$&$ \{1, 1, 1, 1, 2, 2\}$&$ 4443432132$\\
 $\{2, 4\}$&$ 2444334321$&$ \{1, 1, 1, 2\}$&$ 4443243213$&$ \{1, 1, 1, 1, 1, 1, 2\}$&$ 4444321323$\\ 
 $\{2, 8\}$&$ 4444332321$&$ \{1, 1, 2, 5\}$&$ 4443432321$&$ \{1, 1, 1, 1, 1, 1, 2, 2\}$&$ 4434343212$\\ 
 $\{4, 4\}$&$ 4434433221$&$ \{2, 2, 2, 2\}$&$ 4434433212$
\end{tabular}}\smallskip
This shows that we have at least 32 different isomorphism types for the endomorphism algebras of weight functors, a dramatic contrast with $\gl_3$ where there were 3.  
\item In particular, rows where one of the entries is 12 are special and can only have one non-zero entry; this can only happen for words that are essential, by Theorem \ref{thm:essential}.  Note that we only see one 12 per column, because we don't display all words; our conventions about leaving out words containing 42 or 31 means that we only show the lex-shortest word in the essential support of a given module, and don't display the others that are essentially the same.  
\item More generally, when there is a group of $n_2$ 2's and one of $n_3$ 3's, by Proposition \ref{prop:smaller-nilHecke}, a smaller nilHecke algebra will act and the multiplicity must be a multiple of $n_2!n_3!$,  the dimension of the unique irrep of this nilHecke algebra.  For example, 3323214444 has a group of two 3's, and so the multiplicity must be divisible by $2$; in fact, it has multiplicities of 8 and 2 in two different simple modules.  To give two more examples: 4444333212 has a group of three 3's, so the multiplicities are divisible by $3!=6$, and in fact, this appears in two modules with multiplicity 6; the word    4443343221 has $n_2=2$ and $n_3=2$, so the multiplicities must be divisible by $4$, and in fact, they are 4 and 8.  
\item As in the case of $\gl_3$, we include the Gelfand-Kirillov dimension
(determined using Theorem \ref{thm:GK}) in the row labeled ``GK.'' 
In Figure \ref{fig:hist}, we give some statistics on the number of simples with a given Gelfand-Kirillov dimension, with color distinguishing whether all $\mathfrak{h}$-weight spaces are finite dimensional, or at least one is infinite dimensional.  Note that there are no modules with GK dimension 1 or 2; such a module would be annihilated by a primitive ideal whose associated variety would be a 2 or 4-dimensional nilpotent orbit in $\gl_4$, and the minimal non-trivial orbit has dimension 6.  With a bit more combinatorial effort, we could determine the precise primitive ideals annihilating these modules, but we will leave that to another time.
\item Our observation earlier that the canonical module seems to correspond to the lex-greatest good word amongst all modules where a weight appears still holds in $\gl_4$, though this is not as visually apparent since rows are split across multiple pages.  Again, we don't regard this to be extremely convincing, since the behavior of the corresponding algebras will be much more complicated in higher rank, but it remains an intriguing possibility.  
\item  For each simple, the row labeled ``IWS'' contains a ``T'' if an
infinite dimensional $\mathfrak{h}$-weight space exists and ``F'' if
it doesn't.
\end{itemize}

 We determine whether modules over $\gl_4$
have an infinite-dimensional $\mathfrak{h}$-weight space using the
Theorem below:   

\begin{theorem}\label{thm:IWS}
  If $\mcU=\mcU(\gl_4)$, then the integral GT module $\mathbb{GT}(M)$ has an infinite $\mathfrak{h}$-weight multiplicity for some weight if and only if there is a word $\Bi$ such that  $e(\Bi)M\neq 0$ which both begins and ends with one of the words $\{2,3,23,32\}$.
\end{theorem}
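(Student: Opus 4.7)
My plan is to reduce the existence of an infinite $\mathfrak{h}$-weight space in $\mathbb{GT}(M)$ to a per-chamber combinatorial count inside $\tT_{(1,2,3,4)}$. The starting observation is that the $\mathfrak{h}$-weight of an integral $\gamma\in\MaxSpec_\Z(\Gamma)$ depends only on the row sums $a_{11}$, $a_{21}+a_{22}$, $a_{31}+a_{32}+a_{33}$, and $\sum_j a_{4j}$, the last of which is fixed by $\chi$. Since $\Wei_\gamma(\mathbb{GT}(M))\cong e(\mathbf{I}_\gamma)M$ is finite-dimensional and $\Seq(1,2,3,4)$ is finite, infinite $\mathfrak{h}$-multiplicity is equivalent to the existence of a single word $\Bi$ with $e(\Bi)M\neq 0$ admitting infinitely many integral lifts $\gamma$ with a prescribed triple of row sums.

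Next I would fix such a $\Bi$ and analyse these lifts. The $1$- and $4$-strands have pinned longitudes, dividing $\Bi$ into a left extreme $L$ (the initial maximal substring with labels in $\{2,3\}$), a right extreme $R$ (the terminal such substring), and a middle portion enclosed between fixed-longitude positions. The middle longitudes take only finitely many integer values, so the unbounded behaviour lies entirely in the extremes: left-extreme longitudes are bounded above but not below, right-extreme longitudes bounded below but not above, all subject to the two row-sum constraints on $a_{2*}$ and $a_{3*}$. A direct recession-cone analysis then shows that the integral lifts of $\Bi$ form an infinite set iff the positive cones $C_L,C_R\subset \R_{\geq 0}^2$ generated by the partial-count vectors $\mathbf{a}_k=(\ell_2^{(k)},\ell_3^{(k)})$ along prefixes of $L$, respectively $\mathbf{b}_k=(r_2^{[k]},r_3^{[k]})$ along suffixes of $R$, have a common non-zero ray.

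The last step is translating this cone-intersection condition into the prefix/suffix statement of the theorem. Since $C_L$ and $C_R$ are convex subcones of the first quadrant, their non-trivial intersection is equivalent to their slope intervals $[s_L^{\min},s_L^{\max}]$ and $[s_R^{\min},s_R^{\max}]$ overlapping. One checks directly that $s_L^{\min}=0\iff L_1=2$ and $s_L^{\max}=\infty\iff L_1=3$, with the analogous statements for $R$; so the overlap splits into four regimes, corresponding to pure $2$-push, pure $3$-push, and two mixed pushes, which match respectively the patterns $\{2,3,23,32\}$ appearing as beginning and end of $\Bi$. The main obstacle is the mixed case: one must verify, using $v_2=2$ and $v_3=3$, that a common slope in $(0,\infty)$ exists iff both ends of $\Bi$ match a pattern from $\{23,32\}$ with compatible slope ranges. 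This reduces to a short case analysis on the possible extremes (each of length at most $5$), which completes the equivalence.
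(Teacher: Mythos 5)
Your approach is correct, and it takes a genuinely different route from the paper's, so a comparison is worthwhile. Both arguments share the initial reduction: an infinite $\mathfrak{h}$-weight space forces a single word $\Bi$ with $e(\Bi)M\neq 0$ to admit infinitely many integral lifts with fixed row sums. The paper then treats this head-on; for necessity it sets $a_{21}=-k$ and uses the inequality $a_{31}<\frac{a_{31}+a_{32}}{2}=\frac{\mu_3-a_{33}}{2}<\frac{\mu_3-\mu_2-k}{2}$ to conclude that once $k$ is large the two smallest and two largest longitudes of any lift are $\{a_{21},a_{31}\}$ and $\{a_{22},a_{33}\}$, so $\Bi$ begins and ends with $23$. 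You instead encode the problem as a recession-cone computation and read off the answer from slope intervals. This is sound, but two pieces are asserted rather than argued. First, the identification of the recession cone with $C_L\cap C_R$ needs a proof: the Abel-summation step, setting $u_j:=v^L_{j+1}-v^L_j\ge 0$ with $v^L_{|L|+1}:=0$, so that the balance vector $\bigl(-\sum_{L_k=2}v^L_k,\,-\sum_{L_k=3}v^L_k\bigr)$ becomes $\sum_j u_j\mathbf{a}_j$, is what identifies the achievable balance vectors with $C_L$, and symmetrically $C_R$; you should include this. Second, the deferred ``short case analysis'' in the mixed regime (say $L_1=2$, $R$ ends in $3$) must actually be carried out: one needs $s_L^{\max}\ge s_R^{\min}$ iff $L$ begins with $23$ and $R$ ends with $23$, and this genuinely uses $v_2=2$, $v_3=3$ (for instance, if $L$ begins with $22$ then $R$ can contain no $2$'s so $s_R^{\min}=\infty$; if $L=233$ then $R$ has at most one $3$). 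Once those are filled in your proof goes through. The trade-off is that the paper's argument is shorter and more elementary, while your cone formulation is more structured and points toward the $n>4$ question that the paper explicitly flags as open just after this theorem.
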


\begin{proof}
  The module $\mathbb{GT}(M)$ has infinite weight multiplicity for $\mu$, if and only if for some word $\Bi$ such that $e(\Bi)M\neq 0$, there are infinitely many $\gamma$ with $e(\gamma)=e(\Bi)$ of weight $\mu$ for $\mathfrak{h}$.  If a word $\Bi$ ends with $2$, with $3$, or with permutations of the word $23$, then we can subtract any fixed integer $a$ from the longitude for the start of the word and add $a$ to the longitude for the end without changing the $\mathfrak{h}$-weight or the idempotent $\Bi$, so we have found an infinite multiplicity weight space.

  Now, assume that we have $\Bi$ with infinitely many $\gamma$ with $e(\gamma)=e(\Bi)$ of weight $\mu$ for $\mathfrak{h}$.  Having fixed the weight space, the sums
  \[a_{21}+a_{22}=\mu_2\qquad a_{31}+a_{32}+a_{33}=\mu_3\]
  will be fixed.  One of $a_{21}$ or $a_{31}$ must take on infinitely many values below the averages $\mu_2/2$ and $\mu_3/3$, while $a_{11}$ and $a_{4*}$ are fixed.  So $\Bi$ must begin with a 2 or a 3.  Similarly, it must end with a 2 or 3.  If it begins with a 2 and ends with a 3, then $a_{21}<a_{31}$, and $a_{22}<a_{33}$, and we can assume that $a_{21}=-k$ for $k\gg 0$.  Thus, $a_{22}=\mu_2+k<a_{33}$.  Since $a_{31}<a_{32}$, we have the inequality:
  \[a_{31}<\frac{a_{31}+a_{32}}{2}=\frac{\mu_3-a_{33}}2<\frac{\mu_3-\mu_2-k}2.\]
  So once $k$ is sufficiently large, this shows that $a_{21}<a_{31}$ are smaller than all other $a_{**}$'s and $a_{22}<a_{33}$ are larger than all others.  Thus, $\Bi$ must begin with $23$ and end with $23$.  On the other hand, if the word begins with $3$ and ends with $2$, then the same argument shows that it begins and ends with $32$.
\end{proof}
In general, it is clear that if $\Bi$ begins and ends with permutations of the same word which includes no appearances of $n$, there will be an infinite multiplicity $\mathfrak{h}$-weight space, but it is not clear if this is a necessary condition for $n>4$; the authors suspect it is not.
\begin{figure}
  \centering
  \begin{tabular}{{|c|c|c|c|c|c|c|c|c|c|c|}}\hline
 43221&\boxed{2}&&&&&&&&&\\\hline
43212&\boxed{1}&&&&&&1&&&\\\hline
32214&&\boxed{2}&&&&&&&&\\\hline
32124&&\boxed{1}&&&&&&1&&\\\hline
24321&\boxed{1}&&&&1&&&&&\\\hline
23214&&\boxed{1}&&&&1&&&&\\\hline
22143&&&&&\boxed{2}&&&&&\\\hline
22134&&&&&&\boxed{2}&&&&\\\hline
21432&&&\boxed{1}&&&&&&&\\\hline
21324&&&&\boxed{1}&&&&&&\\\hline
21243&&&&&\boxed{1}&&&&1&\\\hline
21234&&&&&&\boxed{1}&&&&1\\\hline
14322&&&&&&&\boxed{2}&&&\\\hline
13224&&&&&&&&\boxed{2}&&\\\hline
12432&&&&&&&\boxed{1}&&1&\\\hline
12324&&&&&&&&\boxed{1}&&1\\\hline
12243&&&&&&&&&\boxed{2}&\\\hline
12234&&&&&&&&&&\boxed{2}\\\hline
  \end{tabular}\hspace{1cm}
  \begin{tabular}{{|c|c|c|c|c|c|c|c|}}\hline
    44433&\boxed{2}&&&&&&\\\hline
44343&\boxed{1}&&1&&&&\\\hline
44334&&&\boxed{2}&&&&\\\hline
43443&&\boxed{1}&&&&&\\\hline
43434&&&\boxed{1}&&&1&\\\hline
43344&&&&&&\boxed{2}&\\\hline
34443&&&&\boxed{1}&&&\\\hline
34434&&&&&\boxed{1}&&\\\hline
34344&&&&&&\boxed{1}&1\\\hline
33444&&&&&&&\boxed{2}\\\hline
  \end{tabular}
  \caption{Tables of multiplicities for integral modules over the OGZ algebras $\mcU_{(1,2,1,1)}$ and $\mcU_{(0,0,2,3)}$.  }
  \label{fig:OGZ}
\end{figure}

Finally, let us illustrate how to understand non-integral cases with a $\gl_4$ example: Consider the block of Gelfand-Tsetlin modules where $\Bv^{(\bar{0})}=(1,2,1,1)$ and $\Bv^{(\overline{\nicefrac{1}{2}})}=(0,0,2,3)$.  In this case, every simple is the tensor product of two simple Gelfand-Tsetlin modules over the corresponding OGZ algebras for these vectors.  The multiplicities of these blocks are shown in Figure \ref{fig:OGZ}.  Every simple module over $\gl_4$ in this block corresponds to a pair of columns in the two tables; thus, there are $70=10\cdot 7$ of them.  Each GT weight corresponds to a pair of rows, corresponding to the order of the numbers $a_{*,*}$ that are congruent mod $\Z$.  For example in the module $L$ corresponding to the first column of the first table and the last column of the second, either of the weights corresponding to:
\[\tikz{\matrix[row sep=0mm,column sep=0mm,ampersand replacement=\&]{
      \node {$0$}; \& \& \node {$2\nicefrac{1}{2}$}; \& \& \node {$4\nicefrac{1}{2}$}; \& \& \node {$6\nicefrac{1}{2}$};\\
\& \node {$\nicefrac{1}{2}$}; \& \& \node {$1$}; \& \& \node {$1\nicefrac{1}{2}$}; \& \\
\& \& \node {$2$};  \& \& \node {$4$}; \&\& \\
\& \& \&\node {$7$}; \& \&\& \\
};}\qquad \tikz{\matrix[row sep=0mm,column sep=0mm,ampersand replacement=\&]{
      \node {$-2\nicefrac{1}{2}$}; \& \& \node {$-\nicefrac{1}{2}$}; \& \& \node {$5$}; \& \& \node {$8\nicefrac{1}{2}$};\\
\& \node {$-4\nicefrac{1}{2}$}; \& \& \node {$-3\nicefrac{1}{2}$}; \& \& \node {$7$}; \& \\
\& \& \node {$9$};  \& \& \node {$12$}; \&\& \\
\& \& \&\node {$13$}; \& \&\& \\
};}\]
give the words $43221$ and $33444$ since, in the first case, we have $0<1<2<4<7$ and $\nicefrac{1}{2}<1 \nicefrac{1}{2}<2 \nicefrac{1}{2}<4 \nicefrac{1}{2}<6 \nicefrac{1}{2}$.  Thus, the corresponding GT weight space is $4=2\cdot 2$ dimensional, and since both 2's are boxed, $L$ is the canonical module for these weights.

On the other hand, the first weight below
\begin{equation}
\tikz[baseline]{\matrix[row sep=0mm,column sep=0mm,ampersand replacement=\&]{
      \node {$0$}; \& \& \node {$2\nicefrac{1}{2}$}; \& \& \node {$4\nicefrac{1}{2}$}; \& \& \node {$6\nicefrac{1}{2}$};\\
\& \node {$\nicefrac{1}{2}$}; \& \& \node {$3$}; \& \& \node {$1\nicefrac{1}{2}$}; \& \\
\& \& \node {$2$};  \& \& \node {$4$}; \&\& \\
\& \& \&\node {$7$}; \& \&\& \\
};}\qquad \tikz[baseline]{\matrix[row sep=0mm,column sep=0mm,ampersand replacement=\&]{
      \node {$-2\nicefrac{1}{2}$}; \& \& \node {$-\nicefrac{1}{2}$}; \& \& \node {$5$}; \& \& \node {$8\nicefrac{1}{2}$};\\
\& \node {$-4\nicefrac{1}{2}$}; \& \& \node {$-3\nicefrac{1}{2}$}; \& \& \node {$7$}; \& \\
\& \& \node {$9$};  \& \& \node {$12$}; \&\& \\
\& \& \&\node {$-2$}; \& \&\& \\
};}\label{eq:sl4-example}
\end{equation}
has corresponding words 42321 and 33444; the word 42321 doesn't appear in our table, since we can swap the initial 4 and 2 without changing the weight functor.  So, we should consider the row for 24321, and, we now have a $2=1\cdot 2$ dimensional weight space in $L$.  Note that we get the same multiplicity if we replace $L$ by $L'$, the simple for the 7th column in the first table and the last in the second.  We can see that again $L$ is the canonical module of the first weight from \eqref{eq:sl4-example}.  On the other hand, the second weight in \eqref{eq:sl4-example} has corresponding words 14322 and 33444, so $L$ has multiplicity 0 in this case, and $L'$ has multiplicity $4=2\cdot 2$ and is the canonical module of this weight.

\section*{Declarations}

{\bf Ethical Approval}: not applicable

{\bf Funding}: T. S. was supported by NSERC and the University of Waterloo through an Undergraduate Student Research Award.  B. W. is supported by an NSERC Discovery Grant.  This research was supported in part by Perimeter Institute for Theoretical Physics. Research at Perimeter Institute is supported by the Government of Canada through the Department of Innovation, Science and Economic Development Canada and by the Province of Ontario through the Ministry of Research, Innovation and Science.

{\bf Availability of data and materials}: All code used in the production of data in this file is available at: 
\url{https://github.com/bwebste/gelfand-tsetlin-public}

\bibliography{./gen3}

\newcommand{\etalchar}[1]{$^{#1}$}
\providecommand{\bysame}{\leavevmode\hbox to3em{\hrulefill}\thinspace}
\providecommand{\MR}{\relax\ifhmode\unskip\space\fi MR }
\providecommand{\MRhref}[2]{%
  \href{http://www.ams.org/mathscinet-getitem?mr=#1}{#2}
}
\providecommand{\href}[2]{#2}
\begin{thebibliography}{FGRZ20b}

\bibitem[BKM14]{brundanHomologicalProperties2014}
Jonathan Brundan, Alexander Kleshchev, and Peter~J. McNamara, \emph{Homological
  properties of finite-type {{Khovanov}}--{{Lauda}}--{{Rouquier}} algebras},
  Duke Mathematical Journal \textbf{163} (2014), no.~7.

\bibitem[B{\'o}n22]{MR4596199}
Mikl{\'o}s B{\'o}na, \emph{Combinatorics of permutations}, 3 ed., Discrete
  mathematics and its applications (boca raton), CRC Press, Boca Raton, FL,
  2022. \MR{4596199}

\bibitem[BSW]{GTcode}
Jon Brundan, Turner Silverthorne, and Ben Webster, \emph{Gap4 and python code
  for computations}, available for download at
  \url{https://github.com/bwebste/gelfand-tsetlin-public}.

\bibitem[DFO94]{FOD}
{\relax Yu}.~A. Drozd, V.~M. Futorny, and S.~A. Ovsienko,
  \emph{Harish-{{Chandra}} subalgebras and {{Gelfand-Zetlin}} modules},
  Finite-dimensional algebras and related topics ({{Ottawa}}, {{ON}}, 1992),
  {{NATO}} adv. {{Sci}}. {{Inst}}. {{Ser}}. {{C}} math. {{Phys}}. {{Sci}}.,
  vol. 424, Kluwer Acad. Publ., Dordrecht, 1994, pp.~79--93. \MR{1308982}

\bibitem[DOF91]{drozdGelFandZetlin1991}
{\relax Yu}.~A. Drozd, S.~A. Ovsienko, and V.~M. Futorny, \emph{On
  {{Gel}}'fand-{{Zetlin}} modules}, Proceedings of the {{Winter School}} on
  {{Geometry}} and {{Physics}} ({{Srn{\'i}}}, 1990), Rendiconti del {{Circolo
  Matematico}} di {{Palermo}}, no.~26, 1991, pp.~143--147. \MR{1151899}

\bibitem[EMV20]{EMV}
Nick Early, Volodymyr Mazorchuk, and Elizaveta Vishnyakova, \emph{Canonical
  {{Gelfand}}--{{Zeitlin Modules}} over {{Orthogonal Gelfand}}--{{Zeitlin
  Algebras}}}, International Mathematics Research Notices \textbf{2020} (2020),
  no.~20, 6947--6966.

\bibitem[FGR18]{futornyGelfandTsetlinModules2018}
Vyacheslav Futorny, Dimitar Grantcharov, and Luis~Enrique Ramirez,
  \emph{Gelfand-{{Tsetlin}} modules of {{sl(3)}} in the principal block},
  Representations of {{Lie}} algebras, quantum groups and related topics,
  Contemp. {{Math}}., vol. 713, Amer. Math. Soc., Providence, RI, 2018,
  pp.~121--134. \MR{3845911}

\bibitem[FGRZ20a]{FGRZVerma}
Vyacheslav Futorny, Dimitar Grantcharov, Luis~Enrique Ramirez, and Pablo
  Zadunaisky, \emph{Bounds of {{Gelfand-Tsetlin}} multiplicities and tableaux
  realizations of {{Verma}} modules}, Journal of Algebra \textbf{556} (2020),
  412--436.

\bibitem[FGRZ20b]{FGRZGalois}
\bysame, \emph{Gelfand-{{Tsetlin}} theory for rational {{Galois}} algebras},
  Israel Journal of Mathematics \textbf{239} (2020), no.~1, 99--128.

\bibitem[GC50]{gelfandFinitedimensionalRepresentations1950}
I.~M. Gelfand and M.~L. Cetlin, \emph{Finite-dimensional representations of the
  group of unimodular matrices}, Doklady Akad. Nauk SSSR (N.S.) \textbf{71}
  (1950), 825--828. \MR{35774}

\bibitem[Har20]{Hartwig}
Jonas~T. Hartwig, \emph{Principal {{Galois}} orders and {{Gelfand-Zeitlin}}
  modules}, Advances in Mathematics \textbf{359} (2020), 106806.

\bibitem[KL09]{KLI}
Mikhail Khovanov and Aaron~D. Lauda, \emph{A diagrammatic approach to
  categorification of quantum groups. {{I}}}, Representation Theory \textbf{13}
  (2009), 309--347.

\bibitem[KLMS12]{khovanovExtendedGraphical2012}
Mikhail Khovanov, Aaron~D. Lauda, Marco Mackaay, and Marko Sto{\v s}i{\'c},
  \emph{Extended graphical calculus for categorified quantum {{sl(2)}}},
  Memoirs of the American Mathematical Society \textbf{219} (2012), no.~1029,
  vi+87. \MR{2963085}

\bibitem[KTW{\etalchar{+}}19]{KTWWYO}
Joel Kamnitzer, Peter Tingley, Ben Webster, Alex Weekes, and Oded Yacobi,
  \emph{On category {{O}} for affine {{Grassmannian}} slices and categorified
  tensor products}, Proceedings of the London Mathematical Society \textbf{119}
  (2019), no.~5, 1179--1233.

\bibitem[Lau10]{laudaCategorificationQuantum2010}
Aaron~D. Lauda, \emph{A categorification of quantum $\mathfrak{sl}(2)$},
  Advances in Mathematics \textbf{225} (2010), no.~6, 3327--3424. \MR{2729010
  (2012b:17036)}

\bibitem[Lec04]{Lecshuf}
Bernard Leclerc, \emph{Dual canonical bases, quantum shuffles and
  {{q}}-characters}, Mathematische Zeitschrift \textbf{246} (2004), no.~4,
  691--732. \MR{2045836 (2005c:17019)}

\bibitem[Maz99]{mazorchukOGZ}
Volodymyr Mazorchuk, \emph{Orthogonal {{Gelfand-Zetlin}} algebras {{I}}},
  Contributions to Algebra and Geometry \textbf{40} (1999), no.~2, 399--415.

\bibitem[McN15]{mcnamaraFiniteDimensional2015}
Peter~J. McNamara, \emph{Finite dimensional representations of
  {{Khovanov}}--{{Lauda}}--{{Rouquier}} algebras {{I}}: {{Finite}} type},
  Journal f{\"u}r die reine und angewandte Mathematik (Crelles Journal)
  \textbf{2015} (2015), no.~707, 103--124.

\bibitem[MV21]{MVHC}
Volodymyr Mazorchuk and Elizaveta Vishnyakova, \emph{Harish-{C}handra modules
  over invariant subalgebras in a skew-group ring}, Asian J. Math. \textbf{25}
  (2021), no.~3, 431--454. \MR{4395608}

\bibitem[Soe92]{soergelCombinatoricsHarishChandra1992}
Wolfgang Soergel, \emph{The combinatorics of {{Harish-Chandra}} bimodules},
  Journal Fur Die Reine Und Angewandte Mathematik \textbf{429} (1992), 49--74.
  \MR{MR1173115 (94b:17011)}

\bibitem[SW]{SWschur}
Catharina Stroppel and Ben Webster, \emph{Quiver {Schur} algebras and q-{Fock}
  space}, \arxiv{1110.1115}.

\bibitem[VV11]{varagnoloCanonicalBases2011a}
M.~Varagnolo and E.~Vasserot, \emph{Canonical bases and affine {{Hecke}}
  algebras of type {{B}}}, Inventiones Mathematicae \textbf{185} (2011), no.~3,
  593--693. \MR{2827096}

\bibitem[Weba]{WebGT}
Ben Webster, \emph{Gelfand-{{Tsetlin}} modules in the {{Coulomb}} context},
  \arxiv{1904.05415}.

\bibitem[Webb]{websterKoszulDuality2019}
\bysame, \emph{Koszul duality between {{Higgs}} and {{Coulomb}} categories
  $\mathcal{O}$}, \arxiv{1611.06541}.

\bibitem[Webc]{websterThreePerspectives2020}
\bysame, \emph{Three perspectives on categorical symmetric {{Howe}} duality},
  \arxiv{2001.07584}.

\bibitem[Web15]{WebCB}
\bysame, \emph{Canonical bases and higher representation theory}, Compositio
  Mathematica \textbf{151} (2015), no.~1, 121--166.

\bibitem[Web17a]{Webmerged}
\bysame, \emph{Knot invariants and higher representation theory}, Memoirs of
  the American Mathematical Society \textbf{250} (2017), no.~1191, 141.

\bibitem[Web17b]{Webqui}
\bysame, \emph{On generalized category {{O}} for a quiver variety},
  Mathematische Annalen \textbf{368} (2017), no.~1, 483--536.

\bibitem[Web19]{WebwKLR}
\bysame, \emph{Weighted {{Khovanov-Lauda-Rouquier}} algebras}, Documenta
  Mathematica \textbf{24} (2019), 209--250. \MR{3946709}

\bibitem[{Web}20]{WebBKnote}
{Webster}, \emph{On graded presentations of {{Hecke}} algebras and their
  generalizations}, Algebraic Combinatorics \textbf{3} (2020), no.~1, 1--38.

\end{thebibliography}
\bibliographystyle{amsalpha}
\vfill
\ifslfour

\ifslfour
\pagestyle{empty}
\eject
\pdfpagewidth=12in \pdfpageheight=35in


  \eject \pdfpagewidth=8.5in \pdfpageheight=11in
  \fi
 \fi
\end{document}